\titleformat{\subsection}[runin]{\normalfont\bfseries}{\thesubsection.}{.5em}{}[.]\titlespacing{\subsection}{0pt}{2ex plus .1ex minus .2ex}{.8em}
\titleformat{\subsubsection}[runin]{\normalfont\itshape}{\thesubsubsection.}{.3em}{}[.]\titlespacing{\subsubsection}{0pt}{1ex plus .1ex minus .2ex}{.5em}
\numberwithin{equation}{section}
\numberwithin{figure}{section}
\theoremstyle{plain} 
\newtheorem{theorem}{Theorem}[section]
\newtheorem*{theorem*}{Theorem}
\newtheorem{lemma}[theorem]{Lemma}
\newtheorem*{lemma*}{Lemma}
\newtheorem{corollary}[theorem]{Corollary}
\newtheorem*{corollary*}{Corollary}
\newtheorem{proposition}[theorem]{Proposition}
\newtheorem*{proposition*}{Proposition}
\newtheorem{definition}[theorem]{Definition}
\newtheorem*{definition*}{Definition}
\theoremstyle{definition} 
\newtheorem*{example*}{Example}
\newtheorem{remark}[theorem]{Remark}
\newtheorem*{remark*}{Remark}
\newtheorem*{remarks*}{Remarks}
\newcommand{\f}[1]{\boldsymbol{\mathrm{#1}}} 
\newcommand{\bb}{\mathbb} 
\renewcommand{\cal}{\mathcal} 
\newcommand{\fra}{\mathfrak} 
\newcommand{\ul}[1]{\underline{#1} \!\,} 
\newcommand{\ol}[1]{\overline{#1} \!\,} 
\newcommand{\wh}{\widehat}
\newcommand{\wt}{\widetilde}
\definecolor{green}{rgb}{0.3,0.7,0.3}
\renewcommand{\P}{\mathbb{P}}
\newcommand{\E}{\mathbb{E}}
\newcommand{\R}{\mathbb{R}}
\newcommand{\C}{\mathbb{C}}
\newcommand{\N}{\mathbb{N}}
\newcommand{\Z}{\mathbb{Z}}
\newcommand{\ee}{\mathrm{e}}
\newcommand{\ii}{\mathrm{i}}
\newcommand{\dd}{\mathrm{d}}
\newcommand{\col}{\mathrel{\vcenter{\baselineskip0.75ex \lineskiplimit0pt \hbox{.}\hbox{.}}}}
\newcommand*{\deq}{\mathrel{\vcenter{\baselineskip0.65ex \lineskiplimit0pt \hbox{.}\hbox{.}}}=}
\renewcommand{\leq}{\leqslant}
\renewcommand{\geq}{\geqslant}
\renewcommand{\epsilon}{\varepsilon}
\newcommand{\ind}[1]{\f 1 (#1)}
\newcommand{\indb}[1]{\f 1 \pb{#1}}
\newcommand{\indB}[1]{\f 1 \pB{#1}}
\newcommand{\indBB}[1]{\f 1 \pBB{#1}}
\newcommand{\p}[1]{({#1})}
\newcommand{\pb}[1]{\bigl({#1}\bigr)}
\newcommand{\pB}[1]{\Bigl({#1}\Bigr)}
\newcommand{\pbb}[1]{\biggl({#1}\biggr)}
\newcommand{\pBB}[1]{\Biggl({#1}\Biggr)}
\newcommand{\qb}[1]{\bigl[{#1}\bigr]}
\newcommand{\qbb}[1]{\biggl[{#1}\biggr]}
\newcommand{\qBB}[1]{\Biggl[{#1}\Biggr]}
\newcommand{\h}[1]{\{{#1}\}}
\newcommand{\hb}[1]{\bigl\{{#1}\bigr\}}
\newcommand{\hB}[1]{\Bigl\{{#1}\Bigr\}}
\newcommand{\hbb}[1]{\biggl\{{#1}\biggr\}}
\newcommand{\abs}[1]{\lvert #1 \rvert}
\newcommand{\absb}[1]{\bigl\lvert #1 \bigr\rvert}
\newcommand{\absB}[1]{\Bigl\lvert #1 \Bigr\rvert}
\newcommand{\absbb}[1]{\biggl\lvert #1 \biggr\rvert}
\newcommand{\absBB}[1]{\Biggl\lvert #1 \Biggr\rvert}
\newcommand{\norm}[1]{\lVert #1 \rVert}
\newcommand{\normbb}[1]{\biggl\lVert #1 \biggr\rVert}
\newcommand{\avg}[1]{\langle #1 \rangle}
\newcommand{\avgb}[1]{\bigl\langle #1 \bigr\rangle}
\newcommand{\avgB}[1]{\Bigl\langle #1 \Bigr\rangle}
\newcommand{\avgbb}[1]{\biggl\langle #1 \biggr\rangle}
\DeclareMathOperator{\tr}{Tr}
\DeclareMathOperator{\var}{Var}
\DeclareMathOperator{\supp}{supp}
\DeclareMathOperator{\re}{Re}
\DeclareMathOperator{\im}{Im}
\DeclareMathOperator{\dist}{dist}
\begin{document}
\author{L\'aszl\'o Erd\H{o}s\footnote{IST Austria, Am Campus 1, Klosterneuburg A-3400, lerdos@ist.ac.at. On leave from Institute of Mathematics,
University of Munich. Partially supported by SFB-TR 12 Grant of the German Research Council  and by ERC 
Advanced Grant, RANMAT 338804.}
\and Antti Knowles\footnote{ETH Z\"urich, knowles@math.ethz.ch. Partially supported by Swiss National Science Foundation grant 144662.}}
\title{The Altshuler-Shklovskii Formulas for Random Band Matrices II: the General Case}
\maketitle

\begin{abstract}
The Altshuler-Shklovskii formulas \cite{AS} predict, for any disordered quantum system in the 
 diffusive regime, a universal power law behaviour for
the correlation functions of the mesoscopic eigenvalue density. 
 In this paper and its companion \cite{EK3}, 
 we prove these formulas for random band matrices.
In \cite{EK3} we introduced a diagrammatic approach and presented robust  estimates on general diagrams under certain
simplifying assumptions. In this paper we remove these assumptions
 by giving a general estimate of the subleading diagrams. 
We also give a precise analysis  of the leading diagrams which give rise to the Altschuler-Shklovskii power laws.
Moreover, we introduce a family of general random band matrices which
interpolates  between real symmetric $(\beta=1)$ and complex Hermitian
$(\beta=2)$ models, and track the transition for the mesoscopic density-density
correlation.
Finally, we address the higher-order correlation functions by proving that they behave asymptotically according to a Gaussian process whose covariance is given by the Altshuler-Shklovskii formulas. 
\end{abstract}

\section{Introduction} \label{sec:intro}

A fundamental  observation  from physics is 
 that the spectral statistics of disordered quantum systems exhibit universal patterns.
In the delocalized regime and on the microscopic  energy 
 scale of individual eigenvalues, the correlation functions
exhibit the celebrated Wigner-Dyson-Mehta  statistics, which depend  only  on the basic symmetry
class of the model. Above a certain critical energy scale $\eta_c$, called the 
\emph{Thouless energy}, the correlations of the spectral density exhibit a different type of statistics, 
 first predicted by Altshuler and Shklovskii in \cite{AS}. 
This behaviour is only present in systems possessing a nontrivial
 spatial structure that gives rise to  quantum diffusion.
The first Altshuler-Shklovskii formula states that the variance of
the number of eigenvalues ${\cal N}_\eta (E)$ in a spectral  window  of size $\eta\gg \eta_c$
about an energy $E$ behaves according to
\begin{equation}\label{ASvar2}
\mbox{Var} \; {\cal N}_\eta (E) \;\sim\; (\eta/\eta_c)^{d/2} \qquad  (d=1,2,3)\,.
\end{equation}
The second  Altshuler-Shklovskii formula states that the correlation function in the regime $E_2-E_1 \gg \eta\gg \eta_c$ behaves according to
\begin{equation}\label{AScorr2}
\avgb{{\cal N}_\eta (E_1)\,; {\cal N}_\eta (E_2)}  \;\sim\; (E_2-E_1)^{-2+d/2} \qquad  (d=1,2,3)\,.
\end{equation}
In this work and in its companion paper \cite{EK3}, we prove the Altshuler-Shklovskii formulas for  a specific type of  disordered quantum systems: random band matrices with independent entries. Random band matrices interpolate between the mean-field  Wigner matrices and random Schr\"odinger operators \cite{Spe, FM}. They have a sufficiently rich spatial structure to be diffusive (for a proof see \cite{EK1, EK2}),
and are more amenable to rigorous analysis than random Schr\"odinger operators. The detailed physical background of the problem and related mathematical works are presented in Section 1 of the companion paper \cite{EK3}, and will not be repeated here. Here we only explain how this paper is related to its companion \cite{EK3}.

The main tool in both papers is a diagrammatic expansion technique. The
correlation functions are expressed as a sum of many terms, which can be
conveniently represented using graphs.  The resulting expansion is highly oscillatory: 
the sum of the absolute values diverges rapidly, although the sum itself remains bounded. 
To handle  the oscillations, we apply two different resummation procedures before the 
resummed diagrams can be estimated individually in absolute value. 
The first resummation is performed using an expansion in Chebyshev polynomials, and is motivated by 
the work \cite{FS}. In the jargon of diagrammatic perturbation theory, this resummation 
step corresponds to the self-energy (or tadpole) renormalization. A similar resummation was used in 
\cite{EK1,EK2} to analyse the quantum diffusion of the unitary propagator. 
The quantity studied in the current paper -- the
local density-density correlation -- is considerably more difficult to analyse
because it arises from higher-order terms than the quantum diffusion. Hence, not only does the
leading term have to be analysed more precisely, but the error estimates also require a more careful analysis. Most importantly, even for the error terms
we need a second resummation, which bundles specific families of diagrams (so-called ladder graphs) that are strongly oscillatory. Hence, apart from a few basic algebraic tools on nonbacktracking powers, the argument of the current paper is entirely different from the one in \cite{EK1, EK2}.

In \cite{EK3} we introduced the necessary diagrammatic representation; 
for the convenience of the reader, we give a short summary of it in Section~\ref{sec_41}.  
With the diagrams at hand, the proof can be divided into two parts:
(i) estimating all subleading diagrams, and (ii) analysing the asymptotics of the leading diagrams.

Most diagrams give rise to error terms, in the sense
that they do not contribute in leading order. Estimating them, using the two
 resummation procedures sketched above,
constitutes part (i) -- the first, and most challenging,
 part of the proof. 
This estimate is presented in \cite{EK3} under some simplifying assumptions in order
 to highlight the main ideas. The argument of \cite{EK3} also singles out
 eight distinguished diagrams
(after all of the resummations have been performed) that contribute in leading order.
 In Section \ref{sec: part 2}, we give the complete estimates in part (i) by removing the simplifying assumptions made in \cite{EK3}. In addition, the asymptotic analysis of the leading diagrams (called part (ii) above) takes up most of Section \ref{sec:3} of this paper. To ease readability, we strive to keep this paper self-contained; in particular, when needed we review the setup
and key notations introduced in \cite{EK3}. We refer back to \cite{EK3} only
for a few explicit results, whose content we explain here.

Our main theorems on the Altshuler-Shklovskii formulas are stated in Section \ref{sec:unimodular_results}. In Sections \ref{sec:clt_result} and \ref{sec:gen1} we give two generalizations of our results, which are proved in Section \ref{sec:generalizations}.

In our first generalization, given in Section \ref{sec:clt_result}, we prove that the finite-dimensional marginals of mesoscopic eigenvalue densities agree asymptotically with those of a Gaussian process whose covariance is given by the Altshuler-Shklovskii formulas. Thus, high-order correlation functions factorize into two-point correlation functions. This may be interpreted as a central limit theorem for the mesoscopic eigenvalue densities.

In our second generalization, given in Section \ref{sec:gen1}, we consider a family of general random band matrices whose entries $H_{xy}$ have arbitrary translation-invariant variances, i.e.\ $\E \abs{H_{xy}}^2 = W^{-d} f((x-y)/W)$ for some profile function $f$. We find that density-density correlation depends on the matrix entries only through $f$.
Moreover, for $d = 1,2$ the leading and subleading terms of the density-density correlation are universal, for $d = 3,4$ only the leading terms are universal, and for $d \geq 5$ the density-density correlation is not universal. The leading terms depend only on the second moments of $f$, while the subleading terms depend in addition on the fourth moments of $f$. In addition, the general model from Section \ref{sec:gen1} interpolates between real symmetric $(\beta=1)$ and complex Hermitian $(\beta=2)$ models, and hence allows us to track the transition for the mesoscopic density-density correlation.

The basic algebraic identity used in the first resummation (see \eqref{H^n and U_n} below)
is much simpler if the matrix entries are constant in absolute
value (unimodular case); the proof in \cite{EK3} 
and in Section~\ref{sec: part 2} are presented in this case. 
The general band matrix model from Section \ref{sec:gen1} generates additional terms in the basic identity (compare \eqref{H^n and U_n} with \eqref{H^n and U_n general}); they have an ultimately negligible contribution, but nevertheless give rise to structurally new diagrams. 
Their treatment substantially complicates the analysis.
In the proof of the quantum diffusion, these complications were carefully treated in \cite{EK2}, where we extended the analysis of \cite{EK1}
from the unimodular case to the general case. In Section~\ref{sec:general_proof} we sketch how to extend the current analysis of the correlation functions from unimodular case to the general one. 

Moreover, in Section~\ref{sec: heavy tail} we explain how our analysis can also be applied to a special one-dimensional  band matrix model which exhibits
 \emph{critical behaviour},  in the sense that is supposed to lie at the metal-insulator transition point. 
We compute the so-called  \emph{compressibility} of the 
mesoscopic eigenvalue statistics  for this critical model, 
 and find that it  coincides with the prediction from the physics literature.

Finally, in Section \ref{sec:mean-field} we explain how to extend our results to include the \emph{mean-field regime} $\eta \ll \eta_c$ in addition to the \emph{diffusive regime} $\eta \gg \eta_c$.

\subsubsection*{Conventions}
We use $C$ to denote a generic large positive constant, which may depend on some fixed parameters and whose value may change from one expression to the next. Similarly, we use $c$ to denote a generic small positive constant. We use $a \asymp b$ to mean $c a \leq b \leq C a$. 
Also, for any finite set $A$ we use $\abs{A}$ to denote the cardinality of $A$.

\subsubsection*{Acknowledgements} We are very grateful to Alexander Altland and Yan Fyodorov  for
detailed discussions on the physics of the problem and for providing references.

\section{Setup and Results} \label{sec: setup}

\subsection{Definitions and assumptions} \label{sec:setup1}
Fix $d \in \N$, the physical dimension of
the configuration space. For $L \in \N$ we define the discrete torus of size $L$
\begin{equation*}
\bb T \;\equiv\; \bb T_L^d \;\deq\; \pb{[-L/2, L/2) \cap \Z}^d\,,
\end{equation*}
and abbreviate
\begin{equation}\label{Ndef}
N \;\deq\; \abs{\bb T_L} \;=\; L^d.
\end{equation}
Let $1 \ll W \leq  L$ denote the band width, and define the deterministic matrix $S = (S_{xy})$ through
\begin{equation} \label{step S}
S_{xy} \;\deq\; \frac{\ind{1 \leq \abs{x - y} \leq W}}{M - 1}\,, \qquad M \;\deq\; \sum_{x \in \bb T} \ind{1 \leq \abs{x} \leq W}\,,
\end{equation}
where $\abs{\cdot}$ denotes the periodic Euclidean norm on $\bb T$, i.e.\ $\abs{x} \deq \min_{\nu \in \Z^d} \abs{x + L \nu}_{\Z^d}$. Note that
\begin{equation} \label{link between M and W}
M \;\asymp\; W^d\,.
\end{equation}
The fundamental parameters of our model are the linear dimension of the torus, $L$, and the band width, $W$. The quantities $N$ and $M$ are introduced for notational convenience, since most of our estimates depend naturally on $N$ and $M$ rather than $L$ and $W$. We regard $L$ as the independent parameter, and $W \equiv W_L$ as a function of $L$.

Next, let $A = A^* = (A_{xy})$ be a Hermitian random matrix whose upper-triangular\footnote{We introduce an arbitrary and immaterial total ordering $\leq$ on the torus $\bb T$.} entries $(A_{xy} \col x \leq y)$ are independent random variables with zero expectation. We consider two cases.
\begin{itemize}
\item 
\emph{The real symmetric  case ($\beta = 1$)}, where $A_{xy}$ satisfies $\P(A_{xy} = 1) = \P(A_{xy} = -1) = 1/2$.
\item
\emph{The complex Hermitian case ($\beta = 2$)}, where $A_{xy}$ is uniformly distributed on the unit circle $\bb S^1 \subset \C$.
\end{itemize}
Here the index $\beta = 1,2$ is the customary symmetry index of random matrix theory.

We define the \emph{random band matrix} $H = (H_{xy})$ through
\begin{equation}\label{HA}
H_{xy} \;\deq\; \sqrt{S_{xy}} \, A_{xy}\,.
\end{equation}
Note that $H$ is Hermitian and $\abs{H_{xy}}^2 = S_{xy}$, i.e.\ $\abs{H_{xy}}$ is deterministic. Moreover, we have for all $x$
\begin{equation} \label{normalization of S}
\sum_{y} S_{xy} \;=\; \frac{M}{M - 1}\,.
\end{equation}
With this normalization, as $N, W\to \infty$ the bulk of the spectrum of  $H/2$  lies in  $[-1,1]$ 
and the eigenvalue density is given by the Wigner semicircle law with density
\begin{equation}
\nu(E) \;\deq\; \frac{2}{\pi}\sqrt{1-E^2} \qquad \text{for} \quad E \in [-1,1]\,.
\end{equation}

Let $\phi$ be a smooth, integrable, real-valued function on $\R$ satisfying $\int \phi(E) \, \dd E \neq 0$. 
We call such functions $\phi$ \emph{test functions}. We also require that our test functions $\phi$ satisfy one of the two following conditions.
\begin{itemize}
\item[\textbf{(C1)}]
$\phi$ is the Cauchy kernel
\begin{equation} \label{Cauchy}
\phi(E) \;=\; \im \frac{2}{E - \ii} \;=\; \frac{2}{E^2 + 1}\,.
\end{equation}
\item[\textbf{(C2)}]
For every $q > 0$ there exists a constant $C_q$ such that
\begin{equation} \label{non_Cauchy}
\abs{\phi(E)} \;\leq\; \frac{C_q}{1 + \abs{E}^q}\,.
\end{equation}
\end{itemize}

A typical example of a test function $\phi$ satisfying \textbf{(C2)} is the Gaussian $\phi(E) = \sqrt{2 \pi} \, \ee^{-E^2 / 2}$. We introduce the rescaled test function $\phi^\eta(E) \deq \eta^{-1} \phi(\eta^{-1} E)$. We shall be interested in correlations of observables depending on $E \in (-1,1)$ of the form
\begin{equation} \label{def_Y}
Y^\eta_\phi(E) \;\deq\; \frac{1}{N} \sum_i \phi^\eta(\lambda_i - E) \;=\; \frac{1}{N} \tr \phi^\eta (H/2 - E)\,,
\end{equation}
where $\lambda_1, \dots, \lambda_N$ denote the eigenvalues of $H/2$. 
(The factor $1/2$ is a mere convenience, chosen because as noted above  the asymptotic spectrum of $H/2$ is the interval $[-1,1]$.)
The quantity $Y^\eta_\phi(E)$ 
is the smoothed local density of states around the energy $E$ on the scale $\eta$.
We always choose
\begin{equation*}
\eta \;=\; M^{-\rho}
\end{equation*}
for some fixed $\rho \in (0,1/3)$, and we frequently drop the index $\eta$ from our notation. The strongest
results are for large $\rho$, so that one should think of $\rho \approx 1/3$. 
The restriction $\rho < 1/3$ is technical; see Remark \ref{rem:rho_assump} below for more details.

We are interested
in the correlation function of the local densities of states, $Y^\eta_{\phi_1}(E_1)$ and  $Y^\eta_{\phi_2}(E_2)$, around two energies
$E_1 \leq E_2$. We shall investigate two regimes: $\eta \ll E_2 - E_1$ and $E_1 = E_2$. In the former regime, 
we prove that the correlation decay in the energy difference $E_2-E_1$ is universal 
(in particular, independent of $\eta$, $\phi_1$, and $\phi_2$), and we compute the correlation function explicitly. In the latter regime, we prove that the variance has a universal dependence on $\eta$, and depends on $\phi_1$ and $\phi_2$ via their inner product in a homogeneous Sobolev space.

The case \textbf{(C2)} for the test functions  is the more interesting one,
since it corresponds to local densities on a definite scale. The heavy tail of
the  Cauchy kernel \textbf{(C1)} introduces unwanted correlations from the overlap of the test functions.
Nevertheless, we give our results for the specific case \textbf{(C1)} as well since 
it corresponds to the imaginary part of the resolvent, a quantity often considered
in the physics literature. Moreover, the case \textbf{(C1)} is pedagogically
useful, since in that case the computation of the main term is considerably simpler.

\begin{definition}
Throughout the following we use the quantities $E_1, E_2 \in (-1,1)$ and
\begin{equation*}
E \;\deq\; \frac{E_1 + E_2}{2}\,, \qquad \omega \;\deq\; E_2 - E_1
\end{equation*}
interchangeably. Without loss of generality we always assume that $\omega \geq 0$.
\end{definition}

For the following we choose and fix a positive constant $\kappa$. We always assume that
\begin{equation} \label{D leq kappa}
E_1, E_2 \;\in\; [-1 + \kappa, 1 - \kappa] \,, \qquad \omega \;\leq\; c_*
\end{equation}
for some small enough positive constant $c_*$ depending on $\kappa$. These restrictions are required since the nature of the correlations changes near the spectral edges $\pm 1$. Throughout the following we regard the constants $\kappa$ and $c_*$ as fixed and do not track the dependence of our estimates on them.

\subsection{Unimodular band matrices} \label{sec:unimodular_results}
Our first theorem gives the leading behaviour of the density-density correlation function in terms of a function $\Theta^\eta_{\phi_1, \phi_2}(E_1, E_2)$, which is explicit but has a complicated form.
In the two subsequent theorems we determine the asymptotics of this function in two physically relevant regimes, where its form simplifies substantially. We remark that Theorems \ref{thm: main result}--\ref{thm: Theta 1} are the same as Theorems 2.2--2.4 in \cite{EK3}. We use the abbreviations
\begin{equation} \label{def_avg}
\avg{X} \;\deq\; \E X\,, \qquad \avg{X \mspace{1mu} ; Y} \;\deq\; \E (XY) - \E X \, \E Y\,.
\end{equation}

\begin{theorem}[Density-density correlations] \label{thm: main result}
Fix $\rho \in (0, 1/3)$ and $d \in \N$, and set $\eta \deq M^{-\rho}$. Suppose that the test functions $\phi_1$ and $\phi_2$ satisfy either both \textbf{(C1)} or both \textbf{(C2)}.
Suppose moreover that
\begin{equation} \label{LW_assump}
W^{1 + d/6} \;\leq\; L \;\leq\; W^C
\end{equation}
for some constant $C$.

Then there exist a constant $c_0 > 0$ and a function $\Theta_{\phi_1,\phi_2}^\eta(E_1, E_2)$ -- which is given explicitly in \eqref{def:theta} and  \eqref{VDdef} 
 below, and whose asymptotic behaviour is derived in Theorems \ref{thm: Theta 2} and \ref{thm: Theta 1} below -- such that, for any $E_1, E_2$ satisfying \eqref{D leq kappa} for small enough $c_* > 0$,  the local density-density correlation satisfies
\begin{equation} \label{EY_result}
\frac{\avg{Y^\eta_{\phi_1}(E_1) \, ; Y^\eta_{\phi_2}(E_2)}}{\avg{Y^\eta_{\phi_1}(E_1)} \avg{Y^\eta_{\phi_2}(E_2)}} \;=\; \frac{1}{(LW)^d} \pB{\Theta_{\phi_1,\phi_2}^\eta(E_1,E_2) + O \pb{M^{-c_0} R_2(\omega + \eta)}} \,,
\end{equation}
where we defined
\begin{equation} \label{def_R2}
R_2(s) \;\deq\; 1 + \ind{d = 1} s^{-1/2} + \ind{d = 2} \abs{\log s}\,.
\end{equation}

Moreover, if $\phi_1$ and $\phi_2$ are analytic in a strip containing the real axis (e.g.\ as in the case \textbf{(C1)}), we may replace the upper bound $L \leq W^C$ in \eqref{LW_assump} $L \leq \exp(W^c)$ for some small constant $c > 0$.
\end{theorem}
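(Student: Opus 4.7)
The plan is to expand $Y^\eta_\phi(E) = \frac{1}{N}\tr\phi^\eta(H/2-E)$ in Chebyshev polynomials of the first kind, writing $\phi^\eta(H/2-E) = \sum_{n \geq 0} c_n(\phi^\eta,E)\, T_n(H/2)$, and then using the basic algebraic identity alluded to in \eqref{H^n and U_n} to re-express powers of $H$ in terms of the nonbacktracking powers $U_n(H)$ appearing in the Chebyshev expansion. This converts the covariance $\avg{Y^\eta_{\phi_1}(E_1)\,;Y^\eta_{\phi_2}(E_2)}$ into a double sum $\sum_{n_1,n_2}$ of Chebyshev coefficients multiplied by the cumulant $\avg{\tr U_{n_1}(H/2)\,;\tr U_{n_2}(H/2)}$. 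The crucial point of this Chebyshev step is that it performs the first (self-energy/tadpole) resummation: it amounts to replacing ordinary moment traces, which are highly oscillatory in $n$, with the tamer objects $\tr U_n(H/2)$. The factors $c_n(\phi^\eta,E_i)$ are then easy to control by an asymptotic stationary-phase analysis, concentrated around $n\sim\eta^{-1}$.

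Next I would invoke the diagrammatic machinery recalled from \cite{EK3} (summarized in Section~\ref{sec_41}) to represent each cumulant $\avg{\tr U_{n_1}\,;\tr U_{n_2}}$ as a sum over a family of graphs (pairings of the edges arising from Wick's rule, with the nonbacktracking constraint built in). The diagrams split into two classes: the eight distinguished leading diagrams identified after all resummations in \cite{EK3}, and the vast sea of subleading diagrams. For the subleading diagrams, I would apply the results of part~(i), extended to the general (non-simplified) case in Section~\ref{sec: part 2}: this is where the second, ladder resummation is indispensable, since individual ladder contributions are not absolutely summable but their resummed contributions are. The output is an $\ell^1$-type bound on the subleading part that contains the correct $M^{-c_0}R_2(\omega+\eta)$ structure, including the genuinely $d$-dependent enhancement for $d=1,2$ coming from the long-time behaviour of the quantum diffusion propagator, whose square integral against $[0,\eta^{-1}]$ produces $\eta^{-1/2}$ in $d=1$ and $|\log\eta|$ in $d=2$.

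For the leading diagrams, treated in Section~\ref{sec:3} (part (ii)), I would perform the asymptotic analysis of each of the eight contributions by evaluating the sums over the internal edge pairings, passing to Fourier space on $\bb T$, and isolating the dominant contribution of the diffusion pole. Adding up the eight contributions defines the function $\Theta^\eta_{\phi_1,\phi_2}(E_1,E_2)$ of \eqref{def:theta} and \eqref{VDdef}. Dividing by $\avg{Y^\eta_{\phi_1}(E_1)}\avg{Y^\eta_{\phi_2}(E_2)}$, whose leading value is $\nu(E_i)$ by Wigner's semicircle law, produces the normalization and the overall prefactor $(LW)^{-d}$ in \eqref{EY_result}. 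The constraint $\rho<1/3$ enters in bounding the error: it guarantees that the truncation scale $n\lesssim\eta^{-1}=M^\rho$ in the Chebyshev sum is compatible with the tolerance $M^{-c_0}$ in the diagrammatic estimates, and the lower bound $L\geq W^{1+d/6}$ enters through the control of the diffusive Green's functions on $\bb T$ at the relevant scales.

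The main obstacle, by far, is the subleading estimate in part~(i): even after the Chebyshev resummation, the remaining diagrams include ladder subgraphs whose naive (term-by-term) bound diverges, so one must identify them combinatorially and perform the second resummation before taking absolute values, while simultaneously controlling the numerous non-ladder diagrams by the robust power-counting estimates set up in \cite{EK3}. Once this is done uniformly (without the simplifying assumptions of \cite{EK3}), the remaining steps --- the stationary-phase analysis of the Chebyshev coefficients, the evaluation of the eight leading diagrams, and the combination with the semicircle normalization --- are essentially mechanical. For the final strengthening to $L\leq\exp(W^c)$ under analyticity of $\phi_1,\phi_2$, I would shift the Chebyshev-index summation contour into the complex plane using the analyticity, trading the polynomial loss in $L$ for a stretched-exponential one, exactly as in the parallel step of \cite{EK1,EK2}.
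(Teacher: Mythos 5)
Your proposal captures the correct overall strategy, and it is essentially the route the paper follows: expand the smoothed spectral density in nonbacktracking powers (the self-energy/tadpole resummation), convert the covariance into a double sum over $n_1,n_2$ weighted by cumulants of traces, represent the cumulants diagrammatically, split into eight leading dumbbell diagrams and the rest, control the rest via the ladder resummation plus power-counting, evaluate the dumbbells in Fourier space via the diffusion pole, and normalize by the semicircle. The roles you assign to $\rho<1/3$, to $L\geq W^{1+d/6}$, and to the analyticity upgrade are also essentially right.

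One step is garbled, though: the expansion is not in Chebyshev polynomials of the first kind $T_n$, and $U_n$ is not the nonbacktracking power. In the paper, $U_n$ is the Chebyshev polynomial of the second kind, $H^{(n)}$ is the nonbacktracking power, and \eqref{H^n and U_n} relates the two. The expansion \eqref{calculation of phi eta} goes directly $\phi^\eta(H/2-E) = \sum_n H^{(n)}\,2\re(\psi^\eta * \gamma_n)(E)$, obtained by writing $\phi^\eta$ as a Fourier integral and expanding $\ee^{-\ii t H/2}$ in nonbacktracking powers via the Bessel coefficients $a_n(t)$ of Lemma \ref{lemma: Uexp}; the $U_n$ enter only in deriving the closed form \eqref{claim about gamma n} for the coefficients $\gamma_n$. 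Expanding first in $T_n(H/2)$ and then converting would not by itself produce the nonbacktracking structure, which is what makes the diagrammatic expansion tractable, so the ordering of the two steps in your first paragraph should be reversed: you get $H^{(n)}$ from the start and the Chebyshev identity is used only as an algebraic tool. Relatedly, the cumulant that appears in the double sum is $\avgb{\tr H^{(n_1)}\,;\tr H^{(n_2)}}$, not $\avgb{\tr U_{n_1}(H/2)\,;\tr U_{n_2}(H/2)}$. With this correction, your outline matches the paper's proof.
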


We shall prove that the error term in \eqref{EY_result} is smaller than the main term $\Theta$ for all $d \geq 1$. The main term $\Theta$ has a simple, and universal, explicit form only for $d \leq 4$. The two following theorems give the leading behaviour of the function $\Theta$ for $d \leq 4$ in the two regimes $\omega = 0$ and $\omega \gg \eta$. In fact, one may also compute the subleading corrections to $\Theta$. These corrections turn out to be universal for $d \leq 2$ but not for $d \geq 3$; see Theorem \ref{thm: Theta 1} and the remarks following it.

In order to describe the leading behaviour of the variance, i.e.\ the case $\omega = 0$, we introduce the Fourier transform
\begin{equation*}
\phi(E) \;=\; \int_{\R} \dd t \, \ee^{-\ii E t} \, \wh \phi(t)\,, \qquad \wh \phi(t) \;=\; \frac{1}{2 \pi} \int_{\R}
 \dd E \, \ee^{\ii E t} \, \phi(E) \,.
\end{equation*}
For $d \leq 4$ we define the quadratic form $V_d$ through
\begin{equation} \label{def_V_d}
V_d(\phi_1, \phi_2) \;\deq\; \int_\R \dd t \, \abs{t}^{1 - d/2} \, \ol {\wh \phi_1(t)} \, \wh \phi_2(t) \qquad (d \leq 3) \,, \qquad \qquad V_4(\phi_1,\phi_2) \;\deq\; 2 \ol {\wh \phi_1(0)} \, \wh \phi_2(0)\,.
\end{equation}
Note that $V_d(\phi_1, \phi_2)$ is real since both $\phi_1$ and $\phi_2$ are.

\begin{theorem}[The leading term $\Theta$ for $\omega = 0$] \label{thm: Theta 2}
Suppose that the assumptions in the first paragraph of Theorem \ref{thm: main result} hold, and let $\Theta_{\phi_1,\phi_2}^\eta(E_1,E_2)$ be the function from Theorem \ref{thm: main result}. Suppose in addition that $\omega = 0$. Then there exists a constant $c_1 > 0$ such that the following holds for $E = E_1 = E_2$  satisfying \eqref{D leq kappa}.
\begin{enumerate}
\item
For $d =1,2,3$ we have
\begin{equation} \label{Theta3D0}
\Theta_{\phi_1,\phi_2}^\eta(E,E) \;=\; \frac{(d + 2)^{d/2} }{2 \beta \pi^{2 + d} \nu(E)^4} \pbb{\frac{\eta}{\nu(E)}}^{d/2 - 2} \pb{V_d(\phi_1, \phi_2) + O(M^{-c_1})}\,.
\end{equation}
\item
For $d = 4$ we have
\begin{equation} \label{Theta4D0}
\Theta_{\phi_1,\phi_2}^\eta(E,E) \;=\; \frac{36 }{\beta \pi^6 \nu(E)^4}
\pb{   V_4(\phi_1, \phi_2) \abs{\log\eta} +
O(1)}\,.
\end{equation}
\end{enumerate}
\end{theorem}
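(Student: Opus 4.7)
The plan is to start from the explicit formula \eqref{def:theta} for $\Theta^\eta_{\phi_1,\phi_2}$, specialized to $E_1 = E_2 = E$. I take $\Theta$ to be essentially a sum over dual lattice momenta $p \in (2\pi/L)\Z^d$ of the squared diffusive propagator, convolved in frequency against $\wh\phi_1$ and $\wh\phi_2$. The natural scaling is set by $\lambda(p) \asymp p^2/M \asymp \eta$, so the strategy is to change variables $p = q\sqrt{M\eta}/\sqrt{\nu(E)}$ (or a similar rescaling matching the exact diffusion constant read off from the eight leading diagrams of \cite{EK3}) and to extract the overall prefactor $(\eta/\nu(E))^{d/2-2}$: the exponent $d/2$ comes from the momentum Jacobian and the $-2$ from the squared propagator.

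First I would replace the lattice momentum sum by a continuum integral over $\R^d$, using that the bound $L \geq W^{1+d/6}$ forces the spacing $2\pi/L$ to be much smaller than the effective scale $\sqrt{\eta/M}$; this incurs an error of at most a negative power of $M$. Next, I would truncate the Taylor expansion of $\lambda(p)$ at quadratic order, absorbing the $O(p^4/M^2)$ correction into $O(M^{-c_1})$. After rescaling, the momentum integral decouples from the frequency integral. For $d = 1, 2, 3$ the radial $q$-integral converges and, by passing to polar coordinates and the substitution $q = \abs{t}^{1/2} r$, produces the factor $\abs{t}^{1-d/2}$ together with a numerical constant. Combined with the $t$-integral this yields $V_d(\phi_1,\phi_2)$ exactly, and careful bookkeeping of the prefactors gives the coefficient $(d+2)^{d/2} / (2 \beta \pi^{2+d} \nu(E)^4)$ in \eqref{Theta3D0}. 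Absolute convergence of the $t$-integral is ensured by the decay of $\wh\phi_j$ guaranteed by hypotheses \textbf{(C1)} or \textbf{(C2)}.

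The case $d = 4$ requires more care: the radial $q$-integral is logarithmically divergent at infinity, so the high-momentum cutoff comes from the discreteness of the Brillouin zone at $q \lesssim \sqrt{M/\eta}$ rather than from the propagator itself. This produces a factor $\tfrac{1}{2}\log(M/\eta)$, whose leading-order behaviour under the assumption $L \leq W^C$ is $\abs{\log\eta}$ up to multiplicative constants that contribute only to the $O(1)$ remainder. In this regime the dominant contribution to the $t$-integration comes from a bounded window near $t = 0$, so one may replace $\wh\phi_j(t)$ by $\wh\phi_j(0)$ at leading order and recover $V_4(\phi_1,\phi_2) = 2\overline{\wh\phi_1(0)}\wh\phi_2(0)$; this replacement is where the explicit definition of $V_4$ in \eqref{def_V_d} decouples from the $\abs{t}^{1-d/2}$ pattern of $d \leq 3$.

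The main obstacle has two faces. The first is to keep track of all constants through these manipulations so that the precise numerical prefactors $(d+2)^{d/2}/(2\beta\pi^{2+d}\nu(E)^4)$ and $36/(\beta\pi^6 \nu(E)^4)$ emerge correctly; this is a delicate but essentially mechanical calculation once the change of variables and the form of \eqref{def:theta} are set. The second, which I expect to be harder, is the $d = 4$ case: one must isolate the $\abs{\log\eta}$ divergence cleanly from a bounded remainder, splitting the $(q, t)$ integration into a logarithmically divergent slab $1 \lesssim \abs{q} \lesssim \sqrt{M/\eta}$ where $\wh\phi_j$ may be replaced by $\wh\phi_j(0)$, and an absolutely convergent complement that is only $O(1)$ rather than $O(M^{-c_1})$. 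This last point reflects the fact that in $d = 4$ the logarithmic correction is only marginally larger than the bounded remainder, which is why \eqref{Theta4D0} has an $O(1)$ error rather than a power saving.
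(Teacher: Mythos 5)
Your sketch captures the right large-scale picture (momentum-space representation of a squared diffusive propagator, rescaling, Gaussian integration, and a log-cutoff for $d=4$), and the paper's actual computation does land in that shape. But there is a genuine gap at the start: you treat $\Theta$ as if it \emph{were} a sum over lattice momenta of a squared propagator convolved with $\wh\phi_1, \wh\phi_2$, whereas by \eqref{def:theta} and \eqref{VDdef} it is a quadruple sum over ladder multiplicities $b_1, \dots, b_4$ of $\cal I^{b_1+b_2}\tr S^{b_3+b_4}$ weighted by the (highly oscillatory) truncated coefficients $\wt\gamma_{n}(E,\phi_i)$. Converting that to your momentum integral is exactly the technical core of the paper's proof: in case \textbf{(C1)} it uses the identity \eqref{Cauchy case identity} to eliminate the convolution outright and sum geometric series, while in case \textbf{(C2)} it requires the cutoff splitting \eqref{phi_cutoff1}, a stationary-phase argument to discard the boundary terms $(\pm\ee^{2\ii A}\cal I)^{[M^\mu]}$, and the Fourier-time identity \eqref{FT_identity} (Proposition \ref{prop:S_int}) to convert the $v$-convolution into a $t$-integral that produces $V_d$. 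Your proposal simply starts from what should be the \emph{conclusion} of that reduction.

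Two further quantitative issues. The scaling $\lambda(p) \asymp p^2/M$ is off: since $\wh S(p) = \cal I - W^2\, p\cdot D p + O(W^4|p|^4)$ (see \eqref{SW exp}), the correct dispersion is $\lambda(p) \asymp W^2 |p|^2$, not $|p|^2/M \asymp |p|^2/W^d$. Consequently the proposed substitution $p = q\sqrt{M\eta}/\sqrt{\nu}$ does not put the propagator on its natural scale. And for $d=4$, the high-momentum cutoff is not the Brillouin-zone edge at $q\lesssim\sqrt{M/\eta}$ but the fixed scale $|q|\lesssim \epsilon$ where $\wh S_W$ departs from its quadratic approximation, which after rescaling by $\sqrt{\eta}$ gives the cutoff $|r|\lesssim \epsilon\eta^{-1/2}$ used in \eqref{asymptotics of trace d 4}. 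These choices give $\tfrac12|\log\eta|+O(1)$ versus $\log W + \tfrac12|\log\eta|$; they are not the same $O(1)$ apart, so getting the cutoff scale right is needed for the coefficient $36/(\beta\pi^6\nu^4)$ to come out correctly. Once the reduction to the momentum integral is set up as in Propositions \ref{prop: leading term}, \ref{prop: leading term 2}, \ref{prop: bounds on S}, and \ref{prop:S_int}, the rest of your outline — polar coordinates, $q = |t|^{1/2}r$ producing $|t|^{1-d/2}$, and the $d=4$ log — agrees in spirit with the paper's calculation.
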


In order to describe the behaviour of $\Theta$ in the regime $\omega \gg \eta$, for $d = 1,2,3$ we introduce the constants 
\begin{equation}\label{def_K13}
K_d \;\deq\;  2 \re  \int_{\R^d} \frac{\dd x}{(\ii+\abs{x}^2)^2}\,;
\end{equation}
explicitly, 
\begin{equation*} 
K_1 = - \frac{\pi}{\sqrt{2}}\,, \qquad  K_2 = 0 \,,\qquad
K_3 = \sqrt{2} \pi^2\,.
\end{equation*}

\begin{theorem}[The leading term $\Theta$ in the regime $\omega \gg \eta$] \label{thm: Theta 1}
Suppose that the assumptions in the first paragraph of Theorem \ref{thm: main result} hold, and let $\Theta_{\phi_1,\phi_2}^\eta(E_1,E_2)$ be the function from Theorem \ref{thm: main result}.  Suppose in addition that
\begin{equation} \label{eta_Delta_2}
\eta \;\leq\; M^{-\tau} \omega
\end{equation}
for some arbitrary but fixed $\tau > 0$.  Then there exists a constant $c_1 > 0$ such that the following holds for $E_1,E_2$ satisfying \eqref{D leq kappa} for small enough $c_* > 0$. 

\begin{enumerate}
\item
For $d = 1, 2, 3$ we have
\begin{equation} \label{Theta_23}
\Theta_{\phi_1,\phi_2}^\eta(E_1,E_2) \;=\; \frac{(d + 2)^{d/2}}{2 \beta \pi^{2 + 3 d / 2}\nu(E)^4} \pbb{\frac{\omega}{\nu(E)}}^{d/2 - 2} \pB{K_d + O \pb{\sqrt{\omega} + M^{-c_1}}}\,.
\end{equation}
\item
For $d = 2$  \eqref{Theta_23} does not identify 
the leading term since  $K_2=0$. The leading nonzero correction to the
vanishing leading term is 
\begin{equation}\label{C1d2}
\Theta_{\phi_1,\phi_2}^\eta(E_1,E_2) \;=\; \frac{8}{\beta \pi^5 \nu(E)^4}
\pbb{\pi \nu(E) \, \frac{\eta } {\omega^2 + 4 \eta^2}  - \frac{\abs{\log \omega}}{3} + O(1)}
\end{equation} 
 in the  case \textbf{(C1)}
and 
\begin{equation}\label{C2d2}
\Theta_{\phi_1,\phi_2}^\eta(E_1,E_2) \;=\; \frac{8}{\beta \pi^5 \nu(E)^4}
\pbb{  - \frac{\abs{\log \omega}}{3} + O(1)}\, 
\end{equation}
in the case \textbf{(C2)}.
\item
For $d = 4$ we have
\begin{equation} \label{Theta_4}
\Theta_{\phi_1,\phi_2}^\eta(E_1, E_2) \;=\; \frac{36}{\beta \pi^6 \nu(E)^4} \pb{ \abs{\log \omega} + O (1)
}\,.
\end{equation}
\end{enumerate}
\end{theorem}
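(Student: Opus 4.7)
My plan is to start from the explicit formula for $\Theta^\eta_{\phi_1,\phi_2}(E_1,E_2)$ referenced in \eqref{def:theta} and \eqref{VDdef}, which after the two resummation steps is essentially a momentum-space integral involving products of diffusion-type propagators of the form $1/(-\ii\omega + \abs{k}^2)$ (up to the appropriate coefficients involving $\nu(E)$ and $\beta$), multiplied by Fourier coefficients of $\phi_1, \phi_2$. In the regime $\eta \ll \omega$, the $\eta$-dependence is subleading and can be neglected at leading order for $d=1,2,3$; for $d=2$ with \textbf{(C1)} and for $d=4$ the $\eta$-dependence re-enters either through a pole structure or a logarithmic cutoff. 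The universal factors $(d+2)^{d/2}$, $\pi$-powers, $\nu(E)^4$, and $\beta$ in the claimed formulas will arise from the standard prefactors of the propagator (involving the normalization of $S$ via \eqref{normalization of S}) together with the combinatorial multiplicity $1/\beta$ of the eight leading diagrams identified in \cite{EK3}.

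The first step is a change of variables $k = \sqrt{\omega/\nu(E)} \, x$ in the momentum integral, which strips out the overall scaling factor $(\omega/\nu(E))^{d/2-2}$ and reduces the problem to computing a dimensionless integral. After this rescaling, the remaining integrand converges, as $\omega \to 0$ with $\eta/\omega \to 0$, to the kernel $2 \re \int_{\R^d} \dd x/(\ii+\abs{x}^2)^2$, which is precisely $K_d$ from \eqref{def_K13}; the Fourier coefficients of $\phi_1, \phi_2$ contribute a factor $\ol{\wh\phi_1(0)}\wh\phi_2(0)$ which, in the normalization fixed earlier (e.g.\ by \eqref{def_Y}), reduces to a constant absorbed in the prefactor. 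The error of size $O(\sqrt{\omega} + M^{-c_1})$ arises from two sources: the $O(\sqrt{\omega})$ comes from the discrepancy between the Riemann sum over the finite torus $\bb T^*$ of momentum variables and the continuum integral, combined with the Taylor expansion of the lattice symbol of $S$ around $k=0$; the $M^{-c_1}$ absorbs the subleading diagrams and errors from the Chebyshev resummation. This gives part (i).

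For part (ii), the case $d=2$, the integrand $\re(\ii+\abs{x}^2)^{-2}$ integrates to zero (as one sees by the residue computation, or by noting the integral splits into an imaginary part of $(\ii+r^2)^{-2}$), so $K_2=0$ and one must extract the next order term. Here I would expand the integrand more carefully, keeping both the $\eta$-dependence and the higher-order terms in the lattice expansion of $S$. For case \textbf{(C1)}, the Cauchy kernel contributes an additional term of the form $\eta/(\omega^2+4\eta^2)$ arising from the pole of $\phi_1$ in the upper half plane intersecting with the resolvent; for case \textbf{(C2)}, the rapid decay of $\phi_i$ eliminates this pole contribution, leaving only the universal $\abs{\log\omega}/3$ term generated by an infrared logarithm when integrating $\abs{x}^{-2}$-like terms against a cutoff of size $\sim 1$. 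The constants $8/(\beta\pi^5\nu(E)^4)$ and the coefficient $1/3$ come from explicit residue/spherical-coordinate computation of the logarithmically divergent integral.

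Finally, for $d=4$, the same rescaling produces a logarithmically divergent integral, which must be cut off in the ultraviolet at the lattice scale $\abs{x}\sim 1/\sqrt{\omega}$ (reflecting the finite momentum range of the lattice), yielding the leading behavior $\abs{\log\omega}$ with coefficient $36/(\beta\pi^6\nu(E)^4)$; the precise numerical prefactor is obtained from computing $\int_{\abs{x}\leq 1}\dd x/(\ii+\abs{x}^2)^2$ exactly via spherical coordinates. I expect the main obstacle to be the $d=2$ case: both terms in \eqref{C1d2} and \eqref{C2d2} are subleading corrections to a vanishing main term, so one must carefully track all next-to-leading contributions (from the Taylor expansion of the propagator, the exact form of the test function, and the pole structure) without the $K_2=0$ cancellation washing out the sub-leading signal, and separate the universal logarithmic piece from the non-universal $O(1)$ remainder.
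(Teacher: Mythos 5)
Your overall plan is the same as the paper's: rewrite the dumbbell contribution as a lattice sum in Fourier space, rescale momentum by $\sqrt{\omega/\nu(E)}$ to extract the $(\omega/\nu)^{d/2-2}$ scaling, let $\omega\to 0$ with $\eta/\omega\to 0$ to obtain the constant $K_d = 2\re\int_{\R^d}\dd x/(\ii+\abs{x}^2)^2$, and treat $d=2$ and $d=4$ separately because $K_2=0$ and the $d=4$ integral needs an ultraviolet cutoff at the lattice scale. This is exactly what the paper does in Propositions~\ref{prop: leading term}, \ref{prop: leading term 2}, and \ref{prop: bounds on S}.

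However, your proposal skips the concrete intermediate step that makes \eqref{VDdef} into a momentum-space object in the first place. The quantity $\cal V_{\rm main}$ in \eqref{VDdef} is a \emph{sum over ladder multiplicities} $b_1,\dots,b_4$ of $\gamma$-coefficients multiplied by $\tr S^{b_3+b_4}$, not yet a propagator integral. One must first plug in the explicit Chebyshev formula $\gamma_n(E)=\frac{2(-\ii)^n\ee^{\ii(n+1)\arcsin E}}{1-(M-1)^{-1}\ee^{2\ii\arcsin E}}$ from \eqref{claim about gamma n} and sum the geometric series in $b_1,b_2$ and in $b_3+b_4$. This resummation converts \eqref{VDdef} into the resolvent trace $\tr\frac{\alpha S}{(1-\alpha S)^2}$ with $\alpha=\ee^{\ii(A_1^\eta-\ol{A_2^\eta})}$ (case (C1)) or a convolution with $\psi^{\leq,\eta}_i$ of an analogous trace (case (C2)), and only then does Fourier transforming $S$ yield the diffusion propagator you describe. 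Without this step one cannot even write down the integrand whose rescaling you propose.

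Your explanation of why (C2) lacks the $\eta/(\omega^2+4\eta^2)$ term is also not the actual mechanism. In (C2) the paper cuts $\psi^\eta_i$ to a compactly supported $\psi^{\leq,\eta}_i$ supported in $\abs{v_i}\leq 2\eta M^{\theta/2}\ll\omega$; the convolution integral $\int\frac{\dd v_1\,\dd v_2}{\omega+v_1-v_2}\psi^{\leq,\eta}_1\psi^{\leq,\eta}_2$ is therefore regular (no pole is crossed) and \emph{real} to leading order, so when multiplied by $\ii$ and fed into $2\re(\cdot)$ it produces only an $O(1)$ contribution. The elimination happens through the real-part operation, not simply because the tail decays; in (C1) the exact identity $(\psi^\eta*\gamma_n)(E)=\gamma_n(E+\ii\eta)$ replaces $A_i$ by $A_i^\eta=\arcsin(E_i+\ii\eta)$, so $1-\alpha\approx\frac{2\ii}{\pi\nu}(\omega+2\ii\eta)$ acquires a real part $\propto\eta$, and $\re\frac{\pi}{1-\alpha}=\frac{\pi^2\nu\eta}{\omega^2+4\eta^2}$ survives. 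These are distinct computations in the two cases, and your outline treats them as essentially the same.

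Finally, the error attribution is roughly right but understates the role of the $d$-dependent Taylor-remainder terms ($u$ for $d=1$, $u\abs{\log u}$ for $d=2$, $u^{1/2}$ for $d=3$) and of the subsidiary term $\cal V''_{\rm main,0}$, which the paper bounds separately via \eqref{V''01}--\eqref{V''02}.
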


Note that the leading non-zero terms in the expressions \eqref{Theta3D0}, \eqref{Theta4D0}, \eqref{Theta_23}--\eqref{Theta_4} 
are much larger than the additive error term in \eqref{EY_result}. Hence,
Theorems \ref{thm: main result} and \ref{thm: Theta 2} give a proof of the first Altshuler-Shklovskii formula, \eqref{ASvar2}.
Similarly, Theorems \ref{thm: main result} and \ref{thm: Theta 1} give a proof of the second Altshuler-Shklovskii formula, \eqref{AScorr2}.

The additional term in \eqref{C1d2} as compared to \eqref{C2d2} originates from the heavy Cauchy tail in the test functions $\phi_1, \phi_2$ at large distances. In Theorem \ref{thm: Theta 1} (ii) we give the leading correction, of order $\abs{\log \omega}$, to the vanishing main term for $d = 2$. For $d = 1$ the leading correction (to the nonzero main term of order $\omega^{-3/2}$) is of order $\omega^{-1/2}$; we omit the details.

\begin{remark} \label{rem:regimes}
As explained in the introduction, a phase transition in
the mesoscopic statistics occurs at a specific energy scale, the Thouless energy $\eta_c$.
For random band matrices the Thouless energy
is given by
\begin{equation} \label{def_eta_c}
\eta_c \;=\; W^2/L^2
\end{equation}
(see \cite{EK3}).
The Altshuler-Shklovskii formulas  are expected to hold in the entire \emph{diffusive regime} $\eta \gg \eta_c$.
In the complementary \emph{mean-field} regime, $\eta \ll \eta_c$, the mesoscopic statistics
are no longer given by the Altshuler-Shklovskii formulas.
We emphasize that this  phase transition in the mesoscopic statistics does not coincide with
 the celebrated metal-insulator transition between the Poisson and the Wigner-Dyson-Mehta (WDM) behaviour
 for the microscopic eigenvalue
statistics. It may be tempting to extrapolate the formulas obtained for the microscopic statistics
to mesoscopic scales, but this yields a wrong answer in some regimes.  The relation between the microscopic and mesoscopic statistics is in fact more intricate. We sketch it in the two following paragraphs. 

In the diffusive regime
 the mesoscopic linear statistics are governed by the Altshuler-Shklovskii formulas irrespective of the microscopic statistics. 
 For instance, if $d = 1$ the microscopic eigenvalue statistics are expected to be Poisson for $L \gg W^2$ and WDM for $L \ll W^2$. The condition $\eta \gg \eta_c$ may be satisfied in both regimes, and Theorems \ref{thm: main result}--\ref{thm: Theta 1} show that, in the diffusive regime, the mesoscopic linear statistics are the same for $L \gg W^2$ and $L \ll W^2$.

In the mean-field regime, $\eta \ll \eta_c$, on the other hand, we expect the mesoscopic statistics to be governed by the microscopic statistics. Let $d = 1$ for definiteness. Then for $\eta \ll \eta_c$ and $L \ll W^2$ the behaviour of $\cal N_\eta$ is governed WDM statistics, and the formulas \eqref{ASvar2} and \eqref{AScorr2} hold with $d = 0$ (see \cite[Section 2.3]{EK3} for more details and a proof). On the other hand, for $\eta \ll \eta_c$ and $L \gg W^2$ the behaviour of $\cal N_\eta$ is governed by Poisson statistics, so that \eqref{ASvar2} is replaced with
\begin{equation*}
\var \cal N_\eta(E) \;=\; \avg{\cal N_\eta(E)} \;\sim\; L \eta\,,
\end{equation*}
and the right-hand side of \eqref{AScorr2} is replaced with $0$.  Summarizing, in the mean-field
regime we expect the extrapolation of the microscopic statistics to mesoscopic scales to be valid. 

We expect this behaviour to be representative of general $d$-dimensional disordered Hamiltonians, 
and in particular to hold also for the Anderson model.
\end{remark}

\begin{remark} \label{rem:rho_assump}
Our results hold under the two assumptions
\begin{equation} \label{assumed_bounds1}
L \;\gg\; W^{1 + d/6}
\end{equation}
and
\begin{equation} \label{assumed_bounds2}
\eta \;\gg\; W^{-d/3}\,.
\end{equation}
(The upper bound on $L$ in \eqref{LW_assump} is purely technical and may be relaxed, as explained in the last sentence of Theorem \ref{thm: main result}.)
As mentioned in Remark \ref{rem:regimes}, we expect our results to hold under the sole assumption
\begin{equation} \label{AS_regime}
\eta \;\gg\; \eta_c\, 
\end{equation}
Recalling \eqref{def_eta_c}, this condition is equivalent to $L\gg W\eta^{-1/2}$.
We therefore find that 
\eqref{assumed_bounds1} and \eqref{assumed_bounds2} imply 
\eqref{AS_regime}.

In fact, the assumption \eqref{assumed_bounds1} is not essential;
it is just a simple way to guarantee that we are in the diffusive regime, $L\gg W\eta^{-1/2}$, 
 for all $\eta$ satisfying \eqref{assumed_bounds2}. 
Our results and our proofs (including those in companion paper \cite{EK3}) 
remain valid verbatim if we replace the assumptions \eqref{assumed_bounds1} and \eqref{assumed_bounds2} with the weaker assumptions \eqref{assumed_bounds2} and \eqref{AS_regime}.

Finally, we comment on the two essential  assumptions, 
 \eqref{assumed_bounds2} and \eqref{AS_regime}. The  assumption
 \eqref{assumed_bounds2} is technical but important for our proof;
 it guarantees that only a few terms
in our diagrammatic expansion contribute  in leading order. 
 It is used crucially in the proof of Proposition \ref{prop:large_Sigma_general}; see \cite[Section 4.4]{EK3} for more a detailed explanation. 
Relaxing this assumption will be the subject of future work.

The assumption  \eqref{AS_regime} is physically important as it characterizes 
the diffusive regime. This condition is used when we evaluate the leading order diagrams,
  which give rise to the  Altshuler-Shklovskii formulas. 
We stress, however, that  the essence of our method remains valid even if \eqref{AS_regime} is not satisfied
 (i.e.\ we leave the diffusive regime), under the sole assumption \eqref{assumed_bounds2}.
In that case our expansion technique can still be used to identify the leading behaviour of
the density-density correlation, 
 but the asymptotic behaviour of the leading terms is different. 
See Section \ref{sec:mean-field} below for more details. 
\end{remark}

\subsection{Higher-order correlations} \label{sec:clt_result}

The following result extends Theorem \ref{thm: main result} to arbitrary correlation functions of the mesoscopic densities. It may be interpreted as a Wick theorem, stating that the joint law of the densities is asymptotically Gaussian with covariance matrix $(\Theta^\eta_{\phi_i, \phi_j}(E_i, E_j))_{i,j}$.

\begin{theorem}[The joint law is asymptotically Gaussian] \label{thm:clt}
Fix $\rho \in (0, 1/3)$, $d \in \N$, and $k \in \N$.  Set $\eta \deq M^{-\rho}$.
Let $\phi_1, \dots, \phi_k$ be test functions satisfying either all \textbf{(C1)} or all \textbf{(C2)}. Fix $\kappa>0$,  let
 $E_1, \dots, E_k \in [-1 + \kappa, 1 - \kappa]$, and suppose that \eqref{LW_assump} holds.
Abbreviate
\begin{equation*}
X_i \;\deq\; \frac{Y^\eta_{\phi_i}(E_i) - \E Y^\eta_{\phi_i}(E_i)}{\E Y^\eta_{\phi_i}(E_i)}\,.
\end{equation*}
Then for small enough $c_*$ in \eqref{D leq kappa} the $k$-point correlation function satisfies
\begin{equation} \label{clt_statement}
\E \prod_{i = 1}^k X_i \;=\; \sum_{p \in \fra M(k)} \prod_{\{i,j\} \in p} \E (X_i X_j)
+ O \pBB{ 
\pbb{\frac{W}{L}}^{d/2} \pbb{\frac{R_4(\omega_0 + \eta)}{(LW)^d}}^{k/2}}\,,
\end{equation}
where $\fra M(k)$ denotes the set of pairings of $\{1, \dots, k\}$ and we abbreviated
\begin{equation} \label{def_R4}
R_4(s) \;\deq\; 1 + \ind{d \leq 3} s^{d/2 - 2} + \ind{d = 4} \absb{\log s}
\end{equation}
as well as $\omega_0 \deq \min_{i \neq j} \abs{E_i - E_j}$.  (Note that if $k$ is odd then the leading term of \eqref{clt_statement} is zero by convention.)
\end{theorem}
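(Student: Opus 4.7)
The plan is to reduce the $k$-point correlation to the two-point one via the moment-cumulant formula, and then control the higher cumulants diagrammatically. Writing $\chi(\,\cdot\,)$ for the joint cumulant, we have the elementary identity
\begin{equation*}
\E \prod_{i=1}^k X_i \;=\; \sum_{\pi \in \fra P(k)} \prod_{B \in \pi} \chi\p{X_i \col i \in B}\,,
\end{equation*}
where $\fra P(k)$ denotes the set of all set partitions of $\{1,\dots,k\}$. Since $\E X_i = 0$, only partitions whose blocks all have size $\geq 2$ contribute, and the pair partitions $\fra M(k) \subset \fra P(k)$ produce precisely the Gaussian Wick sum in \eqref{clt_statement}, since $\chi(X_i,X_j) = \E(X_i X_j)$ is controlled by Theorem~\ref{thm: main result}. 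It remains to estimate those partitions containing at least one block of size $\geq 3$.

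The central quantitative input is the cumulant bound
\begin{equation*}
\absb{\chi\p{X_i \col i \in B}} \;\leq\; C \pbb{\frac{R_4(\omega_0+\eta)}{(LW)^d}}^{\abs{B}/2} \pbb{\frac{W}{L}}^{d(\abs{B}-2)/2}
\end{equation*}
valid for every $B \subseteq \{1,\dots,k\}$ with $\abs{B} \geq 2$; for $\abs{B}=2$ this reduces (via $R_4 \geq R_2$) to Theorem~\ref{thm: main result}. Substituting into the moment-cumulant formula, any partition with at least one block of size $\geq 3$ acquires at least one relative factor $(W/L)^{d/2}$ compared to the pure-pair baseline, and since the total number of partitions is bounded in terms of $k$, summing their contributions yields the error term in \eqref{clt_statement}.

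To prove the cumulant bound I would rerun the diagrammatic machinery of \cite{EK3} and Section~\ref{sec: part 2} simultaneously for the $\abs{B}$ external traces $\frac{1}{N} \tr \phi_i^\eta(H/2-E_i)$, $i \in B$. The joint Wick contractions of the independent entries $A_{xy}$ across these traces partition the resulting diagrams into connected components, and by the standard combinatorial characterization of cumulants only diagrams in which all $\abs{B}$ traces lie in a single connected component contribute to $\chi\p{X_i \col i \in B}$. Inside such a cluster, the Chebyshev self-energy renormalization and the ladder resummation of Section~\ref{sec: part 2} apply essentially unchanged to the internal vertices; one then argues that each additional external trace beyond the first two forces a further propagator sum over the torus whose contribution is bounded by $(LW)^{-d/2}$ relative to the two-point leading diagrams, giving the per-trace gain $(W/L)^{d/2}$ after normalization.

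The main obstacle lies in this last step: the delicate cancellations behind the Chebyshev and ladder resummations were arranged for two-point diagrams, and their extension to diagrams with $\abs{B} \geq 3$ external traces requires checking that the cancellations still occur among the enlarged families of diagrams, and that the per-trace improvement $(W/L)^{d/2}$ can be extracted uniformly in $\abs{B}$ without deteriorating the $R_4$ factor. A secondary point of care is to ensure that the number of topologically distinct connected diagrams on $\abs{B}$ external traces grows only polynomially in $\abs{B}$, so that the combinatorial overhead does not spoil the estimate. Once this bookkeeping is in place, the theorem follows directly from the moment-cumulant formula.
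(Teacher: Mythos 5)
Your route is \emph{morally} the same as the paper's, but the paper deliberately phrases it differently and your version leaves the key quantitative step unproven.

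The paper does not invoke the moment--cumulant formula: it expands the centered $k$-point function directly, assigns to each partition $\Gamma \in \fra P(E(\cal C))$ the induced partition $P(\Gamma)\in\fra P(k)$ of the $k$ chains, and splits $\wt F^\eta(E_1,\dots,E_k)=\sum_{p\in\fra P^*(k)}\wt F^\eta_p$ according to $P(\Gamma)=p$. It even points out explicitly that had one worked with cumulants, $P(\Gamma)$ would have been forced to be trivial --- precisely the observation underlying your proposal. In fact, since $\wt F^\eta_p$ factorizes over the blocks of $p$ (the nonbacktracking indicator, the $D$-factor \eqref{def_D_clt}, the $\wt\gamma$-weights and the label sums all factorize, as the paper remarks), the per-block factor is, by M\"obius inversion, exactly the joint cumulant; so your decomposition and the paper's are really the same decomposition under two names. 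The non-trivial content is the per-block bound, and here you diverge.

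The paper obtains, for a block $b$ of size $\abs{b}\geq 3$, the crude power-counting bound
\begin{equation*}
\frac{N}{M}\,M^{-\abs{b}+2}\,\pb{R_2(\omega_0+\eta)\,M^{\mu}}^{\abs{b}-1},
\end{equation*}
which it reads off directly from \cite[Eq.~(4.57)]{EK3}; it then uses the refined $R_4$-bound only for blocks of size two, and observes that the worst non-pairing $p$ still beats the pairings by at least $(M/N)^{1/2}=(W/L)^{d/2}$. You instead posit the cleaner-looking cumulant bound $C\,(R_4/(LW)^d)^{\abs{B}/2}(W/L)^{d(\abs{B}-2)/2}$ and sketch a proof via connected diagrams. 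That bound is in fact \emph{weaker} than the paper's power-counting bound (one can check the paper's bound implies yours under the standing hypotheses $3\mu<1$, $R_2^2\leq M^\mu$), so it would suffice; and it does reproduce the error term in \eqref{clt_statement}. The gap, which you yourself flag, is the proof of this bound: you say each extra external trace ``forces a further propagator sum whose contribution is bounded by $(LW)^{-d/2}$'' and worry that ``the delicate cancellations behind the Chebyshev and ladder resummations were arranged for two-point diagrams.'' That worry is somewhat misplaced --- for blocks of size $\geq 3$ one only needs an upper bound, not asymptotics, and the crude power counting of \cite[Eq.~(4.57)]{EK3} (which already has the ladder resummations built in) suffices without any new cancellation argument. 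If you replace your heuristic ``further propagator sum'' step by a citation to that power-counting estimate, as the paper does, the proof closes; as written it is a sketch rather than a proof, and it is precisely at the step you identify that the argument is missing.
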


We remark that the error bound in \eqref{clt_statement} is not optimal and may be easily improved. We chose this form to obtain a simple expression that covers all cases of interest. Since the error term carries an extra power of $(W/L)^{d/2}$ as compared to the main term (see Theorems \ref{thm: Theta 2} and \ref{thm: Theta 1}), it is easy to see that in all regimes studied in Theorems \ref{thm: Theta 2} and \ref{thm: Theta 1} the error term in \eqref{clt_statement} is subleading provided $L \geq W^K$ for some large enough $K$. We also note that some of the energies $E_i$ in the theorem may coincide, in which case $\omega_0=0$.

A concrete corollary of Theorems \ref{thm:clt} and \ref{thm: Theta 2} is the following result. It says that at a fixed energy the rescaled finite-dimensional marginals of the process $(Y^\eta_\phi(E))_\phi$ converge to those of a Gaussian process with covariance $V_d(\cdot, \cdot)$.

\begin{corollary}[Convergence to a Gaussian process at a fixed energy]
Suppose that the assumptions of Theorem \ref{thm:clt} hold, and that in addition $E_1 = \cdots = E_k = E$ for some fixed $k$. 
 Let $d \leq 3$. For $i = 1, \dots, k$ define the random variable 
\begin{equation*}
\wt X_i \;\deq\; (LW)^{d/2} \pBB{\frac{(d + 2)^{d/2}}{2 \beta \pi^{2 + d} \nu(E)^4} \pbb{\frac{\eta}{\nu(E)}}^{d/2 - 2}}^{\!-1/2} \pBB{\frac{Y^\eta_{\phi_i}(E) - \E Y^\eta_{\phi_i}(E)}{\E Y^\eta_{\phi_i}(E)}}\,.
\end{equation*}
Then, as $W \to \infty$, 
 the random vector $(\wt X_1, \dots, \wt X_k)$ converges  in distribution 
to a mean-zero Gaussian vector with covariance matrix $(V_d(\phi_i, \phi_j))_{i,j = 1}^k$.
 A similar result holds for $d = 4$, whose details we omit.
\end{corollary}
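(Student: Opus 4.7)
The plan is to invoke the method of moments: since a centered multivariate Gaussian distribution is uniquely determined by its moments (which grow at most polynomially in the order), it suffices to show that every mixed moment of $(\wt X_1, \dots, \wt X_k)$ converges to the corresponding Gaussian moment. Concretely, I would show that for any $r \geq 1$ and any multi-index $(i_1, \dots, i_r) \in \{1, \dots, k\}^r$,
\begin{equation*}
\E \prod_{a = 1}^r \wt X_{i_a} \;\longrightarrow\; \sum_{p \in \fra M(r)} \prod_{\{a, b\} \in p} V_d(\phi_{i_a}, \phi_{i_b})
\end{equation*}
as $W \to \infty$, with the right-hand side set to zero for odd $r$.

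First I would compute the pairwise covariances. Writing $\sigma^2 \deq \frac{(d + 2)^{d/2}}{2 \beta \pi^{2 + d} \nu(E)^4} (\eta / \nu(E))^{d/2 - 2}$, so that $\wt X_i = (LW)^{d/2} \sigma^{-1} X_i$, Theorem \ref{thm: Theta 2} (i) applied to \eqref{EY_result} at $E_1 = E_2 = E$ yields
\begin{equation*}
\E (X_i X_j) \;=\; \frac{\sigma^2}{(LW)^d} \pB{V_d(\phi_i, \phi_j) + O(M^{-c_1}) + O\pb{M^{-c_0} R_2(\eta) / \sigma^2}} \,.
\end{equation*}
A short case-by-case check in $d = 1, 2, 3$ using $R_2(\eta) \leq 1 + \eta^{-1/2}$, $\sigma^2 \asymp \eta^{d/2 - 2}$, and $\eta = M^{-\rho}$ shows that $R_2(\eta) / \sigma^2 = O(\eta^{1/2}\abs{\log \eta})$, so the last error is also of order $M^{-c_1}$ after shrinking $c_1$ if necessary. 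Consequently $\E (\wt X_i \wt X_j) = V_d(\phi_i, \phi_j) + O(M^{-c_1})$.

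Next I would apply Theorem \ref{thm:clt} to the $r$-tuple $(\phi_{i_1}, \dots, \phi_{i_r})$ at the common energy $E$, so that $\omega_0 = 0$ and $R_4(\omega_0 + \eta) = R_4(\eta) \asymp \eta^{d/2 - 2}$ for $d \leq 3$. Multiplying both sides of \eqref{clt_statement} by the scaling constant $(LW)^{rd/2} \sigma^{-r}$ turns the leading Wick sum into $\sum_{p \in \fra M(r)} \prod_{\{a, b\} \in p} \E (\wt X_{i_a} \wt X_{i_b})$, which by the previous step tends to the right-hand side of the target display. The key observation is that the error factor $R_4(\eta)$ and the variance $\sigma^2$ both behave like $\eta^{d/2 - 2}$, so the rescaled error term simplifies to
\begin{equation*}
(W/L)^{d/2} \pb{R_4(\eta) / \sigma^2}^{r/2} \;\asymp\; (W/L)^{d/2} \;\leq\; W^{-d^2/12}\,,
\end{equation*}
which tends to zero by \eqref{LW_assump}.

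The only point requiring genuine care is the matching of the $\eta$-powers in the normalization $\sigma^{-r}$ and the error factor $R_4(\eta)^{r/2}$: this matching is not accidental but reflects the fact that Theorem \ref{thm:clt} was normalised so that each $X_i$ enters at its natural standard-deviation scale. Once this cancellation is noted, the uniform-in-$W$ boundedness of all moments (also supplied by Theorem \ref{thm:clt}) together with the moment-determinacy of the Gaussian law closes the argument via the method of moments, and the corollary follows.
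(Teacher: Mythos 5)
Your proposal is correct and takes the same approach the paper intends: the corollary is stated in the paper without a separate proof, as an immediate consequence of Theorem \ref{thm:clt} (Wick factorisation of all mixed moments) and Theorem \ref{thm: Theta 2} (i) (identification of the two-point covariance with $V_d$), followed by the method of moments using moment-determinacy of the Gaussian. Your careful verification that the rescaled error $(W/L)^{d/2}(R_4(\eta)/\sigma^2)^{r/2}\asymp(W/L)^{d/2}\to 0$ for $d\leq 3$, and that the $\Theta$-error $M^{-c_0}R_2(\eta)/\sigma^2$ vanishes, correctly supplies the details the paper leaves implicit.
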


\subsection{General band matrices} \label{sec:gen1}

The results of Sections \ref{sec:unimodular_results} and \ref{sec:clt_result} were stated for the unimodular band matrices defined in Section \ref{sec:setup1}. In this section we extend these results to a general class of band matrices. Roughly, we generalize the unimodular band matrices of Section \ref{sec:setup1} in two ways: the variances $S_{xy}$ may be given by an arbitrary profile on the scale $W$ (instead of the uniform profile of \eqref{step S}), and the law of $A_{xy}$ may be an arbitrary symmetric law with sufficient decay.

\subsubsection*{Definition of model}
As in Section \ref{sec:setup1}, we assume that the upper-triangular entries of $H = H^*$ are independent random variables with mean zero. We set
\begin{equation}\label{ST}
S_{xy} \;\deq\; \E \abs{H_{xy}}^2 \,, \qquad T_{xy} \;\deq\; \E H_{xy}^2\,.
\end{equation}
We assume that the law of $H_{xy}$ is symmetric, i.e.\ that $H_{xy}$ and $-H_{xy}$ have the same law. Moreover, we assume that (for nonzero $S_{xy}$) the entries $A_{xy} \deq (S_{xy})^{-1/2} H_{xy}$ have uniform subexponential decay, in the sense that
\begin{equation} \label{subexp_decay}
\P(\abs{A_{xy}} > \xi) \;\leq\; C \ee^{-\xi^c}
\end{equation}
for some constants $c,C > 0$ and for any $\xi>0$.

The remaining assumptions are on the deterministic profile matrices $S$ and $T$; roughly, we assume that $S$ and $T$ are translation invariant (in the sense of \eqref{transl-inv} below) and live on the scale $W$, but allow them to be otherwise arbitrary (up to the trivial constraint $\abs{T_{xy}} \leq S_{xy}$). We shall describe such a general profile using three fixed functions $f,g,h \col \R^d \to \R$. We require that the profile $S$ be given in terms of $f$ according to
\begin{equation} \label{general S}
S_{xy} \;=\; \frac{1}{M - 1} f \pbb{\frac{[x - y]_L}{W}}\,, \qquad M \;\deq\; \sum_{x \in \bb T} f \pbb{\frac{x}{W}}\,,
\end{equation}
where $[x]_L$ denotes the canonical representative of $x \in \Z^d$ in the torus $\bb T$. Similarly, we require that the profile $T$ be given in terms of $f$, $g$, and $h$ according to
\begin{equation} \label{general T}
T_{xy} \;=\;  \frac{1}{M - 1} f(z) \, [1 - \varphi h(z)]\, \ee^{\ii \lambda g(z)} \,, \qquad z \;\deq\; \frac{[x - y]_L}{W}\,.
\end{equation}
Here $\varphi, \lambda \in [0,1]$ are parameters that may depend on $L$. Note that \eqref{general S} and \eqref{general T} are the most general matrices $S$ and $T$ that are translation invariant, are given by a fixed profile on the scale $W$, and satisfy the trivial constraint $\abs{T_{xy}} \leq S_{xy}$.

We say that a function $f \col \R^d \to \R$ is \emph{piecewise $C^1$} if there exists a finite collection of disjoint open sets $U_1, \dots, U_n$ with piecewise $C^1$ boundaries,  whose closures cover $\R^d$, such that $f$ is $C^1$ on each $U_i$.  We also say that a function $f$ is \emph{piecewise $C^1$ with bounded derivatives} if it is piecewise $C^1$ and $\nabla f$ is bounded on each $U_i$. 
We always make the following assumptions on the functions $f$, $g$, and $h$. 
\begin{itemize}
\item[(A$f$)]
We assume that $f \col \R^d \to \R$ is an even, bounded, nonnegative, piecewise $C^1$ function, such that $f$ and $\abs{\nabla f}$ are integrable. We also assume that
\begin{equation} \label{moment of f}
\int_{\R^d} \dd x \, f(x) \, \abs{x}^{4 + c} \;<\; \infty
\end{equation}
for some $c > 0$.

Moreover, we introduce the covariance matrix of $f$,
\begin{equation} \label{definition of D0}
(D_0)_{ij} \;\deq\; \frac{1}{2} \int_{\R^d} x_i x_j f(x) \, \dd x\,,
\end{equation}
and assume that
\begin{equation} \label{D_bounded}
c \;\leq\; D_0 \;\leq\; C 
\end{equation}
in the sense of quadratic forms, for some positive constants $c$ and $C$.

\item[(A$g$)]
We assume that $g \col \R^d \to \R$ is an odd, bounded, piecewise $C^1$ function with bounded derivatives. 
We also assume that $g$ is not equal to a linear function on the support of $f$.
\item[(A$h$)]
We assume that $h \col \R^d \to \R$ is an even, piecewise $C^1$ function with bounded
derivatives, satisfying $0\leq h\leq 1$.
We also assume that $h$ is not identically zero on the support of $f$.
\end{itemize}

\begin{definition}[General band matrix] \label{def:gen_band}
We call the matrix $H$ a \emph{general band matrix} if it satisfies \eqref{ST}--\eqref{general T} for $f$, $g$, and $h$ satisfying (A$f$), (A$g$), and (A$h$) respectively.
\end{definition}

Note that under these assumptions we have \eqref{link between M and W}. Throughout the following we regard $f$, $g$, and $h$ as fixed, and do not track the dependence of the errors on them. The smoothness assumptions on $f$, $g$, and $h$ are technical and made for convenience. All other assumptions are natural: That $f$ and $h$ are even and $g$ is odd is clearly necessary since $H$ is Hermitian. 
The condition \eqref{D_bounded} guarantees that the system exhibits a non-degenerate diffusion.  
The condition \eqref{moment of f} is necessary only for $d=2$; in other dimensions, the
finiteness of $(2+c)$-th moment would be sufficient. (For $d=2$ the leading contribution arises from a fourth order Taylor expansion; hence the higher order moment assumption.) All of these assumptions on the decay of $f$ are made for conveniece. Indeed, our method may easily also handle heavy-tailed $f$, in which case the behaviour of $\Theta$ is different. (See Section \ref{sec: heavy tail} below for more details.)

Finally, the assumption that $g$ is not a linear function on the support of $f$ essentially amounts to excluding a trivial gauge transformation.
 Indeed, if $g$ were linear on the support of $f$, then (neglecting unimportant boundary issues on $\bb T$) the effect of the phase in \eqref{general T} simply amounts to a conjugation of $H$ with a unitary matrix. Hence, the final sentences of (A$g$) and (A$h$) are not restrictive; they simply fix  an ambiguity in the definition of the general band matrices. 
 Note that for $\varphi = 0$ (respectively $\lambda = 0$) the choice of $h$ (respectively $g$) is immaterial.

Note that by definition  $S$ and $T$ are translation invariant, 
 $S$ is real symmetric, and $T$ is Hermitian:
\begin{equation} \label{transl-inv}
S_{xy} \;=\; S_{x-y \, 0} \;=\; S_{yx} \;=\; \ol{S_{xy}}\,, \qquad  T_{xy} \;=\; T_{x - y \,  0} \;=\; \ol{T_{yx}}\,.
\end{equation}
Definition \ref{def:gen_band}  encompasses several important examples:
\begin{itemize}
\item[(a)]
\emph{The  complex Hermitian case ($\beta = 2$)}, where $T = 0$. 
\item[(b)]
\emph{The real symmetric case ($\beta = 1$)}, where $T = S$.
\item[(c)]
\emph{The rotated real symmetric case}, where $h = 0$. 
\end{itemize}
In addition, by varying the parameters $\varphi$ and $\lambda$ we may interpolate between these, and other, models; in particular, we may investigate the transition from $\beta = 1$ to $\beta = 2$ in the behaviour of the mesoscopic density statistics.

\subsubsection*{Results for general band matrices}

In the general band matrix model  of Definition \ref{def:gen_band}, for technical reasons outlined in Section \ref{sec:general_proof} below, we cannot control the errors in \eqref{EY_result} for arbitrary $\rho < 1/3$. Instead, we require the condition $\rho < c$ for some positive universal constant $c > 0$.

\begin{theorem}
If $H$ is the general band matrix model from Definition \ref{def:gen_band},  Theorems \ref{thm: main result} and \ref{thm:clt} are valid provided one replaces the assumption $\rho \in (0,1/3)$ with $\rho \in (0,c)$ for some universal constant $c > 0$. (One can take $c = 1/7$.)
\end{theorem}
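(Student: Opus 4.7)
My plan is to follow the strategy foreshadowed in the paragraph after Definition \ref{def:gen_band} and, more explicitly, in the third-to-last paragraph of the introduction: adapt the two resummation procedures of \cite{EK3} and of Sections \ref{sec_41}--\ref{sec: part 2} to the general band matrix model, keeping track of the extra terms that arise because $\abs{H_{xy}}$ is no longer deterministic. The first step is to replace the basic algebraic identity \eqref{H^n and U_n} with its general counterpart \eqref{H^n and U_n general}. In the unimodular case, $H^n$ decomposes cleanly into nonbacktracking Chebyshev polynomials $U_n(H)$, which eliminates tadpole (self-contraction) terms from the start. In the general case the same expansion yields additional contributions in which an internal edge has been contracted onto itself, producing either a factor $S_{xy}$ or a factor $T_{xy}$ at that vertex, together with a shift in the polynomial degrees. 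I would carry this out exactly as in \cite{EK2}, where the analogous extension was performed for the quantum diffusion problem.

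The second step is the diagrammatic bookkeeping. Each backtracking correction should be viewed as a new decoration attached to a vertex of the graphs introduced in \cite{EK3}. I would enlarge the class of diagrams to include these decorated versions, and then redo the classification into leading and subleading diagrams. Crucially, I expect the leading diagrams to be structurally unchanged, because the decorations produce a relative factor of $M^{-1}$ without a compensating vertex summation; thus the eight distinguished diagrams identified in \cite{EK3} still capture the main contribution and, after a direct computation, depend only on the covariance matrix $D_0$ from \eqref{definition of D0}, matching the universality statements made in Section \ref{sec:gen1}. In particular, the explicit functions $V_d$ and the constants $K_d$ of Theorems \ref{thm: Theta 2} and \ref{thm: Theta 1} are recovered after a reparametrization that absorbs $D_0$.

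The main obstacle will be controlling the new subleading diagrams containing backtracking decorations. Since the ladder resummation of Section \ref{sec: part 2} relies on exact algebraic cancellations in the Chebyshev expansion, I would need to re-examine how each decoration interacts with that resummation and verify that the oscillatory cancellations survive. Following the combinatorial strategy of \cite{EK2}, one classifies decorations by their type ($S$-decoration or $T$-decoration) and by the position at which they are inserted, and bounds each class using the size estimates on $S$, $T$ and on products of propagators. Because the worst-case diagrams contain a bounded number of decorations but each decoration tightens the power-counting inequalities that determine the admissible range of $\eta$, the argument of \cite[Section 4.4]{EK3} now requires $\rho$ to be strictly less than an explicit constant. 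A careful bookkeeping yields $\rho < 1/7$; I do not expect this to be optimal, but it suffices for the stated theorem.

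Finally, for the extension of Theorem \ref{thm:clt}, I would not need any new ideas: the factorization of higher-order correlations into pair contractions is a combinatorial statement about diagrams, and once the diagrammatic estimates above have been established for general $H$, the proof of \eqref{clt_statement} proceeds exactly as in the unimodular case, with the same pairing structure $\fra M(k)$ and the same error term, valid now under $\rho \in (0,1/7)$.
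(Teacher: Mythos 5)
Your proposal takes essentially the same route as the paper's Section \ref{sec:general_proof}: start from the corrected Chebyshev identity \eqref{H^n and U_n general}, treat the $\Phi_2$/$\Phi_3$ error terms as decorations on the diagrammatic graphs, follow \cite{EK2} for the resulting bookkeeping, re-run the power counting to obtain $\rho < 1/7$, and note that Theorem \ref{thm:clt} then follows by the same combinatorics. That plan is correct.

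One conceptual conflation is worth flagging. You classify decorations as ``$S$-decoration or $T$-decoration,'' but the two decoration types produced by \eqref{H^n and U_n general} are $\Phi_2$ (the centered second moment $|H_{xy}|^2-S_{xy}$, whose contribution is controlled because it has zero expectation) and $\Phi_3$ (proportional to $|H_{xy}|^2 H_{xy}$, controlled via the extra factor $M^{-1}$). The $T$-matrix enters the argument through an entirely separate mechanism: because $\E H_{xy}^2 = T_{xy}$ can be nonzero, twisted bridges contribute, and the rungs of a twisted antiladder are $T$-entries rather than $S$-entries. This is not a decoration; it is the $\beta\neq 2$ generalization already present in Section \ref{sec:sym}, and for it one needs the estimates of Proposition \ref{prop: bounds on S} to hold for $T$ (Proposition \ref{prop: bounds on T}). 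A consequence of that mechanism which your proposal misses is that the set of leading dumbbell skeletons doubles to sixteen: the $D_1,\dots,D_8$ of Figure \ref{fig: dumbbell} together with the $\wt D_1,\dots,\wt D_8$ of Figure \ref{fig:antidumbbell}, so it is not correct that ``the eight distinguished diagrams still capture the main contribution.'' Likewise the leading term depends not only on $D_0$ but also on the parameter $\sigma$ of \eqref{def_sigma} through these $T$-rungs. Since the theorem at hand asserts only the validity of the error estimate (the explicit asymptotics of $\Theta$ are Theorems \ref{thm: Theta 2 gen} and \ref{thm: Theta 1 gen}, not part of this statement), these slips do not invalidate your strategy, but the twisted dumbbell skeletons must be incorporated before $\Theta$ can even be defined via \eqref{def:theta_gen}.
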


The rest of this subsection is devoted to the asymptotics of the leading term $\Theta$, which has a more complicated behaviour than in Section \ref{sec:unimodular_results} since it depends on the parameters $f$, $g$, $h$, $\varphi$, and $\lambda$. This dependence on the functions $f$, $g$, and $h$ is encoded by the two fixed coefficients
\begin{equation} \label{def_Delta_0}
\Delta_0 \;\deq\; \inf_{q \in \R^d} \frac{1}{2} \int \pb{x \cdot q - g(x)}^2 \, f(x) \, \dd x \;=\; \frac{1}{2} \int_{\R^d} g(x)^2 f(x) \, \dd x - \absbb{\frac{1}{2} \int_{\R^d} D_0^{-1/2} x g(x) f(x) \, \dd x}^2
\end{equation}
and
\begin{equation} \label{def_Upsilon_0}
\Upsilon_0 \;\deq\; \int_{\R^d} h(x) f(x) \, \dd x \,.
\end{equation}
By assumption on $g$ and $h$, we have $\Delta_0 > 0$ and $\Upsilon_0 > 0$.
 The dependence of $\Theta$ on all of the quantities $f$, $g$, $h$, $\varphi$, and $\lambda$ takes place via the single quantity
\begin{equation} \label{def_sigma}
\sigma \;\deq\; \Delta_0 \lambda^2 + \Upsilon_0 \varphi\,,
\end{equation}
which may depend on $L$ through $\lambda$ and $\varphi$.

For $d \leq 3$ we generalize the definition of the quadratic form $V_d$ from \eqref{def_V_d} by defining 
\begin{equation} \label{def_V_d_a}
V_d(\phi_1, \phi_2; a) \;\deq\; \int_\R \dd t \, \abs{t}^{1 - d/2} \, \ee^{- a \abs{t}} \, \ol {\wh \phi_1(t)} \, \wh \phi_2(t)
\end{equation}
for $a \geq 0$. Note that $V_d(\phi_1, \phi_2; 0) = V_d(\phi_1, \phi_2)$ and $\lim_{a\to \infty} V_d(\phi_1, \phi_2; a) =0$.
The following result generalizes Theorem \ref{thm: Theta 2} to the general band matrix model.

\begin{theorem}[The leading term $\Theta$ for $\omega = 0$] \label{thm: Theta 2 gen}
Suppose that $H$ satisfies Definition \ref{def:gen_band}.  Suppose in addition that $\omega = 0$. Then there exists a constant $c_1 > 0$ such that the following holds for $E = E_1 = E_2$  satisfying \eqref{D leq kappa}.
\begin{enumerate}
\item
For $d =1,2,3$ we have
\begin{multline} \label{Theta3D0_gen}
\Theta_{\phi_1,\phi_2}^\eta(E,E) \;=\; \frac{1}{2^{2 + d/2} \pi^{2 + d} \nu(E)^4 \sqrt{\det D_0}} \pbb{\frac{\eta}{\nu(E)}}^{d/2 - 2} 
\\
\times \pbb{V_d(\phi_1, \phi_2) + V_d\pbb{\phi_1, \phi_2; \frac{2 \sigma}{\pi \nu(E) \eta}} + O(M^{-c_1})}\,.
\end{multline}
\item
For $d = 4$ we have
\begin{equation} \label{Theta4D0_gen}
\Theta_{\phi_1,\phi_2}^\eta(E,E) \;=\; \frac{1}{16 \pi^6 \nu(E)^4 \sqrt{\det D_0}}
 \pB{V_4(\phi_1, \phi_2) \pb{\abs{\log \eta} + \min\{\abs{\log \eta}, \abs{\log \sigma}\}} + O(1)} \,.
\end{equation}
\end{enumerate}
\end{theorem}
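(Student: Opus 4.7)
The plan is to start from the explicit representation of $\Theta^\eta_{\phi_1,\phi_2}(E,E)$ afforded by Theorem~\ref{thm: main result} via \eqref{def:theta} and \eqref{VDdef}, and then carry out a small-momentum asymptotic analysis in Fourier space. In the general band matrix model, $T$ is neither equal to $S$ (the $\beta = 1$ case treated in Theorem~\ref{thm: Theta 2}) nor zero (the $\beta = 2$ case), so $\Theta$ naturally splits into a \emph{diffuson} contribution built from $\wh S$ and a \emph{cooperon} contribution built from $\wh T$, plus oscillatory cross terms. The key input is the small-$p$ Taylor expansion $\wh S(p) = 1 - (p^\top D_0 p)/(M - 1) + O(\abs{p}^4/W^2)$, while the new feature in the general case is $\wh T(p) = 1 - \sigma/(M - 1) - (p^\top D_0 p)/(M - 1) + \cdots$; the additional constant term $\sigma$ arises precisely from the complex phase $\ee^{\ii \lambda g(x/W)}$ and the damping $1 - \varphi h(x/W)$ in \eqref{general T}.

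First I would justify the mass identification by expanding $\wh T(p)$ about $p = 0$ and absorbing any spurious imaginary linear term through a gauge transformation; this is exactly where the infimum in \eqref{def_Delta_0} appears, since a shift $p \mapsto p + q$ can eliminate the linear contribution coming from $\int x g(x) f(x) \dd x$ but leaves the residual quadratic variance $\Delta_0$. The term $\Upsilon_0 \varphi$ originates from the analogous expansion of $1 - \varphi h(x/W)$. After this reduction, the effective diffuson and cooperon propagators become $(\alpha + p^\top D_0 p)^{-1}$ and $(\alpha + \sigma + p^\top D_0 p)^{-1}$ respectively, where $\alpha$ is the spectral parameter proportional to $\ii \eta$.

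Second, for $d = 1, 2, 3$ I would reduce $\Theta$ to an absolutely convergent integral of the form
\begin{equation*}
C_d \sum_{m \in \{0, \sigma\}} \int_{\R^d} \dd p \, \absb{\alpha + m + p^\top D_0 p}^{-2} F_{\phi_1, \phi_2}(p)\,,
\end{equation*}
after extending the momentum domain from $[-\pi, \pi]^d$ to $\R^d$ (the tail error being controlled using \eqref{moment of f}). Rescaling $p \mapsto \sqrt{\eta/\nu(E)}\, p$ and diagonalizing $D_0$ produces the prefactor $\eta^{d/2 - 2}/\sqrt{\det D_0}$, while the radial integration converts the squared propagator into a time-domain kernel $\ee^{-(\alpha + m) t}$. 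Inserting the Fourier representations of $\phi_1$ and $\phi_2$ then yields precisely $V_d(\phi_1, \phi_2; 2 m/(\pi \nu(E) \eta))$ for the $m$-th term, producing \eqref{Theta3D0_gen}. For $d = 4$ the corresponding integrals diverge logarithmically; a UV cutoff at the lattice scale and an IR cutoff at scale $\max(\eta, m)$ give $\abs{\log \eta}$ for the diffuson and $\abs{\log \max(\eta, \sigma)} = \min\{\abs{\log \eta}, \abs{\log \sigma}\}$ for the cooperon, yielding \eqref{Theta4D0_gen}.

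The main obstacle is two-fold. First, one must establish the clean decomposition $\Theta = \Theta_{\mathrm{diff}} + \Theta_{\mathrm{coop}} + \text{(subleading)}$ and show that the diffuson-cooperon cross terms together with the higher-order Taylor remainders in $\wh T$ are genuinely negligible; the finite $(4 + c)$-th moment assumption \eqref{moment of f} enters critically in controlling the quartic remainder for $d = 4$. Second, the error estimates must be uniform across the crossover $\sigma \asymp \eta$, so that a single formula interpolates smoothly between the massless regime $\sigma \ll \eta$ (where the cooperon doubles the diffuson contribution, reproducing the $\beta = 1$ prefactor of Theorem~\ref{thm: Theta 2}) and the heavily damped regime $\sigma \gg \eta$ (where the cooperon is suppressed and one recovers the $\beta = 2$ answer).
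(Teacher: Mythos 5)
Your proposal is essentially the paper's own route: decompose $\Theta$ into a diffuson piece coming from the straight-bridge dumbbells (which produce $\tr S^n$) and a cooperon piece coming from the twisted-bridge dumbbells (which produce $\tr T^n$, cf.\ $\wt{\cal V}_{\rm main}$ in \eqref{V_main_wt}), expand $\wh T_W(q)$ at small momentum to extract the mass $\wt\sigma = \Delta\lambda^2 + \Upsilon\varphi$ via the momentum shift $\wt q = q - \lambda D^{-1}w$ (this is where the infimum in \eqref{def_Delta_0} materializes, exactly as you describe), and then use the Gaussian rescaling plus the time-domain identity \eqref{FT_identity} to arrive at $V_d(\phi_1,\phi_2;2m/(\pi\nu(E)\eta))$ for $m\in\{0,\sigma\}$; this is precisely Propositions \ref{prop: main T}, \ref{prop: bounds on T}, and \ref{prop:T_int}. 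The only cosmetic difference is that the paper organizes the split not as a generic diffuson/cooperon decomposition with cross terms to be estimated, but combinatorially through the disjoint families $D_1,\dots,D_8$ (straight) and $\wt D_1,\dots,\wt D_8$ (twisted), so there are no cross terms at the level of dumbbell skeletons; the anomalous contributions you would call cross terms appear instead as the $\cal V''_{\rm main}$ pieces and are killed because they avoid the small-momentum singularity.
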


In particular, for $\sigma \ll \eta$ we recover the results of Theorem \ref{thm: Theta 2} for $\beta = 1$, and for $\sigma \gg \eta$ the results of Theorem \ref{thm: Theta 2} for $\beta = 2$. 
 Here we used that in the case \eqref{step S} we have the explicit expression
\begin{equation} \label{D_const}
D_0 \;=\; \frac{1}{2 (d+2)}\,.
\end{equation}

In order to describe the behaviour of $\Theta$ in the regime $\omega \gg \eta$, 
for $d = 1,2,3$ we introduce the constants
\begin{equation} \label{def_B_d}
B_d \;\deq\; \int_{\R^d} \dd x \, \frac{1}{(1 +  \abs{x}^2)^2}\,,
\end{equation}
so that $K_d = 2 B_d \re \ii^{d/2 - 2}$;
explicitly,
\begin{equation*} 
B_1 \;=\; \frac{\pi}{2} \,, \qquad B_2 \;=\; \pi \,, \qquad B_3 \;=\; \pi^2\,.
\end{equation*}
In addition, for $d = 2$ we also introduce the quantity
\begin{equation} \label{def_Q0}
Q_0 \;\deq\; \frac{1}{32} \int_{\R^2} \absb{D_0^{-1/2} x}^4 \, f(x) \, \dd x\,,
\end{equation}
which depends on the fourth moments $f$.

The following result generalizes Theorem \ref{thm: Theta 1} to the general band matrix model of Definition \ref{def:gen_band}.
\begin{theorem}[The leading term $\Theta$ in the regime $\omega \gg \eta$] \label{thm: Theta 1 gen}
Suppose that $H$ satisfies Definition \ref{def:gen_band}, and that \eqref{eta_Delta_2} holds for some $\tau > 0$. Then there exists a constant $c_1 > 0$ such that the following holds for $E_1,E_2$ satisfying \eqref{D leq kappa} for some small enough $c_* > 0$.

\begin{enumerate}
\item
For $d = 1, 2, 3$ we have
\begin{multline} \label{Theta_13_gen}
\Theta_{\phi_1,\phi_2}^\eta(E_1,E_2) \;=\; \frac{1}{2^{2 + d/2} \pi^{2 + 3 d / 2} \nu(E)^4 \sqrt{\det D_0}}
\pbb{\frac{\omega}{\nu(E)}}^{d/2 - 2}
\\
\times \pbb{K_d + 2 B_d \re \pbb{\ii + \frac{\pi \nu(E) \sigma}{2\omega}}^{d/2 - 2} + O \pb{\sqrt{\omega} + M^{-c_1}}}\,,
\end{multline}
where the fractional power is taken to be holomorphic in the right half-plane.
\item
For $d = 2$ and small $\sigma$, \eqref{Theta_13_gen} does not identify 
the leading term since  $K_2=0$. The leading nonzero correction to the
vanishing leading term is
\begin{multline} \label{Theta_2_gen_1}
\Theta_{\phi_1,\phi_2}^\eta(E_1,E_2) \;=\; \frac{1}{2 \pi^5 \nu(E)^4 \sqrt{\det D_0}}
\\
\times \pbb{ \frac{\pi\nu(E) [4 \eta  + \pi \nu(E) \sigma ]} {4 \omega^2 + (4 \eta + \pi \nu(E) \sigma)^2} +
\frac{\pi\eta \nu(E)} {\omega^2 + 4 \eta^2}
+ \p{Q_0 - 1} \pb{\abs{\log \omega} + \min\{\abs{\log \omega}, \abs{\log \sigma}\}} + O(1)}
\end{multline} 
in the  case \textbf{(C1)} 
and 
\begin{multline} \label{Theta_2_gen_2}
\Theta_{\phi_1,\phi_2}^\eta(E_1,E_2) \;=\; \frac{1}{2 \pi^5 \nu(E)^4 \sqrt{\det D_0}}
\\
\times \pbb{\frac{\pi^2 \nu(E)^2 \sigma} {4 \omega^2 + (\pi \nu(E) \sigma)^2} +
 \p{Q_0 - 1} \pb{\abs{\log \omega} + \min\{\abs{\log \omega}, \abs{\log \sigma}\}} + O(1)}
\end{multline} 
in the case \textbf{(C2)}. (Note that \eqref{Theta_2_gen_2} is obtained from \eqref{Theta_2_gen_1} by replacing $\eta$ with $0$.) 
\item
For $d = 4$ we have
\begin{equation} \label{Theta_4_gen}
\Theta_{\phi_1,\phi_2}^\eta(E_1, E_2) \;=\; \frac{1}{8 \pi^6 \nu(E)^4 \sqrt{\det D_0}}
 \pB{\abs{\log \omega} + \min\{\abs{\log \omega}, \abs{\log \sigma}\}  + O(1)}\,.
\end{equation}
\end{enumerate}
\end{theorem}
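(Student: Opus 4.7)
The plan is to start from the explicit representation of $\Theta^\eta_{\phi_1,\phi_2}(E_1,E_2)$ identified in Theorem \ref{thm: main result} via \eqref{def:theta} and \eqref{VDdef}, and carry out the asymptotic analysis in the regime $\omega \gg \eta$, paralleling the argument used for the unimodular case (Theorem \ref{thm: Theta 1}) but tracking the additional dependence on $f$, $g$, $h$. After passing to Fourier variables on $\bb T$, the leading diagrams produce a momentum-space integral over a propagator whose denominator, in the diffusive scaling, behaves like $-\ii\omega + \pi\nu(E)\,(q\cdot D_0 q) + \sigma$, where $\sigma = \Delta_0\lambda^2 + \Upsilon_0\varphi$ captures the symmetry-breaking scale (coming from the $T$-block of the self-consistent expansion, i.e.\ the analogue of the $\beta=1$ propagator present in the general model). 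The extra $\sigma$ term is the only new structural feature compared with Theorem \ref{thm: Theta 1}; everything else is a genuine generalization where the scalar Laplacian symbol is replaced by the quadratic form $q \cdot D_0 q$.

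For $d=1,2,3$, after a Gaussian rescaling $q \mapsto D_0^{-1/2} q$ that produces the factor $(\det D_0)^{-1/2}$, the leading contribution reduces to
\[
2\re \int_{\R^d} \frac{\dd x}{(\ii + \abs{x}^2)^{2}} \quad+\quad 2\re \int_{\R^d} \frac{\dd x}{(\ii + \abs{x}^2 + \pi\nu(E)\sigma/(2\omega))^{2}},
\]
after normalizing by appropriate powers of $\omega/\nu(E)$; the first integral reproduces $K_d$ and the second yields the $B_d$-term with shifted argument, giving \eqref{Theta_13_gen}. The correction $O(\sqrt{\omega}+M^{-c_1})$ arises from bounding the replacement of the lattice sum by the continuum integral (using the Poisson summation and the smoothness assumptions on $f$), from the higher-order terms in the Taylor expansion of the $f$-symbol at small momentum (here \eqref{D_bounded} and the $(4+c)$-th moment assumption on $f$ are essential), and from the subleading diagrams controlled in \cite{EK3}. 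For $d=4$ the same integral is logarithmically divergent at infinity and one extracts $\abs{\log\omega}+\min\{\abs{\log\omega},\abs{\log\sigma}\}$ by cutting off at the lattice scale, yielding \eqref{Theta_4_gen}.

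The main obstacle is case (ii), namely $d=2$ with $\sigma$ small, where the leading constant $K_2=0$ and the $B_2$ term also contributes only to the vanishing order. One has to extract the next term, which requires expanding the $f$-symbol of the diffusion propagator beyond the quadratic order: writing the symbol as $1 - (q \cdot D_0 q) + \tfrac{1}{2}(\text{quartic form in } q) + \cdots$, the quartic correction generates precisely the coefficient $Q_0 - 1$ of \eqref{def_Q0}, where the $-1$ comes from the square of the quadratic term in the expansion of $(1 - \text{symbol})^{-1}$. This fourth-moment expansion is exactly why the assumption \eqref{moment of f} is sharpened to a $(4+c)$-th moment in $d=2$. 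After performing this expansion one evaluates the resulting integrals explicitly (separating the real-axis contribution that reproduces the $\eta$ and $\sigma$-resonance terms $\tfrac{\pi\nu(E)[4\eta+\pi\nu(E)\sigma]}{4\omega^2+(4\eta+\pi\nu(E)\sigma)^2}$, etc.), which yields \eqref{Theta_2_gen_1}. The passage to \eqref{Theta_2_gen_2} follows, as noted in the statement, by the fact that the Cauchy kernel \textbf{(C1)} introduces the extra on-shell term $\frac{\pi\eta\nu(E)}{\omega^2+4\eta^2}$ through its non-decaying Fourier tail, whereas under \textbf{(C2)} the test functions $\wh\phi_i$ decay rapidly and this contribution disappears.

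Finally, to obtain the claimed universality statements one checks that the diffusion constant $D_0$, the single scalar $\sigma$, and (only for $d=2$) the fourth moment $Q_0$ are the only parameters of $f$, $g$, $h$, $\varphi$, $\lambda$ that enter the leading order; this is essentially a bookkeeping verification after the above Taylor expansion, and it reproduces the unimodular formulas of Theorem \ref{thm: Theta 1} when one specializes to \eqref{step S} (so that \eqref{D_const} holds), $\varphi=0$ and either $\lambda=0$ ($\beta=2$, $\sigma=0$) or $\lambda=1$ ($\beta=1$, $\sigma=\Delta_0$ large).
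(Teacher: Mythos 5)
Your proposal matches the paper's approach in all essentials: the leading contribution is a sum of two momentum-space integrals, one unshifted (yielding $K_d$, from the straight dumbbell skeletons $D_1,\dots,D_8$ via $\tr S^n$) and one with a $\sigma$-shift (yielding $2B_d\re(\ii+\pi\nu\sigma/(2\omega))^{d/2-2}$, from the twisted dumbbell skeletons $\wt D_1,\dots,\wt D_8$ via $\tr T^n$); the $\sqrt{\det D_0}$ comes from the Gaussian rescaling of the quadratic form $q\cdot D_0 q$; and for $d=2$ the quartic Taylor correction to the symbol produces the $Q_0-1$ coefficient, exactly as in the two-term asymptotics of Proposition~\ref{prop: bounds on S}(iii) and its $T$-analogue. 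Your closed-form expression for the sum of the two integrals and your identification of the parameter $\sigma$ as the unique scalar encoding $g,h,\varphi,\lambda$ are correct.

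Two minor imprecisions worth flagging. First, the $\sigma$-shift is not quite a shift of a single propagator denominator by the constant $\sigma$; in the paper it arises via Lemma~\ref{lem:That}, which shows $\wh T_W(q) = \cal I - \wt\sigma - (q-\lambda D^{-1}w)\cdot D(q-\lambda D^{-1}w) + \cdots$, so that after shifting the momentum origin by $\lambda D^{-1}w$ (a shift that one must verify is harmless for the trace, as in the proof of Proposition~\ref{prop: bounds on T}) the effective replacement is $1-\alpha \mapsto 1-\alpha + \wt\sigma$ in the $T$-trace alone; your phrasing suggests a single shifted propagator, but your displayed sum of two integrals shows you have the right picture. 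Second, your explanation of the ``$-1$'' in $Q_0-1$ is slightly off: as in \eqref{int_q_d2}, it does not come from squaring the quadratic term in the expansion of the resolvent, but from the numerator $\wh S_W(q)\approx 1 - q\cdot Dq$ paired with the leading quadratic denominator; the quartic correction $\cal Q(q)$ in the denominator supplies the $+Q_0$ piece. Neither issue affects the correctness of the result you derive.
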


Similarly to the case $\omega = 0$, we note that in the case $\omega \gg \eta$ and $d = 1,3,4$ we have a transition from the case $\beta = 1$ to the case $\beta = 2$ depending on whether $\sigma \ll \omega$ or $\sigma \gg \omega$. For $d = 4$ this follows easily from \eqref{Theta_4_gen}, and for $d = 1,3$ from \eqref{Theta_13_gen} combined with 
$K_d = 2 B_d \re \ii^{d/2 - 2}$.
Here we used that in the case \eqref{step S} we have the explicit expression
\begin{equation} \label{Q_const}
\qquad Q_0 \;=\; \frac{2}{3}\,.
\end{equation}

Owing to $K_2=0$, the case $d=2$ is special; the correlation is determined by higher-order corrections to the algebraic cancellation in the integral \eqref{def_K13} for $d=2$.
 Similarly to the results in (ii) of Theorem~\ref{thm: Theta 1}, the first nonvanishing terms have a different structure. 
For definiteness, we focus on the case \textbf{(C2)}, i.e.\ \eqref{Theta_2_gen_2}. 
 The transition from $\beta = 1$ (for $\sigma \ll \omega^2 \abs{\log \omega}$)
 to $\beta = 2$ (for $\sigma \gg \abs{\log \omega}^{-1}$)
passes through a region of much stronger correlations, since in the regime  
$\omega^2 \abs{\log \omega} \ll \sigma \ll \abs{\log \omega}^{-1}$  the first term
in  \eqref{Theta_2_gen_2}  dominates over the logarithmic terms.

Interestingly, since all of the results in Theorems \ref{thm: Theta 2 gen} and \ref{thm: Theta 1 gen} depend on the parameters $\varphi$ and $\lambda$ only through their combination $\sigma$, we find that, for the purposes
of  mesoscopic statistics, decreasing the magnitude of $\E A_{xy}^2$ is equivalent to rotating $\E A_{xy}^2$ in the complex plane. In particular, either procedure may be used to probe the transition from $\beta = 1$ to $\beta = 2$.

\subsection{A remark on heavy-tailed band profiles} \label{sec: heavy tail}

The moment assumption \eqref{moment of f} on $f$ is not fundamental for our method. We imposed it to simplify the presentation of our results, since the behaviour of $\Theta$ for heavy-tailed $f$ is different. To illustrate this difference, we consider the case $d = 1$ and $f(x) = \frac{1}{\pi} \frac{1}{x^2 + 1}$. Then \eqref{EY_result} holds, whereby the leading term $\Theta$ is given for $\omega = 0$ by
\begin{equation} \label{heavy_tail}
\Theta_{\phi_1,\phi_2}^\eta(E,E) \;=\; \frac{1}{\beta \pi^4 \nu(E)^3} \frac{1}{\eta} \pb{V_2(\phi_1, \phi_2) + O(M^{-c_1})}\,,
\end{equation}
and for $\omega \gg \eta$ in the case \textbf{(C2)} by
\begin{equation} \label{heavy_tail2}
\Theta_{\phi_1,\phi_2}^\eta(E_1,E_2) \;=\; \frac{1}{2 \beta \pi^5 \nu(E)^4} \pb{- \abs{\log \omega} + O(1)}\,.
\end{equation}
We omit the proofs, which are identical to those of Theorems \ref{thm: Theta 2 gen} and \ref{thm: Theta 1 gen}, up to the explicit
calculation of the leading term.
Similar results hold for higher dimensions and for different heavy-tailed profiles $f$.

We remark that the one-dimensional band matrix model with a variance profile decaying as $f(x)\sim x^{-2}$ is conjectured in the physics literature \cite{MFDQS, ME} to be \emph{critical} in the sense that it describes a disordered quantum system at the Anderson (metal-insulator) transition. The other conjectured critical band matrix model is obtained by setting $d = 2$ and choosing $f$ with rapid decay (i.e.\ with light tail); this model was extensively studied in Sections \ref{sec:unimodular_results}--\ref{sec:gen1}. 

Comparing \eqref{heavy_tail} and \eqref{Theta3D0} for $d=2$, as well as \eqref{heavy_tail2} and \eqref{C2d2}, we note that both the one- and two-dimensional critical band matrix models have the same mesoscopic density fluctuations. 
 Defining $\cal N(I)$ as the (smoothed) number of eigenvalues in the mesoscopic interval $I$, we find in both cases that
\begin{equation} \label{compressibility}
\var(\cal N(I)) \;\approx\; C_d W^{-d} \, \E (\cal N(I))
\end{equation}
for some constants $C_1$ and $C_2$, assuming $\abs{I} \gg W^{- d  /3}$. (See \eqref{EY_result}, \eqref{Theta3D0}, \eqref{heavy_tail}, 
and \eqref{semicir}.) In particular,  the variance of $\cal N(I)$ is proportional to the length of $I$. This suggests weak correlations of $\cal N(I_1)$ and $\cal N(I_2)$ for disjoint $I_1$ and $I_2$, which was indeed established in \eqref{heavy_tail2} and \eqref{C2d2} for the one- and two-dimensional critical band matrices respectively.

The behaviour \eqref{compressibility} had been previously established in the physics literature; see e.g.\ \cite{CKL}. 
 Moreover, it was conjectured in \cite{CKL} that the proportionality in \eqref{compressibility} is equivalent to the \emph{multifractality} of the eigenvectors of $H$, and the proportionality constant $C_d W^{-d}$ (called the \emph{compressibility}) is directly related to the multifractal exponent. See \cite{ME} for a review.

\section{Path expansion and computation of the leading term} \label{sec:3}

We now begin the proof of Theorems \ref{thm: main result}--\ref{thm: Theta 1}. For simplicity, we assume
 throughout the proof that $\beta = 2$; the case $\beta = 1$ is similar and the minor modifications are sketched in 
Section \ref{sec:sym} below.

In this section we review the renormalized path expansion from \cite{EK3} that underlies our proof, and compute the leading term.
We first observe that, since the left-hand side of \eqref{EY_result} is invariant under the scaling $\phi \mapsto \lambda \phi$ for $\lambda \neq 0$, we assume without loss of generality that $\int \dd E \, \phi_i(E) = 2 \pi$ for $i = 1,2$. We shall make this assumption throughout the proof without further mention.

\subsection{Expansion in nonbacktracking powers}

We expand $\phi^\eta(H / 2 - E)$ in nonbacktracking powers $H^{(n)}$ of $H$, defined through
\begin{equation} \label{def: nb}
H^{(n)}_{x_0 x_n} \;\deq\; \sum_{x_1, \dots, x_{n - 1}} H_{x_0 x_1} \cdots H_{x_{n - 1} x_n} \prod_{i = 0}^{n - 2} 
\ind{x_i \neq x_{i + 2}}\,.
\end{equation}
From \cite{EK1}, Section 5, we find that
\begin{equation} \label{H^n and U_n}
H^{(n)} \;=\; U_n(H/2) - \frac{1}{M - 1} U_{n - 2}(H / 2)\,,
\end{equation}
where $U_n$ is the $n$-th Chebyshev polynomial of the second kind, defined through
\begin{equation} \label{definition of Un}
U_n(\cos \theta) \;=\; \frac{\sin (n + 1) \theta}{\sin \theta}\,.
\end{equation}
Note that \eqref{H^n and U_n} requires the deterministic condition $\abs{A_{xy}} = 1$ on the entries of $H$. As stated in Section \ref{sec:gen1}, this condition is not necessary for our proof, but does simplify it considerably. How to relax it is explained in Section \ref{sec:general_proof}.

From \cite{EK1}, Lemmas 5.3 and 7.9, we recall the expansion in nonbacktracking powers of $H$.
\begin{lemma} \label{lemma: Uexp}
For $t \geq 0$ we have
\begin{equation} \label{Chebyshev expansion of propagator}
\ee^{-\ii t H/2} \;=\; \sum_{n \geq 0} a_n(t) H^{(n)}\,,
\end{equation}
where
\begin{equation} \label{def of an}
a_n(t) \;\deq\; \sum_{k \geq 0} \frac{\alpha_{n + 2k}(t)}{(M - 1)^k}\,, \qquad \alpha_k(t) \;\deq\; 2 (- \ii)^k 
\frac{k+1}{t} J_{k + 1}(t)
\end{equation}
and $J_\nu$ denotes the $\nu$-th Bessel function of the first kind.

Moreover, we have
\begin{equation}\label{bounds on a_n}
\sum_{n \geq 0} \abs{a_n(t)}^2 \;=\; 1 + O(M^{-1}) \,, \qquad \abs{a_n(t)} \;\leq\; C \frac{t^n}{n!}\,.
\end{equation}
\end{lemma}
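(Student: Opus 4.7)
The plan is to reduce the claimed operator expansion to a scalar generating-function identity for $\ee^{-\ii t \cos\theta}$ in Chebyshev polynomials of the second kind, then pass to the operator $H/2$ via the functional calculus, and finally convert between the Chebyshev and nonbacktracking bases using the algebraic identity \eqref{H^n and U_n} (which is taken as input). The key scalar computation is the expansion
\begin{equation*}
\ee^{-\ii t \cos\theta} \;=\; \sum_{k \geq 0} \alpha_k(t) \, U_k(\cos\theta)
\end{equation*}
with $\alpha_k(t) = 2(-\ii)^k (k+1) J_{k+1}(t)/t$, which I would derive from the Jacobi--Anger identity $\ee^{-\ii t \cos\theta} = \sum_{m \in \Z}(-\ii)^m J_m(t) \ee^{\ii m \theta}$: multiplying through by $\sin\theta$, matching the resulting Fourier series against $U_k(\cos\theta)\sin\theta = \sin((k+1)\theta)$, and applying the Bessel recurrence $J_{m-1}(t) + J_{m+1}(t) = (2m/t) J_m(t)$ yields exactly the claimed $\alpha_k$.

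Applying this identity to the bounded self-adjoint operator $H/2$ via the functional calculus gives $\ee^{-\ii t H/2} = \sum_{k \geq 0}\alpha_k(t) U_k(H/2)$. Iterating \eqref{H^n and U_n} with the base cases $H^{(0)} = I$, $H^{(1)} = H$ (compatible with $U_0 = 1$, $U_1(H/2) = H$), one expresses $U_k(H/2)$ as a finite combination $\sum_{j \geq 0} (M - 1)^{-j} H^{(k - 2j)}$ (truncated when $k - 2j$ becomes negative). Substituting and reindexing the resulting double sum by $n = k - 2j$ produces \eqref{Chebyshev expansion of propagator} with $a_n(t) = \sum_{j \geq 0}(M - 1)^{-j} \alpha_{n + 2j}(t)$, matching \eqref{def of an}. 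Operator-norm convergence of the interchange of sums is automatic from the factorial decay of the $\alpha_k$ and the polynomial bound $\|U_k(H/2)\| \lesssim k + 1$ on the (almost surely bounded) spectrum of $H/2$.

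For the identities in \eqref{bounds on a_n}, the orthogonality of the $U_k$ with respect to the semicircle weight $\frac{2}{\pi}\sqrt{1 - x^2}$, combined with $|\ee^{-\ii t \cos\theta}|^2 = 1$, yields the Parseval identity $\sum_{k} |\alpha_k(t)|^2 = 1$. Expanding $\sum_n |a_n(t)|^2$ as a double sum over the geometric series in $(M - 1)^{-1}$, the diagonal contribution $j = k = 0$ sums to exactly this Parseval identity, and every off-diagonal term carries at least one factor of $(M - 1)^{-1}$; a Cauchy--Schwarz estimate using $\sum_k |\alpha_k|^2 \leq 1$ bounds the off-diagonal remainder by $O(M^{-1})$. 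The pointwise bound $\abs{a_n(t)} \leq C t^n/n!$ follows from the standard majorant $|J_\nu(t)| \leq (t/2)^\nu/\Gamma(\nu + 1)$, which gives $|\alpha_k(t)| \leq t^k/(2^k k!)$; the resulting $j$-sum is then dominated by its first term through factorial decay.

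I do not expect any substantive obstacle, since \eqref{H^n and U_n} supplies the crucial algebra for free. The only place demanding modest care is the bookkeeping when iterating \eqref{H^n and U_n} at small $n$ (handling $U_{-1} = 0$ and $U_{-2}(\cos\theta) = -1$ correctly in the boundary terms), together with justifying operator-norm convergence uniformly in $t$ on compact sets; both are handled once the factorial bound on $|a_n(t)|$ is in hand, so the argument is essentially a direct assembly of known ingredients.
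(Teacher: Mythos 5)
The paper does not prove this lemma in-house; it simply cites \cite{EK1}, Lemmas 5.3 and 7.9. Your derivation is the expected and standard one: Jacobi--Anger gives the scalar Chebyshev expansion $\ee^{-\ii t\cos\theta}=\sum_k\alpha_k(t)U_k(\cos\theta)$ (your computation of $\alpha_k$ via the Bessel recurrence is correct), functional calculus transports it to $H/2$, and iterating \eqref{H^n and U_n} converts the $U_k$-basis to the nonbacktracking basis with the geometric resummation producing exactly $a_n$; Parseval for the $U_k$-orthogonality gives $\sum_k|\alpha_k|^2=1$ and Cauchy--Schwarz handles the off-diagonal corrections, while the classical majorant $|J_\nu(t)|\leq (t/2)^\nu/\Gamma(\nu+1)$ gives $|\alpha_k(t)|\leq (t/2)^k/k!$ and hence the pointwise bound on $a_n$. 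This all checks out.

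One small imprecision worth correcting: the operator-norm estimate you quote, $\|U_k(H/2)\|\lesssim k+1$, is the bound on $[-1,1]$, but the spectrum of $H/2$ is only asymptotically contained in $[-1,1]$ (in fact $\|H\|\leq 2\sqrt{M}$ deterministically, and there can be outliers). To justify the interchange of sums you should instead invoke the paper's rough bound \eqref{rough bound on Un}, $|U_n(E/2)|\leq C^n(1+|E|)^n$, which combined with $\|H\|\leq 2\sqrt{M}$ gives $\|U_k(H/2)\|\leq (C\sqrt{M})^k$; the factorial decay $|\alpha_k(t)|\leq (t/2)^k/k!$ still crushes this and absolute convergence holds. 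Relatedly, the constant $C$ in $|a_n(t)|\leq Ct^n/n!$ implicitly requires $t$ to stay in a regime $t\ll\sqrt{M}$ (so the $j$-geometric series $\sum_j (t/2)^{2j}/((M-1)^j(2j)!)$ is $O(1)$); that is consistent with how the bound is actually used in the paper (with $t\leq M^{\rho+\delta}$), but worth making explicit.

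You also correctly flag that \eqref{H^n and U_n} needs the base cases $H^{(0)}=I=U_0$ and $H^{(1)}=H=U_1(H/2)$ rather than the formal $n=0$ instance of \eqref{H^n and U_n} with $U_{-2}=-1$, which would give the wrong normalization; with that bookkeeping the iteration $U_n(H/2)=\sum_{j\geq 0, \, n-2j\geq 0}(M-1)^{-j}H^{(n-2j)}$ is exactly what you need, and the reindexing $k=n+2j$ produces \eqref{def of an}.
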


Throughout the following we denote by $\arcsin$ the analytic branch of $\arcsin$ extended to the real axis by continuity from the upper half-plane.
The following coefficients will play a key role in the expansion.
For $n \in \N$ and $E \in \R$ define
\begin{equation*}
\gamma_n(E) \;\deq\; \int_0^\infty \dd t \, \ee^{\ii E t}\, a_n(t)\,.
\end{equation*}
In \cite[Lemma 3.2]{EK3} 
we proved that
\begin{equation} \label{claim about gamma n}
\gamma_n(E) \;=\; \frac{2 (-\ii)^n \ee^{\ii (n+1) \arcsin E}}{1 - (M - 1)^{-1} \ee^{2\ii \arcsin E}}\,.
\end{equation}

Define
\begin{equation} \label{def_F_eta}
F_{\phi_1, \phi_2}^\eta(E_1,E_2) \;\equiv\; F^\eta(E_1,E_2) \;\deq\; \avgb{\tr \phi_1^\eta (H/2 - E_1) \,; \tr \phi_2^\eta (H/2 - E_2)}\,,
\end{equation}
where we used the notation \eqref{def_avg}.
 Note that the left-hand side of \eqref{EY_result} may be written as
\begin{equation}\label{ThetatoF}
\frac{\avg{Y^\eta_{\phi_1}(E_1) \, ; Y^\eta_{\phi_2}(E_2)}}{\avg{Y^\eta_{\phi_1}(E_1)} \avg{Y^\eta_{\phi_2}(E_2)}} \;=\; \frac{1}{N^2} \frac{ F^\eta(E_1,E_2)}{\E Y^\eta_{\phi_1}(E_1) \, \E Y^\eta_{\phi_2}(E_2)}\,.
\end{equation}
The expectations in the denominator are easy to compute using the local semicircle law for band matrices; see Lemma \ref{lem:EY} below.
Our main goal is to compute $ F^\eta(E_1,E_2)$.

Throughout the following we use the abbreviation
\begin{equation}\label{phipsi}
\psi(E) \;\deq\; \phi(-E)\,,
\end{equation}
and define $\psi^\eta$, $\psi_i$, and $\psi_i^\eta$ similarly similarly in terms of $\phi^\eta$, $\phi_i$, and $\phi_i^\eta$.
We also use the notation
\begin{equation} \label{def of convolution}
(\varphi * \chi)(E) \;\deq\; \frac{1}{2 \pi} \int \dd E' \, \varphi(E - E') \, \chi(E')
\end{equation}
to denote convolution.  The normalizing factor $(2 \pi)^{-1}$ is chosen so that $\wh {\varphi * \chi} = \wh \varphi \, \wh \chi$. Observe that
\begin{equation} \label{claim about gamma}
(\psi^\eta * \gamma_n)(E) \;=\; \int_0^\infty \dd t \, \ee^{\ii E t} \, \wh \phi(\eta t)\, a_n(t)\,.
\end{equation}
We note that in the case where $\phi(E) = \frac{2}{E^2 + 1}$, we have $\wh \phi(t) = \ee^{-\abs{t}}$. Hence \eqref{claim about gamma} implies in the case \textbf{(C1)}
\begin{equation} \label{Cauchy case identity}
(\psi^\eta * \gamma_n)(E) \;=\; \int_0^\infty \dd t \, \ee^{\ii (E + \ii \eta) t} \, a_n(t) \;=\; \gamma_n(E + \ii \eta)\,.
\end{equation}
We now return to the case of a general real $\phi$. Since $\phi$ is real, we have $\ol{\wh \phi(t)} = \wh \phi(-t)$.  We may therefore use Lemma \ref{lemma: Uexp} and Fourier transformation to get
\begin{equation} \label{calculation of phi eta}
\phi^\eta (H/2 - E) \;=\;
2 \re  \sum_{n = 0}^\infty H^{(n)} \int_0^\infty \dd t \, \wh \phi(\eta t) \, \ee^{\ii t E} a_n(t)
\;=\; \sum_{n = 0}^\infty H^{(n)} \, 2 \re (\psi^\eta * \gamma_n)(E)\,,
\end{equation}
where $\re$ denotes the Hermitian part of a matrix, i.e.\ $\re A \deq (A + A^*)/2$, and in the last step we used \eqref{claim about gamma} and the fact that 
$H^{(n)}$ is Hermitian.
We conclude that
\begin{equation} \label{expansion without trunction}
F^\eta(E_1, E_2) \;=\; \sum_{n_1, n_2 \geq 0} 2 \re \pb{(\psi_1^\eta * \gamma_{n_1})(E_1)} \, 2 \re \pb{(\psi_2^\eta * \gamma_{n_2})(E_2)} \, \avgb{\tr H^{(n_1)}\,; \tr H^{(n_2)}}\,.
\end{equation}

Because the combinatorial estimates of Section \ref{sec: part 2} deteriorate rapidly for $n \gg \eta^{-1}$, it is essential to cut off the terms $n > M^\mu$ in the expansion \eqref{expansion without trunction}, where $\rho < \mu < 1/3$. Thus, we choose a cutoff exponent $\mu$ satisfying $\rho < \mu < 1/3$. All of the estimates in this paper depend on $\rho, \mu$, and $\phi$; we do not track this dependence. The following result gives the truncated version of \eqref{expansion without trunction}, whereby the truncation is done in $n_i$ and in the support of $\wh \phi_i$.

\begin{proposition}[Path expansion with truncation] \label{prop: expansion with trunction}
Choose $\mu < 1/3$ and $\delta > 0$ satisfying $2 \delta < \mu - \rho < 3 \delta $. Define
\begin{equation} \label{definition of gamma tilde}
\wt \gamma_n(E, \phi) \;\deq\; \int_0^{M^{\rho + \delta}} \dd t \, \ee^{\ii E t} \, \wh \phi(\eta t)\, a_n(t)
\end{equation}
and
\begin{equation} \label{truncated series}
\wt F^\eta(E_1,E_2) \;\deq\; \sum_{n_1 + n_2 \leq M^\mu} 2 \re \pb{\wt \gamma_{n_1}(E_1,\phi_1)} \, 2 \re \pb{\wt \gamma_{n_2}(E_2, \phi_2)} \, \avgb{\tr H^{(n_1)}\,; \tr H^{(n_2)}}\,.
\end{equation}
Let $q > 0$ be arbitrary. Then for any $n\in \N$ and recalling \eqref{phipsi} we have the estimates
\begin{equation} \label{gamma - g}
\abs{(\psi_i^\eta * \gamma_{n})(E_i) - \wt \gamma_{n}(E_i, \phi_i)} \;\leq\; C_q M^{-q} \qquad (i = 1,2)
\end{equation}
and
\begin{equation} \label{F - wt F}
\absb{F^\eta(E_1,E_2) - \wt F^\eta(E_1,E_2)} \;\leq\; C_q N^2 M^{-q}.
\end{equation}
Moreover, for all $q > 0$ we have 
\begin{equation} \label{bound on gamma}
\absb{\wt \gamma_{n}(E_i, \phi_i)} +  \absb{(\psi_i^\eta * \gamma_{n})(E_i)} \;\leq\; \min \hb{C, C_q (\eta n)^{-q}}\,.
\end{equation}

If $\phi_1$ and $\phi_2$ are analytic in a strip containing the real axis, the factors $C_q M^{-q}$ on the right-hand sides of \eqref{gamma - g} and \eqref{F - wt F} may be replaced with $\exp(-M^c)$ for some $c > 0$, and the factor $C_q (\eta n)^{-q}$ on the right-hand side of \eqref{bound on gamma} by $\exp(- (\eta n)^c)$.
\end{proposition}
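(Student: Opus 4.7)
I will establish the three bounds in order \eqref{gamma - g}, \eqref{bound on gamma}, \eqref{F - wt F}, since each uses the preceding ones. The underlying identity is \eqref{claim about gamma}, namely $(\psi^\eta*\gamma_n)(E) = \int_0^\infty \ee^{\ii E t}\wh\phi(\eta t) a_n(t)\dd t$, combined with the crude bound $|a_n(t)|\leq C$ (consequence of \eqref{bounds on a_n}) and the explicit formula \eqref{claim about gamma n} for $\gamma_n$. A change of variables $u = (E'-E)/\eta$ gives the companion representation $(\psi^\eta*\gamma_n)(E) = (2\pi)^{-1}\int \phi(u)\gamma_n(E+\eta u)\dd u$, which I will use in the complementary frequency regime.

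\textbf{Proof of \eqref{gamma - g} and of the uniform bound in \eqref{bound on gamma}.} The difference in \eqref{gamma - g} is the tail $\int_{M^{\rho+\delta}}^\infty \ee^{\ii E_i t}\wh\phi_i(\eta t)a_n(t)\dd t$, and using $|a_n(t)|\leq C$ it is bounded by $\eta^{-1}\int_{M^\delta}^\infty|\wh\phi_i(s)|\dd s$. Under (C2), $\phi_i$ is Schwartz so $\wh\phi_i$ has super-polynomial decay; the tail is $C_q M^{-\delta q}$ for every $q$, and multiplying by $\eta^{-1}=M^\rho$ still leaves an arbitrary negative power of $M$. Under (C1), $\wh\phi_i(s)=\ee^{-|s|}$ gives a bound of order $\exp(-M^\delta)$. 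For the uniform bound $\leq C$ in \eqref{bound on gamma}, I use the $u$-representation above; from \eqref{claim about gamma n} the denominator is bounded below by $1-(M-1)^{-1}$, hence $|\gamma_n(E')|\leq C$ uniformly on $\R$, and $\int|\phi|=O(1)$ yields the bound. The companion bound on $\wt\gamma_n$ then follows by \eqref{gamma - g}.

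\textbf{Proof of the sharp decay in \eqref{bound on gamma}.} Starting from the $u$-representation, \eqref{claim about gamma n} exhibits the rapidly oscillating factor $\ee^{\ii(n+1)\arcsin(E+\eta u)}$ whose phase derivative (in $u$) equals $(n+1)\eta/\sqrt{1-(E+\eta u)^2}$. I split the $u$-integral at $|u|\sim \kappa/\eta$: on $|u|\geq\kappa/(2\eta)$ the rapid decay of $\phi$ makes the contribution smaller than any power of $M$, while on $|u|\leq\kappa/(2\eta)$ the argument $E+\eta u$ remains in a compact subset of $(-1,1)$ (thanks to $|E|\leq 1-\kappa$), so the phase derivative is bounded below by $c\eta n$. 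The remaining factors in the integrand are smooth with bounded derivatives uniformly in $n, u$. Integration by parts $q$ times against the oscillatory factor yields $C_q(\eta n)^{-q}$; boundary terms at $|u|=\kappa/(2\eta)$ are negligible by the decay of $\phi$. For analytic $\phi$ (such as (C1)), shifting the $u$-contour into the upper half-plane by a fixed imaginary amount upgrades this to $\exp(-c\eta n)$. The same bounds transfer to $\wt\gamma_n$ via \eqref{gamma - g}.

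\textbf{Proof of \eqref{F - wt F}.} Decompose $F^\eta - \wt F^\eta = \Sigma_1 + \Sigma_2$, where $\Sigma_1$ is the sum over $n_1+n_2\leq M^\mu$ of $(A_{n_1,n_2}-\wt A_{n_1,n_2})C_{n_1,n_2}$ with
\[
A_{n_1,n_2}\;\deq\; 2\re(\psi_1^\eta*\gamma_{n_1})(E_1)\cdot 2\re(\psi_2^\eta*\gamma_{n_2})(E_2),\qquad C_{n_1,n_2}\;\deq\;\avgb{\tr H^{(n_1)};\tr H^{(n_2)}},
\]
and $\wt A_{n_1,n_2}$ the analogous quantity with $\wt\gamma$; while $\Sigma_2$ is the complementary sum over $n_1+n_2>M^\mu$ of $A_{n_1,n_2}C_{n_1,n_2}$. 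In $\Sigma_1$, \eqref{gamma - g} combined with the uniform bound in \eqref{bound on gamma} gives $|A_{n_1,n_2}-\wt A_{n_1,n_2}|\leq C_q M^{-q}$ termwise. In $\Sigma_2$, at least one $n_i\geq M^\mu/2$, so \eqref{bound on gamma} gives $|A_{n_1,n_2}|\leq C_q M^{-(\mu-\rho)q}$. A crude deterministic bound $|\tr H^{(n)}|\leq N\,\|U_n(H/2)\|_\mathrm{op}+(M-1)^{-1}N\,\|U_{n-2}(H/2)\|_\mathrm{op}$ from \eqref{H^n and U_n}, together with a high-probability bound on $\|H\|_\mathrm{op}$ (and a deterministic worst-case bound to handle the exceptional event), yields $|C_{n_1,n_2}|\leq CN^2(n_1 n_2)^C$; the rapid decay dominates this polynomial growth and gives $|F^\eta-\wt F^\eta|\leq C_q N^2M^{-q}$ after summation.

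\textbf{Main obstacle.} The crux is the sharp decay \eqref{bound on gamma}. The assumption $E\in[-1+\kappa,1-\kappa]$ is essential here: it guarantees that the argument of $\arcsin$ in \eqref{claim about gamma n} stays in a compact subset of $(-1,1)$ on the effective range of $u$, which in turn secures the uniform lower bound $\gtrsim \eta n$ on the phase derivative driving the integration-by-parts argument. The other estimates are routine bookkeeping combined with the rapid decay of $\wh\phi$.
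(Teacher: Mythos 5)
Your arguments for \eqref{gamma - g} and for the uniform bound $\leq C$ in \eqref{bound on gamma} match the paper's. Your route to the sharp decay in \eqref{bound on gamma} (stationary phase in the $u$-representation, exploiting the rapidly oscillating factor $\ee^{\ii(n+1)\arcsin(E+\eta u)}$) is a genuinely different idea from the paper's, which stays in the time domain, splits the $t$-integral at $t = n/K$, and uses the factorial bound $|a_n(t)|\leq Ct^n/n!$ from \eqref{bounds on a_n} for $t<n/K$ (giving $\ee^{-n}$) together with the decay of $\wh\phi$ for $t>n/K$. However, your version has a small gap: the contribution from $|u|>\kappa/(2\eta)$ is bounded only by $C_p\eta^p$, which does not decay as $n\to\infty$ and hence is not $\leq C_q(\eta n)^{-q}$ \emph{uniformly in $n\in\N$} as the statement requires. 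The paper's $\ee^{-n}$ term decays in $n$ and so absorbs cleanly into $(\eta n)^{-q}$; your tail does not.

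The larger problem is \eqref{F - wt F}. Your termwise decomposition $F^\eta - \wt F^\eta = \Sigma_1 + \Sigma_2$ does not close, for two related reasons. First, the claimed bound $\abs{\avgb{\tr H^{(n_1)};\tr H^{(n_2)}}}\leq C N^2 (n_1 n_2)^C$ is false: since $\|H/2\|$ exceeds $1$ (with high probability one has $\|H\|\leq M^\delta$, not $\|H/2\|\leq 1$), the rough Chebyshev bound \eqref{rough bound on Un} gives $\ind{\Xi}\|H^{(n)}\|\leq (CM^\delta)^n$, i.e.\ exponential growth in $n$. Second, the decay $(\eta n)^{-q}$ from \eqref{bound on gamma} is only polynomial in $n$, which cannot compete with $(CM^\delta)^{n_1+n_2}$ on the region $n_1+n_2>M^\mu$, nor can the termwise error $C_qM^{-q}$ in $\Sigma_1$ survive multiplication by $(CM^\delta)^{M^\mu}$. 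The mechanism that actually makes the truncation in $n$ affordable is the \emph{factorial} estimate $|a_n(t)|\leq Ct^n/n!$, which yields $\absb{\wt\gamma_n}\leq C\,M^{(\rho+\delta)(n+1)}/n!$; combined with $\ind{\Xi}\|H^{(n)}\|\leq (CM^\delta)^n$ and the numerical constraint $2\delta<\mu-\rho$ this produces $(CM^{\rho+2\delta-\mu})^{n_1+n_2}$ on the tail, a geometrically small quantity. The paper also performs the cutoffs in a different order: it first inserts the high-probability event $\Xi$ and truncates the $t$-integral inside $\phi^\eta(H/2-E)$ \emph{before} expanding in nonbacktracking powers, rather than comparing the two infinite series term by term; this way the only $n$-truncation needed is the one controlled by the factorial bound, and the indicator $\ind{\Xi}$ is removed last using the deterministic bound $\|H\|\leq 2\sqrt M$ together with $\P(\Xi^c)\leq M^{-\epsilon M}$.
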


The proof of Proposition \ref{prop: expansion with trunction} is given in Appendix \ref{app: path expansion}.

\subsection{The behaviour of $\wt F^\eta(E_1, E_2)$}

Our main goal is to compute $\wt F^\eta(E_1, E_2)$ from \eqref{truncated series}. In order to clarify the argument, it is actually helpful to generalize the assumptions on the matrix of variances $S$. (This more general setup is also used in the generalization of Section \ref{sec:gen1}.) We suppose that $S_{xy}$ is given by \eqref{general S} for some $f$ satisfying the assumption (A$f$) from Section \ref{sec:gen1}.
We introduce the covariance matrices of $S_{x0}$ and $f$, defined through
\begin{equation} \label{definition of D}
D_{ij} \;\deq\; \frac{1}{2}\sum_{x \in \bb T} \frac{x_i x_j}{W^2} S_{x0} \,, \qquad (D_0)_{ij} \;\deq\; \frac{1}{2} \int_{\R^d} x_i x_j f(x) \, \dd x\,.
\end{equation}
(Recall also \eqref{definition of D0}.)
It is easy to see that $D = D_0 + O(W^{-1})$.
Note, that since \eqref{D_bounded} holds for $D_0$, it also holds for $D$ for large enough $W$.
In addition, for $d = 2$ we also introduce the quantities
\begin{equation} \label{def_Q}
Q \;\deq\; \frac{1}{32} \sum_{x \in \bb T} S_{x0} \, \absbb{ D^{-1/2} \, \frac{x}{W}}^4\,, \qquad
Q_0 \;\deq\; \frac{1}{32} \int_{\R^2} \absb{D_0^{-1/2} x}^4 \, f(x) \, \dd x\,.
\end{equation}
(Recall also \eqref{def_Q0}.)
As above, it is easy to see that $Q = Q_0 + O(W^{-1})$.

The main result of this section is summarized in the following Proposition \ref{prop: main}, which establishes the leading asymptotics of $\wt F^\eta(E_1,E_2)$, defined in \eqref{truncated series}, for small $\omega = E_2 - E_1$. The basic strategy is an expansion of the expectation on the right-hand side of \eqref{truncated series} in terms of graphs, as explained in Section \ref{sec_41}  below. As it turns out, the leading contribution arises from eight \emph{skeleton graphs}, called the \emph{dumbbell skeletons} in
Section \ref{sec:classification_skeletons} below, whose combined contribution is denoted by $\cal V_{\rm{main}} \equiv (\cal V_{\rm{main}})^\eta_{\phi_1, \phi_2}(E_1,E_2)$. It is given explicitly by
\begin{multline}\label{VDdef}
\cal V_{\rm{main}} \;=\; \sum_{b_1, b_2 = 0}^\infty \sum_{(b_3, b_4) \in \cal A} \indb{b_1 + b_2 + b_3 + b_4 \leq M^\mu / 2}
\\
\times 2 \re \pb{\wt \gamma_{2 b_1 + b_3 + b_4}(E_1,\phi_1)} \, 2 \re \pb{\wt \gamma_{2 b_2 + b_3 + b_4}(E_2, \phi_2)} \, \cal I^{b_1 + b_2} \tr S^{b_3 + b_4}\,, 
\end{multline}
where we defined
\begin{equation} \label{def_cal_A}
\cal A \;\deq\; \pb{\h{1,2,\dots} \times \h{0,1,\dots}} \setminus \hb{(2,0), (1,1)}\,.
\end{equation}
and
\begin{equation} \label{def_I}
\cal I \;\equiv\; \cal I_M \;\deq\; \frac{M}{M - 1}\,.
\end{equation}
(The choice of the symbol $\cal I$ suggests that for most purposes $\cal I$ should be thought of as $1$.)

\begin{proposition} \label{prop: main}
Suppose that the assumptions of the first paragraph of Theorem \ref{thm: main result} hold. Suppose moreover that $S$ is given by \eqref{general S} with a function $f$ satisfying {\rm (A$f$)} and \eqref{D_bounded}. Then there is a constant $c_0 > 0$ such that, for any $E_1,E_2$ satisfying \eqref{D leq kappa} for small enough $c_* > 0$, we have
\begin{equation} \label{main_prop_error}
\wt F^\eta(E_1, E_2) \;=\; \cal V_{\rm{main}} + \frac{N}{M} O \pb{M^{-c_0} R_2(\omega + \eta)} \,,
\end{equation}
where the leading contribution $\cal V_{\rm{main}}$ from \eqref{VDdef} satisfies the following estimates.
\begin{enumerate}
\item
Suppose that \eqref{eta_Delta_2} holds. Then for $d = 1, 2, 3$  we have
\begin{equation} \label{V3C2}
\cal V_{\rm{main}} \;=\;
\frac{ (2/\pi)^{d/2}}{\nu(E)^2 \sqrt{\det D}}\pbb{\frac{L}{2 \pi W}}^d \pbb{\frac{\omega}{\nu(E)}}^{d/2 - 2} \pB{K_d + O \pb{\omega^{1/2} + M^{-\tau/2}}}
\end{equation}
where $K_d$ was defined in \eqref{def_K13}. Moreover, for $d = 4$ we have
\begin{equation} \label{V4C2}
\cal V_{\rm{main}} \;=\; \frac{8}{\nu(E)^2\sqrt{\det D}} \pbb{\frac{L}{2 \pi  W}}^d
 \pb{\abs{\log \omega}  + O (1)}\,.
\end{equation}
\item
Suppose that \eqref{eta_Delta_2} holds and that $d = 2$. If $\phi_1$ and $\phi_2$ satisfy \textbf{(C1)} then
\begin{equation} \label{V2C1}
\cal V_{\rm{main}} \;=\; \frac{8}{\pi\nu(E)^2 \sqrt{\det D}} \pbb{\frac{L}{2 \pi W}}^2  \pBB{\frac{\pi\eta \nu(E)  } {\omega^2 + 4 \eta^2} +
 \p{Q - 1} \abs{\log \omega} + O(1)}\,,
\end{equation}
and if $\phi_1$ and $\phi_2$ satisfy \textbf{(C2)} then
\begin{equation} \label{V2C2}
\cal V_{\rm{main}} \;=\; \frac{8}{\pi \nu(E)^2 \sqrt{\det D}} \pbb{\frac{L}{2 \pi W}}^2 \pb{  (Q - 1) \abs{\log \omega} + O(1)}\,.
\end{equation}
\item
Suppose that $\omega = 0$. Then the exponent $\mu$ from Proposition \ref{prop: expansion with trunction} may be chosen so that there exists an exponent $c_1 > 0$ such that for $d =1,2,3$ we have
\begin{equation} \label{V3C2D0}
\cal V_{\rm{main}} \;=\; \frac{2^{d/2}}{ \nu(E)^2\sqrt{\det D}} \pbb{\frac{L}{2 \pi W}}^d
  \pbb{\frac{\eta}{\nu(E)}}^{d/2 - 2} \pb{ V_d(\phi_1, \phi_2) + O(M^{-c_1})}
\end{equation}
and for $d = 4$ we have
\begin{equation} \label{V4C2D0}
\cal V_{\rm{main}} \;=\; \frac{4}{\nu(E)^2 \sqrt{\det D}} \pbb{\frac{L}{2 \pi W}}^4 \, \pb{V_4(\phi_1, \phi_2) \abs{\log \eta} + O(1)}\,.
\end{equation}
\end{enumerate}
\end{proposition}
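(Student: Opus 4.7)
The plan is to decompose the cumulant $\avgb{\tr H^{(n_1)}\,;\tr H^{(n_2)}}$ appearing in \eqref{truncated series} using the diagrammatic expansion of \cite{EK3}, thereby writing $\wt F^\eta$ as a sum over skeleton graphs. By the estimates in Section~\ref{sec: part 2}, which extend the simplified analysis of \cite{EK3} to the full set of skeletons after both resummations, only the eight dumbbell skeletons survive at leading order, and their combined contribution is exactly $\cal V_{\rm{main}}$ as written in \eqref{VDdef}. This step immediately yields \eqref{main_prop_error}; the only combinatorial check is that the exclusion set $\cal A$ in \eqref{def_cal_A} correctly records the two degenerate configurations of the two bridge edges that must be removed to avoid double-counting with loop contributions already absorbed elsewhere.

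The bulk of the proof is the asymptotic evaluation of $\cal V_{\rm{main}}$. I would proceed by Fourier analysis in parallel on the spatial and spectral sides. Spatially, translation invariance gives $\tr S^k = \sum_{p \in \bb T^*} \wh S(p)^k$, and assumption (A$f$) together with \eqref{definition of D} yields the quadratic expansion $\wh S(p) = \cal I^{-1}\pb{1 - (Wp)\cdot D(Wp) + O(\abs{Wp}^4)}$ near $p = 0$, the $O(\abs{Wp}^4)$ term producing the fourth-moment coefficient $Q$ of \eqref{def_Q} relevant in dimension two. Spectrally, using the explicit formula \eqref{claim about gamma n} for $\gamma_n$, the two factors $2\re\,\wt\gamma_{2b_i+b_3+b_4}(E_i,\phi_i)$ in \eqref{VDdef} unfold into four complex exponentials in $n$, and the geometric sums in $b_1$ and $b_2$ then collapse. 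After combining the two Fourier sides, $\cal V_{\rm{main}}$ reduces, up to negligible error, to a Riemann sum over $p \in \bb T^*$ of factors of the form $\absb{-\ii\omega/2 + \eta + \pi\nu(E)\,W^2 p\cdot D p}^{-2}$, weighted by the bilinear pairing $\ol{\wh\phi_1(t)}\,\wh\phi_2(t)$ inherited from \eqref{claim about gamma}.

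Finally, I would replace the Riemann sum by an integral over $\R^d$; the error of this replacement is controlled by the diffusive-regime condition implied by \eqref{LW_assump}, and the integral itself is dominated by a small neighbourhood of $p = 0$, which justifies the use of the quadratic expansion of $\wh S$. The change of variables $q = W\sqrt{D}\,p$ with Jacobian $W^{-d}(\det D)^{-1/2}$ reduces the problem to the scalar model integrals $B_d = \int_{\R^d}(1+\abs{q}^2)^{-2}\,\dd q$ of \eqref{def_B_d}, from which the constants $K_d = 2 B_d\,\re\,\ii^{d/2-2}$ emerge in the regime $\omega \gg \eta$ (case (i)), while in the regime $\omega = 0$ the residual $t$-integration against $\ol{\wh\phi_1}\,\wh\phi_2$ produces exactly the quadratic forms $V_d(\phi_1,\phi_2)$ of \eqref{def_V_d} (case (iii)). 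For $d = 4$ the relevant integral is logarithmically divergent and cut off by the boundary of $\bb T^*$, producing the $\abs{\log\omega}$ and $\abs{\log\eta}$ factors in \eqref{V4C2} and \eqref{V4C2D0}. For $d = 2$, since $K_2 = 0$, the leading non-vanishing term requires retaining the quartic term of the $\wh S$ expansion, which yields the $(Q - 1)\abs{\log\omega}$ correction in \eqref{V2C1}--\eqref{V2C2}; the extra $\frac{\pi\eta\nu(E)}{\omega^2 + 4\eta^2}$ term in \eqref{V2C1} arises via \eqref{Cauchy case identity} from the heavy Cauchy tail of \textbf{(C1)} test functions, which shifts $E$ into the complex upper half-plane rather than providing genuine $t$-decay.

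The main obstacle I anticipate is the bookkeeping in the geometric sums: because of the oscillating factors $(-\ii)^n$ in \eqref{claim about gamma n} and the slow decay in $n$ of $\ee^{\ii(n+1)\arcsin(E+\ii\eta)}$, these sums are only conditionally convergent and must be evaluated either by contour deformation or by exchanging the order of summation with the $t$-integration in \eqref{claim about gamma}, with boundary contributions from the hard cutoff $b_1 + b_2 + b_3 + b_4 \leq M^\mu/2$ in \eqref{VDdef} tracked explicitly. A secondary delicate point is the two-dimensional case: the full coefficient $Q - 1$ in front of $\abs{\log\omega}$ combines the quartic correction of $\wh S(p)$ with the contribution of the excluded pairs $(b_3,b_4) \in \{(2,0),(1,1)\}$, and correctly pairing these two sources, together with the separate handling of \textbf{(C1)} versus \textbf{(C2)}, is the most intricate part of the asymptotic analysis.
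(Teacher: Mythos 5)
Your strategy matches the paper's: reduce to the eight dumbbell skeletons via Section~\ref{sec: part 2} (yielding \eqref{main_prop_error}), perform the geometric sums in $b_1, b_2$ using the explicit coefficients \eqref{claim about gamma n}, collapse the $(b_3,b_4)$-sum to a trace $\tr\frac{\alpha S}{(1 - \alpha S)^2}$ with $\alpha$ close to $1$, and evaluate that trace by Fourier/Riemann-sum analysis of $\sum_q \wh S(q)/(1 - \alpha\wh S(q))^2$ near $q = 0$ (Proposition~\ref{prop: bounds on S}); for $\omega = 0$ the test-function pairing is carried to the end and gives $V_d$ via Proposition~\ref{prop:S_int}. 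Two points in your sketch need correction.

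First, the Riemann-sum integrand you write as $\absb{-\ii\omega/2 + \eta + \pi\nu(E)\,W^2 p\cdot D p}^{-2}$ is not a modulus. After the $b$-sums one obtains $\cal V_{\rm{main}} = 2\re \cal V'_{\rm{main},0} + \cdots$ with $\cal V'_{\rm{main},0}$ proportional to $\tr \frac{\alpha S}{(1-\alpha S)^2}$, $\alpha = \ee^{\ii(A_1^\eta - \ol{A_2^\eta})}$. The relevant integrand is therefore the \emph{real part} of the holomorphic quantity $(1 - \alpha + q\cdot D q)^{-2}$, not its squared modulus. This distinction is exactly what produces $K_d = 2B_d \re (-\ii)^{d/2-2}$ and, in particular, the cancellation $K_2 = 0$; a modulus-squared integrand is strictly positive and would give no such cancellation, which is inconsistent with the formula for $K_d$ that you yourself state two lines later.

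Second, the coefficient $Q - 1$ in \eqref{V2C1}--\eqref{V2C2} does not combine the quartic correction of $\wh S$ with the excluded pairs $(b_3,b_4) \in \{(2,0),(1,1)\}$. Both pieces arise from the small-$q$ expansion $\wh S_W(q) = \cal I - q\cdot D q + \cal Q(q) + O(\abs{q}^{4+c})$ inside the single Fourier-space expression $\sum_q \wh S_W(q)/(1 - \alpha\wh S_W(q))^2$: the $Q$ comes from the quartic term $\cal Q(q)$ in the \emph{denominator}, while the $-1$ comes from the quadratic term $-q\cdot D q$ in the \emph{numerator}; see \eqref{precise asymptotics d=2} and the split \eqref{int_q_d2} in the appendix. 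The excluded pairs contribute only $O(\tr S^2) = O(N/M)$, which for $d = 2$ is absorbed into the $O(1)$ error and cannot affect the coefficient of $\abs{\log\omega}$.
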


The proof consists of two independent parts.
\begin{itemize}
\item[(a)]
The asymptotic analysis of the right-hand side of $\eqref{VDdef}$, which yields \eqref{V3C2}--\eqref{V4C2D0}.
\item[(b)]
The estimate of the error terms, which yields \eqref{main_prop_error}.
\end{itemize}
Of these two, (b) represents the main work and is done in Section \ref{sec: part 2}. The rest of this section is devoted to (a).

We note that Proposition \ref{prop: main} is stated as Proposition 4.1 in \cite{EK3}. 
The asymptotics of the leading term, stated in \eqref{V3C2}--\eqref{V4C2D0}, are established in the current paper. 
The key result \eqref{main_prop_error} was proved in \cite[Section 4]{EK3}, but only under several simplifying assumptions, called {\bf (S1)}--{\bf (S3)} there.
Section \ref{sec: part 2} of the current paper gives the general
proof of \eqref{main_prop_error} by showing that the errors arising from the simplifications
 {\bf (S1)}--{\bf (S3)} in \cite{EK3} are negligible.

\subsection{Computation of the leading term in the case \textbf{(C1)}} \label{sec:dumbbell_C1}
We now perform part (a) of the proof of Proposition \ref{prop: main}, i.e.\ we compute $\cal V_{\rm{main}}$. As it turns out, the computation of the contribution of the dumbbell skeletons in the case where $\phi_1$ and $\phi_2$ satisfy \textbf{(C1)} is different, and somewhat simpler, than in the case where they satisfy \textbf{(C2)}. Hence, in this subsection we focus on the case \textbf{(C1)}, and devote the next one to the case \textbf{(C2)}. 
\begin{proposition}[Dumbbell skeletons in the case \textbf{(C1)}]
\label{prop: leading term}
Suppose that $\phi_1$ and $\phi_2$ satisfy \textbf{(C1)}, that \eqref{LW_assump} holds, and that \eqref{D leq kappa} holds for some small enough $c_* > 0$.
\begin{enumerate}
\item
Suppose that \eqref{eta_Delta_2} holds. Then $\cal V_{\rm{main}}$ satisfies \eqref{V3C2} for $d = 1,2,3$  and \eqref{V4C2} for $d = 4$.
\item
 Suppose that \eqref{eta_Delta_2} holds. Then $\cal V_{\rm{main}}$ satisfies \eqref{V2C1} for $d = 2$. 
\item
Suppose that $\omega = 0$. Then $\cal V_{\rm{main}}$ satisfies \eqref{V3C2D0} for $d = 1,2,3$ and \eqref{V4C2D0} for $d = 4$.
\end{enumerate}
\end{proposition}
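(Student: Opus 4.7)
The plan is to reduce $\cal V_{\rm main}$ in \eqref{VDdef} to an explicit lattice sum and evaluate it by Riemann-sum approximation, using three ingredients: the identity \eqref{Cauchy case identity} special to case \textbf{(C1)}, the explicit formula \eqref{claim about gamma n} for $\gamma_n(E+\ii\eta)$, and the Fourier decomposition $\tr S^k = \sum_{q \in \bb T^*} \wh S(q)^k$. First I would use \eqref{Cauchy case identity} and \eqref{gamma - g} to replace $\wt\gamma_n(E_i,\phi_i)$ by $\gamma_n(E_i + \ii\eta)$ modulo a negligible error, then use \eqref{claim about gamma n} to write $\gamma_n(E_i+\ii\eta) = C_i z_i^n$ with $z_i \deq -\ii \ee^{\ii\arcsin(E_i+\ii\eta)}$, so that $\abs{z_i}^2 = 1 - 4\eta/(\pi\nu(E)) + O(\eta^2)$ and both $C_i$ and $1 - \cal I z_i^{\pm 2}$ are explicit. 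Expanding $2\re(C_i z_i^n)$ generates four cross-terms; summing the geometric series in $b_1, b_2$ produces factors $(1 - \cal I z_i^{\pm 2})^{-1}$, each of order $(\pi\nu(E))^{-1}$ since $\arcsin E_i$ is bounded away from $\pm\pi/2$ by \eqref{D leq kappa}.

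Next, writing $n_3 \deq b_3 + b_4$, the constraint $(b_3,b_4) \in \cal A$ amounts to weighting by $n_3$ (up to a bounded correction from excluding $(1,1)$ and $(2,0)$), so the remaining sum becomes $\sum_{n_3 \geq 1} n_3 x^{n_3} \tr S^{n_3} = \sum_q x\wh S(q)/(1-x\wh S(q))^2$ with $x$ running over four products of the form $\cal I z_1^{\pm 1}\bar z_1^{\mp 1} z_2^{\pm 1}\bar z_2^{\mp 1}$. Only $x = \cal I z_1 \bar z_2$ and its conjugate give denominators close to $0$; the other two carry a phase near $\ee^{\pm 2\ii\arcsin E}$ bounded away from $1$ by \eqref{D leq kappa}, contributing only $O(1)$ terms. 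Using $\wh S(q) = \cal I - W^2 q^T D q + O(W^4\abs{q}^4)$ for $\abs{q} \lesssim 1/W$, the relevant denominator becomes $\alpha + W^2 q^T D q$ with $\alpha \deq 2(2\eta + \ii\omega)/(\pi\nu(E))$, and after the successive substitutions $v = Wq$, $u = D^{1/2} v$, $u = \sqrt\alpha w$, Riemann-sum approximation yields
\begin{equation*}
\sum_{q \in \bb T^*} \frac{1}{(\alpha + W^2 q^T D q)^2} \;=\; \pbb{\frac{L}{2\pi W}}^d \frac{\alpha^{d/2-2} B_d}{\sqrt{\det D}}\,(1 + o(1))
\end{equation*}
for $d = 1, 2, 3$. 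Combining with the prefactors $C_1 \bar C_2/((1-\cal I z_1^2)(1-\cal I \bar z_2^2))\approx 4/(\pi^2\nu(E)^2)$ (after the various phase factors cancel) and using $2\re \alpha^{d/2-2} = K_d (2\omega/(\pi\nu(E)))^{d/2-2}(1 + O(\eta/\omega))$ yields \eqref{V3C2}; the case $\omega = 0$ is analogous with $\alpha = 4\eta/(\pi\nu)$ real, giving \eqref{V3C2D0} once $V_d(\phi_1,\phi_2)$ for the Cauchy kernel $\wh\phi(t) = \ee^{-\abs t}$ is identified as $\int \abs{t}^{1-d/2}\ee^{-2\abs t}\dd t$. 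For $d = 4$ the radial $w$-integral diverges logarithmically at infinity; the Gaussian expansion of $\wh S$ is valid only for $\abs{v} \lesssim 1$, and the truncated integral contributes $\log(1/\abs\alpha)$, while the complementary region $\abs{q} \gtrsim 1/W$ (where $\wh S$ is no longer close to $\cal I$) adds only $O(1)$. This gives \eqref{V4C2} and \eqref{V4C2D0}.

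The main obstacle is part (ii), the case $d = 2$ with $\omega \gg \eta$, where the exact cancellation $K_2 = 2 B_2 \re \ii^{-1} = 0$ makes the leading computation vanish. To extract the surviving contributions I would keep two finer pieces of information. First, the finite-$\eta$ part $2\re\alpha^{-1} = 2\pi\nu(E)\eta/(4\eta^2 + \omega^2)$ survives even when the pure power $K_d$ vanishes, and combined with $B_2 = \pi$ and the prefactor gives the first term in \eqref{V2C1}; this is a \textbf{(C1)}-specific effect, coming from the identity $(\psi_i^\eta * \gamma_n)(E) = \gamma_n(E + \ii\eta)$ which endows $\alpha$ with a nonzero real part from $\eta$. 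Second, to obtain the $\abs{\log\omega}$ term I would refine the expansion of $\wh S$ to fourth order, $\wh S(q) = \cal I - W^2 q^T D q + W^4 \cdot (\text{quartic term involving } Q) + O(\abs q^6)$ with $Q$ as in \eqref{def_Q}, and compute the next correction to the lattice integral. The fourth-order term contributes an expression of the form $\int \abs u^4/(\alpha + \abs u^2)^3\dd u$ in $d = 2$, logarithmically divergent at the UV cutoff $\abs u \sim 1$, producing an $\abs{\log\omega}$ contribution with coefficient $Q$; an offsetting $-\abs{\log\omega}$ from the region where the geometric expansion of $\wh S$ breaks down then yields the net coefficient $(Q - 1)$ in \eqref{V2C1}--\eqref{V2C2}.
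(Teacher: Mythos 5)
Your proposal is essentially the paper's argument: use \eqref{Cauchy case identity} and \eqref{claim about gamma n}, sum the geometric series in $b_1,b_2$, identify the $b_3+b_4$ sum as the trace of $\alpha S(1-\alpha S)^{-2}$ (plus two explicit low-order corrections coming from deleting $(2,0)$ and $(1,1)$ from $\cal A$), split via $(2\re x_1)(2\re x_2)=2\re(x_1\bar x_2 + x_1 x_2)$, and evaluate the dominant trace by Fourier analysis and Riemann-sum approximation using $\wh S_W(q) = \cal I - q\cdot Dq + \cal Q(q) + O(|q|^{4+c})$. The paper packages the Fourier analysis of the trace into Proposition \ref{prop: bounds on S}, proved separately in Appendix B; you reproduce that content inline. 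The identification of $\alpha$, the prefactor $4/(\pi^2\nu^2)$ from the $(1+\cal I z^{\pm 2})^{-1}$ terms, the treatment of the second cross-term (bounded away from the singularity by \eqref{D leq kappa}), and the role of $K_d = 2B_d\re(\mp\ii)^{d/2-2}$ all match.

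Two corrections. First, in your treatment of the $d=2$ case the attribution of the $-1$ in the coefficient $(Q-1)$ is wrong. The $-\abs{\log\omega}$ term does not come from the UV region $\abs{q}\gtrsim 1$ where the quadratic expansion of $\wh S_W$ breaks down (that region contributes only $O(1)$, exactly as it does for the main term). It comes from expanding the \emph{numerator} $\wh S_W(q) = 1 - q\cdot Dq + \cdots$ in $\tr[\wh S_W / (1-\alpha\wh S_W)^2]$: the piece $-q\cdot Dq$ in the numerator, integrated against $(1-\alpha + q\cdot Dq)^{-2}$, yields $-\pi\abs{\log u}$. This is the second term in the three-term split used in the proof of \eqref{precise asymptotics d=2}; the $+\pi Q\abs{\log u}$ comes, as you say, from the quartic correction $\cal Q(q)$ in the denominator. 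Second, your Riemann-sum step needs more than the bare lattice-spacing estimate: the error term $\exp(-cL\sqrt{u}/W)$ appearing in \eqref{asymptotics of trace d leq 3} and \eqref{V3C2} is obtained by Poisson summation applied to an analytic integrand, which is where the lower bound in \eqref{LW_assump} (equivalently $u \geq c(W/L)^2$, so the dual-lattice spacing stays bounded) is actually used. Without this, the Riemann-sum error swamps the main term near the boundary of the diffusive regime.
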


The rest of this subsection is devoted to the proof of Proposition \ref{prop: leading term}. We begin by introducing some notation that we shall use throughout this and the following subsection. For $s > 0$ and $k\in \N$ define
\begin{equation} \label{def R}
R_k(s) \;\deq\; 1 + \ind{d \leq k - 1} s^{(d - k)/2} + \ind{d = k} \absb{\log s}\,,
\end{equation}
which generalizes $R_2(s)$ defined in \eqref{def_R2} and 
$R_4(s)$ defined in \eqref{def_R4}. 
The parameter $R_k(s)$ will be used in estimates of the form
\begin{equation} \label{use_of_R}
\int_{\R^d} \dd x \, \ind{\abs{x} \leq \epsilon} \frac{\abs{x}^l}{(\zeta + \abs{x}^2)^{k/2}} \;\leq\; C_\epsilon R_{k - l}(\abs{\zeta})\,,
\end{equation}
where $\zeta$ satisfies $\re \zeta \geq 0$ and $\abs{\zeta} \leq 3$, and $0 \leq l \leq k$ are nonnegative integers.

The computation of the main term will rely on the following asymptotic results on the resolvent of the matrix $S$.
 Its proof is given in Appendix \ref{appendix: S}.

\begin{proposition} \label{prop: bounds on S}
Let $S$ be as in \eqref{general S} and $\alpha \in \C$ satisfy $\abs{\alpha} \leq 1$ and $\abs{1 - \alpha} \geq 4 / M + (W/L)^2$.
\begin{enumerate}
\item
There exists a constant $C > 0$, depending only on $d$ and the profile function $f$, such that
\begin{equation} \label{bound res 1}
\normbb{\frac{1}{1 - \alpha S}}_{\ell^\infty \to \ell^\infty} \;\leq\; \frac{C \log N}{2 - \abs{1 + \alpha}}\,.
\end{equation}
Under the same assumptions we have, for each $k = 1,2, \dots$,
\begin{equation} \label{bound res 2}
\sup_{x,y} \absbb{\pbb{\frac{S}{(1 - \alpha S)^k}}_{xy}} \;\leq\; \frac{C}{M} R_{2k}(\abs{1 - \alpha})\,,
\end{equation}
where the constant $C$ depends only on $d$, $f$, and $k$.

\item
Suppose that $\alpha$ in addition satisfies $\re \alpha \geq 0$ and $\abs{\alpha} \geq 1/2$. 
Abbreviate $u \deq \abs{1 - \alpha}$ and let $\zeta\in \bb S^1$ be defined through $1-\alpha =  u \zeta$.
 Then for $d = 1,2,3$ we have
\begin{multline} \label{asymptotics of trace d leq 3}
\tr \frac{S}{\p{1 - \alpha S}^2}
\\
=\; \frac{u^{d/2 - 2}}{\sqrt{\det D}} \pbb{\frac{L}{2 \pi W}}^d  \pBB{ B_d \, \zeta^{d/2 - 2}  + O \pbb{ \exp \pbb{- \frac{c L \sqrt{u}}{W}}  + \frac{1}{M u} + u + \ind{d = 2} u \abs{\log u} + \ind{d = 3} u^{1/2}}}\,,
\end{multline} 
 where $B_d$ was defined in \eqref{def_B_d}.
Here the power $\zeta^{d/2 - 2}$ of $\zeta$ is taken to be analytic in the right half-plane; note that by assumption on $\alpha$ we have $\re \zeta > 0$. 
Moreover, under the same assumptions we have for $d = 4$
\begin{equation} \label{asymptotics of trace d 4}
\tr \frac{S}{\p{1 - \alpha S}^2} \;=\; \frac{\pi^2}{\sqrt{\det D}} 
 \pbb{\frac{L}{2 \pi  W}}^d  \pb{\abs{\log u} + O(1)}\,.
\end{equation}
\item
For $d=2$, under the assumptions of (ii), we have the more precise two-term asymptotics
\begin{equation} \label{precise asymptotics d=2}
\tr \frac{S}{\p{1 - \alpha S}^2} \;=\; \frac{1}{\sqrt{\det D}} \pbb{\frac{L}{2 \pi W}}^2  \pBB{\frac{\pi}{u \zeta} + \pi \p{Q - 1} \abs{\log u} + O\pbb{1 + \frac{1}{M u^2} + \frac{1}{u} \exp \pbb{-\frac{cL\sqrt{u}}{W}}}}\,,
\end{equation}
where $Q$ was defined in \eqref{def_Q}. 
\end{enumerate}
\end{proposition}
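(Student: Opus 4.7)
The plan is to work entirely in Fourier space on the discrete torus $\bb T$, where the translation-invariant convolution operator $S$ is diagonalized with eigenvalues $\wh S(k) \deq \sum_{x \in \bb T} S_{x0} \ee^{-\ii k \cdot x}$ indexed by $k \in \bb T^* \deq (2\pi/L) \Z^d \cap [-\pi,\pi)^d$. From \eqref{general S} together with assumption (A$f$) and the moment bound \eqref{moment of f}, I would first establish the local expansion
\begin{equation*}
\wh S(k) \;=\; 1 + O(M^{-1}) - W^2 k^T D k + O \pb{(W |k|)^{2 + c}}
\end{equation*}
for $|k|$ small, along with a uniform lower bound $1 - |\wh S(k)| \geq c \min\{(W|k|)^2, 1\}$ away from $k = 0$. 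For the two-term asymptotic in (iii), I would refine the expansion to keep the fourth-order term $W^4 Q \, (k^T D k)^2$, where $Q$ is the quantity from \eqref{def_Q}.

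For part (i), I would express
\begin{equation*}
\pbb{\frac{1}{1 - \alpha S}}_{xy} \;=\; \frac{1}{\abs{\bb T}} \sum_{k \in \bb T^*} \frac{\ee^{\ii k \cdot (x - y)}}{1 - \alpha \wh S(k)}
\end{equation*}
and bound the $\ell^\infty \to \ell^\infty$ norm by the $\ell^1$ kernel norm, splitting $\bb T^*$ into dyadic shells $\{|k| \sim 2^j/L\}$. The denominator is controlled by $|1 - \alpha \wh S(k)| \gtrsim (2 - |1+\alpha|) + (W|k|)^2$, yielding a geometric tail and the stated $\log N$ factor from summing the $O(\log N)$ shells. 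The pointwise estimate \eqref{bound res 2} follows analogously: after writing the kernel as a Fourier sum and rescaling $k = q/W$, the extra $\wh S(k)$ in the numerator kills the logarithm and the continuum analogue \eqref{use_of_R} yields the factor $M^{-1} R_{2k}(|1-\alpha|)$.

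For parts (ii) and (iii), the trace $\tr S/(1-\alpha S)^2 = \sum_{k \in \bb T^*} \wh S(k)/(1-\alpha \wh S(k))^2$ is analyzed in three stages. First, I truncate to a small-momentum region $|k| \leq c/W$; the tail contributes $O(1/(Mu))$ via the global lower bound on $1 - |\wh S|$. Second, I substitute the quadratic expansion, writing $1 - \alpha \wh S(k) = u\zeta + W^2 k^T D k + O(u W^2 |k|^2 + (W|k|)^{2+c})$. Third, I pass to the continuum via the rescaling $q = Wk$: the lattice sum becomes $(L/(2\pi W))^d \sum_q$, and a Poisson-summation argument comparing it to $(L/(2\pi W))^d \int_{\R^d} \dd q$ produces an $\exp(-cL\sqrt{u}/W)$ error governed by the natural smoothing scale $\sqrt{u}$ of the integrand. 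The resulting integral $\int \dd q \, (u\zeta + q^T D q)^{-2}$ is evaluated via $q = D^{-1/2} x$ and then $x = \sqrt{u\zeta} \, y$ (using the holomorphic branch in the right half-plane) to yield $(u\zeta)^{d/2 - 2} B_d/\sqrt{\det D}$. For $d = 4$ this integral is logarithmically divergent and the lattice cutoff at $|q| \sim W$ produces the $|\log u|$ behaviour of \eqref{asymptotics of trace d 4}. For (iii) in $d = 2$, I retain the fourth-order term in $\wh S(k)$ and expand $(1 - \alpha \wh S(k))^{-2}$ to one extra order; the subleading integrand has a logarithmic divergence analogous to the $d=4$ case, and tracking the coefficient produces the $\pi (Q - 1) |\log u|$ correction in \eqref{precise asymptotics d=2}.

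The main technical obstacle is controlling the comparison between the discrete lattice sum on $\bb T^*$ and its continuum counterpart on $\R^d$ uniformly in the small parameter $u$; this is where both the assumption $|1-\alpha| \geq 4/M + (W/L)^2$ (which guarantees that the quadratic regime dominates the spacing scale $W/L$) and the moment bound \eqref{moment of f} (which controls the $O((W|k|)^{2+c})$ remainder in the expansion of $\wh S$ despite $f$ being only piecewise $C^1$) enter. In dimension two, obtaining the precise $\pi(Q-1)|\log u|$ coefficient in \eqref{precise asymptotics d=2} requires tracking a delicate cancellation between the logarithmic contributions generated by the quartic correction to $\wh S(k)$ and the next-to-leading expansion of the squared resolvent.
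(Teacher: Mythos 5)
Your proposal follows the same route as the paper: work in Fourier space, establish a quadratic (for part (iii), quartic) Taylor expansion of $\wh S_W$ near $q=0$ together with a uniform gap $|\wh S_W(q)| \leq 1-\delta_\epsilon$ for $|q|\geq\epsilon$ (Lemma~\ref{lem: basic properties of SW}), split the Fourier sum at $|q|\leq\epsilon$, rescale by $u^{1/2}$, and pass to the continuum integral with a Poisson-summation error $\exp(-cL\sqrt{u}/W)$; the quartic term produces the $\pi(Q-1)|\log u|$ correction via angular averaging exactly as in \eqref{int_q_d2}. One caveat: the pointwise lower bound you posit, $|1-\alpha\wh S(k)|\gtrsim(2-|1+\alpha|)+(W|k|)^2$, is not literally correct for general complex $\alpha$ in the unit disc --- for example it need not hold when $\re\alpha<0$ and $\wh S(k)$ is negative, and even for $\re\alpha\geq 0$ the competition between the phase of $\alpha$ and the real quantity $a(q)=\cal I-\wh S_W(q)$ is delicate. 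The paper handles this by the polar decomposition $t=|(1-\alpha)/\alpha|$, $\xi=(1-\alpha)/(t\alpha)$ together with the covering lemma \eqref{partition for alpha} to reduce the denominator to $|1/8+t^{-1}a(t^{1/2}r)|$, and it defers \eqref{bound res 1} itself to \cite[Prop.~A.2(ii)]{EKYY4}. Your bound would need to be replaced by this (or an equivalent) case analysis, but this is a local repair and does not change the structure of your argument. Note also that the genuine quartic term in the expansion is the full form $\cal Q(q)=\frac{1}{4!}\sum_x(x\cdot q/W)^4 S_{x0}$ rather than $W^4Q(k^TDk)^2$; these agree only after the angular average inside the $d=2$ integral, which is where $Q$ from \eqref{def_Q} emerges.
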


In order to apply Proposition \ref{prop: bounds on S} to the proof of Proposition \ref{prop: leading term}, we
introduce the abbreviations
\begin{equation} \label{def:AA} 
A_i \;\deq\; \arcsin E_i \,, \qquad A_i^\eta \;\deq\; \arcsin (E_i + \ii \eta)\,,
\end{equation}
which we shall tacitly use throughout the following.

\begin{proof}[Proof of Proposition \ref{prop: leading term}]
We begin by rewriting \eqref{VDdef} as
\begin{multline} \label{common expression for V(D)}
\cal V_{\rm{main}} \;=\; \sum_{b_1, b_2 = 0}^{\infty} \sum_{(b_3, b_4) \in \cal A} \, 2 \re \pb{\gamma_{2 b_1 + b_3 + b_4} * \psi_1^\eta}(E_1) \, 2 \re \pb{\gamma_{2 b_2 + b_3 + b_4} * \psi^\eta_2}(E_2) \, \cal I^{b_1 + b_2} \tr S^{b_3 + b_4}
\\
+ O_q(N M^{-q})\,,
\end{multline}
which follows easily 
 using \eqref{bound on gamma} to get rid of the condition on the summation variables $b_1, \dots, b_4$, 
as well as \eqref{gamma - g} to replace $\wt \gamma_{n}(E_i, \phi_i)$ with $(\gamma_{n} * \psi_i^\eta)(E_i)$.

Now we make use of the special form \eqref{Cauchy} of $\phi_1$ and $\phi_2$ from Assumption \textbf{(C1)}: using \eqref{Cauchy case identity} we find
\begin{equation*}
\cal V_{\rm{main}} \;=\; \sum_{b_1, b_2 = 0}^{\infty} \sum_{(b_3, b_4) \in \cal A} \, 2 \re \gamma_{2 b_1 + b_3 + b_4}(E_1 + \ii \eta) \, 2 \re \gamma_{2 b_2 + b_3 + b_4}(E_2 + \ii \eta) \, \cal I^{b_1 + b_2} \tr S^{b_3 + b_4}
+ O_q(N M^{-q})\,.
\end{equation*}
We may now plug in the expression \eqref{claim about gamma n} and sum over $b_1$ and $b_2$. Abbreviating
\begin{equation} \label{def_T}
\qquad T(z) \;\deq\; \frac{2}{1 - (M - 1)^{-1} \ee^{2 \ii \arcsin(z)}}\,,
\end{equation}
and recalling the definition \eqref{def:AA}, we get
\begin{multline} \label{Vmain1}
\cal V_{\rm{main}} \;=\; \sum_{(b_3, b_4) \in \cal A} \,
2 \re \pbb{T(E_1 + \ii \eta) \, \frac{\ee^{\ii A_1^\eta}}{1 + \ee^{2 \ii A_1^\eta} \cal I} \, \, (-\ii \ee^{\ii A_1^\eta}) ^{b_3 + b_4}}
\\
\times
2 \re \pbb{T(E_2 + \ii \eta) \, \frac{\ee^{\ii A_2^\eta}}{1 + \ee^{2 \ii A_2^\eta} \cal I} \, \, (-\ii \ee^{\ii A_2^\eta}) ^{b_3 + b_4}}  \, \tr  S^{b_3 + b_4} + O_q(N M^{-q})\,.
\end{multline}

We now prove part (i) of Proposition \ref{prop: leading term}. Thus, we assume that \eqref{eta_Delta_2} holds. 
Writing out
\begin{equation} \label{2re2re}
(2 \re x_1) (2 \re x_2) = 2  \re (x_1 \ol x_2 + x_1 x_2)
\end{equation}
yields
\begin{equation}\label{vdd}
\cal V_{\rm{main}} \;=\; 2 \re (\cal V_{\rm{main}}' + \cal V_{\rm{main}}'') + O_q(N M^{-q})
\end{equation}
in self-explanatory notation. We focus on $\cal V_{\rm{main}}'$; the analysis of $\cal V_{\rm{main}}''$ is similar. In the summation over $b_3$ and $b_4$ in the definition of $\cal V_{\rm{main}}'$, we replace the set $\cal A$ with $\pb{\h{1,2,\dots} \times \h{0, 1, \dots}}$ and subtract the terms $(b_3,b_4) = (2,0), (1,1)$. This gives $\cal V_{\rm{main}}' = \cal V_{\rm{main},0}' - \cal V_{\rm{main},1}'$, where
\begin{align}
\cal V_{\rm{main},0}' &\;\deq\;
\sum_{b_3 = 0}^\infty \sum_{b_4 = 1}^\infty \,
T(E_1) \, \frac{\ee^{\ii A_1^\eta}}{1 + \ee^{2 \ii A_1^\eta} \cal I} \, (-\ii \ee^{\ii A_1^\eta}) ^{b_3 + b_4}\, 
\ol{T(E_2)} \, \frac{\ee^{-\ii \ol A_2^\eta}}{1 + \ee^{-2 \ii \ol A_2^\eta} \cal I} \, \, (\ii \ee^{- \ii \ol A_2^\eta})^{b_3 + b_4}  \, \tr  S^{b_3 + b_4}
\notag \\ \label{V'_main_computed}
&\;=\;
T(E_1) \ol{T(E_2)} \, \frac{\ee^{\ii A_1^\eta}}{1 + \ee^{2 \ii A_1^\eta} \cal I} \, \frac{\ee^{-\ii \ol A_2^\eta}}{1 + \ee^{-2 \ii \ol A_2^\eta} \cal I}
\tr \frac{\ee^{\ii (A_1^\eta - \ol A_2^\eta)} S}{\pb{1 - \ee^{\ii (A_1^\eta - \ol A_2^\eta)} S}^2}
\end{align}
and
\begin{equation*}
\cal V_{\rm{main},1}' \;\deq\; 2
T(E_1 + \ii \eta) \, \ol{T(E_2  + \ii \eta)} \, \frac{\ee^{\ii A_1^\eta}}{1 + \ee^{2 \ii A_1^\eta} \cal I} \, \frac{\ee^{-\ii \ol A_2^\eta}}{1 + \ee^{-2 \ii \ol A_2^\eta} \cal I} \, \ee^{2 \ii (A_1^\eta - \ol A_2^\eta)} \tr  S^2\,.
\end{equation*}
(Note that the two exceptional terms
 $(b_3,b_4) = (2,0), (1,1)$ actually give the same contribution; this gives rise to the
prefactor 2, since the summands on the right-hand side of \eqref{Vmain1} depend on $b_3$ and $b_4$ only through their sum $b_3 + b_4$.)
We first focus on the easier term, $\re \cal V_{\rm{main},1}'$, which we shall simply estimate in absolute value. An elementary estimate yields
\begin{equation} \label{1/cos}
\frac{\ee^{\ii A_i^\eta}}{1 + \ee^{2 \ii A_i^\eta} \cal I} \;=\; \frac{\ee^{\ii A_i}}{1 + \ee^{2 \ii A_i}} + O(\eta) \;=\; O(1)\,, \qquad
\frac{\ee^{\ii A_i}}{1 + \ee^{2 \ii A_i}} \;=\; \frac{1}{2 \sqrt{1 - E_i^2}} \;=\; \frac{1}{\pi\nu_i}\,,
\end{equation}
where we abbreviated $\nu_i \deq \nu(E_i)$.
Using $T(E_i + \ii \eta) = 2+ O(M^{-1})$ and $\tr S^2 \leq CN/M$ by \eqref{tr_Sn_bound}, we find
\begin{equation} \label{V'1}
\absb{\cal V_{\rm{main},1}'} \;\leq\; \frac{CN}{M}\,.
\end{equation}

Next, we compute $\re \cal V_{\rm{main},0}'$. Writing $\alpha \deq \ee^{\ii (A_1^\eta - \ol A_2^\eta)}$ and $\nu \equiv \nu(E)$, 
and using \eqref{1/cos}, we find
\begin{equation}\label{V'_0_computation}
  \cal V_{\rm{main},0}' \;=\;  \frac{4}{\pi^2\nu_1\nu_2} 
\tr \frac{\alpha S}{\pb{1 - \alpha S}^2} \pb{1 + O (\eta)}
\;=\; \frac{4}{\pi^2\nu^2}
\tr \frac{\alpha S}{\pb{1 -\alpha S}^2} \pb{1 + O (\omega)}\,,
\end{equation}
where we used that $M^{-1} \leq \eta \leq \omega$.
In order to estimate the trace, we invoke \eqref{bound res 2}. 
Expanding $\alpha$ the variable $E_i + \ii \eta - E$ yields
\begin{equation} \label{alpha_Delta_sqrt_kappa}
\alpha \;=\; \ee^{\ii (A_1^\eta - \ol A_2^\eta)} \; = \; 1 + \frac{2\ii}{\pi\nu} (\omega + 2 \ii \eta) + O \p{\omega^2}\,.
\end{equation}
We write $1-\alpha$ in polar form: $1-\alpha = u \zeta$ with $u \deq \abs{1-\alpha}$ and $\zeta \in \bb S^1$. This yields
\begin{equation*}
u \;=\; \frac{2\omega}{\pi\nu} \pb{1 + O(\eta/\omega + \omega)}\,, \qquad \zeta \;=\; - \ii + O(\eta/\omega + \omega)\,.
\end{equation*}
We may plug this into the formula \eqref{asymptotics of trace d leq 3} for the case $d \leq 3$. By assumption on $\eta$ and $\omega$, we have $u \asymp \omega$ and $\alpha = 1 + O(\omega)$. Thus we get from \eqref{asymptotics of trace d leq 3} for $d \leq 3$ that
\begin{equation} \label{calV_prime_d=3}
\cal V_{\rm{main},0}' \;=\; \frac{(2/\pi)^{d/2}}{\nu^2 \sqrt{\det D}} \pbb{\frac{L}{2 \pi W}}^d \pbb{\frac{\omega}{\nu}}^{d/2 - 2}
 \pbb{ B_d (-\ii)^{d/2 - 2}  + O \pbb{\frac{\eta}{\omega} + \omega^{1/2} + \exp \pbb{- \frac{c L \omega^{1/2}}{W}} }}\,,
\end{equation}
where we used that $\eta \geq M^{-1}$.
Similarly, if $d = 4$ we get from \eqref{asymptotics of trace d 4} that
\begin{align}
\cal V_{\rm{main},0}' &\;=\; \frac{4}{\nu^2\sqrt{\det D}}  \pbb{\frac{L}{2 \pi  W}}^d  \pb{\abs{\log u} + O(1)} (1 + O(\omega))
\notag \\ \label{calV_prime_d=4}
&\;=\; \frac{4}{\nu^2\sqrt{\det D}}  \pbb{\frac{L}{2 \pi  W}}^d \abs{\log \omega} \qbb{1 + O \pbb{\frac{1}{\abs{\log \omega}}}}\,.
\end{align}
This concludes the analysis of $\cal V_{\rm{main}}'$.

By a similar analysis, we find $\cal V_{\rm{main}}'' = \cal V_{\rm{main},0}'' - \cal V_{\rm{main},1}''$, where
\begin{equation} \label{V''1}
\absb{\cal V_{\rm{main},1}''} \;\leq\; \frac{CN}{M}
\end{equation}
and
\begin{equation} \label{V''0}
\abs{\cal V_{\rm{main},0}''} \;\leq\; C \absbb{\tr \frac{\ee^{\ii (A_1^\eta + A_2^\eta)} S}{\pb{1 + \ee^{\ii (A_1^\eta + A_2^\eta)} S}^2}}\,.
\end{equation}
We shall estimate the trace using \eqref{bound res 2}. To that end, we use the elementary estimate
\begin{equation} \label{bound e A1 A2}
\absb{1 + \ee^{\ii (A_1^\eta + A_2^\eta)}} \;\geq\; c\,,
\end{equation}
which follows from \eqref{D leq kappa}.
We conclude that
\begin{equation} \label{V''_estimate}
\abs{\cal V_{\rm{main},0}''} \;\leq\; \frac{CN}{M}\,.
\end{equation}
In particular, for $d \leq 3$ we have the weaker bound
\begin{equation} \label{V''01}
\abs{\cal V_{\rm{main},0}''} \;\leq\; C \absBB{\frac{1}{\nu^2\sqrt{\det D}} \pbb{\frac{L}{2 \pi W}}^d \pbb{\frac{\omega}{\nu}}^{d/2 - 2}\omega^{1/2}}\,.
\end{equation}
Similarly, for $d = 4$ we have the weaker bound
\begin{equation} \label{V''02}
\abs{\cal V_{\rm{main},0}''} \;\leq\; C \absBB{\frac{1}{\nu^2\sqrt{\det D}}  \pbb{\frac{L}{2 \pi  W}}^d \log \pbb{\frac{\omega}{\nu}} \frac{1}{\log \omega}}
\end{equation}

In order to conclude the proof of part (i), we observe that
\begin{equation} \label{Kd_V}
K_d \;=\;  2 B_d \re (-\ii)^{d/2 - 2} 
\end{equation}
for $d = 1,2,3$.
Plugging \eqref{V'1}, \eqref{calV_prime_d=3}, \eqref{calV_prime_d=4}, \eqref{V''1}, \eqref{V''01}, and \eqref{V''02} into
\begin{equation} \label{Vmain_computation}
\cal V_{\rm{main}} \;=\; 2 \re \pb{\cal V_{\rm{main},0}' - \cal V_{\rm{main},1}' + \cal V_{\rm{main},0}'' - \cal V_{\rm{main},1}''} + O_q(N M^{-q})
\end{equation}
from \eqref{vdd} completes the proof of part (i) of Proposition \ref{prop: leading term}.

The proof of part (ii) is similar. We find, exactly as in the proof of part (i), that
\begin{equation*}
\cal V_{\rm{main}} \;=\; 2 \re \cal V_{\rm{main},0}' + O \pbb{\frac{N}{M}}
\;=\; 2 \re  \frac{4}{\pi^2\nu^2} \tr \frac{S}{\pb{1 - \alpha S}^2} \pb{1 + O (\omega)} + O \pbb{\frac{N}{M}}
\end{equation*}
with $\alpha = \ee^{\ii (A_1^\eta - \ol A_2^\eta)}$.
Plugging \eqref{alpha_Delta_sqrt_kappa} into \eqref{precise asymptotics d=2} yields
\begin{equation*}
\cal V_{\rm{main},0}' \;=\; \frac{4}{\pi^2\nu^2}
\frac{1}{\sqrt{\det D}} \pbb{\frac{L}{2 \pi W}}^2  \pBB{\frac{\pi}{u \zeta} + \pi \p{Q - 1} \abs{\log u} + O(1)}\,.
\end{equation*}
Using \eqref{alpha_Delta_sqrt_kappa} on $u \zeta = 1 - \alpha$ we therefore easily get \eqref{V2C1}. This concludes the proof of \eqref{V2C1} and hence of part (ii).

What remains is the proof of part (iii) of Proposition~\ref{prop: leading term}.
 Thus, set $\omega = 0$ so that $E_1 = E_2 = E$. The details are similar to the above proof of part (i). The estimates \eqref{V'1}, \eqref{V''1}, and \eqref{V''_estimate} may be taken over verbatim. The only difference is the computation of the main contribution, $\cal V'_{\rm{main}, 0}$. From \eqref{V'_0_computation} we get
\begin{equation*}
\cal V_{\rm{main},0}' \;=\; \frac{4}{\pi^2\nu^2}
\tr \frac{\alpha S}{\p{1 - \alpha S}^2} \pb{1 + O (\eta)}\,, \qquad \alpha \;\deq\; \abs{\ee^{\ii \arcsin (E + \ii \eta)}}^2\,.
\end{equation*}
A simple expansion yields $\alpha = 1 - \frac{4 \eta}{\pi\nu} + O(\eta^2)$. Hence \eqref{asymptotics of trace d leq 3} yields, for $d = 1,2,3$,
\begin{equation*}
\cal V_{\rm{main},0}' \;=\; \frac{(2/\pi)^{d/2}}{\nu^2 \sqrt{\det D}} \pbb{\frac{L}{2 \pi W}}^d \pbb{\frac{2\eta}{\nu}}^{d/2 - 2} 
\qBB{ B_d  + O \pbb{\exp \pbb{- \frac{c L \eta^{1/2}}{W}} + \eta^{1/2}}}\,.
\end{equation*}
Here we used the inequality $\frac{1}{M \eta} \leq \eta^{1/2}$ to absorb the error term $\frac{1}{M \eta}$ 
into the last error term. 
Similarly, for $d = 4$ we get from \eqref{asymptotics of trace d 4}
\begin{equation*}
\cal V_{\rm{main},0}' \;=\; \frac{4}{\nu^2 \sqrt{\det D}} \pbb{\frac{L}{2 \pi  W}}^4 \abs{\log \eta} \qbb{1 + O\pbb{\frac{1}{\abs{\log \eta}}}}\,.
\end{equation*}
Now \eqref{V3C2D0} and \eqref{V4C2D0} in the case \textbf{(C1)} follow from \eqref{Vmain_computation} and a simple computation of 
 $V_d(\phi_1, \phi_2)$ from \eqref{def_V_d} for $\phi_1 = \phi_2$ given by \eqref{Cauchy}. 
\end{proof}

We conclude this subsection with a remark about the assumption on $\omega$ in the  case (i) of Proposition \ref{prop: leading term}. 
The result of Proposition \ref{prop: leading term} is only meaningful in the regime $\omega \to 0$, which is not imposed by our assumptions \eqref{D leq kappa} and \eqref{eta_Delta_2} on $E$, $\omega$, and $\eta$. If $\omega$ is of order one, the contribution of the dumbbell skeletons no longer has a simple universal form as in Proposition \ref{prop: leading term}. However, our results remain valid even if $\omega \asymp 1$. In that case, we need to replace Proposition \ref{prop: leading term} with the following result, which follows from a tedious and unenlightening calculation. 
\begin{proposition}
\label{prop: leading term details}
Suppose that $\phi_1$ and $\phi_2$ satisfy \textbf{(C1)}, and that \eqref{D leq kappa} holds for some small enough $c_* > 0$. Then 
\begin{equation}\label{VDD}
\cal V_{\rm{main}} \;=\; \frac{4}{\pi^2\nu_1\nu_2} \, \qBB{\cal D^\eta(E_1, E_2) + O \pBB{R_4 (\omega) \frac{N}{M} \pbb{\frac{1}{M} + \eta}}}
\end{equation}
with $\nu_i=\nu(E_i)$ and where
\begin{equation} \label{def Theta C1}
\cal D^\eta(E_1, E_2) \;\deq\; 2 \re \tr \frac{\ee^{\ii (A_1^\eta - \ol A_2^\eta)} S}{\pb{1 - \ee^{\ii (A_1^\eta - \ol A_2^\eta)} S}^2} - 2 \re \tr \frac{\ee^{\ii (A_1^\eta + A_2^\eta)} S}{\pb{1 + \ee^{\ii (A_1^\eta + A_2^\eta)} S}^2} - 8 (1 - 2 E_1^2) (1 - 2 E_2^2) \tr S^2\,.
\end{equation}
\end{proposition}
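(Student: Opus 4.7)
The plan is to follow the same skeleton decomposition as in the proof of Proposition~\ref{prop: leading term}(i), but without invoking the asymptotic assumption \eqref{eta_Delta_2}. The goal is to identify all three terms of $\cal D^\eta$ as exact combinatorial pieces of $\cal V_{\rm main}$, and to show that every prefactor approximation error is $O(1/M + \eta)$, which upon multiplication by the crude bound $O(N R_4(\omega)/M)$ on the traces produces the claimed total error.

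First, I would start from \eqref{Vmain1} and apply the identity $(2 \re x_1)(2 \re x_2) = 2 \re(x_1 \bar x_2 + x_1 x_2)$ to write $\cal V_{\rm{main}} = 2 \re(\cal V'_{\rm{main}} + \cal V''_{\rm{main}}) + O_q(N M^{-q})$ as in \eqref{vdd}. In each piece the summand depends on $(b_3, b_4)$ only through $b_3 + b_4$, with base $\ee^{\ii(A_1^\eta - \bar A_2^\eta)}$ for $\cal V'_{\rm main}$ and $-\ee^{\ii(A_1^\eta + A_2^\eta)}$ for $\cal V''_{\rm main}$. Writing $\cal A = (\{1,2,\dots\} \times \{0,1,\dots\}) \setminus \{(2,0),(1,1)\}$, I would split each $\cal V^\ast_{\rm{main}}$ into a geometric-sum part $\cal V^\ast_{\rm{main},0}$ over the rectangle and a subtracted two-term piece $\cal V^\ast_{\rm{main},1}$. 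The summation $\sum_{b_3 \geq 1, b_4 \geq 0} \alpha^{b_3+b_4} \tr S^{b_3+b_4} = \tr(\alpha S/(1-\alpha S)^2)$ delivers \eqref{V'_main_computed} and its analogue for $\cal V''_{\rm main,0}$ with sign flip on $\alpha$, hence exactly the first two (unmodified, $\eta$-dependent) traces inside $\cal D^\eta$.

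Second, I would approximate the common prefactor without ever expanding the traces. Using $\cal I = 1 + O(M^{-1})$ gives $T(E_i + \ii \eta) = 2 + O(M^{-1})$, and a first-order Taylor expansion in $\eta$ upgrades \eqref{1/cos} to $\ee^{\ii A_i^\eta}/(1 + \cal I \ee^{2\ii A_i^\eta}) = (\pi \nu_i)^{-1} + O(M^{-1} + \eta)$. Consequently both $\cal V'_{\rm main,0}$ and $\cal V''_{\rm main,0}$ carry the prefactor $4/(\pi^2 \nu_1 \nu_2) + O(M^{-1} + \eta)$. To control the resulting relative error I would bound the first trace by \eqref{bound res 2} with $k = 2$ and $|1 - \alpha| \asymp \omega + \eta$ (via \eqref{alpha_Delta_sqrt_kappa}), giving $O(N R_4(\omega+\eta)/M) \leq O(N R_4(\omega)/M)$ by monotonicity of $R_4$, while the second trace is bounded uniformly by $CN/M$ thanks to \eqref{bound e A1 A2}. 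Both are then dominated by $C R_4(\omega) N/M$.

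Third, I would handle the subtracted terms $\cal V'_{\rm main,1}, \cal V''_{\rm main,1}$, each of which already contains the factor $\tr S^2 = O(N/M)$ and a prefactor of $2$ from the identical contributions of $(2,0)$ and $(1,1)$. Approximating the prefactors to leading order and freezing $A_i^\eta \mapsto A_i$ at the cost of $O(M^{-1}+\eta)$ yields
\begin{equation*}
2 \re(\cal V'_{\rm{main},1} + \cal V''_{\rm{main},1}) \;=\; \frac{16}{\pi^2\nu_1\nu_2}\qb{\cos 2(A_1-A_2)+\cos 2(A_1+A_2)}\tr S^2 + O\pbb{R_4(\omega)\frac{N}{M}\pbb{\frac{1}{M}+\eta}}.
\end{equation*}
The key algebraic identity is the product-to-sum formula $\cos(a-b)+\cos(a+b) = 2\cos a \cos b$ combined with $\cos(2\arcsin E) = 1 - 2E^2$, which collapses the bracket to $2(1-2E_1^2)(1-2E_2^2)$ and reproduces the last term of $\cal D^\eta$ with the correct prefactor $4/(\pi^2\nu_1\nu_2)\cdot(-8)$. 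The only ``obstacle'' here is purely bookkeeping (indeed, the statement already announces that this is a ``tedious and unenlightening calculation''); the identification of the constant $-8(1-2E_1^2)(1-2E_2^2)$ via the trigonometric collapse is the one step that genuinely needs to be checked. Assembling the three contributions and absorbing the truncation error $O_q(NM^{-q})$ into $O(R_4(\omega)(N/M)(M^{-1}+\eta))$ for $q$ large yields \eqref{VDD}.
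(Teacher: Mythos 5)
Your proposal is correct and follows precisely the route the paper intends (the paper explicitly defers this ``tedious and unenlightening calculation,'' but it is built on the same intermediate displays \eqref{Vmain1}, \eqref{vdd}, and \eqref{V'_main_computed} that you reuse). The bookkeeping checks out: the geometric sum over the rectangle $b_3\ge1, b_4\ge0$ with $n$ compositions of each $n=b_3+b_4$ gives $\sum_{n\ge1} n\alpha^n \tr S^n = \tr\bigl(\alpha S/(1-\alpha S)^2\bigr)$, which produces exactly the first two traces of $\cal D^\eta$ (with the minus sign on the second coming from the base $-\ee^{\ii(A_1^\eta+A_2^\eta)}$ in $\cal V''$); the prefactor $T(E_1+\ii\eta)\,\ol{T(E_2+\ii\eta)}\cdot\frac{\ee^{\ii A_1^\eta}}{1+\ee^{2\ii A_1^\eta}\cal I}\cdot\frac{\ee^{-\ii\ol A_2^\eta}}{1+\ee^{-2\ii\ol A_2^\eta}\cal I}$ equals $4/(\pi^2\nu_1\nu_2)\bigl(1+O(M^{-1}+\eta)\bigr)$; and the two excluded pairs $(2,0),(1,1)$ contribute $2\alpha^2\tr S^2$ to each piece, which after $2\re$ and the product-to-sum identity $\cos 2(A_1-A_2)+\cos 2(A_1+A_2)=2\cos 2A_1\cos 2A_2 = 2(1-2E_1^2)(1-2E_2^2)$ reproduces the $-8(1-2E_1^2)(1-2E_2^2)\tr S^2$ term, the sign arising from the subtraction $\cal V^*_{\rm main}=\cal V^*_{\rm main,0}-\cal V^*_{\rm main,1}$. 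Your error accounting is also right: each prefactor carries a relative $O(M^{-1}+\eta)$ error against trace bounds $\tr\frac{\alpha S}{(1-\alpha S)^2}=O(NR_4(\omega+\eta)/M)\le O(NR_4(\omega)/M)$ from \eqref{bound res 2}, and $\tr\frac{\beta S}{(1+\beta S)^2}=O(N/M)$ from \eqref{bound e A1 A2}, while $\tr S^2=O(N/M)$ from \eqref{tr_Sn_bound}. The truncation tail $O_q(NM^{-q})$ is indeed absorbed by choosing $q$ large.
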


In the regime $\omega \ll 1$, the first summand in $\cal D^\eta(E_1, E_2)$ 
dominates; its leading asymptotics may be explicitly computed, which leads to the formulas in 
Proposition~\ref{prop: leading term}. If $\omega \asymp 1$, all
three terms typically are of the same order, and cannot be brought into a simpler form. In this case, barring a coincidental cancellation in $\cal D^\eta(E_1, E_2)$, these terms are all of order $M/N$. Hence, the error term in \eqref{VDD} is of subleading order. In the regime $\omega \asymp 1$, we may compute the traces in \eqref{def Theta C1} using a Riemann sum approximation  provided that $f$ is piecewise $C^\infty$ or that, for some $k \in \N$, all derivatives of order $k$ of $\wh f$ are integrable.
The result is
\begin{multline} \label{D_largeD}
\cal D^\eta(E_1, E_2) \;=\; \pbb{\frac{L}{2 \pi W}}^d 2 \re \int \qBB{\frac{\ee^{\ii (A_1 - A_2)} \wh f_*(q)}{\pb{1 - \ee^{\ii (A_1 - A_2)} \wh f_*(q)}^2} - \frac{\ee^{\ii (A_1 + A_2)} \wh f_*(q)}{\pb{1 + \ee^{\ii (A_1 + A_2)} \wh f_*(q)}^2}} \dd q
\\
- 8 \pbb{\frac{L}{W}}^d (1 - 2 E_1^2) (1 - 2 E_2^2) \int f_*(x)^2 \, \dd x + o_{\omega}((L/W)^d)\,,
\end{multline}
where $f_*(x) \deq f(x) / \int f(y) \, \dd y$ is the probability density associated with $f$.
This formula immediately shows that the critical dimension for
the universality of the correlation decay is $d=4$. Noting that  $\wh f_*(q) \sim 1- (q,Dq) + O(q^4) $
and $A_2-A_1 \sim \omega$,
the first integral is approximately
$$
 \int \frac{\dd q}{\omega^2 (q \cdot Dq)^2} \;\approx\; \frac{1}{\omega^2} \int \frac{\dd q}{ q^4}\,.
$$
For $d \leq 4$ the main contribution comes from the very
small $q$ regime and the details of $\wh f_*$ are irrelevant:
only the covariance matrix $D$ matters (which is essentially a constant
in the case of \eqref{step S}). For $d>4$, however, the
integral is not concentrated on the infrared regime  $q \approx 0$  and the specific form of $\wh f_*$,
in particular its decay properties, is essential and  influences the asymptotics of $\cal V_{\rm{main}}$.

\subsection{Computation of the leading term in the case \textbf{(C2)}} \label{sec:dumbbell_C2}

We now prove the analogue of Proposition \ref{prop: leading term} for the case that $\phi_1$ and $\phi_2$ satisfy \textbf{(C2)} instead of \textbf{(C1)}.
The calculation reveals that in the regime \eqref{eta_Delta_2} the explicit form of $\phi_i$ is not important, and only its integral
$\int \phi_i =2\pi$ matters. On the other hand, in the regime $\omega = 0$ the answer depends on $\phi_i$ via the quadratic form $V_d$ defined in \eqref{def_V_d}.  Throughout this section we use the notation of Section \ref{sec:dumbbell_C1}, and in particular \eqref{def R} and \eqref{def:AA}.

\begin{proposition}[Dumbbell skeletons in the case \textbf{(C2)}]
\label{prop: leading term 2}
Suppose that $\phi_1$ and $\phi_2$ satisfy \textbf{(C2)}, that \eqref{LW_assump} holds, and that \eqref{D leq kappa} holds for some small enough $c_* > 0$.
\begin{enumerate}
\item
Suppose that \eqref{eta_Delta_2} holds. Then $\cal V_{\rm{main}}$ satisfies \eqref{V3C2} for $d = 1,2,3$ and \eqref{V4C2} for $d = 4$.
\item
 Suppose that \eqref{eta_Delta_2} holds. Then $\cal V_{\rm{main}}$ satisfies \eqref{V2C2} for $d = 2$. 
\item
Suppose that $\omega = 0$. Then the exponent $\mu$ in Proposition \ref{prop: expansion with trunction} may be chosen so that $\cal V_{\rm{main}}$ satisfies \eqref{V3C2D0} for $d \leq 3$ and \eqref{V4C2D0} for $d = 4$.
\end{enumerate}
\end{proposition}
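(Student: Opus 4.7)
The plan is to adapt the proof of Proposition~\ref{prop: leading term}, replacing the identity \eqref{Cauchy case identity}, which is specific to the Cauchy kernel, by the integral representation \eqref{claim about gamma} for $(\psi_i^\eta*\gamma_n)(E_i)$. As in that proof, the decomposition $\cal V_{\rm{main}} = 2\re(\cal V'_{\rm{main}} + \cal V''_{\rm{main}}) + O_q(NM^{-q})$ from \eqref{vdd} is model-independent, and the bound $|\cal V''_{\rm{main}}| \leq CN/M$ obtained in \eqref{V''1}--\eqref{V''_estimate} uses only the estimates \eqref{bound res 2}, \eqref{bound e A1 A2}, and the uniform bound \eqref{bound on gamma}, each of which remains valid under \textbf{(C2)}. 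Hence $\cal V''_{\rm{main}}$ is subleading in all three regimes, and the entire task reduces to computing $\cal V'_{\rm{main}}$.

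For $\cal V'_{\rm{main}}$, I substitute \eqref{claim about gamma} into \eqref{common expression for V(D)} and combine the series \eqref{def of an} with the Bessel integral representation $J_m(t)=\frac{1}{2\pi}\int_{-\pi}^\pi \ee^{-\ii m\theta+\ii t\sin\theta}\,\dd\theta$ to close the geometric sums over $b_1, b_2$. After the sum over $(b_3,b_4)\in\cal A$, which as in \eqref{V'_main_computed} produces a factor $\tr \alpha S/(1-\alpha S)^2$, one obtains an expression of the schematic form
\begin{equation*}
\cal V'_{\rm{main}} \;=\; \int_0^\infty\!\!\int_0^\infty \dd t_1\,\dd t_2\, \wh\phi_1(\eta t_1)\,\ol{\wh\phi_2(\eta t_2)}\, K(E_1,E_2,t_1,t_2)\, \tr\frac{\alpha(t_1,t_2)S}{(1-\alpha(t_1,t_2)S)^2}\;+\;\text{lower order},
\end{equation*}
where $K$ is an explicit smooth kernel and $\alpha(t_1,t_2)$ reduces, on the dominant scale of $t_1,t_2$, to the factor $\ee^{\ii(A_1^\eta-\ol A_2^\eta)}$ from the \textbf{(C1)} calculation, with $1-\alpha$ of combined size comparable to $|E_1-E_2| + 1/\max(t_1,t_2)$.

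In regimes (i) and (ii), Proposition~\ref{prop: bounds on S} shows that the trace is governed by $|1-\alpha|\asymp\omega$, so the integral concentrates on $t_i\lesssim 1/\omega\ll 1/\eta$. On this range the normalization $\int\phi_i=2\pi$ yields $\wh\phi_i(\eta t_i) = \wh\phi_i(0) + O(\eta/\omega) = 1 + O(\eta/\omega)$, and the integral reduces to precisely the computations \eqref{calV_prime_d=3}--\eqref{calV_prime_d=4} of the \textbf{(C1)} case up to controlled errors, establishing \eqref{V3C2}, \eqref{V4C2}, and (for $d=2$) \eqref{V2C2}. The only discrepancy with \eqref{V2C1} is the absence of the term $\frac{\pi\eta\nu(E)}{\omega^2+4\eta^2}$, which under \textbf{(C1)} arose from the heavy Cauchy tail $\wh\phi(t)=\ee^{-|t|}$ and becomes negligible under the rapid decay of $\wh\phi_i$ in \textbf{(C2)}.

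In regime (iii) the trace is governed by $|1-\alpha|\asymp\eta$, so the integral concentrates on $t_i\asymp 1/\eta$, where $\wh\phi_i$ varies at order one. Rescaling $s_i=\eta t_i$ and applying \eqref{asymptotics of trace d leq 3} for $d\leq 3$ or \eqref{asymptotics of trace d 4} for $d=4$, the leading contribution reduces to a Plancherel-type integral against $\wh\phi_1\,\ol{\wh\phi_2}$, which matches the definition \eqref{def_V_d} of $V_d(\phi_1,\phi_2)$ up to the prefactors in \eqref{V3C2D0} and \eqref{V4C2D0}. I expect the main technical obstacle to be the uniform control of the error terms in \eqref{asymptotics of trace d leq 3} when integrated against $\wh\phi_i$ on the scale $s_i\asymp 1$; achieving this requires the truncation exponent $\mu$ in Proposition~\ref{prop: expansion with trunction} to be chosen sufficiently small while still satisfying $\mu>\rho$, which is precisely the freedom granted in part (iii) of the proposition statement (and not in (i) or (ii)).
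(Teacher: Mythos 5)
Your strategy---reduce to the \textbf{(C1)} computation in regimes (i)--(ii) by exploiting that $\wh\phi_i(\eta t)$ is essentially constant for $t\lesssim\omega^{-1}$, and extract $V_d$ via a Plancherel-type computation in regime (iii)---matches the paper's intent, and you correctly identify both why the bound on $\cal V''_{\rm{main}}$ carries over and why the term $\frac{\pi\eta\nu(E)}{\omega^2+4\eta^2}$ present in \eqref{V2C1} is absent in \eqref{V2C2}. However, the execution differs from the paper's and leaves two significant gaps.

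First, the paper does not close the geometric sums in the Fourier variable $t$. Because $a_n(t)$ depends on $n$ through Bessel functions, $\sum_{b_1}\cal I^{b_1}a_{2b_1+n_0}(t)$ is a Bessel series rather than a geometric series, and closing it with the integral representation for $J_m$ introduces a new angular variable whose control you would still need to describe; the schematic double-$t$-integral formula you write is therefore not established. The paper instead keeps the energy-convolution variable, i.e.\ uses $(\psi_i^\eta*\gamma_n)(E_i)=\frac{1}{2\pi}\int\dd v\,\psi_i^\eta(v)\gamma_n(E_i-v)$ together with the explicit form \eqref{claim about gamma n}, so that the $n$-dependence is a genuine geometric factor $(-\ii\ee^{\ii\arcsin(E_i-v)})^n$, and introduces the cutoffs $\psi_i^{\leq,\eta}$ from \eqref{phi_cutoff1} to confine $v$ to a tiny interval. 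Since $\cal I>1$, the truncated geometric sum up to $[M^\mu]$ produces boundary terms of the schematic form $(-\ee^{2\ii\arcsin(E_i-v)}\cal I)^{[M^\mu]}$; these have near-unit magnitude and their negligibility comes entirely from a stationary-phase argument, exploiting that the phase oscillates in $v$ at scale $M^{-\mu}$ while $\psi_i^{\leq,\eta}$ varies at scale $\eta=M^{-\rho}>M^{-\mu}$. Your outline does not address these boundary terms, and the heuristic concentration of $t_i$ on $t_i\lesssim1/\omega$ does not substitute for that estimate.

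Second, in part (iii) you omit the devices that make the case $\omega=0$ tractable. After cutting off, one must symmetrize the $b_1,b_2$-geometric ratio by inserting $1=\cal I^{-2(b_1+b_2)}+(1-\cal I^{-2(b_1+b_2)})$ so that the sums may be extended to infinity, and introduce the damping factor $J=1-M^{-\mu-\xi}$ into the $(b_3,b_4)$-sum so that the resolvent $\tr S/(1-JS)^k$ obeys the a-priori bound \eqref{no_osc2}; only then can one pass to the continuous integral and invoke Proposition~\ref{prop:S_int}. You correctly note that tuning $\mu$ is what distinguishes (iii) from (i)--(ii), but the specific constraints \eqref{cond_mu_rho}--\eqref{cond_epsilon}, whose purpose is precisely to render the boundary and damping errors subleading relative to $V_d$, are absent from your plan; without them ``rescaling $s_i=\eta t_i$ and applying \eqref{asymptotics of trace d leq 3}'' leaves the tail contributions uncontrolled.
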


\begin{proof}[Proof of Proposition \ref{prop: leading term 2} \rm{(i)}]
Similarly to \eqref{common expression for V(D)}, we get
\begin{multline} \label{common expression for V(D) 2}
\cal V_{\rm{main}} \;=\; \sum_{b_1, b_2 = 0}^{[M^\mu] - 1} \sum_{(b_3, b_4) \in \cal A_\mu} \, 2 \re \pb{\gamma_{2 b_1 + b_3 + b_4} * \psi_1^\eta}(E_1) \, 2 \re \pb{\gamma_{2 b_2 + b_3 + b_4} * \psi^\eta_2}(E_2) \, \cal I^{b_1 + b_2} \tr S^{b_3 + b_4}
\\
+ O_q(N M^{-q})\,,
\end{multline}
where we defined
\begin{equation*}
\cal A_\mu \;\deq\; \pb{\h{1,2,\dots, [M^\mu]} \times \h{0,1, \dots, [M^\mu] - 1}}  \setminus \hb{(2,0), (1,1)}\,.
\end{equation*}
The only difference between \eqref{common expression for V(D)} and \eqref{common expression for V(D) 2} is that in \eqref{common expression for V(D) 2} we use the index set $\cal A_\mu$ instead of $\cal A$, thus dropping any terms with a summation index larger than $[M^\mu]$ (or $[M^\mu] - 1$).

Next, we split the integration domain of each convolution using a smooth, nonnegative, symmetric function $\chi$ satisfying $\chi(E) = 1$ for $\abs{E} \leq 1$ and $\chi(E) = 0$ for $\abs{E} \geq 2$. We split $\psi_i = \psi^{\leq}_i + \psi^{>}_i$, where
\begin{equation} \label{phi_cutoff1}
\psi^{\leq}_i(E) \;\deq\; \psi_i(E) \chi(M^{-\theta/2} E)\,, \qquad
\psi^{>}_i(E) \;\deq\; \psi_i(E) \pb{1 - \chi(M^{-\theta/2} E)}\,,
\end{equation}
for some positive constant $\theta > 0$. Here we take $\theta \deq \tau$, where $\tau$ is the constant from \eqref{eta_Delta_2}.

This yields the splitting $\psi^\eta_i = \psi^{\leq, \eta}_i + \psi^{>, \eta}_i$ of the rescaled test function $\psi^\eta(E) = \eta^{-1}\psi(\eta^{-1}E)$. This splitting is done on the scale $\eta M^{\theta/2}$, and we have
\begin{equation} \label{supp_phi_trunc}
\supp \psi_i^{\leq, \eta} \;\subset\; [-2 \eta M^{\theta/2}, 2 \eta M^{\theta/2}]\,. 
\end{equation}
Moreover, recalling \eqref{non_Cauchy} and using the trivial bound $\abs{\gamma_n(E)} \leq C$ we find
\begin{equation} \label{phi_geq_bound}
\absb{(\psi_i^{\leq,\eta} * \gamma_{n})(E_i)} \;\leq\; C \,, \qquad \absb{(\psi_i^{>,\eta} * \gamma_{n})(E_i)} \;\leq\; C_q M^{-q}
\end{equation}
for any $q > 0$.
Plugging the splitting $\psi^\eta_i = \psi^{\leq, \eta}_i + \psi^{>, \eta}_i$ into \eqref{common expression for V(D) 2} and using \eqref{phi_geq_bound} yields
\begin{multline} \label{VD_step2}
\cal V_{\rm{main}}
\;=\; \sum_{b_1, b_2 = 0}^{[M^\mu] - 1} \sum_{(b_3, b_4) \in \cal A_\mu} \, 2 \re \pb{\gamma_{2 b_1 + b_3 + b_4} * \psi_1^{\leq, \eta}}(E_1) \, 2 \re \pb{\gamma_{2 b_2 + b_3 + b_4} * \psi^{\leq, \eta}_2}(E_2) \, \cal I^{b_1 + b_2} \tr S^{b_3 + b_4}
\\
+ O_q(N M^{-q})\,.
\end{multline}
We use the same splitting \eqref{2re2re} as in \eqref{vdd}, and focus only on the term $\cal V'_{\rm{main}}$. As after \eqref{vdd}, we split $\cal V_{\rm{main}}'= \cal V_{\rm{main},0}' - \cal V_{\rm{main},1}'$ and focus only on leading term $\cal V_{\rm{main},0}'$. Using the same method as the one leading to \eqref{V'1},  one can easily show that
\begin{equation} \label{V_prime_1_estimate}
\cal V_{\rm{main},1}' = O\pbb{\frac{N}{M}}\,.
\end{equation}
Thus, we have to compute
\begin{align*}
\cal V'_{\rm{main},0} &\;=\;
\sum_{b_1, b_2 = 0}^{[M^\mu] - 1} \sum_{b_3 = 1}^{[M^\mu]} \sum_{b_4 = 0}^{[M^\mu] - 1} \, \pb{\gamma_{2 b_1 + b_3 + b_4} * \psi_1^{\leq, \eta}}(E_1) \, \pb{\ol{\gamma_{2 b_2 + b_3 + b_4}} * \psi^{\leq, \eta}_2}(E_2) \, \cal I^{b_1 + b_2} \tr S^{b_3 + b_4}
\\
&\;=\; \Biggl[T(E_1) \ol{T(E_2)} \, \frac{\ee^{\ii A_1}}{1 + \ee^{2 \ii A_1} \cal I} \, \frac{\ee^{-\ii A_2}}{1 + \ee^{-2 \ii A_2} \cal I}
\tr \Biggl(\frac{\ee^{\ii (A_1 - A_2)} S}{\pb{1 - \ee^{\ii (A_1 - A_2)} S}^2}
\\
&\qquad \times \pb{1 - (-\ee^{2 \ii A_1} \cal I)^{[M^\mu]}} \pb{1 - (-\ee^{-2 \ii A_2} \cal I)^{[M^\mu]}} \pb{1 - (\ee^{\ii (A_1 - A_2)} S)^{[M^\mu]}}^2 \Biggr) \Biggl]
{}*{} \psi_1^{\leq, \eta}(E_1) * \psi_2^{\leq, \eta}(E_2)\,.
\end{align*}
Next, we get rid of all terms with an exponent $[M^\mu]$. The basic idea is that any such term oscillates in its energy variable on a much smaller scale than the scale $\eta$ of the convolution with $\psi_i^{\leq, \eta}$. More precisely, we multiply out the three parentheses on the second line, and prove that any of the seven terms that is not $1$ yields a negligible contribution. All of these terms are treated in the same way. For definiteness, we focus on the term $(-\ee^{2 \ii A_1} \cal I)^{[M^\mu]}$. Thus, we have to estimate
\begin{equation} \label{conv_expression}
\qBB{T(E_1) \ol{T(E_2)} \, \frac{\ee^{\ii A_1}}{1 + \ee^{2 \ii A_1} \cal I} \, \frac{\ee^{-\ii A_2}}{1 + \ee^{-2 \ii A_2} \cal I}
(-\ee^{2 \ii A_1} \cal I)^{[M^\mu]} \, \tr \frac{\ee^{\ii (A_1 - A_2)} S}{\pb{1 - \ee^{\ii (A_1 - A_2)} S}^2}}
* \psi_1^{\leq, \eta}(E_1) * \psi_2^{\leq, \eta}(E_2)\,.
\end{equation}
In order to estimate this, we observe that, since $\theta = \tau$, we get from \eqref{supp_phi_trunc} and \eqref{eta_Delta_2} that
\begin{equation} \label{supp_phi_trunc 2}
\supp \psi_i^{\leq, \eta} \;\subset\; [-2 M^{-\theta/2} \omega, 2 M^{-\theta/2}\omega]\,.
\end{equation}
We denote the argument of $\psi_i^{\leq, \eta}$ in the convolution integral 
by $v_i$, and the corresponding argument in the bracket in \eqref{conv_expression} by $E_i^v : = E_i -v_i$.
By \eqref{supp_phi_trunc 2}, on the domain of integration we have $\abs{v_i} \leq 2 M^{-\theta/2} \omega$. Therefore we may estimate the first two denominators of \eqref{conv_expression}, for the integration variable $v_i$ in the support of $\psi_i^{\leq, \eta}$, as
\begin{equation} \label{1+Ie_geq}
\absb{1 + \cal I^{-1} \ee^{-2 \ii \arcsin E_i^v }} \;\geq\; \frac{1}{2} \absb{1 + \ee^{-2 \ii \arcsin E_i^v)}} \;=\; \sqrt{1 - (E_i^v)^2} \;\geq\; c
\end{equation}
for small enough $c_*$ in \eqref{D leq kappa}. Hence we may write \eqref{conv_expression} in the form
\begin{equation*}
\int \dd v_1 \, \psi_1^{\leq, \eta}(v_1) \, \pb{- \ee^{2 \ii \arcsin E_1^v }}^{[M^\mu]} \, \tr h(v_1)\,,
\end{equation*}
where $h$ is a smooth matrix-valued function on $\R$ with derivatives satisfying
\begin{equation*}
\norm{h^{(k)}(v)} \;\leq\; C_k  \eta^{-k - 1}
\end{equation*}
for all $k \in \N$. Since the phase factor $\phi(v_1) \deq 2 \arcsin(E_1 - v_1)$ is regular and $\abs{\phi'} \asymp 1$
on the support of $\psi_1^{\leq, \eta}$, a standard stationary phase argument using a $k$-fold integration by parts
implies that
 \eqref{conv_expression} is bounded by $N C_k \eta^{-k-1} M^{-k\mu} \leq C N M^{-1}$, where the second bound follows by choosing $k$ large enough.

We conclude that
\begin{equation} \label{Wprime0}
\cal V'_{\rm{main},0} \;=\; \qBB{T(E_1) \ol{T(E_2)} \, \frac{\ee^{\ii A_1}}{1 + \ee^{2 \ii A_1} \cal I} \, \frac{\ee^{-\ii A_2}}{1 + \ee^{-2 \ii A_2} \cal I}
\tr \frac{\ee^{\ii (A_1 - A_2)} S}{\pb{1 - \ee^{\ii (A_1 - A_2)} S}^2}}
* \psi_1^{\leq, \eta}(E_1) * \psi_2^{\leq, \eta}(E_2) + O\pbb{\frac{N}{M}}
\end{equation}
for all dimensions $d$. 
We have the elementary estimate $T(E_i^v) = 2 + O(M^{-1})$ and, using \eqref{1+Ie_geq},
\begin{equation*}
\frac{\ee^{\ii \arcsin E_i^v}}{1 + \cal I^{-1} \ee^{2 \ii \arcsin E_i^v }}
\;=\;
\frac{1}{2 \sqrt{1 - E^2}} \pbb{1 + O \pbb{\frac{1}{M} + \omega}} =\frac{1}{\pi\nu} \pbb{1 + O \pbb{\frac{1}{M} + \omega}}\,, \qquad \nu \equiv \nu(E)\,.
\end{equation*}
In order to compute the trace in \eqref{Wprime0}, we proceed exactly as in the proof of Proposition \ref{prop: leading term}; here we have $\alpha \deq \ee^{\ii (\arcsin E_1^v - \arcsin E_2^v))} = 1 - u \zeta$, where (on the domain of integration)
\begin{equation*}
u \;=\; \frac{2\omega}{\pi\nu} \pb{1 + O(M^{-\theta/2} + \omega)}\,, \qquad \zeta \;=\; - \ii + O(M^{-\theta/2} + \omega)\,.
\end{equation*}
Notice that after replacing $E_i - v_i$ with $E$, at the cost of a negligible error, and
thus removing the $v$-dependence in the first factor of the convolutions, the precise
form of $\psi_i$ becomes irrelevant and only $\int \psi_i(E) \, \dd E = 2 \pi$ matters.
Hence we get, for $d \leq 3$,
\begin{equation} \label{calW2}
\cal V'_{\rm{main},0} \;=\; \frac{(2/\pi)^{d/2}}{\nu^2 \sqrt{\det D}}\pbb{\frac{L}{2 \pi W}}^d \pbb{\frac{\omega}{\nu}}^{d/2 - 2} 
\pbb{B_d (-\ii)^{d/2 - 2}  + O \pbb{\exp \pbb{- \frac{c L \sqrt{u}}{W}} + \omega^{1/2} + M^{-\theta/2}}}\,,
\end{equation}
and, for $d = 4$,
\begin{equation} \label{calW3}
\cal V'_{\rm{main},0} \;=\; \frac{4}{\nu^2 \sqrt{\det D}} \pbb{\frac{L}{2 \pi  W}}^d \abs{\log \omega} \pbb{1 + O \pbb{\frac{1}{\abs{\log \omega}} + M^{-\theta/2} + \omega}}\,.
\end{equation}
For $\cal V''_{\rm{main}}$ we perform a similar estimate, using \eqref{bound e A1 A2} with $\eta = 0$ and the fact that ${\cal V''_{\rm{main}}}$ contains $\tr \frac{\alpha S}{(1-\alpha S)^2}$ with $\abs{1 - \alpha} \geq c$ (see \eqref{V''0}--\eqref{bound e A1 A2}), which may be estimated by \eqref{bound res 2}. This gives
\begin{equation} \label{calW4}
\abs{\cal V''_{\rm{main}}} \;\leq\; \frac{C N}{M}\,.
\end{equation}
The estimates \eqref{V_prime_1_estimate} and \eqref{calW2}--\eqref{calW4} conclude the analysis of $\cal V_{\rm{main}}$, and hence the proof of Proposition \ref{prop: leading term 2} (i).
\end{proof}

\begin{proof}[Proof of Proposition \ref{prop: leading term} \rm{(ii)}]
The argument is similar to the proof of part (i), and we only focus on what is different.
We start from \eqref{Wprime0} for $d = 2$, which we write as
\begin{equation}
\cal V'_{\rm{main},0} \;=\; \qBB{\frac{4 + O(\omega)}{\pi^2\nu^2} \tr \frac{S}{\pb{1 - \ee^{\ii (A_1 - A_2)} S}^2}}
* \psi_1^{\leq, \eta}(E_1) * \psi_2^{\leq, \eta}(E_2) + O\pbb{\frac{N}{M}}\,.
\end{equation}
Invoking \eqref{precise asymptotics d=2} with $u = \omega(1 + O(M^{-\tau}))$ yields
\begin{multline*}
\cal V'_{\rm{main},0} \;=\; \frac{4}{\pi^2\nu^2\sqrt{\det D}} \pbb{\frac{L}{2 \pi W}}^2
\\
\times \qBB{\pbb{\frac{\pi}{1 - \ee^{\ii (A_1 - A_2)}}} * \psi_1^{\leq, \eta}(E_1) * \psi_2^{\leq, \eta}(E_2) + \pi (Q - 1) \abs{\log \omega} + O(1)}
+ O\pbb{\frac{N}{M}}\,.
\end{multline*}
We compute the convolution integral using \eqref{alpha_Delta_sqrt_kappa}:
\begin{align*}
&\mspace{-40mu} \pbb{\frac{\pi}{1 - \ee^{\ii (A_1 - A_2)}}} * \psi_1^{\leq, \eta}(E_1) * \psi_2^{\leq, \eta}(E_2)
\\
&\;=\; \ii \frac{\pi^2\nu }{2}  \pb{1 + O(\omega)}
\int \frac{\dd v_1}{2 \pi} \, \frac{\dd v_2}{2 \pi} \, \frac{1}{\omega + v_1 - v_2} \, \psi_1^{\leq, \eta}(v_1) \, \psi_2^{\leq, \eta}(v_2)
\\
&\;=\; \ii  \frac{\pi^2\nu }{2}  \pbb{\frac{1}{\omega} + \frac{\eta}{\omega^2} \int \frac{\dd v_1}{2 \pi} \, \frac{d v_2}{2 \pi} \, (v_1 - v_2) \psi^{\leq}(v_1) \, \psi^\leq(v_2) + O(1)}\,.
\end{align*}
Here we used that, by \eqref{supp_phi_trunc 2}, we always have $\abs{\omega + v_1 - v_2} \geq \omega/2$ on the support of the function $\psi_1^{\leq, \eta}(v_1) \psi_2^{\leq, \eta}(v_2)$. 
Taking the real part of this expression yields simply $O(1)$. Hence we conclude that
\begin{equation*}
\cal V_{\rm{main}} \;=\; 2 \re \cal V'_{\rm{main},0} + O\pbb{\frac{N}{M}}
\;=\; \frac{4}{\pi^2\nu^2 \sqrt{\det D}} \pbb{\frac{L}{2 \pi W}}^2 \pb{2 \pi (Q - 1) \abs{\log \omega} + O(1)}\,.
\end{equation*}
This concludes the proof of part (ii) of Proposition \ref{prop: leading term 2}.
\end{proof}

In order to prove part (iii) of Proposition \ref{prop: leading term 2}, we shall need the following 
local decay bound.
\begin{lemma} \label{lem: lct}
For all $b \in \N$ we have
\begin{equation*}
(\cal I^{-1} S^b)_{yz} \;\leq\; \frac{C}{M b^{d/2}} + \frac{C}{N}
\end{equation*}
for some constant $C$ depending only on $f$.
\end{lemma}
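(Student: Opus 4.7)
The plan is to exploit that $P \deq \cal I^{-1} S$ is a symmetric, doubly-stochastic, translation-invariant kernel on $\bb T$, hence diagonalized by the Fourier characters. Writing $\cal I^{-1} S^b = \cal I^{b-1} P^b$ and using $\cal I^{b-1} = 1 + O(b/M) = O(1)$ in the relevant range of $b$ (where in applications $b \leq M^\mu$ with $\mu < 1/3$), the claim reduces to
\begin{equation*}
P^b_{yz} \;\leq\; \frac{C}{M b^{d/2}} + \frac{C}{N}\,.
\end{equation*}
Isolating the trivial mode $p = 0$, which contributes exactly $1/N$, this in turn reduces to bounding $\sum_{p \neq 0} \absb{\hat P(p)}^b \leq C N / (M b^{d/2})$, where $\hat P(p) \deq \cal I^{-1} \sum_z S_{z0} \ee^{- 2 \pi \ii p \cdot z / L}$ is the Fourier multiplier of $P$ on the dual lattice.

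The essential input is a two-scale estimate for $\hat P$. For $\abs{p} W \leq c_0$, Taylor expansion using the evenness of $f$ together with the nondegeneracy $D_0 \geq c I$ from (A$f$) yields $\hat P(p) \leq 1 - c W^2 \abs{p}^2$. For $\abs{p} W \geq c_0$, positivity of $f$ combined with continuity and Riemann--Lebesgue decay of $\hat f$ yields a uniform spectral gap $\absb{\hat P(p)} \leq 1 - c$ for some $c > 0$ depending only on $f$. With these, the small-$p$ contribution is handled by a Gaussian Riemann-sum comparison,
\begin{equation*}
\sum_{0 < \abs{p} \leq c_0/W} \ee^{- c b W^2 \abs{p}^2} \;\leq\; C \pbb{\frac{L}{2 \pi}}^{\!d} \int_{\R^d} \ee^{-c b W^2 \abs{q}^2} \dd q \;\asymp\; \frac{L^d}{W^d b^{d/2}} \;\asymp\; \frac{N}{M b^{d/2}}\,,
\end{equation*}
and the large-$p$ contribution by combining the uniform gap with Plancherel,
\begin{equation*}
\sum_{\abs{p} > c_0/W} \absb{\hat P(p)}^b \;\leq\; (1-c)^{b-2} \sum_p \absb{\hat P(p)}^2 \;=\; (1-c)^{b-2} N \sum_z P_{0z}^2 \;\leq\; \frac{C N (1-c)^{b-2}}{M}\,,
\end{equation*}
which is bounded by $C N / (M b^{d/2})$ since $(1-c)^{b-2} \leq C'/b^{d/2}$ uniformly for $b \geq 2$ (exponential decay dominating polynomial). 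The case $b = 1$ follows directly from the pointwise bound $P_{yz} \leq C/M$.

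The main obstacle lies in the plateau regime $\abs{p} W \in [c_0, R]$ for the uniform spectral gap, where neither Taylor expansion nor asymptotic Fourier decay is directly effective. I would handle it via a continuity/compactness argument on a compact annulus, combined with the strict inequality $\absb{\hat f(q)} < \hat f(0)$ for $q \neq 0$, which is a consequence of $f \geq 0$ and $\supp f$ having positive Lebesgue measure; the tail $\abs{q} \geq R$ is handled by $f, \nabla f \in L^1$ yielding $\hat f(q) \to 0$ as $\abs{q} \to \infty$. The discretization error between $\sum_z f(z/W) \ee^{-\ii p \cdot z}$ and $W^d \hat f(pW)$ is of lower order as $W \to \infty$ and is controlled by the piecewise $C^1$ regularity of $f$.
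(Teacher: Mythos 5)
Your proof is correct and carries out exactly what the paper delegates to a citation: the paper's own proof of this lemma is a single sentence invoking a standard local central limit theorem via \cite{So1}, and your Fourier argument --- isolating the zero mode, the quadratic upper bound on the multiplier near the origin, the uniform spectral gap away from the origin (which is \eqref{SW prop 1} of Lemma~\ref{lem: basic properties of SW} after undoing the $\cal I^{-1}$ rescaling), the Gaussian Riemann-sum comparison for the bulk, and Parseval combined with the gap for the exponentially small remainder --- is precisely that local CLT spelled out. One remark on the factor $\cal I^{b-1}$ that you flag: the bound as literally stated (``for all $b \in \N$'') does in fact fail for $b \gg M \log N$, since already the $p=0$ Fourier mode contributes $\cal I^{b-1}/N$ to the diagonal entry; but the paper only ever invokes the lemma with $b \leq 2 M^\mu$ (see \eqref{no_osc}, and note that \eqref{LW_assump} guarantees $2M^\mu \leq (L/W)^2$ as needed in \eqref{lclt}), where $\cal I^{b-1} = 1 + O(M^{\mu-1})$, so the restriction you impose is both necessary for the literal statement and harmless for the paper's purposes.
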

\begin{proof}
This follows from a standard local central limit theorem; see for instance the proof in \cite{So1}. 
\end{proof}
In particular, for $1 \leq b \leq (L / W)^2$ we have
\begin{equation} \label{lclt}
(S^b)_{yz} \;\leq\; \frac{C}{M b^{d/2}}\,.
\end{equation}

\begin{proof}[Proof of Proposition \ref{prop: leading term 2} \rm{(iii)}] 
We assume that the exponent $\mu$ from Proposition \ref{prop: expansion with trunction} has been chosen so that
\begin{equation} \label{cond_mu_rho}
11 \mu \;<\; 12 \rho\,, \qquad 5 \mu \;<\; 1 + 2 \rho\,.
\end{equation}
(Recall that the most critical case is when both $\rho$ and $\mu$ are just slightly below 1/3.)
We use the truncated functions $\psi_i^{\leq, \eta}$ from \eqref{phi_cutoff1}, where $\theta$ is an exponent that satisfies
\begin{equation} \label{cond_theta}
6 \mu - 6 \rho \;<\; \theta \;<\; 2 \rho - \mu\,.
\end{equation}
We start from \eqref{VD_step2} with $E_1 = E_2 = E$, from which we get
\begin{multline} \label{VD_step5}
\cal V_{\rm{main}} \;=\; \sum_{b_1, b_2 = 0}^{[M^\mu] - 1} \sum_{b_3 = 1}^{[M^\mu]} \sum_{b_4 = 0}^{[M^\mu] - 1} \, 2 \re \pb{\gamma_{2 b_1 + b_3 + b_4} * \psi_1^{\leq, \eta}}(E) \, 2 \re \pb{\gamma_{2 b_2 + b_3 + b_4} * \psi^{\leq, \eta}_2}(E) \, \cal I^{b_1 + b_2} \tr S^{b_3 + b_4}
\\
+ O\pbb{\frac{N}{M}}\,;
\end{multline}
here we estimated the contribution of the two terms $(b_3,b_4) = (2,0), (1,1)$ excluded from $\cal A_\mu$ by $C N/M$. For the following we need the bound
\begin{equation} \label{no_osc}
\sum_{b_3 = 1}^{[M^\mu]} \sum_{b_4 = 0}^{[M^\mu]} \tr S^{b_3 + b_4} \;\leq\;
C N \sum_{b_3 = 1}^{[M^\mu]} \sum_{b_4 = 0}^{[M^\mu]} \frac{1}{M (b_3 + b_4)^{d/2}}  \;\leq\; \frac{C N}{M} R_4(M^{-\mu})\,,
\end{equation}
where in the first step we used \eqref{lclt}. Inserting the splitting $1 = \cal I^{-2 (b_1 + b_2)} + (1 - \cal I^{-2 (b_1 + b_2)})$ into the right-hand side of \eqref{VD_step5} and using \eqref{phi_geq_bound} as well as \eqref{no_osc} to estimate the second resulting term  yields
\begin{multline} \label{VD_step6}
\cal V_{\rm{main}} \;=\; \sum_{b_1, b_2 = 0}^{\infty} \sum_{b_3 = 1}^{[M^\mu]} \sum_{b_4 = 0}^{[M^\mu] - 1} \, 2 \re \pb{\gamma_{2 b_1 + b_3 + b_4} * \psi_1^{\leq, \eta}}(E) \, 2 \re \pb{\gamma_{2 b_2 + b_3 + b_4} * \psi^{\leq, \eta}_2}(E) \, \cal I^{-(b_1 + b_2)} \tr S^{b_3 + b_4}
\\
+ O\pbb{\frac{N}{M} M^{3 \mu - 1} R_4(M^{-\mu})}\,.
\end{multline}
Here we also used \eqref{phi_geq_bound} to extend the $b_1,b_2$-summation to $\infty$.

Next, we split
\begin{equation} \label{Vmain_D0}
\cal V_{\rm{main}} \;=\; 2 \re \pb{\cal V_{\rm{main}}' + \cal V_{\rm{main}}''} + O\pbb{\frac{N}{M} M^{3 \mu - 1} R_4(M^{-\mu})}
\end{equation}
using \eqref{2re2re} as in \eqref{vdd}. The error term $\cal V_{\rm{main}}''$ may be estimated exactly as in the proof of Proposition \ref{prop: leading term 2} (i), using the fact that $\phi_i^{\leq, \eta}$ has compact support; see \eqref{calW4}. The result is
\begin{equation} \label{calV4D0}
\abs{\cal V''_{\rm{main}}} \;\leq\; \frac{C N}{M}\,.
\end{equation}

What remains is the computation of
\begin{equation*}
\cal V_{\rm{main}}' \;=\; \sum_{b_1, b_2 = 0}^{\infty} \sum_{b_3 = 1}^{[M^\mu]} \sum_{b_4 = 0}^{[M^\mu] - 1} \, \pb{\gamma_{2 b_1 + b_3 + b_4} * \psi_1^{\leq, \eta}}(E) \, \pb{\ol{\gamma_{2 b_2 + b_3 + b_4}} * \psi^{\leq, \eta}_2}(E) \, \cal I^{-(b_1 + b_2)} \tr S^{b_3 + b_4}.
\end{equation*}
We begin by enforcing exponential convergence with a sufficient rate in the summation over $b_3$ and $b_4$. To that end, let $\xi$ be a constant satisfying
\begin{equation} \label{cond_epsilon}
6 \mu - 6 \rho \;<\; 3 \xi \;<\; \min \hb{2 \rho - \mu - \theta \,,\, \mu/2}
\end{equation}
(see \eqref{cond_mu_rho} and \eqref{cond_theta}),
and set
\begin{equation*}
J \;\deq\; 1 - M^{- \mu - \xi}
\end{equation*}
We introduce the splitting $1 = J^{b_3 + b_4} + (1 - J^{b_3 + b_4})$ into the summation in $\cal V'_{\rm{main}}$. In line with the abuse of notation $(\varphi * \chi)(E) \equiv \varphi(E) * \chi(E)$, in the following we use the notation
\begin{equation} \label{EAEi}
E_i \;=\; E - v_i \,, \qquad A_i \;=\; \arcsin E_i\,, 
\end{equation}
and abbreviate
\begin{equation} \label{conv_integral}
\varphi(E_1, E_2) * \psi_1^{\leq, \eta}(E) * \psi_2^{\leq, \eta}(E) \;\equiv\; \int \dd v_1\, \dd v_2 \, \varphi(E - v_1, E - v_2) \, \psi_1^{\leq, \eta}(v_1) \,\psi_2^{\leq, \eta}(v_2)\,. 
\end{equation}
Now the error term, resulting from the term $1 - J^{b_3 + b_4}$ in the above splitting, is
\begin{multline*}
\sum_{b_3 = 1}^{[M^\mu]} \sum_{b_4 = 0}^{[M^\mu] - 1} \,
\pbb{T(E_1) \ol{T(E_2)} \frac{\ee^{\ii A_1}}{1 + \cal I^{-1} \ee^{2 \ii A_1}} \frac{\ee^{-\ii A_2}}{1 + \cal I^{-1} \ee^{-2 \ii A_2}} (\ee^{\ii (A_1 - A_2)})^{b_3 + b_4} }
* \psi_1^{\leq, \eta}(E) * \psi_2^{\leq, \eta}(E)
\\
\times (1 - J^{b_3 + b_4}) \tr S^{b_3 + b_4} \;\leq\; CM^{-\xi} \sum_{b_3, b_4 = 0}^{[M^\mu]} \tr S^{b_3 + b_4} \;\leq\; \frac{C N}{M} M^{-\xi} R_4(M^{-\mu})\,;
\end{multline*}
here we obtained the first expression by using \eqref{claim about gamma n}
 and summing up the geometric series in the indices $b_1,b_2$; the first inequality follows from the estimate
\begin{equation} \label{1/cos expanded}
\frac{\ee^{\ii A_1}}{1 + \cal I^{-1} \ee^{2 \ii A_1}} \;=\; \frac{1}{2 \sqrt{1 - E^2}} + O\pbb{\frac{1}{M} + \eta M^{\theta/2}}
\;=\; \frac{1}{\pi\nu(E)} + O\pbb{\frac{1}{M} + \eta M^{\theta/2}}
\,,
\end{equation}
valid on the support of the convolution integral (see \eqref{supp_phi_trunc 2}),
and from  $\abs{1 - J^{b_3 + b_4}} \leq CM^{-\xi}$; the second inequality follows from \eqref{no_osc}. We therefore conclude that
\begin{multline} \label{Vmain7}
\cal V_{\rm{main}}' \;=\;
\sum_{b_3 = 1}^{\infty} \sum_{b_4 = 0}^{\infty} \,
\pbb{T(E_1) \ol{T(E_2)} \frac{\ee^{\ii A_1}}{1 + \cal I^{-1} \ee^{2 \ii A_1}} \frac{\ee^{-\ii A_2}}{1 + \cal I^{-1} \ee^{-2 \ii A_2}} (\ee^{\ii (A_1 - A_2)})^{b_3 + b_4} }
* \psi_1^{\leq, \eta}(E) * \psi_2^{\leq, \eta}(E)
\\
\times \tr (JS)^{b_3 + b_4} + O \pbb{\frac{N}{M} M^{-\xi} R_4(M^{-\mu})}\,,
\end{multline}
where we used \eqref{claim about gamma n} as well as the definitions \eqref{def_T} and \eqref{EAEi} 
 followed by an explicit summation over $b_1$ and $b_2$.  In order to simplify the right-hand side 
of \eqref{Vmain7}, we use \eqref{1/cos expanded} and $T(E - v_i) = 2 + O(M^{-1})$ to replace 
 $\ee^{\ii A_i} / (1 + \cal I^{-1} \ee^{2 \ii A_i})$ and $T(E_i)$  with $1/(2\sqrt{1-E^2})$ and 2, respectively.   The contribution of the error terms in both  replacements can be estimated by using the following general estimate for any fixed $k \in \N$ (here we use it for $k=2$):
\begin{equation} \label{no_osc2}
\tr \frac{S}{(1 - J S)^k} \;=\; \sum_{d_1 = 1}^\infty \sum_{d_2, \dots, d_k = 0}^\infty \tr (J S)^{d_1 + \cdots + d_k} \;\leq\; \frac{C_kN}{M} R_{2k}(M^{-\mu - \xi})\,,
\end{equation}
which plays a role similar to that of \eqref{no_osc} in providing a robust a-priori bound 
 which does not make use of oscillations. 
The estimate in \eqref{no_osc2} follows from
 \eqref{bound res 2} together with \eqref{cond_epsilon}. 
We find
\begin{multline} \label{Vmain6}
\cal V_{\rm{main}}' \;=\;
\frac{4}{\pi^2\nu^2} \sum_{b_3 = 1}^{\infty} \sum_{b_4 = 0}^{\infty} \,
\pb{(\ee^{\ii (A_1 - A_2)})^{b_3 + b_4} }
* \psi_1^{\leq,\eta}(E) * \psi_2^{\leq,\eta}(E) \, \tr (JS)^{b_3 + b_4}
\\
+ O \pbb{\frac{N}{M} R_4(M^{-\mu - \xi}) \pbb{\frac{1}{M} + \eta M^{\theta/2}} + \frac{N}{M} R_4(M^{-\mu}) M^{-\xi} }\,.
\end{multline}
Next, we sum up the geometric series on the right-hand side of \eqref{Vmain6}. We use the estimate, valid on the support of the convolution integral,
\begin{align*}
\tr \frac{ \ee^{\ii (A_1 - A_2)} JS}{\pb{1 - \ee^{\ii (A_1 - A_2)} JS}^2} &\;=\; \tr \frac{S}{\pb{\ee^{- \ii (A_1 - A_2)} - JS}^2} + O \pbb{\frac{N}{M} R_4(M^{-\mu - \xi}) M^{-\mu - \xi}}
\\
&\;=\; \tr \frac{S}{\pb{1 + \ii (1 - E^2)^{-1/2} (v_1 - v_2) - JS}^2}
\\
& \qquad + O \pbb{\frac{N}{M} R_4(M^{-\mu - \xi}) M^{-\mu - \xi} + \frac{N}{M} R_6(M^{-\mu - \xi}) M^\theta \eta^2}\,;
\end{align*}
here the last step follows from \eqref{alpha_Delta_sqrt_kappa} and  a short argument using a resolvent expansion together with \eqref{no_osc2} and the bound
\begin{equation}
M^\theta \eta^2 \;\leq\; M^{-\mu - \xi} M^{-3 \xi}\,,
\end{equation}
as follows from \eqref{cond_epsilon}. We omit further details. Plugging this into the right-hand side of \eqref{Vmain6} and recalling \eqref{cond_epsilon} as well as the definition \eqref{def R} of $R_k$, we get
\begin{multline} \label{Vmain8}
\cal V_{\rm{main}}' \;=\;
\frac{4}{\pi^2\nu^2} \, \frac{1}{(2 \pi)^2}\int \dd v_1 \, \dd v_2 \, \psi_1^\eta(v_1) \, \psi_2^\eta(v_2) \, \tr \frac{S}{\pb{1 + \ii (1 - E^2)^{-1/2} (v_1 - v_2) - JS}^2} 
\\
+ O \pbb{\frac{N}{M} R_4(M^{-\mu}) M^{-\xi}}\,.
\end{multline}
Recalling \eqref{cond_epsilon} and $\eta = M^{\rho}$, we find that the error term may be estimated by $\frac{N}{M} R_4(M^{-\mu}) M^{-\xi} \leq R_4(\eta) M^{-c_1}$ for some constant $c_1 > 0$. Going back to \eqref{Vmain_D0} and recalling \eqref{calV4D0} as well as the second inequality of \eqref{cond_mu_rho}, we find
\begin{equation*}
\cal V_{\rm{main}} \;=\;
\frac{8}{\pi^2\nu^2} \, \re \frac{1}{2 \pi} \int \dd v \, (\phi_1^\eta * \psi_2^\eta)(v) \, \tr \frac{S}{\pb{1 + \ii (1 - E^2)^{-1/2} v - JS}^2} + O \pbb{\frac{N}{M} (1 + R_4(\eta) M^{-c_1})}
\end{equation*}
for some constant $c_1 > 0$ (recall from \eqref{phipsi} that $\phi_1^\eta (E) = \psi_1^\eta(-E)$).
 Using Proposition \ref{prop:S_int} below, with
$e \deq \psi_1 * \phi_2$, $b = (1 - E^2)^{-1/2}$, and the observation that
\begin{equation*}
2 \re \int_0^\infty \dd t \, t^{1-d/2} \, \ol{\wh e(t)} \;=\; \int_\R \dd t \, \abs{t}^{1 - d/2} \, \ol {\wh \phi_1(t)} \, \wh \phi_2(t) \;=\; V_d(\phi_1, \phi_2)\,, \qquad 2 \, \ol {\wh e(0)} \;=\; 2 \, \ol {\wh \phi_1(0)} \, \wh \phi_2(0) \;=\; V_4(\phi_1, \phi_2)\,,
\end{equation*}
we find for $d \leq 3$ that
\begin{equation*}
\cal V_{\rm{main}} \;=\; \frac{4}{\pi^2\nu^2\sqrt{\det D}} \pbb{\frac{L}{2 \sqrt{\pi} W}}^d \pbb{\frac{2\eta}{\pi\nu}}^{d/2 - 2} V_d(\phi_1, \phi_2) + O \pbb{\frac{N}{M} (1 + R_4(\eta) M^{-c_1})}
\end{equation*}
and for $d = 4$ that
\begin{equation*}
\cal V_{\rm{main}} \;=\; \frac{4\abs{\log \eta}}{\pi^2\nu^2 \sqrt{\det D}} \pbb{\frac{L}{2 \sqrt{\pi} W}}^4 \, V_4(\phi_1, \phi_2)  + O \pbb{\frac{N}{M}}\,.
\end{equation*}
This concludes the proof of Proposition \ref{prop: leading term 2} (iii).
\end{proof}

The proof of the following result is given in Appendix \ref{appendix: S}.
\begin{proposition} \label{prop:S_int}
Suppose that \eqref{LW_assump} holds. Let $b > 0$ be fixed and $\cal J \deq 1 - M^{-c_2} \eta$ for some $c_2 > 0$.  Fix a smooth real function $e \in L^1(\R)$ satisfying the condition \textbf{(C2)} (see \eqref{non_Cauchy}), and recall the notation  $e^\eta(v) = \eta^{-1}e(\eta^{-1}v)$.
Then for $d \leq 3$ we have
\begin{equation} \label{intR3}
\frac{1}{2 \pi} \int \dd v \, e^\eta(v) \tr \frac{S}{\p{1 + \ii b v -\cal JS}^2} \;=\; \frac{(b \eta)^{d/2 - 2}}{\sqrt{\det D}} \pbb{\frac{L}{2 \sqrt{\pi} W}}^d \int_0^\infty \dd t \, t^{1-d/2} \, \ol{\wh e(t)} + O \pbb{\frac{N}{M} R_4(\eta) M^{-c_0}}
\end{equation}
for some constant $c_0 > 0$, and for $d = 4$ we have
\begin{equation} \label{intR4}
\frac{1}{2 \pi} \int \dd v \, e^\eta(v) \tr \frac{S}{\p{1 + \ii b v -\cal JS}^2} \;=\; \frac{\abs{\log \eta}}{\sqrt{\det D}} \pbb{\frac{L}{2 \sqrt{\pi} W}}^4 \, \ol{\wh e(0)}  + O \pbb{\frac{N}{M}}\,.
\end{equation}
\end{proposition}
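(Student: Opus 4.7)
The strategy will be to diagonalize $S$ via the Fourier transform on $\bb T$, perform the $v$-integration in closed form, and then analyse the resulting sum over momenta by a Riemann sum approximation. Let $\{s_q\}_{q \in (2 \pi / L) \bb T}$ denote the eigenvalues of $S$. Since $S$ is translation invariant with profile $\frac{1}{M - 1} f(\cdot / W)$, assumption (A$f$) gives the small-momentum expansion
\begin{equation*}
s_q \;=\; 1 - W^2 (q, D q) + O\pb{W^4 \abs{q}^4}
\end{equation*}
for $\abs{W q} \leq \epsilon$, and a uniform bound $\abs{s_q} \leq 1 - c$ for $\abs{W q} \geq \epsilon$, for some small $\epsilon > 0$.

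First I would perform the $v$-integration. A direct contour computation yields, for any $\alpha$ with $\re \alpha > 0$ and any $b > 0$,
\begin{equation*}
\frac{1}{2 \pi} \int \dd v \, e^\eta(v) \, \frac{1}{(\alpha + \ii b v)^2} \;=\; \int_0^\infty \dd t \, \ol{\wh e(\eta t)} \, \frac{t}{b^2} \, \ee^{-\alpha t / b}\,,
\end{equation*}
which follows from $\wh{(\alpha + \ii b v)^{-2}}(t) = \ind{t > 0} \, (t/b^2) \, \ee^{-\alpha t / b}$ (a residue calculation) combined with Parseval. Applied with $\alpha = 1 - \cal J s_q$ to each spectral mode and after the change of variables $s = t/b$, the left-hand side of \eqref{intR3}--\eqref{intR4} becomes
\begin{equation*}
\sum_q s_q \int_0^\infty \dd s \, s \, \ol{\wh e(b \eta s)} \, \ee^{-(1 - \cal J s_q) s}\,.
\end{equation*}
I would then replace the $q$-sum by a Riemann integral on the continuous torus (the error, of order $(W/L)^2$ per unit smoothness, is negligible under \eqref{LW_assump}), restrict to small momenta $\abs{q} \lesssim 1/W$ (the complement is exponentially small in $s$ since $\abs{s_q}$ is bounded away from $1$), and substitute $q = p / W$. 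Using $1 - \cal J s_{p/W} = M^{-c_2} \eta + (p, D p) + O(\abs{p}^4)$ and performing the Gaussian $p$-integral, the principal contribution becomes
\begin{equation*}
\pbb{\frac{L}{2 \pi W}}^d \frac{\pi^{d/2}}{\sqrt{\det D}} \int_0^\infty \dd s \, s^{1 - d/2} \, \ol{\wh e(b \eta s)} \, \ee^{-M^{-c_2} \eta s}\,.
\end{equation*}

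For $d \leq 3$, substituting $u = b \eta s$ turns the $s$-integral into $(b \eta)^{d/2 - 2} \int_0^\infty u^{1 - d/2} \, \ol{\wh e(u)} \, \ee^{-M^{-c_2} u / b} \, \dd u$, whose $M \to \infty$ limit is finite by integrability at $u = 0$; combined with $\pi^{d/2} (2 \pi W)^{-d} = (2 \sqrt{\pi} W)^{-d}$, this gives \eqref{intR3}. For $d = 4$ the substitution $u = b \eta s$ is not useful because $u^{-1} \ol{\wh e(u)}$ is logarithmically divergent at the origin; instead one keeps the $s$ variable, replaces $\ol{\wh e(b \eta s)}$ by $\ol{\wh e(0)}$ in the inner region with error controlled by the decay of $\wh e$, and integrates $s^{-1}$ between the natural cutoffs $s \sim W^{-2}$ (below which the $p$-integral saturates at $(2 \pi W)^d$) and $s \sim (b \eta)^{-1}$, producing a factor of $\abs{\log \eta}$ and yielding \eqref{intR4}.

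The main obstacle will be the careful bookkeeping of the error terms, namely: (i) the Riemann sum error, which uses smoothness in $q$ and the lower bound \eqref{LW_assump}; (ii) the quartic remainder in the expansion of $s_q$, whose integrability against the Gaussian weight for $d = 4$ is precisely what requires the $(4 + c)$-moment bound \eqref{moment of f}; (iii) the truncation of the $p$-domain to all of $\R^d$, which produces an exponentially small boundary contribution; and (iv) the removal of $\cal J$ (cost $M^{-c_2} \eta$ in the exponent) and of the non-Gaussian corrections to $s_q$ (cost of type $Q - 1$), which are subleading for $d \leq 4$. Combining these four estimates with the error from replacing $\wh e(b \eta s)$ by $\wh e(0)$ produces the claimed remainder $O\pb{(N/M) R_4(\eta) M^{-c_0}}$.
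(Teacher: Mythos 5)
Your strategy — diagonalize $S$ by Fourier transform, perform the $v$-integration first via the heat-kernel representation $(\alpha+\ii b v)^{-2}\mapsto (t/b^2)\ee^{-\alpha t/b}$, and then analyse the resulting $s$-integral after a Riemann-sum and Gaussian approximation — is the same as the paper's (Appendix B, built on \eqref{FT_identity}, \eqref{def_U}, and \eqref{G1}). For $d\leq 3$ your sketch is correct and reproduces \eqref{intR3}.

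For $d=4$ there is a concrete error in the cutoff that would change the answer. After substituting $q = p/W$, you restrict to small momenta $|q|\lesssim 1/W$, i.e.\ $|p|\lesssim 1$; this restriction is necessary because the quadratic expansion $s_{p/W} = 1 - (p,D p) + O(|p|^4)$ is only valid there. Once the $p$-domain has been shrunk to a fixed bounded set, the Gaussian integral $\int_{|p|\lesssim 1}\dd p\, \ee^{-s(p,Dp)}$ approximates $\pi^{d/2}s^{-d/2}/\sqrt{\det D}$ only for $s\gtrsim 1$, and for smaller $s$ it saturates at an $O(1)$ volume, not at $(2\pi W)^d$. Hence the natural lower cutoff in your $s$ variable is $s\sim 1$, not $s\sim W^{-2}$. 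With your cutoff you would compute $\int_{W^{-2}}^{(b\eta)^{-1}} s^{-1}\,\dd s = |\log\eta| + 2\log W + O(1)$, and since $2\log W \asymp \tfrac12\log M$ while $|\log\eta| = \rho\log M$ with $\rho<1/3$, the spurious term $2\log W$ dominates the true answer. The correct cutoff $s\sim 1$ gives $\int_{O(1)}^{(b\eta)^{-1}} s^{-1}\,\dd s = |\log\eta| + O(1)$, matching \eqref{intR4}; this corresponds in the paper's parametrization (where $s_{\text{paper}} = b\eta\, s_{\text{yours}}$) to the integration range $[\eta, M^\delta]$ used in \eqref{T_computed}. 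A related imprecision: the large-momentum contribution is not controlled by exponential decay of $\ee^{-(1-\cal J s_q)s}$ alone (which is only $O(1)$ for $s\lesssim 1$), but by the fact that $\sum_q |\wh S_W(q)|^2 \lesssim N/M$ after subtracting the single $\wh S_W(q)$, as done in the paper around \eqref{split large momenta}.
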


\section{Extraction of the leading term and estimate of the error terms } \label{sec: part 2}

In this section we give the core of the proof: the estimate of the error in \eqref{main_prop_error}, hence completing the proof of Proposition \ref{prop: main}. Once Proposition \ref{prop: main} is proved, Theorems \ref{thm: main result}--\ref{thm: Theta 1} will follow easily (see Section~\ref{sec:conclusion_proof} below).
 We recall that, as in Section \ref{sec:3}, we set $\beta = 2$ throughout this section. How to modify the arguments for $\beta = 1$ is sketched in Section \ref{sec:sym}  below.

The basic strategy is to compute the expectation in \eqref{truncated series} by plugging \eqref{def: nb} into it and classifying all possible label configurations $(x_i)$ according to the partition induced by coincidences among the labels. After an appropriate resummation, we shall be able to identify the leading terms which give rise to $\cal V_{\rm{main}}$ and the error terms, which are to be estimated. In \cite{EK3}, this strategy was carried out under various simplifying assumptions, denoted by {\bf (S1)}--{\bf (S3)} there.
Roughly, these simplifications stated that among all possible partitions only the pairings matter, that coincidences of labels that are not imposed by the pairings may be neglected, and that the nonbacktracking condition from \eqref{def: nb} may be neglected beyond some fundamental restrictions it imposes on the class of admissible partitions.

In the following, we deal with all scenarios that were ignored under these simplifications.  In particular, we deal with
blocks of partitions that are larger than two. This requires to introduce more involved structures, which are accompanied by heavier notation. In addition, unlike the pairings from \cite{EK3}, partitions with larger blocks do not have an intuitive graphical representation in terms of bridges joining edges. 
Our analysis builds on that developed in \cite{EK3}; for the convenience of the reader,  we summarize some concepts from Sections 4.1 and 4.2 of \cite{EK3}
in the beginnings of Sections \ref{sec_41} and \ref{sec_42} respectively. 

Although the contributions of all exceptional scenarios are ultimately negligible, they cannot be estimated brutally by absolute value. The reason is the strongly oscillatory character of the expressions we have to estimate. Even if two summation labels coincide, and hence result in a gain in the form of a small prefactor, we cannot afford to estimate all remaining summations by the sum of absolute values of summands. Instead, we have to introduce a more involved, local, bookkeeping of various index coincidences, in which some parts of the summation are estimated by taking the absolute value inside while others are estimated by exploiting oscillations among the summands. Thus, the contribution of any ladder that is not affected by the simplifications of \cite{EK3} still has to be summed up explicitly (i.e.\ exploiting oscillations).

\subsection{Graphs and partitions of edges} \label{sec_41}

We have to compute $\wt F^\eta(E_1, E_2)$ defined in \eqref{truncated series}.
In order to express the nonbacktracking powers of $H$ in terms of the entries of $H$, it is convenient to index the two multiple summations arising from \eqref{def: nb} (when plugged into \eqref{truncated series}) using a graph. This graphical language was introduced in \cite[Section 4.1]{EK3}, and we summarize it here for the convenience of the reader. We introduce a directed graph $\cal C(n_1, n_2) \deq \cal C_1(n_1) \sqcup \cal C_2(n_2)$ defined as the disjoint union of a directed chain $\cal C_1(n_1)$ with $n_1$ edges and a directed chain $\cal C_2(n_2)$ with $n_2$ edges. Throughout the following, to simplify notation we shall often omit the arguments $n_1$ and $n_2$ from the graphs $\cal C$, $\cal C_1$, and $\cal C_2$. For an edge $e \in E(\cal C)$, we denote by $a(e)$ and $b(e)$ the initial and final vertices of $e$. Similarly, we denote by $a(\cal C_i)$ and $b(\cal C_i)$ the initial and final vertices of the chain $\cal C_i$. We call vertices of degree two \emph{black} and vertices of degree one \emph{white}. See Figure \ref{fig: cal C} for an illustration of $\cal C$ and for the convention of the orientation.

\begin{figure}[ht!]
\begin{center}
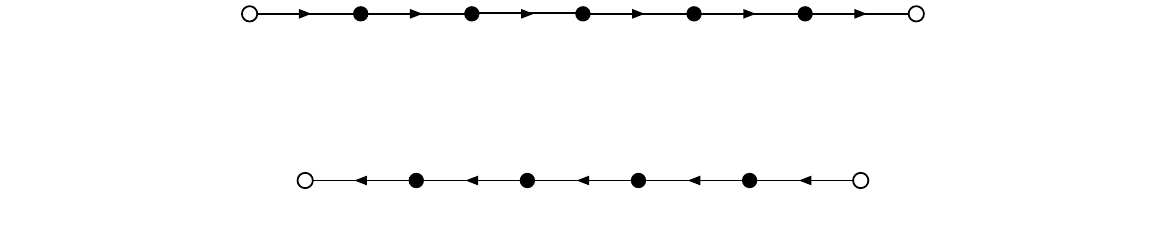
\end{center}
\caption{The graph $\cal C = \cal C_1 \sqcup \cal C_2$. Here we chose $n_1 = 6$ and $n_2 = 5$. We indicate the orientation of the chains $\cal C_1$ and $\cal C_2$ using arrows. In subsequent pictures, we systematically drop the arrows to avoid clutter, but we consistently use this orientation when drawing graphs. \label{fig: cal C}} 
\end{figure}

We assign a label $x_i \in \bb T$ to each vertex $i \in V(\cal C)$, and write $\f x = (x_i)_{i \in V(\cal C)}$.
For an edge $e \in E(\cal C)$ we define the associated pairs of ordered and unordered labels
\begin{equation*}
x_{e} \;\deq\; (x_{a(e)}, x_{b(e)}) \,, \qquad [x_e] \;\deq\; \{x_{a(e)}, x_{b(e)}\}\,.
\end{equation*}
Using the graph $\cal C = \cal C(n_1, n_2)$ we may now write the covariance
\begin{equation} \label{exp_HH_sumx}
\avgb{\tr H^{(n_1)}\,; \tr H^{(n_2)}} \;=\; \E \qb{\pb{\tr H^{(n_1)}}\, \pb{\tr H^{(n_2)}}} - \E \pb{\tr H^{(n_1)}} \E \pb{\tr H^{(n_2)}} \;=\; \sum_{\f x \in {\bb T}^{V(\cal C)}} I(\f x) A(\f x)\,,
\end{equation}
where we introduced
\begin{equation} \label{definition of A}
A(\f x) \;\deq\;
\E \pBB{\prod_{e \in E(\cal C)} H_{x_e}} - \E \pBB{\prod_{e \in E(\cal C_1)} H_{x_e}} \E \pBB{\prod_{e \in E(\cal C_2)} H_{x_e}}\,,
\end{equation}
and the indicator function
\begin{equation} \label{def_I_ind}
I(\f x) \;\deq\; I_0(\f x) \prod_{\substack{i,j \in V(\cal C) \col \\ \dist(i,j) = 2}} \ind{x_i \neq x_j}\,, \qquad
I_0(\f x) \;\deq\; \ind{x_{a(\cal C_1)} = x_{b(\cal C_1)}} \ind{x_{a(\cal C_2)} = x_{b(\cal C_2)}}\,.
\end{equation}
The indicator function $I_0(\f x)$ implements the fact that the final and initial vertices of each chain have the same label, while $I(\f x)$ in addition implements the nonbacktracking condition. When drawing $\cal C$ as in Figure \ref{fig: cal C}, we draw vertices of $\cal C$ with degree two using black dots, and vertices of $\cal C$ with degree one using white dots. The use of two different colours also reminds us that each black vertex $i$ gives rise to a nonbacktracking condition in $I(\f x)$, constraining the labels of the two neighbours of $i$ to be distinct.

In order to compute the expectation in \eqref{definition of A}, we decompose the label configurations $\f x$ according to partitions of $E(\cal C)$.  The following definition was given, in a reduced form, in \cite[Definition 4.2]{EK3}. 
\begin{definition} \label{def:partitions}
\begin{enumerate}
\item
We denote by $\fra P(U)$ for the set of partitions of a set $U$ and by $\fra M(U) \subset \fra P(U)$ the set of pairings (or matchings) of $U$.  (In the applications below the set $U$ will be either $E(\cal C)$ or $V(\cal C)$.) We call blocks of a pairing \emph{bridges}.
\item
We introduce the usual partial order $\leq$ on $\fra P(E(\cal C))$, where $\Pi \leq \Gamma$ means that $\Pi$ is a refinement of $\Gamma$.
\item
For a label configuration $\f x \in \bb T^{V(\cal C)}$ we define the partition $P(\f x) \in \fra P(E(\cal C))$ as the partition of $E(\cal C)$ generated by the equivalence relation $e \sim e'$ if and only if $[x_e] = [x_{e'}]$. Similarly, $P_o(\f x) \in \fra P(E(\cal C))$ is the partition 
of $E(\cal C)$ generated by the equivalence relation $e \sim e'$ if and only if $x_e = x_{e'}$. (Here the subscript ``o'' stands for ``ordered'').
\end{enumerate}
\end{definition}

Here ends the summary of the material from \cite[Section 4.1]{EK3}. Next, we introduce the new concept  of a \emph{halving partition}.
For a partition $\Xi \in \fra P(E(\cal C))$ and a subset $\gamma \subset E(\cal C)$ we define the restriction
\begin{equation*}
\Xi |_\gamma \;\deq\; \hb{\xi \cap \gamma \col \xi \in \Xi \,,\, \xi \cap \gamma \neq \emptyset}\,.
\end{equation*}
Moreover, for $\gamma \subset E(\cal C)$ and $\Xi \in \fra P(E(\cal C))$ we define
\begin{equation} \label{def_mu_Xi}
\mu_\gamma(\Xi) \;\deq\; \indb{\Xi |_\gamma \text{ consists of two blocks of equal size}}\,.
\end{equation}
We also set $\mu_\emptyset(\Xi) \deq 1$.
For a partition $\Gamma \in \fra P(E(\cal C))$ we define the subset
\begin{equation} \label{def_fra_H}
\fra H (\Gamma) \;\deq\; \hb{\Xi \leq \Gamma \col \mu_\gamma(\Xi) = 1 \text{ for each } \gamma \in \Gamma}\,.
\end{equation}
Thus, $\fra H(\Gamma)$ is the subset of partitions $\Xi$ that refine each block of $\Gamma$ into two pieces of equal size.
We call any $\Xi \in\fra H(\Gamma)$ a \emph{halving partition of $\Gamma$}.  If $\Gamma$ is a pairing, $\Xi \in \fra H(\Gamma)$ is simply the atomic partition.

Armed with these definitions, we return to the computation of \eqref{definition of A} to be plugged into \eqref{exp_HH_sumx}.
We first focus on the first term of \eqref{definition of A}. The idea is to partition all edges $e \in E(\cal C)$ first using the unordered labels $[x_e]$, yielding a partition $\Gamma$, and second using the ordered labels $x_e$, yielding a finer partition $\Xi \in \fra H(\Gamma)$. 
In other words, the blocks of $\Gamma$ collect those edges that have the same unordered labels, while
each such block is further subdivided into two smaller blocks according to the two possible
orderings of the same unordered label. We express this constraint on $\f x$ using the indicator function 
\begin{equation}\label{def:B}
B_{\Gamma, \Xi}(\f x) \;\deq\; \ind{P(\f x) = \Gamma} \, \ind{P_o(\f x) = \Xi}\,.
\end{equation}
Notice that the partitions $\Gamma$ and $\Xi$ yield a nonzero contribution only if each block of $\Gamma$ is subdivided by $\Xi$ into two blocks of equal size,  because $ \E A_{xy}^k A_{yx}^l = 0$ unless $k=l$ (recall from \eqref{HA} that $H_{xy} = \sqrt{S_{xy}} A_{xy}$).
This justifies the restriction $\Xi\in \fra H(\Gamma)$.
Thus, we write
\begin{align}
\E \prod_{e \in E(\cal C)} H_{x_e}
&\;=\; \sum_{\Gamma \in \fra P(E(\cal C))} \ind{P(\f x) = \Gamma} \prod_{\gamma \in \Gamma} \E \prod_{e \in \gamma} H_{x_e}
\notag \\ \label{A1}
&\;=\; \sum_{\Gamma \in \fra P(E(\cal C))} \sum_{\Xi \in \fra H(\Gamma)}  B_{\Gamma, \Xi}(\f x) \, \prod_{\gamma \in \Gamma} \pbb{\mu_\gamma(\Xi) \prod_{e \in \gamma} \sqrt{S_{x_e}}}\,,
\end{align}
Here in the first step we used that $H_{x_e}$ and $H_{x_{e'}}$ are independent if $[x_e] \neq [x_{e'}]$, and in the second step that $\E \prod_{e \in \gamma} H_{x_e}$ vanishes unless the partition $P_o(\f x) \vert_\gamma$ consists of two blocks of equal size.

Similarly, we find for the second term of \eqref{definition of A} that
\begin{equation} \label{A2}
\E \pBB{\prod_{e \in E(\cal C_1)} H_{x_e}} \E \pBB{\prod_{e \in E(\cal C_2)} H_{x_e}}
\;=\; \sum_{\Gamma \in \fra P(E(\cal C))} \sum_{\Xi \in \fra H(\Gamma)} B_{\Gamma, \Xi}(\f x) \, \prod_{\gamma \in \Gamma} \pbb{\mu_{\gamma \cap E(\cal C_1)}(\Xi) \mu_{\gamma \cap E(\cal C_2)}(\Xi) \prod_{e \in \gamma} \sqrt{S_{x_e}}}\,.
\end{equation}
In order to express $A(\f x)$ (see \eqref{definition of A}) we subtract \eqref{A2} from \eqref{A1}, which yields
\begin{equation} \label{A3}
A(\f x) \;=\; \prod_{e \in E(\cal C)} \sqrt{S_{x_e}} \sum_{\Gamma \in \fra P_c(E(\cal C))}
\sum_{\Xi \in \fra H(\Gamma)} B_{\Gamma, \Xi}(\f x) \, D(\Gamma, \Xi)\,,
\end{equation}
where we defined
\begin{equation} \label{def_D_ind}
D(\Gamma, \Xi) \;\deq\; \prod_{\gamma \in \Gamma} \mu_\gamma(\Xi) - \prod_{\gamma \in \Gamma} \pbb{\mu_{\gamma \cap E(\cal C_1)}(\Xi) \mu_{\gamma \cap E(\cal C_2)}(\Xi)}
\end{equation}
as well as the set of \emph{connected partitions}
\begin{equation} \label{conn_part}
\fra P_c(E(\cal C)) \;\deq\; \hb{\Gamma \in \fra P(E(\cal C)) \col \text{there is a $\gamma \in \Gamma$ such that } \gamma \cap E(\cal C_1) \neq \emptyset \text{ and } \gamma \cap E(\cal C_2) \neq \emptyset}\,.
\end{equation}
The restriction of the summation in \eqref{A3} from $\Gamma \in \fra P(E (\cal C))$ to $\Gamma \in \fra P_c(E (\cal C))$ follows from the observation that if $\Gamma$ is such that each $\gamma \in \Gamma$ satisfies $\gamma \subset E(\cal C_1)$ or $\gamma \subset E(\cal C_2)$, then $D(\Gamma,\Xi) = 0$ for all $\Xi \in \fra H(\Gamma)$.
We also record that $D(\Gamma, \Xi)$ is either 0 or 1. In analogy to \eqref{conn_part}, we also define the subset $\fra M_c(E(\cal C)) \deq \fra P_c(E(\cal C)) \cap \fra M(E(\cal C))$ of connected pairings.

\subsection{The refining pairing of a partition} \label{sec:refining_partition}
Next, we break up larger blocks of the partition $\Gamma \in \fra P(E(\cal C))$ into pairings.  The blocks of $\Gamma$ were
defined by the label coincidences, which remain unchanged by the breaking up of $\Gamma$; hence this breaking up of $\Gamma$ may seem artificial. It is however a very convenient technical device, since it allows us to reduce the estimates on partitions with arbitrarily large blocks to estimates on pairings. In particular, it allows us to use the machinery developed in \cite{EK3} to control the oscillations. 

Thus, given a  partition $\Gamma \in \fra P(E(\cal C))$ and one of its halving partitions $\Xi\in \fra H(\Gamma)$,
we introduce a rule for breaking up  $\Gamma$
 into a \emph{refining pairing} $\Pi = \Phi(\Gamma, \Xi) \in \fra M(E(\cal C))$. Although there is much arbitrariness in the choice of such a pairing, we define it precisely so as to ensure that, apart from the obvious condition $\Pi\leq\Gamma$, it fulfils the three  following properties which will be important for the rest of the argument.
\begin{enumerate}
\item[(a)]  The two edges of each bridge of $\Pi$ belong
 to different blocks of $\Xi$.
\item[(b)] If $\Gamma \in \fra P_c(E(\cal C))$ is a connected partition and $\Xi \in \fra H(\Gamma)$ then $\Phi(\Gamma, \Xi) \in \fra M_c(E(\cal C))$ is a connected pairing. In other words, connectedness is maintained after
the refinement.
\item[(c)] How a block $\gamma \in \Gamma$ is broken up into pairings does not depend on the other blocks of $\Gamma$.
\end{enumerate}
The refinement  operation $\Phi$  can be easily defined using a
greedy algorithm that successively  breaks up large blocks of $\Gamma$ into bridges,
 such that these three properties are satisfied at each step. 
 In particular, when constructing $\Pi$ from $\Gamma$ and $\Xi$, 
we always break up blocks of $\Gamma$ into smaller blocks in such 
a way that the restriction of $\Xi$ to these smaller blocks is 
again a halving partition.

The precise definition of the operation $\Phi$ is the following.
We introduce a total order on the edges $E(\cal C)$ as follows. The edges of $\cal C_1$ are increasing from left to right (see Figure \ref{fig: cal C}); the edges of $\cal C_2$ are increasing from right to left; any edge of $\cal C_1$ is smaller than any edge of $\cal C_2$. Now suppose that $\Xi \in \fra H(\Gamma)$. We construct $\Pi = \Phi(\Gamma, \Xi)$ recursively as follows. Set $ \Gamma_0  \deq \Gamma$ and $k \deq 0$.
\begin{enumerate}
\item
If $\Gamma_k$ is a pairing then set $\Pi \deq \Gamma_k$ and stop the recursion. Otherwise let $\gamma \in \Gamma_k$ satisfy $\abs{\gamma} > 2$.
\item
Let $e$ be the first edge of $\gamma$ and $e'$ the last edge of $\gamma$ that does not belong to the same block of $\Xi$ as $e$. Set $\Gamma_{k+1}$ to be $\Gamma_k$ in which the block $\gamma$ has been split into the pieces $\{e,e'\}$ and $\gamma \setminus \{e,e'\}$.
\item
Increment $k$ by one and go to step (i).
\end{enumerate}
It is immediate that this algorithm terminates after a finite number of steps and that $\Pi = \Phi(\Gamma, \Xi)$ is a pairing. In fact, $\Phi(\Gamma,\Pi) \in \fra M_c(E(\cal C))$ is a connected pairing. To see this, note that there exists a $\gamma \in \Gamma$ 
such that $\gamma \cap E(\cal C_1) \neq \emptyset$ and $\gamma \cap E(\cal C_2) \neq \emptyset$. It is not hard to see that in at least one step (ii) of the algorithm operating on $\gamma$ we have $e \in E(\cal C_1)$ and $e' \in E(\cal C_2)$, so that the bridge $\{e,e'\} \in \Phi (\Gamma,\Xi)$ connects the two components of $\cal C$.

Hence we may plug the trivial identity $1 = \sum_{\Pi \in \fra M_c(E(\cal C))} \indb{\Phi(\Gamma, \Xi) = \Pi}$ into \eqref{A3} and use \eqref{exp_HH_sumx} to get
\begin{equation} \label{A4}
\avgb{\tr H^{(n_1)}\,; \tr H^{(n_2)}}
\;=\; \sum_{\Pi \in \fra M_c(E(\cal C))} \sum_{\Gamma \geq \Pi} \sum_{\Xi \in \fra H(\Gamma)} \indb{\Phi(\Gamma, \Xi) = \Pi} \, D(\Gamma, \Xi) \sum_{\f x \in \bb T^{V(\cal C)}} I(\f x) \, B_{\Gamma,\Xi}(\f x) \, \prod_{\{e,e'\} \in \Pi} S_{x_e}\,.
\end{equation}
 (Recall that here $\cal C \equiv \cal C(n_1, n_2)$.)
It is convenient to introduce the set of all connected pairings,
\begin{equation*}
\fra M_c \;\deq\; \bigsqcup_{\substack{n_1, n_2 \geq 0 \col \\ n_1 + n_2 \text{ even}}} \fra M_c \pb{E(\cal C(n_1, n_2))}\,,
\end{equation*}
with which we associate the following definitions.
\begin{definition} \label{def:C_Gamma}
With each pairing $\Gamma \in \fra M_c$ we associate its underlying graph $\cal C(\Gamma)$, and regard $n_1$ and $n_2$ as functions on $\fra M_c$ in self-explanatory notation. We also frequently abbreviate $V(\Gamma) \equiv V(\cal C(\Gamma))$, and refer to $V(\Gamma)$ as the vertices of $\Gamma$.
\end{definition}
 Next, suppose that $\Xi \in \fra H(\Gamma)$ and $\Pi = \Phi(\Gamma, \Xi)$. Then $B_{\Gamma, \Xi}(\f x) = 1$ implies $\prod_{\pi \in \Pi} J_{\pi}(\f x) = 1$,
where we defined the indicator function
\begin{equation} \label{def_I_sigma}
J_{\{e,e'\}}(\f x) \;\deq\; \ind{[x_e] = [x_{e'}]} \ind{x_e \neq x_{e'}} \;=\; \ind{x_{a(e)} = x_{b(e')}} \ind{x_{a(e')} = x_{b(e)}}\,.
\end{equation}
To understand this implication, we first recall, from the definition of $B$
in \eqref{def:B}, that $B_{\Gamma, \Xi}(\f x) = 1$ ensures that the edges 
receive the same unordered labels within each block of $\Gamma$
and the halving partition $\Xi$ divides each block  in two 
according to whether the labels are the same as ordered labels.
In particular, if $\Gamma$ is a pairing, i.e.\ if $\Xi$ is atomic
and $\Gamma =\Pi$, then $J_\pi(\f x)=1$ directly follows for 
each pair $\pi\in \Gamma$.  If $\Gamma$ has larger blocks,
then the definition of $\Phi$ guarantees
that every  bridge in $\Pi= \Phi(\Gamma, \Xi)$ is halved by
$\Xi$, i.e.\ their labels, under $B_{\Gamma, \Xi}(\f x) = 1$, are the same as unordered but not as ordered labels.

Therefore we may restrict the summation over $\Pi$ in \eqref{A4} to the set
\begin{equation*}
\fra R \;\deq\; \hbb{\Pi \in \fra M_c \,\col\, \text{there is an $\f x \in \bb T^{V(\Pi)}$ such that }  I(\f x) \prod_{\pi \in \Pi} J_{\pi}(\f x) \neq 0}\,.
\end{equation*}
This set of pairings was also given in \cite[Equation (4.26)]{EK3}, using a more combinatorial definition which we shall not need here. To interpret the set $\fra R$, observe that the right-hand side of \eqref{def_I_sigma} induces a series of constraints among the labels $\f x$; in $\fra R$ we simply require that these constraints hold and be compatible with the nonbacktracking condition from \eqref{def_I_ind}, which requires certain labels to be distinct. As explained in \cite[Sections 4.1 and 4.2]{EK3}, the restriction from $\fra M_c$ to $\fra R$ is significant, and plays an essential role in estimating the contribution of pairings.

Thus we write \eqref{A4} as
\begin{multline} \label{A5}
\avgb{\tr H^{(n_1)}\,; \tr H^{(n_2)}}
\\
=\; \sum_{\Pi \in \fra R} \ind{n_1(\Pi) = n_1} \ind{n_2(\Pi) = n_2} \sum_{\Gamma \geq \Pi} \sum_{\Xi \in \fra H(\Gamma)} \indb{\Phi(\Gamma, \Xi) = \Pi} \, D(\Gamma, \Xi) \sum_{\f x \in \bb T^{V(\cal C)}} I(\f x) \, B_{\Gamma,\Xi}(\f x) \, \prod_{\{e,e'\} \in \Pi} S_{x_e}\,.
\end{multline}
This is the appropriate decomposition of the left-hand side in terms of pairings $\Pi$ of all possible graphs $\cal C$. It is the correct analogue of \cite[Equation (4.27)]{EK3} without making any simplifications.

\subsection{Skeletons} \label{sec_42} 
The summation in \eqref{truncated series} is highly oscillatory,
which requires a careful resummation of graphs of different order. 
We perform a local resummation procedure of the so-called \emph{ladder} subdiagrams,
which are subdiagrams with a pairing structure that consists only of parallel bridges.
This is the second resummation procedure mentioned in the introduction.
Concretely, we regroup pairings $\Pi$ into families that have a similar structure, differing only in the number of parallel bridges per ladder  subdiagram. Their common structure is represented by the simplest element of the family, the \emph{skeleton},  whose ladders consist of a single bridge. 
The concept of skeleton pairing was introduced in \cite[Section 4.2]{EK3}. Here we merely recall the main ideas for the convenience of the reader, and refer to \cite[Section 4.2]{EK3} for full details.

The basic idea is to construct the \emph{skeleton} $\Sigma$ of a pairing $\Pi \in \fra M_c$ 
by collapsing parallel bridges of $\Pi$. By definition, the bridges $\{e_1, e_1'\}$ and $\{e_2, e_2'\}$ are \emph{parallel} if $b(e_1) = a(e_2)$ and $b(e_2') = a(e_1')$. With each $\Pi \in \fra M_c$ we associate a couple $(\Sigma, \f b)$, where $\Sigma \in \fra M_c$ has no parallel bridges, and $\f b = (b_\sigma)_{\sigma \in \Sigma} \in \N^\Sigma$. The pairing $\Sigma$ is obtained from $\Pi$ by successively collapsing parallel bridges until no parallel bridges remain. The integer $b_\sigma$ denotes the number of parallel bridges of $\Pi$ that were collapsed into the bridge $\sigma$. Conversely, for any given couple $(\Sigma, \f b)$, where $\Sigma \in \fra M_c$ has no parallel bridges and $\f b \in \N^{\Sigma}$, we define $\Pi = \cal G(\Sigma, \f b)$ as the pairing obtained from $\Sigma$ by replacing, for each $\sigma \in \Sigma$, the bridge $\sigma$ with $b_\sigma$ parallel bridges. Thus we have a one-to-one correspondence between pairings $\Pi$ and couples $(\Sigma, \f b)$.
Instead of burdening the reader with formal definitions of this correspondence,  we refer to Figure \ref{fig: skeleton} for an illustration. When no confusion is possible, in order to streamline notation we shall identify $\Pi$ with $(\Sigma, \f b)$.
\begin{figure}[ht!]
\begin{center}
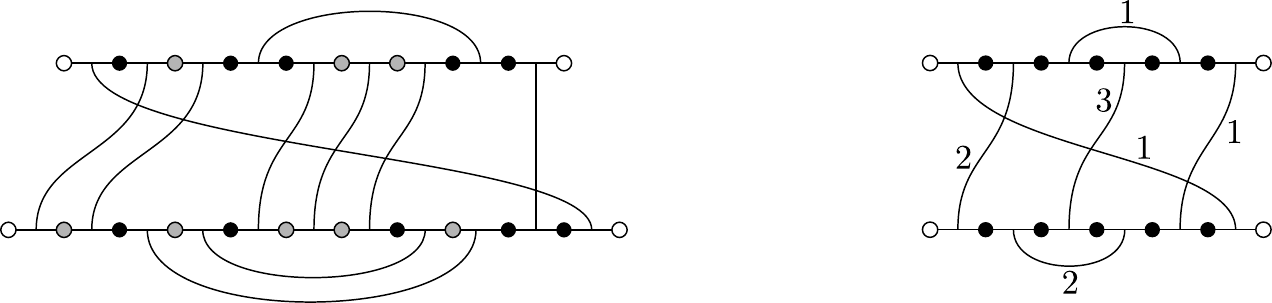
\end{center}
\caption{A pairing $\Pi$ (left) and its skeleton $(\Sigma, \f b)$ (right). As in \cite[Section 4.1]{EK3}, we draw a pairing $\Pi$ by drawing a line connecting the edges $e$ and $e'$ for each bridge $\{e,e'\} \in \Pi$. Next to each skeleton bridge $\sigma \in \Sigma$ we indicate the multiplicity $b_\sigma$ describing how many bridges of $\Pi$ were collapsed into $\sigma$.
We draw the ladder vertices $V_l(\Pi)$ in grey and the vertices $V_s(\Pi)$ are drawn in black or white (see Definition \ref{def:ladders}).
\label{fig: skeleton}} 
\end{figure}

\begin{definition} \label{def:ladders}
Fix $\Sigma \in \fra M_c$ and  $\f b\in  \N^\Sigma$. As above, abbreviate $\Pi \deq \cal G(\Sigma, \f b)$.
\begin{enumerate}
\item
For $\sigma \in \Sigma$ we introduce the \emph{ladder encoded by $\sigma$}, denoted by $L_\sigma(\Sigma, \f b) \subset \Pi$ and defined as the set of bridges of $\Pi$ that are collapsed into the skeleton bridge $\sigma$. Note that $L_\sigma(\Sigma,\f b)$ consists of $\abs{L_\sigma(\Sigma,\f b)} = b_\sigma$ parallel bridges.

\item
We say that a vertex $i \in V(\Pi)$ \emph{touches} the bridge $\{e,e'\} \in \Pi$ if $i$ is incident to $e$ or $e'$. We call a vertex $i$ a \emph{ladder vertex} of $L_\sigma(\Sigma, \f b)$ if it touches two bridges of $L_\sigma(\Sigma, \f b)$.
Note that a ladder consisting of $b$ parallel bridges gives rise to $2(b-1)$ ladder vertices.

\item
We say that $i \in V(\Pi)$ is a \emph{ladder vertex} of $\Pi$ if it is a ladder vertex of $L_\sigma(\Sigma, \f b)$ for some $\sigma \in \Sigma$. We decompose the vertices $V(\Pi) = V_s(\Pi) \sqcup V_l(\Pi)$, where $V_l(\Pi)$ denotes the set of ladder vertices of $\Pi$.
\end{enumerate}
\end{definition}
See Figure \ref{fig: skeleton} for an illustration of Definition \ref{def:ladders}.

Next, for a pairing $\Sigma \in \fra M_c$ we define the set of admissible multiplicities
\begin{equation} \label{def_B_Sigma}
B(\Sigma) \;\deq\; \hb{\f b \in \N^\Sigma \col \cal G(\Sigma, \f b) \in \fra R}\,.
\end{equation}
We define the set of admissible skeletons as
\begin{equation*}
\fra S \;\deq\; \h{\Sigma \in \fra M_c \col B(\Sigma) \neq \emptyset}\,.
\end{equation*}
Due to the nonbacktracking condition and the requirement that parallel bridges are collapsed, not every pairing is an admissible skeleton, and not every family of multiplicities $\f b$ of a skeleton $\Sigma \in \fra S$ is admissible. A combinatorial characterization of $\fra S$ is given in \cite[Lemma 4.6]{EK3}, which we shall however not need here.
All that we shall need about $B(\Sigma)$ is the following simple result, proved in \cite[Lemma 4.6]{EK3}.

\begin{lemma} \label{lem: skeletons}
For any $\Sigma \in \fra S$ the set $\N^{\Sigma} \setminus B(\Sigma)$ is finite.
\end{lemma}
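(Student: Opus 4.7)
Since $\Sigma \in \fra S$, the set $B(\Sigma)$ is nonempty by definition; fix some $\f b^* \in B(\Sigma)$ together with a valid label configuration $\f x^* \in \bb T^{V(\cal G(\Sigma, \f b^*))}$, meaning one satisfying $I(\f x^*) \prod_\pi J_\pi(\f x^*) = 1$. The plan is to use $\f x^*$ as a template to construct a valid label configuration for $\cal G(\Sigma, \f b)$ for every $\f b$ outside a finite exception set, thereby establishing $B(\Sigma) \supset \N^\Sigma \setminus F_\Sigma$ for some finite $F_\Sigma$.

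The key structural observation is that within each ladder $L_\sigma(\Sigma, \f b)$, the bridge constraints $J_\pi = 1$ together with the nonbacktracking conditions of $I$ force the labels along the $\cal C_1$-portion of the ladder to form a sequence $(u_1, \dots, u_{b_\sigma+1})$ with $u_i \neq u_{i+1}$ and $u_i \neq u_{i+2}$, while the corresponding labels along $\cal C_2$ are the reversed sequence. Crucially, the boundary labels $u_1$ and $u_{b_\sigma+1}$ coincide with the labels of the two endpoint vertices of the underlying skeleton bridge $\sigma \in \Sigma$, and hence are shared with the ambient skeleton structure and independent of $\f b$.

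With this setup, the construction proceeds by inheriting from $\f x^*$ the labels at all skeleton vertices, and then filling in each ladder $L_\sigma$ independently with an alternating interior sequence $u_2, \dots, u_{b_\sigma} \in \bb T$ that interpolates the prescribed boundary values, satisfies the internal alternation constraints $u_i \neq u_{i+1}$ and $u_i \neq u_{i+2}$, and respects a bounded number of additional nonbacktracking constraints coupling $u_2, u_{b_\sigma}$ to the labels of the neighbouring skeleton vertices. Since $\abs{\bb T}$ is much larger than the number of local constraints, such an interior sequence can always be constructed provided $b_\sigma$ exceeds a threshold $L_\sigma$ depending only on the local structure of $\Sigma$ near $\sigma$ and on $\f x^*$.

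The hard part will be the finiteness claim for $\N^\Sigma \setminus B(\Sigma)$: a naive ladder-by-ladder count yields an infinite union of slabs $\{\f b \col b_\sigma < L_\sigma\}$, so one must exploit a global structural feature. Concretely, the remaining obstructions arise from the interplay of the closure conditions $x_{a(\cal C_i)} = x_{b(\cal C_i)}$ with the alternation constraints across the entire pairing (as already visible in the single-bridge skeleton, where $b = 1, 2$ force $u_1 = u_{b+1}$ to conflict with $u_1 \neq u_{b+1}$, but $b \geq 3$ always admits a valid labeling). A combinatorial analysis, using that $\Sigma \in \fra S$ guarantees skeleton-level consistency, will show that the exception set is contained in a bounded region of the form $\{\f b \col \sum_\sigma b_\sigma \leq K_\Sigma\}$ with $K_\Sigma$ depending only on $\Sigma$, and is therefore finite.
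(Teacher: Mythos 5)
Your structural picture of the ladders — alternating label sequences forced by the bridge constraints $J_\pi$ and nonbacktracking, with boundary labels tied to the skeleton — is essentially correct, and the observation that filling an interior sequence is unobstructed once $b_\sigma$ clears a bounded threshold is also sound. However, as you yourself flag, this only yields $B(\Sigma) \supset \{\f b \col b_\sigma \geq L_\sigma \text{ for all } \sigma\}$, a ``cone'', and the complement of a cone in $\N^\Sigma$ is an infinite union of slabs. At that point the proof is not finished; it has identified the obstacle.

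The step where you claim that ``a combinatorial analysis, using that $\Sigma \in \fra S$ guarantees skeleton-level consistency, will show that the exception set is contained in a bounded region of the form $\{\f b \col \sum_\sigma b_\sigma \leq K_\Sigma\}$'' is an assertion, not an argument, and it is exactly the content that needs to be proved. To see why this is not routine, note that $B(\Sigma)^c$ is a down-closed subset of $\N^\Sigma$ (since enlarging any ladder only adds freedom), and a down-closed set is finite if and only if for \emph{every} $\sigma$ there is a threshold $T_\sigma$ such that $b_\sigma > T_\sigma$ forces $\f b \in B(\Sigma)$ \emph{regardless of the other components}. Your construction does not supply this: fixing the skeleton labels inherited from $\f x^*$ and decreasing some $b_{\sigma'}$ to $1$ reintroduces nonbacktracking constraints that couple skeleton labels across the single bridge $\sigma'$, and it is not shown that these can be met by re-choosing the skeleton labels uniformly in the remaining (possibly very large) multiplicities. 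You would need to rule out that a fixed small sub-family of $\Sigma$ with $b_\sigma = 1$ creates a persistent skeleton-level contradiction that no enlargement of the other ladders can undo; that is precisely where the hypothesis $\Sigma \in \fra S$ (i.e., the existence of a single admissible $\f b^*$, or equivalently the combinatorial characterization of admissible skeletons from the companion paper) must be invoked, and your outline does not do so. A minor secondary point: the ``reversed sequence along $\cal C_2$'' description only applies to the connecting bridges $\sigma \in \Sigma_c$; for domestic bridges $\sigma \in \Sigma_1 \cup \Sigma_2$ both rails of the ladder lie inside the same chain, and the boundary bookkeeping is slightly different, which should be handled when setting up the thresholds $L_\sigma$.

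For context: the present paper does not itself give a proof of this lemma — it cites the companion paper \cite[Lemma 4.6]{EK3}, where the statement is derived alongside the combinatorial characterization of $\fra S$. So there is no in-paper proof to compare against, but the gap above is genuine and would need to be filled before your proposal constitutes a proof.
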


Here ends the summary of the material from \cite[Section 4.2]{EK3}. 
We may now obtain the desired decomposition of $\wt F^\eta(E_1, E_2)$ as a sum over skeletons.
Plugging \eqref{A5} into \eqref{truncated series} yields
\begin{align}
\wt F^\eta(E_1,E_2) &\;=\;
\sum_{\Pi \in \fra R} \ind{2 \abs{\Pi} \leq M^\mu} \, 2 \re \pb{\wt \gamma_{n_1(\Pi)}(E_1,\phi_1)} \, 2 \re \pb{\wt \gamma_{n_2(\Pi)}(E_2, \phi_2)}
\notag \\ 
&\qquad \times \sum_{\Gamma \geq \Pi} \sum_{\Xi \in \fra H(\Gamma)} \indb{\Phi(\Gamma, \Xi) = \Pi} \, D(\Gamma, \Xi) \, \sum_{\f x \in \bb T^{V(\Pi)}} I(\f x) \, B_{\Gamma,\Xi}(\f x) \, \prod_{\{e,e'\} \in \Pi} S_{x_e}
\notag \\ \label{Fskel}
&\;=\;
\sum_{\Sigma \in \fra S} \wt{\cal V}(\Sigma) \,,
\end{align}
where we defined the \emph{value of the skeleton} $\Sigma \in \fra S$ as
\begin{multline} \label{def_V_general}
\wt{\cal V}(\Sigma) \;\deq\; \sum_{\f b \in B(\Sigma)} \indBB{2 \sum_{\sigma \in \Sigma} b_\sigma \leq M^\mu} \, 2 \re \pb{\wt \gamma_{n_1(\Sigma, \f b)}(E_1,\phi_1)} \, 2 \re \pb{\wt \gamma_{n_2(\Sigma, \f b)}(E_2, \phi_2)}
\\
\times \sum_{\Gamma \geq \cal G(\Sigma, \f b)} \sum_{\Xi \in \fra H(\Gamma)} \indb{\Phi(\Gamma, \Xi) = \cal G(\Sigma, \f b)} \, D(\Gamma, \Xi) \, \sum_{\f x \in \bb T^{V(\Sigma, \f b)}} I(\f x) \, B_{\Gamma,\Xi}(\f x) \, \prod_{\{e,e'\} \in \cal G(\Sigma, \f b)} S_{x_e}\,.
\end{multline}
This definition is the correct generalization of $\cal V(\Sigma)$ in \cite[Equation (4.31)]{EK3} without assuming any simplifications. In particular, $\cal V(\Sigma)$ and $\wt{\cal V}(\Sigma)$  differ by a term $\cal E$ in the terminology of \cite{EK3}.

\subsection{Classification of skeletons} \label{sec:classification_skeletons}

In this short subsection we recall the splitting, introduced in \cite[Section 4.4]{EK3}, of the skeletons $\Sigma \in \fra S$ into three classes: the dumbbell skeletons, the small error skeletons, and the large error skeletons. The \emph{dumbbell skeletons} are the eight simplest skeletons, denoted by $D_1, \dots, D_8$, whose contribution is of leading order and yields $\cal V_{\rm{main}}$ from \eqref{VDdef}. They are defined in Figure \ref{fig: dumbbell}.
\begin{figure}[ht!]
\begin{center}
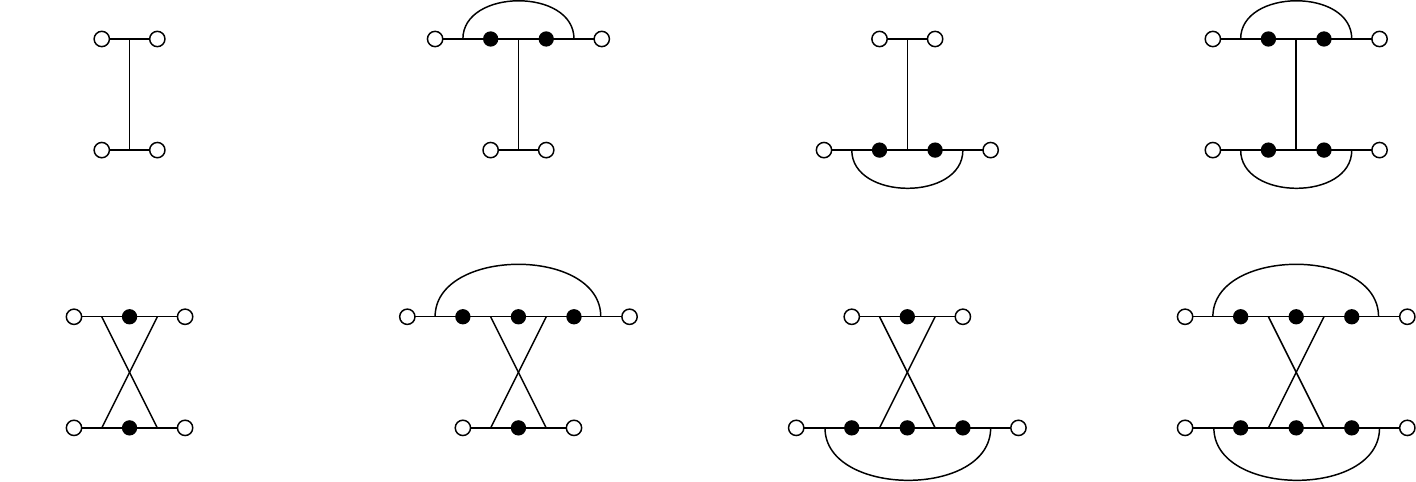
\end{center}
\caption{The eight dumbbell skeletons $D_1, \dots, D_8$. \label{fig: dumbbell}} 
\end{figure}

The contributions of the other skeletons has to be estimated and shown
to be much smaller. As pointed out in \cite[Section 4.4]{EK3},
 it turns out that when estimating $\wt {\cal V}(\Sigma)$ we are faced with two independent difficulties. First, strong oscillations  in the
$\f b$-summations in the definition of  
$\wt {\cal V}(\Sigma)$  \eqref{def_V_general} give rise to cancellations which have to be exploited carefully. Second, due to the combinatorial complexity of the skeletons, the size of $\fra S$ grows exponentially with $M$, which means that we have to deal with combinatorial estimates. It turns out that these two difficulties may be effectively decoupled: if $\abs{\Sigma}$ is small then only the first difficulty matters, and if $\abs{\Sigma}$ is large then only the second one matters. (As usual, $\abs{\Sigma}$ denotes the cardinality of $\Sigma$, i.e.\ the number of bridges in $\Sigma$.) Hence we split the set $\fra S$ in the dumbbell skeletons, the small error skeletons, and the large error skeletons. The splitting into small and large skeletons is done using a fixed large cutoff $K \in \N$. In other words, we write
\begin{equation*}
\fra S \;=\; \fra S_D \sqcup \fra S_K^\leq \sqcup \fra S_K^>\,,
\end{equation*}
where we abbreviated $\fra S_D \deq \{D_1, \dots, D_8\}$ for the set of dumbbell skeletons, and defined the set of small skeletons ${\fra S}_K^\leq \deq \h{\Sigma \in \fra S \setminus \fra S_D \col \abs{\Sigma} \leq K}$ 
 as well as the set of large skeletons ${\fra S}_K^> 
\deq \h{\Sigma \in \fra S \setminus \fra S_D \col \abs{\Sigma} > K}$. 
The constant $K$ will be chosen large enough in Proposition \ref{prop:large_Sigma_general} below.

\subsection{Large skeletons}
We first estimate the contribution of the large skeletons $\fra S^>_K$. As in \cite[Section 4.4]{EK3}, the estimate of the large skeletons is rather simple, since for large enough $K$ their contribution is small even if we estimate them by taking the absolute value inside the summations in \eqref{def_V_general}. Hence, the oscillatory effects present in the prefactors $\wt \gamma$ are not exploited.
 The precise estimate is the following, which is analogous to \cite[Proposition 4.8]{EK3}.

\begin{proposition} \label{prop:large_Sigma_general}
For large enough $K$, depending on $\mu$, we have
\begin{equation} \label{result_large skeletons}
\sum_{\Sigma \in \fra S_K^>} \abs{\wt{\cal V}(\Sigma)} \;\leq\; C_K N M^{-2}\,.
\end{equation}
\end{proposition}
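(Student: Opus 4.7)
The plan is to adapt the strategy of \cite[Proposition 4.8]{EK3} to the full formula \eqref{def_V_general}, handling the sums over $\Gamma$ and $\Xi$ that were suppressed by the simplifications (S1)--(S3) there. The guiding philosophy is that for large skeletons, no oscillatory cancellations need to be exploited: crude absolute-value estimates already yield a bound that decays geometrically in $|\Sigma|$, so for $K$ large enough the total contribution of $\fra S_K^>$ is much smaller than $N M^{-2}$.

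First, I would use \eqref{bound on gamma} to replace $|2\re \wt \gamma_{n_i}(E_i, \phi_i)| \leq C$ uniformly in $n_i$, eliminating the only oscillatory factors from \eqref{def_V_general}. Second, I would collapse the sum over $(\Gamma, \Xi)$ using the observation that the indicator $B_{\Gamma, \Xi}(\f x)$ is nonzero only for the unique choice $(\Gamma, \Xi) = (P(\f x), P_o(\f x))$, and that in this case the construction of $\Phi$ forces $\cal G(\Sigma, \f b) \leq \Gamma = P(\f x)$. Since $D(\Gamma, \Xi) \leq 1$ and $I(\f x) \leq I_0(\f x)$, this yields
\begin{equation*}
|\wt{\cal V}(\Sigma)| \;\leq\; C \sum_{\f b \in B(\Sigma)} \indBB{2 \sum_{\sigma \in \Sigma} b_\sigma \leq M^\mu} \sum_{\f x} \indb{P(\f x) \geq \cal G(\Sigma, \f b)} \, I_0(\f x) \prod_{\{e,e'\} \in \cal G(\Sigma, \f b)} S_{x_e}\,.
\end{equation*}

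Third, for fixed $(\Sigma, \f b)$ I would bound the $\f x$-summation using standard ladder estimates. For each ladder of multiplicity $b_\sigma$, the product of $S$ factors combined with summation over the $2(b_\sigma - 1)$ internal ladder vertices is bounded using $\sum_y S_{xy} = O(1)$ from \eqref{normalization of S} and $\sup_{x,y}(S^{b_\sigma})_{xy} \leq C/M$ from \eqref{bound res 2} (at $\alpha = 0$). Combining this with the translation-invariance saving $I_0(\f x) = 1$ (producing a single overall factor $N$) and summing over the remaining skeleton vertices $V_s(\Pi)$, one obtains a per-skeleton bound
\begin{equation*}
|\wt{\cal V}(\Sigma)| \;\leq\; N \, C^{|\Sigma|} \sum_{\f b \in B(\Sigma)} \prod_{\sigma \in \Sigma} M^{-b_\sigma} \;\leq\; N \pbb{\frac{C}{M}}^{|\Sigma|}\,,
\end{equation*}
where the last inequality follows from $\sum_{b \geq 1} M^{-b} \leq 2/M$ for $M$ large. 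The relaxation from $\ind{P(\f x) \geq \cal G(\Sigma, \f b)}$ to the weaker constraint that $\f x$ merely respect the pairing $\cal G(\Sigma, \f b)$ is used here; dropping additional label coincidences only weakens the bound.

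Fourth, I would sum over $\Sigma \in \fra S_K^>$. A combinatorial count of admissible pairings on a chain of $2k$ edges bounds $|\{\Sigma \in \fra S \col |\Sigma| = k\}|$ by $C_0^k$. Hence, for $K$ large enough that $CC_0/M < 1$,
\begin{equation*}
\sum_{\Sigma \in \fra S_K^>} |\wt{\cal V}(\Sigma)| \;\leq\; N \sum_{k > K} \pbb{\frac{CC_0}{M}}^k \;\leq\; C_K N M^{-K - 1} \;\leq\; C_K N M^{-2}
\end{equation*}
as soon as $K \geq 1$. The main obstacle is the third step: one has to check that larger blocks in $\Gamma$ (not only bridges) cannot disrupt the ladder factorization, and that the combinatorial overhead induced by the operation $\Phi$ is absorbed in the prefactor $C^{|\Sigma|}$. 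This amounts to implementing carefully the counting argument of \cite[Section~4.4]{EK3} in the presence of arbitrary refining pairings, rather than only pairings of bridges. Once this is done, the conclusion follows as above.
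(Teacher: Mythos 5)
Your reduction of the $(\Gamma,\Xi)$-sum in Step 2, via the uniqueness of $(P(\f x), P_o(\f x))$, is correct and is a reasonable alternative to the paper's key inequality \eqref{estimate for breaking lumps}, though slightly weaker: you drop to $\ind{P(\f x) \geq \cal G(\Sigma, \f b)}$ whereas the paper retains the oriented constraint $\prod_{\pi \in \cal G(\Sigma, \f b)} J_\pi(\f x)$, which is stronger since $J_\pi$ also enforces $x_e \neq x_{e'}$. Both then reduce, after ladder summation, to an estimate of the form \eqref{trivvest}. The paper stops here and cites \cite[Section~4.4]{EK3} for the bound; you instead attempt to close the estimate by hand, and this is where the argument goes wrong.

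The serious gap is in Step 3. You claim that each ladder summation produces a factor $M^{-b_\sigma}$, and then use $\sum_{b\geq 1} M^{-b} \leq 2/M$ to get $|\wt{\cal V}(\Sigma)| \leq N(C/M)^{|\Sigma|}$. But the estimate you invoke, $\sup_{x,y}(S^{b})_{xy} \leq C/M$, is \emph{uniform in $b$}: it gives $C/M$ per bridge independently of $b_\sigma$, not $M^{-b_\sigma}$. Consequently the $\f b$-sum has up to $(M^\mu)^{|\Sigma|}$ terms (since each $b_\sigma$ ranges over $\{1,\dots, [M^\mu]\}$ subject to $\sum b_\sigma \leq M^\mu/2$), and it is \emph{not} absorbed by a geometric series. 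The resulting per-skeleton bound is of the form $N\pb{C M^{\mu-1}}^{|\Sigma|-c}$ for some structural constant $c$, not $N(C/M)^{|\Sigma|}$. This is not a minor overshoot: your claimed bound, applied to a dumbbell skeleton ($|\Sigma|=4$), would give $O(NM^{-4})$, flatly contradicting the actual size $\cal V_{\rm{main}} \asymp (N/M)R_4(\omega+\eta)$ found in Proposition~\ref{prop: main}. Moreover, the proposition explicitly states that $K$ must be chosen ``depending on $\mu$''; your bound would make $K$ independent of $\mu$, which is another signal that the $\f b$-sum has not been treated correctly.

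Step 4 also rests on an unjustified assertion. The number of pairings of $2k$ edges grows as $(2k-1)!!$, and the restriction to admissible skeletons $\fra S$ (imposed by the nonbacktracking condition and the no-parallel-bridges requirement) does not reduce this to a geometric count $C_0^k$ -- there is no reason to believe that and you give none. The sum over large skeletons in \cite[Section~4.4]{EK3} is made to converge not by a geometric skeleton count but by exploiting the savings $M^{-1}$ per bridge together with the cutoff $|\Sigma| \leq M^\mu/2$ and the structural constraints of $\fra R$, and the bookkeeping there is more delicate than ``$C_0^k$ skeletons times $(C/M)^k$.'' The paper deliberately does not reproduce this argument; it proves \eqref{estimate for breaking lumps} (the genuinely new ingredient beyond \cite{EK3}) and then invokes \cite[Equation~(4.42)]{EK3} and its estimate. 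If you want to close the argument without that citation, you would need to redo that combinatorial analysis rather than assert the bound.
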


\begin{proof}
Recall the definition \eqref{def_I_sigma} of $J_\pi(\f x)$ for a bridge $\pi = \{e,e'\}$, which encodes the constraints of coincidences of the labels associated with the vertices $a(e)$, $b(e)$, $a(e')$, and $b(e')$. The basic estimate is
\begin{equation} \label{estimate for breaking lumps}
\sum_{\Gamma \geq \cal G(\Sigma, \f b)} \sum_{\Xi \in \fra H(\Gamma)} \indb{\Phi(\Gamma, \Xi) = \cal G(\Sigma, \f b)} \, B_{\Gamma,\Xi}(\f x) \;\leq\; \prod_{\pi \in \cal G(\Sigma, \f b)} J_\pi(\f x)
\end{equation}
for any $\Sigma \in \fra S$, $\f b \in \N^\Sigma$, and $\f x \in \bb T^{V(\Sigma, \f b)}$. In order to prove \eqref{estimate for breaking lumps}, we make the two following observations: (i) each side of $\eqref{estimate for breaking lumps}$ is either zero or one, and (ii) if the right-hand side of \eqref{estimate for breaking lumps} is zero then so is its left-hand side. Observation (i) follows form the trivial fact that for any $\f x$ there are unique partitions $\Gamma, \Xi \in \fra P(E(\cal C))$ such that $B_{\Gamma, \Xi}(\f x) = 1$. Observation (ii) follows form the fact that if $\pi \in \Phi(\Gamma, \Xi)$ then $B_{\Gamma,\Xi}(\f x) = B_{\Gamma,\Xi}(\f x) J_\pi(\f x)$, since, by definition of $\Phi$, the event $J_\pi(\f x) = 1$ is a consequence of the event $B_{\Gamma, \Xi}(\f x) = 1$. This concludes the proof of \eqref{estimate for breaking lumps}

For the following we may drop the nonbacktracking condition encoded by $I$
(see \eqref{def_I_ind} for the definition), but we still keep the condition on the labels imposed by
two traces and implemented by $I_0$. 
From \eqref{def_V_general} and \eqref{estimate for breaking lumps} we therefore get, using \eqref{bound on gamma} and the trivial bound $D(\Gamma,\Xi) \leq 1$, that
\begin{equation*}
\abs{\wt{\cal V}(\Sigma)} \;\leq\; C \sum_{\f b \in \N^\Sigma} \indBB{2 \sum_{\sigma \in \Sigma} b_\sigma \leq M^\mu} \sum_{\f x \in \bb T^{V(\Sigma, \f b)}} I_0(\f x)  \prod_{\{e,e'\} \in \cal G(\Sigma, \f b)} J_{\{e,e'\}}(\f x) \, S_{x_e}\,.
\end{equation*}
Next, we may perform the summation over all labels of $\f x$ indexed by the ladder vertices of $\cal G(\Sigma, \f b)$ (see Definition \ref{def:ladders}). After this summation, each ladder $L_\sigma(\Sigma, \f b)$ of $\cal G(\Sigma, \f b)$ is replaced with a single bridge that encodes an entry of $S^{b_\sigma}$ instead of $S$. Thus we get the bound
\begin{equation}\label{trivvest}
\abs{\wt{\cal V}(\Sigma)} \;\leq\; C \sum_{\f b \in \N^\Sigma} \indBB{2 \sum_{\sigma \in \Sigma} b_\sigma \leq M^\mu} \sum_{\f x \in \bb T^{V(\Sigma)}}  I_0(\f x) \prod_{\{e,e'\} \in \Sigma} J_{\{e,e'\}}(\f x) \, \pb{S^{b_{\{e,e'\}}}}_{x_e}\,.
\end{equation}
See \cite[Section 4.2]{EK3} for the full details of the summation over the labels of the ladder vertices. The right-hand side of \eqref{trivvest} is equal to \cite[Equation (4.42)]{EK3}, which was estimated in \cite[Section 4.4]{EK3}. The result from \cite[Section 4.4]{EK3} is \eqref{result_large skeletons}. This concludes the proof of Proposition \ref{prop:large_Sigma_general}. 
\end{proof}

\subsection{Small skeletons} \label{sec_53}

For the following we fix $K$ to be the constant from Proposition \ref{prop:large_Sigma_general}. In order to handle the small skeletons in $\fra S_K^\leq \cup \fra S_D$, we have to exploit carefully the oscillations in \eqref{def_V_general}. Since the set of small skeletons $\fra S_K^\leq \cup \fra S_D$ is finite and  independent of $M$, it suffices to compute (in the case of $\fra S_D$) or estimate (in the case of $\fra S_K^\leq$) the contribution of each such skeleton individually. The details of the following estimates will be somewhat different for the two cases \textbf{(C1)} and \textbf{(C2)}; for definiteness, we focus on the (harder) case \textbf{(C2)}, i.e.\ we assume that $\phi_1$ and $\phi_2$ both satisfy \eqref{non_Cauchy}. The case \textbf{(C1)} is handled using a similar argument whose details we omit.

Before proceeding further, we want to replace the summation over $\f b \in B(\Sigma)$ with $\f b \in \N^\Sigma$ (recall that $B(\Sigma)$ from \eqref{def_B_Sigma} encodes a restriction on the ladder multiplicities imposed by the nonbacktracking condition). To this end, we split $\wt{\cal V}(\Sigma) = \wt{\cal V}_0(\Sigma) - \wt{\cal V}_1(\Sigma)$ arising from the splitting $\ind{\f b \in B(\Sigma)} = 1 - \ind{\f b \notin B(\Sigma)}$ plugged into \eqref{def_V_general}.  The goal of this subsection is to prove the following result.

\begin{proposition} \label{prop:V_general_estimate}
Suppose that $\phi_1$ and $\phi_2$ satisfy \eqref{non_Cauchy}. Suppose moreover that \eqref{D leq kappa} holds for some small enough $c_* > 0$. Then for any fixed $K \in \N$ and small enough $\delta > 0$ in Proposition \ref{prop: expansion with trunction} there exists a constant $c_0 > 0$ such that 
\begin{equation} \label{V_general_estimate}
\abs{\wt{\cal V}(\Sigma)} \;\leq\; \abs{\wt{\cal V}_0(\Sigma)} + \abs{\wt{\cal V}_1(\Sigma)} \;\leq\; \frac{C_\Sigma N}{M} R_2(\omega + \eta)  M^{-c_0}\,.
\end{equation}
for all $\Sigma \in \fra S_K^{\leq}$.
\end{proposition}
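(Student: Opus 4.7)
The proof naturally splits into handling $\wt{\cal V}_0(\Sigma)$ and $\wt{\cal V}_1(\Sigma)$ separately. The second piece is the easier one: by Lemma \ref{lem: skeletons}, the set $\N^{\Sigma} \setminus B(\Sigma)$ is finite, so $\wt{\cal V}_1(\Sigma)$ is a finite sum (of cardinality depending only on $\Sigma$, hence on $K$) of terms that can be estimated brutally by taking absolute values inside, exactly as in the proof of Proposition \ref{prop:large_Sigma_general}. This yields a bound of order $C_\Sigma N/M^2$, which is strictly better than the required estimate. Hence all of the work goes into $\wt{\cal V}_0(\Sigma)$, where the $\f b$-summation now ranges over all of $\N^{\Sigma}$, which is essential for the resummation of geometric series.

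For $\wt{\cal V}_0(\Sigma)$ my plan is to imitate the strategy of Section \ref{sec:dumbbell_C2}, adapted to the topology of a general small skeleton. First, I truncate the test functions using the cutoff from \eqref{phi_cutoff1}, so that $\psi_i^\eta$ is effectively supported on an interval of size $O(\eta M^{\theta/2})$; this costs only a negligible error by \eqref{phi_geq_bound}. Plugging in the explicit formula \eqref{claim about gamma n} for $\gamma_n$ and the geometric series identities analogous to those leading to \eqref{Vmain1}, the $\f b$-summation in \eqref{def_V_general} can be carried out explicitly and produces, for each choice of $(\Gamma,\Xi)$ with $\Phi(\Gamma,\Xi)=\Sigma$, a product over the skeleton bridges $\sigma \in \Sigma$ of traces of the form $\tr\bigl(\text{polynomial in }S/(1-\alpha_\sigma S)\bigr)$, where each $\alpha_\sigma$ is a phase factor of the type $\ee^{\pm\ii A_1^\eta\pm\ii A_2^\eta}$ determined by the topology of $\Sigma$ and the choice of $(\Gamma,\Xi)$. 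The spurious boundary contributions from the truncation $b_\sigma \leq M^\mu$ are handled by a stationary-phase argument identical to the one leading to \eqref{Wprime0}.

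After this reduction, each trace can be estimated using the resolvent bounds of Proposition \ref{prop: bounds on S}. In the dumbbell case the phase combination is always of the critical diffusive form $\alpha = \ee^{\ii(A_1^\eta-\ol A_2^\eta)}$ giving $|1-\alpha|\asymp \omega+\eta$ and producing the main term. For any $\Sigma \in \fra S_K^{\leq}$, I expect to show that at least one of the following sources of gain applies: (a) the skeleton has at least three bridges, so the combinatorial vertex count yields an extra factor $\sim N/M^{|\Sigma|-2}$ outside the main term's factor $(L/W)^d$, hence a gain of order $M^{-c_0}$ after accounting for the phase and trace factors; (b) the skeleton has two bridges but is not one of the dumbbells $D_1,\dots,D_8$, in which case an inspection of the topology shows that at least one phase factor $\alpha_\sigma$ is bounded away from $1$ (as in the bound \eqref{bound e A1 A2}), so that the corresponding resolvent trace is $O(N/M)$ rather than the singular $R_2(\omega+\eta) \cdot (L/W)^d / M$; or (c) the combinatorial constraint $D(\Gamma,\Xi)\ne 0$ cannot be satisfied in a way that produces a fully diffusive contribution. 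Since $\fra S_K^{\leq}$ is finite and skeleton-independent of $M$, one may simply verify case-by-case that every such $\Sigma$ lands in (a)--(c).

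The main obstacle is case (b): isolating, among skeletons with only two bridges, precisely which phase configurations $(\alpha_{\sigma_1},\alpha_{\sigma_2})$ are compatible with the connectedness condition $\Gamma\in\fra P_c$, the nonbacktracking indicator $I$, and $D(\Gamma,\Xi)\ne 0$, and checking that for any $\Sigma\notin\fra S_D$ at least one phase is non-diffusive. This requires a somewhat tedious topological enumeration of the possible ways a connected partition $\Gamma\geq\Sigma$ and its halving $\Xi$ can generate nontrivial $D(\Gamma,\Xi)$; however, once the phase is pinned down, the rest of the estimate is a direct application of \eqref{bound res 2} and \eqref{use_of_R}. The final factor $M^{-c_0}$ can then be extracted by choosing $\delta$ small enough so that the $[M^\mu]$-truncation stationary-phase error, the boundary error from $\wt{\cal V}_1$, and the intrinsic gain from (a)--(c) all combine into a single negative power of $M$, uniform over the finite set $\fra S_K^{\leq}$.
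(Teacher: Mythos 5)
Your split into $\wt{\cal V}_0$ and $\wt{\cal V}_1$ and your treatment of $\wt{\cal V}_1$ via Lemma \ref{lem: skeletons} match the paper, but the strategy you propose for $\wt{\cal V}_0$ has a fundamental structural gap.

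The central claim in your proposal --- that after plugging in \eqref{claim about gamma n} and carrying out the $\f b$-summation, one obtains ``for each choice of $(\Gamma,\Xi)$ a product over the skeleton bridges $\sigma\in\Sigma$ of traces of the form $\tr(\text{polynomial in }S/(1-\alpha_\sigma S))$'' --- is false for general $\Sigma$, and the error is exactly what makes this proposition hard. Two things break. First, in \eqref{def_V_general} the inner sum is over $\Gamma \geq \cal G(\Sigma,\f b)$ with the constraint $\Phi(\Gamma,\Xi) = \cal G(\Sigma,\f b)$, so the set of admissible $(\Gamma,\Xi)$ depends on $\f b$: you cannot fix $(\Gamma,\Xi)$ and then sum a clean geometric series in $\f b$. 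When a block of $\Gamma$ contains two bridges from the same ladder, that ladder's $b_\sigma$-summation no longer telescopes into $\tr (1-\alpha S)^{-k}$; the label coincidences imposed by the larger block destroy the geometric-series structure. The paper handles this by partitioning skeleton bridges into a \emph{disrupted} set $\zeta$ (where these coincidences occur, and the $\f b$-summation must be estimated brutally, gaining smallness from the extra label constraints) and its complement $\bar\zeta$ (where one may sum the geometric series, yielding an admissible $\cal Z$-matrix as in Definition \ref{def_admissible_Z}). This decomposition, and the accompanying decoupling of the $\zeta$- and $\bar\zeta$-variables via the parametrization $\Gamma = \Gamma(\wt\Gamma,\f b_{\bar\zeta})$ (and Lemma \ref{lem:lift}, whose validity requires the precise design of $\Phi$), is essential and is missing from your proposal. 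Second, even when all ladders are summable, the result is not a product of traces: the skeleton labels $\f x_s$ couple all bridges of $\Sigma$ together through $I_0(\f x_s)$ and the adjacency structure of $\Sigma$. You get a single lattice sum $\sum_{\f x_s} I_0(\f x_s) \prod_{\sigma} \cal Z_{x_{e(\sigma)}}(\sigma)$ over the skeleton graph, which only factorizes into traces for the special dumbbell topology. For general $\Sigma \in \fra S_K^\leq$ the paper feeds this lattice sum into the key combinatorial estimate \cite[Lemma 4.22]{EK3} (here Lemma \ref{lem: gen_skeleton_sum}), which is where the gain $M^{-c_0}$ actually comes from; your cases (a)--(c) attempt to reconstruct this gain by ad hoc topological enumeration, but without the uniform notion of an admissible $\cal Z$ and the skeleton-sum lemma, the case-by-case argument has no foundation, and in particular case (b)'s claim that one phase must be ``bounded away from $1$'' does not reflect what actually happens (the gain in the paper comes from the structure of the skeleton sum, not from pinning a phase to a non-diffusive value).
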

This bound is much smaller than the true size
of the leading term,
\begin{equation*}
\abs{\cal V_{\rm main}} \;\asymp\; \frac{N}{M} R_4(\omega + \eta)\,,
\end{equation*}
which is obtained from the dumbbell skeletons computed in Section \ref{sec:dumbbell_C2} (see Proposition \ref{prop: main}).

The rest of this subsection is devoted to the proof of Proposition \ref{prop:V_general_estimate}. We focus on the main term, $\wt {\cal V}_0(\Sigma)$. The estimate of $\wt {\cal V}_1(\Sigma)$ is much easier and is sketched at the end of this subsection.
To guide the reader, we split the somewhat lengthy argument into eight steps. 

\subsubsection*{Step 1. Conditioning on ladders}
A key idea behind the estimate of $\wt{\cal V}_0(\Sigma)$ is that the ladders in $\cal G (\Sigma, \f b)$ that remain ladders in $\Gamma\geq \cal G (\Sigma, \f b)$ (see the summation over $\Gamma$ in 
\eqref{def_V_general})  will be summed up explicitly. (Recall
 that $L_\sigma(\Sigma, \f b)$ denotes the ladder in $\cal G (\Sigma, \f b)$ encoded by the skeleton bridge $\sigma \in \Sigma$ and consisting of $b_\sigma$ parallel bridges.)
To make this classification of ladders precise, we say that a 
partition $\Gamma$ (with $\Gamma \geq \cal G(\Sigma, \f b)$) \emph{disrupts} the ladder $L_\sigma(\Sigma, \f b)$ if there is a $\pi \in L_\sigma(\Sigma, \f b)$ such that $\pi \notin \Gamma$,  i.e.\ a bridge of the ladder is 
contained in a strictly larger block of $\Gamma$.
 We shall classify all partitions $\Gamma \geq \cal G(\Sigma, \f b)$ according to the set $\zeta \subset \Sigma$ of skeleton bridges whose ladders $\{L_\sigma(\Sigma, \f b) \col \sigma \in \zeta\}$ the partition $\Gamma$ disrupts. The other ladders $L_\sigma(\Sigma, \f b)$, where $\sigma$ lies the complementary set $\bar \zeta \deq \Sigma \setminus \zeta$, are not disrupted by $\Gamma$, i.e.\ for $\sigma \in \bar \zeta$ and $\pi \in L_\sigma(\Sigma, \f b)$ we have $\pi \in \Gamma$. We denote the set of partitions $\Gamma$ that disrupt precisely the ladders encoded by the skeleton bridges $\zeta$ by $\fra F_\zeta(\Sigma, \f b)$; explicitly,
\begin{equation*}
\fra F_\zeta(\Sigma, \f b) \;\deq\; \hb{\Gamma \geq \cal G(\Sigma, \f b) \col \text{$\Gamma$ disrupts $L_\sigma(\Sigma, \f b)$ for $\sigma \in \zeta$ and does not disrupt $L_\sigma(\Sigma, \f b)$ for $\sigma \in \bar \zeta$}}\,.
\end{equation*}
We shall sometimes refer to $\zeta$ as the set of \emph{disrupted skeleton bridges} and to the $\bar \zeta$ as the set of \emph{not disrupted skeleton bridges}.

Our strategy will be to sum carefully, by making use of oscillations, the labels associated with ladders that are not disrupted by $\Gamma$, while using a more brutal approach for the ladders that are disrupted by $\Gamma$. The latter summation will be estimated by taking the absolute value inside the summation. The resulting loss is compensated by the fact that the summation variables are subject to additional constraints owing to their being disrupted by $\Gamma$.
 As it turns out, these constraints lead to a reduction in summation that is sufficient to compensate the loss resulting from ignoring the phases of the summands. We note that an even more refined strategy could be applied, in which we would subdivide ladders disrupted by $\Gamma$ into subladders that are not disrupted by $\Gamma$. We would then sum these not disrupted subladders explicitly (making use of oscillations), and sum the remaining labels of the ladder more brutally, again making use of constraints imposed by the disrupting by $\Gamma$. As it turns out, however, such a refined approach is mercifully not needed, and we choose to characterize  a ladder as ``disrupted by $\Gamma$'' and to sum its labels brutally as soon as even one of its bridges is not contained in $\Gamma$.

For any given skeleton $\Sigma \in \fra S$, we split the summation over $\Gamma$ and $\Xi$ in \eqref{def_V_general} according to the set $\zeta$ of disrupted skeleton bridges: 
\begin{equation} \label{V_o_V_zeta}
\wt{\cal V}_0(\Sigma) \;=\; \sum_{\zeta \subset \Sigma} \cal V_\zeta(\Sigma)\,,
\end{equation}
where we defined
\begin{multline} \label{def_V_T}
\cal V_\zeta(\Sigma) \;\deq\; \sum_{\f b \in \N^\Sigma} \indBB{2 \sum_{\sigma \in \Sigma} b_\sigma \leq M^\mu} \, 2 \re \pb{\wt \gamma_{n_1(\Sigma, \f b)}(E_1,\phi_1)} \, 2 \re \pb{\wt \gamma_{n_2(\Sigma, \f b)}(E_2, \phi_2)}
\\
\times \sum_{\Gamma \in \fra F_\zeta(\Sigma, \f b)} \sum_{\Xi \in \fra H(\Gamma)} \indb{\Phi(\Gamma, \Xi) = \cal G(\Sigma, \f b)} \, D(\Gamma, \Xi) \,
\, \sum_{\f x \in \bb T^{V(\Sigma, \f b)}} I(\f x) \, B_{\Gamma,\Xi}(\f x) \, \prod_{\{e,e'\} \in \cal G(\Sigma, \f b)} S_{x_e}\,.
\end{multline}
(To unburden notation, we omit a tilde in the definition of $\cal V_\zeta$.) 

Next, in \eqref{def_V_T} we first replace $\wt \gamma_{n_i(\Sigma, \f b)}(E_i,\phi_i)$ with $(\psi_i^\eta * \gamma_{n_i(\Sigma, \f b)})(E_i)$ using \eqref{gamma - g}, and then introduce a cutoff in the tail of $\psi_i^\eta$ by replacing $\psi_i^\eta$ with $\psi_i^{\leq, \eta}$ from \eqref{phi_cutoff1} where $\theta \deq \delta$ (here we used the estimate \eqref{phi_geq_bound}), and by replacing the indicator function $\indb{2 \sum_{\sigma \in \Sigma} b_\sigma \leq M^\mu}$ with $\prod_{\sigma \in \Sigma} \ind{b_\sigma \leq M^\mu}$ using \eqref{phi_geq_bound}. We omit the details of these brutal estimates, which are similar to those
used to obtain \eqref{VD_step2} from \eqref{common expression for V(D) 2} and \eqref{VDdef}; 
the only additional complication is the summation over $\Gamma$ and $\Xi$, which may be easily dealt with using \eqref{estimate for breaking lumps}. Using the splitting \eqref{2re2re} (with $x_i = \wt \gamma_{n_i(\Sigma, \f b)}(E_i, \phi_i)$) we therefore get
\begin{equation} \label{V_split2}
\cal V_\zeta(\Sigma) \;=\; 2 \re \pb{\cal V_\zeta'(\Sigma) + \cal V_\zeta''(\Sigma)} + O_{q,\Sigma}(N M^{-q})\,,
\end{equation}
where we defined
\begin{multline} \label{def_V_T_prime}
\cal V_\zeta'(\Sigma) \;\deq\; \sum_{\f b \in \{1, \dots, [M^\mu]\}^\Sigma} \pb{\gamma_{n_1(\Sigma, \f b)} * \psi_1^{\leq, \eta}}(E_1) \, \pb{\ol {\gamma_{n_2(\Sigma, \f b)}} * \psi_2^{\leq, \eta}}(E_2)
\\
\times \sum_{\Gamma \in \fra F_\zeta(\Sigma, \f b)} \sum_{\Xi \in \fra H(\Gamma)} \indb{\Phi(\Gamma, \Xi) = \cal G(\Sigma, \f b)} \, D(\Gamma, \Xi) \,
\, \sum_{\f x \in \bb T^{V(\Sigma, \f b)}} I(\f x) \, B_{\Gamma,\Xi}(\f x) \, \prod_{\{e,e'\} \in \cal G(\Sigma, \f b)} S_{x_e}\,,
\end{multline}
and $\cal V_\zeta''(\Sigma)$ is defined similarly but without the complex conjugation on $\gamma_{n_2(\Sigma, \f b)}$. We give the details of the estimate for the (harder) term $\cal V_\zeta'(\Sigma)$. Thus, throughout the following, we only consider $\cal V_\zeta'(\Sigma)$; the estimates for $\cal V_\zeta''(\Sigma)$ are almost identical, up to minor differences that are explained in Step 8 at the end of this subsection. 

Next, we use \eqref{claim about gamma n} to rewrite the factors $\gamma$.
To that end, we have to classify the bridges of $\Sigma$ into three classes according 
to the following definition, which is the same as Definition 4.13 in \cite{EK3}.
\begin{definition} \label{def_split_Sigma}
For $i = 1,2$ we define
\begin{equation*}
\Sigma_i \;\deq\; \hb{\sigma \in \Sigma \col \sigma \subset E(\cal C_i)}\,,
\end{equation*}
the set of bridges consisting only of edges of $\cal C_i$. We abbreviate $\Sigma_d \deq \Sigma_1 \cup \Sigma_2$ (the set of ``domestic bridges''). We also define $\Sigma_c \deq \Sigma \setminus \Sigma_d$, the set of bridges connecting the two components of $\cal C$.
\end{definition} 
Since each $\sigma \in \Sigma_c$ contains one edge of $\cal C_1$ and one edge of $\cal C_2$, and each $\sigma \in \Sigma_i$ contains two edges of $\cal C_i$, we find that the number of edges in the $i$-th chain $\cal C_i(n_i)$ of the graph
$\cal C(n_1, n_2)$ with pairing $\cal G(\Sigma, \f b)$ is
\begin{equation*}
n_i(\Sigma, \f b) \;=\; \sum_{\sigma \in \Sigma_c} b_\sigma + 2 \sum_{\sigma \in \Sigma_i} b_\sigma\,.
\end{equation*}
Recalling the notation \eqref{def:AA}, we therefore get
\begin{multline} \label{V_T_prime_2}
\cal V_\zeta'(\Sigma) \;=\; \sum_{\f b \in \{1, \dots, [M^\mu]\}^\Sigma}
\pbb{s \, \prod_{\sigma \in \Sigma} \chi_\sigma^{b_\sigma}} * \psi_1^{\leq, \eta}(E_1) * \psi_2^{\leq, \eta}(E_2)
\\
\times \sum_{\Gamma \in \fra F_\zeta(\Sigma, \f b)} \sum_{\Xi \in \fra H(\Gamma)} \indb{\Phi(\Gamma, \Xi) = \cal G(\Sigma, \f b)} \, D(\Gamma, \Xi) \,
\, \sum_{\f x \in \bb T^{V(\Sigma, \f b)}} I(\f x) \, B_{\Gamma,\Xi}(\f x) \, \prod_{\{e,e'\} \in \cal G(\Sigma, \f b)} S_{x_e}\,,
\end{multline}
where we defined the shorthand
\begin{equation*}
s \;\equiv\; s(E_1, E_2) \;\deq\; T(E_1) \ol{T(E_2)} \, \ee^{\ii (A_1 - A_2)}
\end{equation*}
and the phases
\begin{equation} \label{def_chi}
\chi_\sigma \;\equiv\; \chi_\sigma(E_1, E_2) \;\deq\;
\begin{cases}
-\ee^{2 \ii A_1} & \text{if } \sigma \in \Sigma_1
\\
-\ee^{-2 \ii A_2} & \text{if } \sigma \in \Sigma_2
\\
\ee^{\ii (A_1 - A_2)} & \text{if } \sigma \in \Sigma_c\,.
\end{cases}
\end{equation}
(On the first line of the right-hand side of \eqref{V_T_prime_2}, the
stars denote the convolutions with respect to the variables $E_1$ and $E_2$ on which $s$ and $\chi_\sigma$ depend.)
The precise form of the function $s$ is irrelevant; we only need the bound $\abs{s} \leq 5$.
From now on we drop the explicit mention of the domain $\{1, \dots, [M^\mu]\}^\Sigma$ of the $\f b$-summation in our expressions.

\subsubsection*{Step 2. Decoupling of the $\zeta$- and $\bar \zeta$-variables}
We fix $\Sigma \in \fra S_K^\leq$ and $\zeta \subset \Sigma$, where $\zeta$ is the set of the disrupted skeleton
bridges. In order to  perform the ladder summations for the complementary set $\bar \zeta$, we shall have to split the $\f x$ and $\f b$ variables according to the splitting $\Sigma = \zeta \sqcup \bar \zeta$. 
Recalling Definition \ref{def:ladders}, we split the labels according to
$\f x = (\f x_s, \f x_l)$, where $\f x_s = (x_i)_{i \in V_s(\Sigma, \f b)}$ and $\f x_l = (x_i)_{i \in V_l(\Sigma, \f b)}$.
We further split $\f x_l = (\f x_\sigma)_{\sigma \in \Sigma}$, where $\f x_\sigma$ contains the labels $x_i$ indexed by ladder vertices $i$ of $L_\sigma(\Sigma, \f b)$ (see Definition \ref{def:ladders}). We also introduce the set
  $L_\zeta(\Sigma, \f b) \deq \bigcup_{\sigma \in \zeta} L_\sigma(\Sigma, \f b)$, which yields the splitting $\f x_l = (\f x_\zeta, \f x_{\bar \zeta})$. Finally, we split the multiplicities $\f b = (b_\sigma)_{\sigma \in \Sigma} = (\f b_\zeta, \f b_{\bar \zeta})$, where $\f b_\zeta = (b_\sigma)_{\sigma \in \zeta}$.

Our next goal is to fix the multiplicities $\f b_{\zeta}$ and the labels $\f x_s, \f x_\zeta$, and to sum over $\f b_{\bar \zeta}$ and $\f x_{\bar \zeta}$. 
In order to perform the summation over $\f b_{\bar \zeta}$ and $\f x_{\bar \zeta}$, we shall first have to decouple them from the other summation variables, i.e.\ remove the constraints on the summation variables that involve the fixed variables.
The key idea behind the decoupling is to parametrize a partition $\Gamma \in \fra F_\zeta(\Sigma, \f b_\zeta, \f b_{\bar \zeta})$ using a partition $\wt \Gamma \in \fra F_\zeta(\Sigma, \f b_\zeta, \f 1)$ combined with the missing multiplicities $\f b_{\bar \zeta}$. Here $\f b_{\bar \zeta} = \f 1$ denotes the trivial multiplicities $(b_\sigma)_{\sigma \in \bar \zeta}$ where $b_\sigma = 1$ for all $\sigma \in \bar \zeta$. Informally, we collapse all ladders encoded by the set of not disrupted skeleton bridges $\bar \zeta$ into single bridges.

To make this idea precise, we first observe that there is a canonical bijection between the skeleton vertices $V_s$ of $\Gamma \in \fra F_\zeta(\Sigma, \f b_\zeta, \f b_{\bar \zeta})$ and of $\wt \Gamma \in \fra F_\zeta(\Sigma, \f b_\zeta, \f 1)$ (since the length of each ladder can be varied independently without changing the skeleton structure). Similarly, for each $\sigma \in \zeta$ there is a canonical bijection between the ladders  $L_\sigma$ in
 $\Gamma$ and $\wt \Gamma$ (since the length of each ladder encoded by a disrupted skeleton bridge in $\zeta$ is the same in $\Gamma$ and $\wt \Gamma$). 
Instead of a cumbersome formal definition, we refer to Figure \ref{fig: skeleton}, where the skeleton vertices $V_s$ are drawn in black or white. We shall use these bijections tacitly throughout the following, in particular identifying the sets $V_s$ of $\Gamma$ and $\wt \Gamma$ as  well as the labels $\f x_\zeta$ of $\Gamma$ and $\wt \Gamma$.

Next, given a $\Gamma \in \fra F_\zeta(\Sigma, \f b_\zeta, \f b_{\bar \zeta})$, the above bijection uniquely defines $\wt \Gamma \in \fra F_\zeta(\Sigma, \f b_\zeta, \f 1)$.  Moreover, we may recover the partition $\Gamma \in \fra F_\zeta(\Sigma, \f b_\zeta, \f b_{\bar \zeta})$ from $\wt \Gamma \in \fra F_\zeta(\Sigma, \f b_\zeta, \f 1)$ and $\f b_{\bar \zeta}$. To that end, we note that $\Gamma$ coincides with $\wt \Gamma$ on the set of edges $\bigcup \hb{\pi \col \pi \in L_\zeta(\Sigma, \f b)}$ (which is common to both $\Gamma$ and $\wt \Gamma$), and that on the complementary set $\bigcup \hb{\pi \col \pi \in L_{\bar \zeta}(\Sigma, \f b)}$ the partition $\Gamma$ coincides with the pairing $\cal G(\Sigma, \f b)$. We have an analogous parametrization of $\Xi \in \fra H(\Gamma)$ using $\wt \Xi \in \fra H(\wt \Gamma)$: on the set of edges $\bigcup \hb{\pi \col \pi \in L_\zeta(\Sigma, \f b)}$ the partition $\Xi$ coincides with $\wt \Xi$, and that on the complementary set $\bigcup \hb{\pi \col \pi \in L_{\bar \zeta}(\Sigma, \f b)}$ the partition $\Xi$ is atomic. We shall sometimes use the notations $\Gamma = \Gamma(\wt \Gamma, \f b_{\bar \zeta})$ and $\Xi = \Xi(\wt \Xi, \f b_{\bar \zeta})$ to denote these parametrizations.

The following lemma says roughly that the operation $\Phi$ (introduced in Section~\ref{sec:refining_partition}) 
for refining partitions into pairings commutes with the parametrization $(\wt \Gamma, \wt \Xi) \mapsto (\Gamma, \Xi)$, and that the indicator function $D$ (see \eqref{def_D_ind}) is invariant under this parametrization. Note that it was precisely because of this first property that the algorithm $\Phi$ had to be defined carefully in Section \ref{sec:refining_partition}, so that how it breaks up a block of $\Gamma$ is independent of the other blocks of $\Gamma$.

\begin{lemma} \label{lem:lift}
Let $\wt \Gamma \in \fra F_\zeta(\Sigma, \f b_\zeta, \f 1)$ and $\wt \Xi \in \fra H(\wt \Gamma)$. Let $\f b_{\bar \zeta}$ be arbitrary, and let $\Gamma = \Gamma(\wt \Gamma, \f b_{\bar \zeta})$ and $\Xi = \Xi(\wt \Xi, \f b_{\bar \zeta})$. Then the following hold.
\begin{enumerate}
\item
$D(\wt \Gamma, \wt \Xi) = D(\Gamma, \Xi)$.
\item
$\Phi(\wt \Gamma, \wt \Xi) = \cal G(\Sigma, \f b_\zeta, \f 1)$ if and only if $\Phi(\Gamma, \Xi) = \cal G(\Sigma, \f b_\zeta, \f b_{\bar \zeta})$.
\end{enumerate}
\end{lemma}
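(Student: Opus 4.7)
The plan is to exploit the bijective correspondence between the blocks of $\Gamma=\Gamma(\wt\Gamma,\f b_{\bar\zeta})$ and those of $\wt\Gamma$, together with the local nature of $\Phi$. Since the ladders $L_\sigma(\Sigma,\f b)$ with $\sigma\in\bar\zeta$ are not disrupted by $\Gamma$, their $b_\sigma$ bridges appear as $b_\sigma$ singleton blocks of $\Gamma$ (i.e.\ two-element blocks $\{e,e'\}$). The blocks of $\Gamma$ thus fall in two classes: (a) those coming from $\wt\Gamma$ via the canonical identification (on which $\Xi$ coincides with $\wt\Xi$), and (b) extra two-element blocks $\{e,e'\}$ arising from duplicating the single $\bar\zeta$-bridges of $\cal G(\Sigma,\f b_\zeta,\f 1)$ into full parallel ladders, on which $\Xi$ is atomic.

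For (i), I will compute each of the two products in the definition \eqref{def_D_ind} of $D$ block by block. For the first product $\prod_\gamma \mu_\gamma(\Xi)$, every extra block $\{e,e'\}$ of class (b) contributes $\mu_{\{e,e'\}}(\Xi)=1$ since the atomic partition splits the pair into two singletons of equal size; the remaining factors match those in $\prod_{\wt\gamma}\mu_{\wt\gamma}(\wt\Xi)$ one-to-one. For the second product $\prod_\gamma \mu_{\gamma\cap E(\cal C_1)}(\Xi)\,\mu_{\gamma\cap E(\cal C_2)}(\Xi)$, an extra block $\{e,e'\}$ of class (b) with $\sigma\in\Sigma_i$ ($i=1,2$) contributes $1$, while one with $\sigma\in\Sigma_c$ contributes $0$ (since $\gamma\cap E(\cal C_i)$ becomes a single edge). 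The crucial observation is that the same case analysis applies verbatim to the corresponding single-bridge blocks of $\wt\Gamma$ indexed by $\bar\zeta$, so the two second products share identical factors for these blocks. Combining the two products yields $D(\Gamma,\Xi)=D(\wt\Gamma,\wt\Xi)$.

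For (ii), I will invoke properties (a) and (c) of the construction of $\Phi$ from Section \ref{sec:refining_partition}: $\Phi$ breaks up each block of its input independently of the others, and each output bridge halves its block with respect to $\Xi$. Applied to $\Gamma$, the operation $\Phi$ leaves untouched each extra two-element block of class (b), since such a block is already a pair whose two edges lie in different blocks of $\Xi$. On the class (a) blocks, $\Phi$ produces exactly the same bridges for $\Gamma$ as for $\wt\Gamma$, because those blocks and the restrictions of the halving partitions coincide. Consequently, $\Phi(\Gamma,\Xi)$ equals $\Phi(\wt\Gamma,\wt\Xi)$ augmented precisely by the duplicate $\bar\zeta$-bridges of $\cal G(\Sigma,\f b_\zeta,\f b_{\bar\zeta})\setminus\cal G(\Sigma,\f b_\zeta,\f 1)$, which gives the claimed equivalence.

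The main point requiring care will be the case analysis for the second product in (i) and the verification that $\Phi$ genuinely commutes with the lifting $\wt\Gamma\mapsto\Gamma$. Both reduce to the property that $\Phi$ acts locally on blocks — which is precisely why the algorithm of Section \ref{sec:refining_partition} had to be defined in a way that satisfies property (c) — and to the fact that duplicating a single bridge of $\wt\Gamma$ into a collection of parallel bridges produces singleton blocks on which both $\Phi$ and the factors of $D$ behave trivially.
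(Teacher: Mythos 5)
Your proof is correct and follows essentially the same route as the paper's (much terser) proof: a block-by-block comparison of the two products in \eqref{def_D_ind} for part (i), and an appeal to the locality of the algorithm $\Phi$ for part (ii). Your write-up supplies the case analysis that the paper only alludes to as "easy to check," and correctly identifies that the class-(b) pairs contribute trivially to the first product, that $\mu$ collapses to the same $0/1$ value on both sides for the second product (regardless of the multiplicity $b_\sigma$), and that $\Phi$ fixes blocks of size two so that the duplicated $\bar\zeta$-bridges are never modified.
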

\begin{proof}
To prove (i), we note that $D = 1$ if and only if the first term on the right-hand side of \eqref{def_D_ind} is one and the second term is zero. It is easy to check that each term remains invariant under the change $(\wt \Gamma, \wt \Xi) \mapsto (\Gamma, \Xi)$. The claim (ii) is an easy consequence of the definition of the algorithm $\Phi$. 
\end{proof}

Using Lemma \ref{lem:lift} and the parametrization defined above, we may rewrite \eqref{V_T_prime_2} as
\begin{multline} \label{V_T_prime_3}
\cal V_\zeta'(\Sigma) \;=\; \sum_{\f b_\zeta}
\sum_{\wt \Gamma \in \fra F_\zeta(\Sigma, \f b_\zeta, \f 1)} \sum_{\wt \Xi \in \fra H(\wt \Gamma)} \indb{\Phi(\wt \Gamma, \wt \Xi) = \cal G(\Sigma, \f b_\zeta, \f 1)} \, D(\wt \Gamma, \wt \Xi)
\sum_{\f x_s} \sum_{\f x_\zeta} \pBB{\prod_{\{e,e'\} \in L_\zeta(\Sigma, \f b_\zeta, \f 1)} S_{x_e}}
\\
\times 
\sum_{\f b_{\bar \zeta}} \sum_{\f x_{\bar \zeta}} I(\f x) \, B_{\Gamma(\wt \Gamma, \f b_{\bar \zeta}),\Xi (\wt \Xi, \f b_{\bar \zeta})}(\f x) \,
\pbb{s \, \prod_{\sigma \in \Sigma} \chi_\sigma^{b_\sigma}} * \psi_1^{\leq, \eta}(E_1) * \psi_2^{\leq, \eta}(E_2)
\,
\pBB{\prod_{\{e,e'\} \in L_{\bar \zeta}(\Sigma, \f b)} S_{x_e}}\,.
\end{multline}

In \eqref{V_T_prime_3} we separated the summations over $\f b_{\bar \zeta}$ and $\f x_{\bar \zeta}$ from
their counterparts without bars, but 
the variables $\f b_{\bar \zeta}$ and $\f x_{\bar \zeta}$ are still coupled to the variables $\f b_\zeta$, $\f x_s$, and $\f x_\zeta$ through the indicator functions $I$ and $B$. We therefore have to rewrite these indicator functions in a more amenable form. Define the set of unordered edge labels
\begin{equation*}
E_{\f b_\zeta}\p{\f x_s , \f x_\zeta} \;\deq\; \hb{[x_e] \col \{e,e'\} \in L_\zeta(\Sigma, \f b_\zeta, \f 1)}\,,
\end{equation*}
which collects all unordered edge labels associated with edges belonging  to the $\zeta$-bridges.  As the notation implies, this set does not depend on the labels $\f x_{\bar \zeta}$. Moreover, we have the trivial bound
\begin{equation*}
\absb{E_{\f b_\zeta}\p{\f x_s , \f x_\zeta}} \;\leq\; C_\Sigma \, M^\mu\,.
\end{equation*}
We also recall the indicator function $J_\pi(\f x)$ from \eqref{def_I_sigma}, 
which enforces the unordered edge labels to coincide within a bridge $\pi$. Introduce the indicator function
\begin{equation*}
J_{\{e,\tilde e\}, \{e',\tilde e'\}}(\f x) \;\deq\; \indb{[x_e] = [x_{e'}] = [x_{\tilde e}] = [x_{\tilde e'}]}\,
\end{equation*}
associated with the event that two bridges, $\pi= \{e,\tilde e\}$ and $\pi'= \{e',\tilde e'\}$, have the same
unordered edge labels. Notice that this is a very atypical situation; the leading contribution
comes from the case when all bridges have distinct edge labels.

Using these notations, we may rewrite the above indicator function $B$ as 
\begin{multline} \label{B_split}
B_{\Gamma(\wt \Gamma, \f b_{\bar \zeta}),\Xi (\wt \Xi, \f b_{\bar \zeta})}(\f x) \;=\; B_{\wt \Gamma, \wt \Xi}\p{\f x_s, \f x_\zeta} \pBB{\prod_{\{e,e'\} \in L_{\bar \zeta}(\Sigma, \f b_\zeta, \f b_{\bar \zeta})} J_{\{e,e'\}}(\f x_s, \f x_{\bar \zeta}) \indb{[x_e] \notin E_{\f b_\zeta}\p{\f x_s , \f x_\zeta}}}
\\
\times \pBB{\prod_{\pi \neq  \pi' \in L_{\bar \zeta}(\Sigma, \f b_\zeta, \f b_{\bar \zeta})} \pb{1 - J_{\pi,\pi'}(\f x_s, \f x_{\bar \zeta})}}\,.
\end{multline}
The first factor implements all constraints among the $\zeta$-bridges of $L_\zeta(\Sigma, \f b_\zeta, \f b_{\bar \zeta}) \simeq L_\zeta(\Sigma, \f b_\zeta, \f 1)$. The remaining factors provide an explicit form of the constraints among the remaining 
$\bar\zeta$-bridges, which will be needed for the subsequent summation over the $\bar \zeta$-variables: the second factor implements the condition that edges of $L_{\bar \zeta}(\Sigma, \f b_\zeta, \f b_{\bar \zeta})$ belonging to the same bridge have compatible labels (as defined by $J_{\{e,e'\}}$) and that their labels be distinct from the labels associated with the $\zeta$-bridges; 
 the final factor implements the condition that distinct  $\bar\zeta$-bridges
have distinct unordered labels.

Plugging \eqref{B_split} into \eqref{V_T_prime_3} yields
\begin{multline} \label{V_T_prime_4}
\cal V_\zeta'(\Sigma) \;=\; \sum_{\f b_\zeta}
\sum_{\Gamma \in \fra F_\zeta(\Sigma, \f b_\zeta, \f 1)} \sum_{\Xi \in \fra H(\Gamma)} \indb{\Phi(\Gamma, \Xi) = \cal G(\Sigma, \f b_\zeta, \f 1)} \, D(\Gamma, \Xi)
\sum_{\f x_s} \sum_{\f x_\zeta}
B_{\Gamma, \Xi}\p{\f x_s, \f x_\zeta}
\pBB{\prod_{\{e,e'\} \in L_\zeta(\Sigma, \f b_\zeta, \f 1)} S_{x_e}}
\\
\times 
\sum_{\f b_{\bar \zeta}} \sum_{\f x_{\bar \zeta}} I(\f x)
\pBB{\prod_{\{e,e'\} \in L_{\bar \zeta}(\Sigma, \f b_\zeta, \f b_{\bar \zeta})} J_{\{e,e'\}}(\f x_s, \f x_{\bar \zeta}) \indb{[x_e] \notin E_{\f b_\zeta}\p{\f x_s , \f x_\zeta}} S_{x_e}}
\\
\times \pBB{\prod_{\pi \neq \pi' \in L_{\bar \zeta}(\Sigma, \f b_\zeta, \f b_{\bar \zeta})} \pb{1 - J_{\pi,\pi'}(\f x_s, \f x_{\bar \zeta})}}
\, \pbb{s \, \prod_{\sigma \in \Sigma} \chi_\sigma^{b_\sigma}} * \psi_1^{\leq, \eta}(E_1) * \psi_2^{\leq, \eta}(E_2)\,,
\end{multline}
where we dropped the tildes from the partitions $\wt \Gamma$ and $\wt \Xi$ to unclutter notation.

\subsubsection*{Step 3. Resolution of coincidences among the $\bar \zeta$-labels}
Our current goal is to estimate the last two lines of \eqref{V_T_prime_4} by making use of the oscillations in the phases
 $\chi_\sigma$ for $\sigma \in \bar \zeta$. To unclutter notation, in the following we frequently omit  the arguments $\Sigma$, $\f b = (\f b_\zeta, \f b_{\bar \zeta})$, and $\f x$, unless they are needed to avoid confusion. We have to factorize the right-hand side of \eqref{V_T_prime_4} into a product over $\sigma$. The main obstacle is the profusion of indicator functions introducing constraints among the $\bar \zeta$-labels $\f x_{\bar \zeta}$. We split
\begin{equation*}
\prod_{\pi \neq \pi' \in L_{\bar \zeta}} \pb{1 - J_{\pi,\pi'}(\f x_s, \f x_{\bar \zeta})} \;=\; \pBB{\prod_{\sigma \in \bar \zeta} \pb{1 - U_\sigma(\f x_s, \f x_\sigma)}} \pBB{\prod_{\sigma \neq \sigma' \in \bar \zeta} \pb{1 - U_{\sigma, \sigma'}(\f x_s, \f x_\sigma, \f x_{\sigma'})}}\,,
\end{equation*}
where we defined the indicator functions
\begin{equation} \label{def_U_sigma}
1 - U_\sigma \;\deq\; \prod_{\pi\neq \pi' \in L_\sigma} \p{1 - J_{\pi,\pi'}}
\,,\qquad
1 - U_{\sigma,\sigma'} \;\deq\; \prod_{\pi \in L_\sigma} \prod_{\pi \in L_{\sigma'}} \p{1 - J_{\pi,\pi'}}\,.
\end{equation}
The first expression excludes that two different bridges within the same ladder $\sigma$ have 
the same labels, while the second excludes that two bridges from different ladders have the same labels.
We adopt the convention that $U_{\sigma,\sigma} \deq 0$.
We also introduce the indicator function
\begin{equation} \label{def_V_sigma}
1 - \wt U_\sigma \;\deq\; \prod_{\{e,e'\} \in L_{\sigma}} \pb{1 - \indb{[x_e] \in E_{\f b_\zeta}}}\,,
\end{equation}
which excludes that any label in a $\bar\zeta$-ladder $\sigma$ coincide with a $\zeta$-label.
(Note that the right-hand side depends on the choice of ordering of the pair $\{e,e'\}$; it may be chosen arbitrarily, since we shall always use $1 - \wt U_\sigma$ within an expression in which $[x_e] = [x_{e'}]$ for $\{e,e'\} \in L_\sigma$.)

The idea behind these definitions is that the term $1$ is typical (and hence yields a leading contribution); the error terms $U_\sigma$, $U_{\sigma, \sigma'}$, and $\wt U_\sigma$ yield smaller contributions resulting from coinciding summation labels. We expand
\begin{equation} \label{splitting_cal_J}
\cal J \;\deq\; \pBB{\prod_{\sigma \in \bar \zeta} (1 - U_\sigma) (1 - \wt U_\sigma)} \pBB{\prod_{\sigma , \sigma' \in \bar \zeta} \p{1 - U_{\sigma, \sigma'}}} \;=\; \sum_{\alpha, \beta \subset \bar \zeta} \sum_{\gamma \subset \bar \zeta^2} \cal J_{\alpha, \beta, \gamma}\,,
\end{equation}
where we defined
\begin{equation} \label{def_cal_J_2}
\cal J_{\alpha, \beta, \gamma} \;\deq\; \prod_{\sigma \in \alpha} (- U_\sigma) \prod_{\sigma \in \beta} (-\wt U_\sigma) \prod_{\{\sigma, \sigma'\} \in \gamma} (- U_{\sigma,\sigma'})\,.
\end{equation}
Thus we have the splitting $\cal J = \sum_{\xi \subset \bar \zeta} \cal J_\xi$, where
\begin{equation}\label{Jxi}
\cal J_\xi \;\deq\; \sum_{\alpha, \beta \subset \bar \zeta} \sum_{\gamma \subset \bar \zeta^2} \ind{\xi = \alpha \cup \beta \cup [\gamma]} \, \cal J_{\alpha, \beta, \gamma}
\end{equation}
and we defined $[\gamma] \deq \bigcup \{\{\sigma, \sigma'\} \col \{\sigma, \sigma'\} \in \gamma\}$. The interpretation of $\cal J_\xi$ is that it imposes constraints precisely on the ladders $L_\sigma$ indexed by $\sigma \in \xi$.
Hence, the set $\xi$ consists of all $\sigma \in \bar \zeta$ such that $\cal J_\xi$ introduces a constraint among the labels of $L_\sigma$. Abbreviate the complementary set by $\bar \xi \deq \bar \zeta \setminus \xi$;
these are the $\bar\zeta$-ladders whose labels are not subject to restrictions.
 We split the variables $\f b_{\bar \zeta} = (\f b_\xi, \f b_{\bar \xi})$ and $\f x_{\bar \zeta} = (\f x_\xi, \f x_{\bar \xi})$ in self-explanatory notation. Thus, we have $\cal J_\xi(\f x_s, \f x_{\bar \zeta}) \equiv \cal J_\xi(\f x_s, \f x_\xi)$, so that $\cal J_\xi$ does not depend on the $\bar \xi$-labels $\f x_{\bar \xi}$.

Hence we may write the two last lines of \eqref{V_T_prime_4} as $\sum_{\xi \subset \bar \zeta} R_\xi$, where
\begin{equation} \label{def_R_xi}
R_\xi \;\deq\;
\sum_{\f b_{\bar \zeta}} \sum_{\f x_{\bar \zeta}}
\pBB{\prod_{\{e,e'\} \in L_{\bar \zeta}} J_{\{e,e'\}} S_{x_e}} I (\f x) \, \cal J_\xi (\f x_s, \f x_{\xi}) \, \pbb{s \, \prod_{\sigma \in \Sigma} \chi_\sigma^{b_\sigma}} * \psi_1^{\leq, \eta}(E_1) * \psi_2^{\leq, \eta}(E_2)\,.
\end{equation}

We first fix the $\xi$-variables and sum over the $\bar \xi$-variables, using the fact that $\cal J_\xi$ does not depend on the $\bar \xi$-variables. Before we may do this summation, we have to deal with a final complication arising from the indicator function $I(\f x)$ implementing the nonbacktracking condition. Since $I(\f x)$ couples labels associated with vertices at distance two from each other, $I(\f x)$ does not factorize over the ladders $\sigma \in \bar \xi$. However, provided we treat the first and last summation label of each ladder separately, such a factorization is possible. For each $\sigma \in \bar \xi$ we split $\f x_\sigma = (\f y_\sigma, \f z_\sigma)$ into its \emph{inner labels} $\f y_\sigma$ and its \emph{edge labels} $\f z_\sigma$.  By definition, $\f z_\sigma$ consists of the labels associated with all ladder vertices of $L_\sigma(\Sigma, \f b)$ that are adjacent to a vertex that is not a ladder vertex of $L_\sigma(\Sigma, \f b)$; note that for $b_\sigma = 1$ there are no such vertices, for $b_\sigma = 2$ there are two such vertices, and for $b_\sigma \geq 3$ there are four such vertices.  See Figure \ref{fig: zx} for an illustration of this splitting. 
 We also abbreviate $\f y_{\bar \xi} = (\f y_\sigma)_{\sigma \in \bar \xi}$ and $\f z_{\bar \xi} = (\f z_\sigma)_{\sigma \in \bar \xi}$. Thus we get the factorization
\begin{equation*}
I(\f x) \;=\; \wt I(\f x_s, \f x_\zeta, \f x_\xi, \f z_{\bar \xi}) \prod_{\sigma \in \bar \xi} \pb{1 - W_\sigma(\f x_s, \f x_{\zeta}, \f z_{\bar \xi}, \f y_\sigma)}\,,
\end{equation*}
where the first factor $\wt I$ includes all terms in the product on the right-hand side of \eqref{def_I_ind}
 that do not depend on the inner labels $\f y_\sigma$ of any $\sigma \in \bar \xi$; the remaining terms on the right-hand side of \eqref{def_I_ind} depend on precisely one $\f y_\sigma$, so that they may be factorized over $\sigma \in \bar \xi$ and written in the form $1-W_\sigma$. The interpretation of $W_\sigma = 0$ is that, given $(\f x_s, \f x_\zeta, \f x_\xi, \f z_{\bar \xi})$, the summation over $\f y_\sigma$ is unrestricted. A glance at Figure \ref{fig: zx} should clarify this splitting.  Plugging this into \eqref{def_R_xi} yields
\begin{multline} \label{R_xi_1}
R_\xi \;=\;
\sum_{\f b_\xi} \sum_{\f x_\xi} \cal J_\xi(\f x_s, \f x_\xi) \,
\pBB{\prod_{\{e,e'\} \in L_{\xi}} J_{\{e,e'\}} S_{x_e}}
\sum_{\f b_{\bar \xi}} \sum_{\f z_{\bar \xi}}
\wt I(\f x_s, \f x_\zeta, \f x_\xi, \f z_{\bar \xi})
\sum_{\f y_{\bar \xi}}
\pBB{\prod_{\sigma \in \bar \xi} \pb{1 - W_\sigma(\f x_s, \f x_{\zeta}, \f z_{\bar \xi}, \f y_\sigma)}}
\\
\times \pBB{\prod_{\{e,e'\} \in L_{\bar \xi}} J_{\{e,e'\}} S_{x_e}} \, \pbb{s \, \prod_{\sigma \in \Sigma} \chi_\sigma^{b_\sigma}} * \psi_1^{\leq, \eta}(E_1) * \psi_2^{\leq, \eta}(E_2)\,.
\end{multline}
Similarly to the above splitting of $\bar \zeta = \xi \sqcup \bar \xi$, we perform a final splitting $R_\xi \;=\; \sum_{\vartheta \subset \bar \xi} R_{\xi, \vartheta}$, obtained by applying to the right-hand side of \eqref{R_xi_1} the splitting
\begin{equation} \label{intr_vartheta}
\prod_{\sigma \in \bar \xi} \p{1 - W_\sigma} \;=\; \prod_{\sigma \in \bar \xi} \p{1 - W_\sigma} \pb{\ind{b_\sigma \geq 3} + \ind{b_\sigma \leq 2}} \;=\; \sum_{\vartheta \subset \bar \xi} \cal K_{\xi, \vartheta}\,,
\end{equation}
where $\cal K_{\xi, \vartheta}$ collects all terms containing a factor $W_\sigma$ or $\ind{b_\sigma \leq 2}$ for some
 $\sigma \in \vartheta$. The leading contribution is the term $\cal K_{\emptyset, \xi}$, corresponding to $\vartheta=\emptyset$.
Note that $\cal K_{\xi, \vartheta}$ does not depend on $\f y_\sigma$ for all $\sigma \in \bar \vartheta \deq \bar \xi \setminus \vartheta$. Moreover, note that for each $\sigma \in \bar \vartheta$ we have $b_\sigma \geq 3$ for nonzero summands in $R_{\xi, \vartheta}$.

\subsubsection*{Step 4. Summing over the $\bar \xi$-variables using oscillations}
We now explicitly perform the $\f y_\sigma$-summations for each $\sigma \in \bar \vartheta$. To that end, for $\sigma \in \Sigma$ we introduce the labels $x_1$, $x_2$, $x_1'$, $x_2'$, $z_1$, and $z_2$ associated with the vertices at the ends of the ladder $L_{\sigma}$, as defined in Figure \ref{fig: zx}.
\begin{figure}[ht!]
\begin{center}
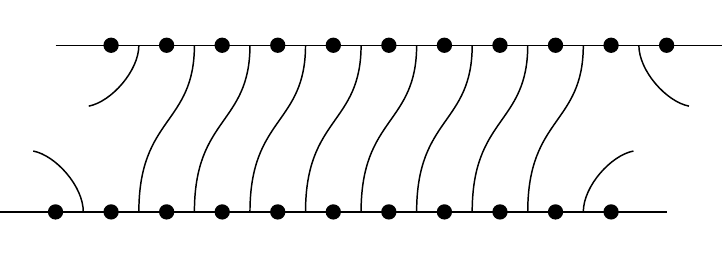
\end{center}
\caption{A ladder $L_\sigma$ with its labels.
The skeleton $\f x_s$-labels are denoted by $x_1$, $x_2$, $x_1'$, and $x_2'$,
the  edge labels by $z_1$ and $z_2$ and the inner labels 
 by $y_1, \dots, y_5$. We display each label next to the vertex it is associated with and omit the argument $\sigma$. The choice of names associated with the ordering of the vertices is immaterial. Since the omnipresent indicator functions $J_{\{e,e'\}}$ force two labels on the same side of a bridge to coincide, we use the same letter for them. For later purposes, we distinguish the labels $x_1$ and $x_1'$ as well as $x_2$ and $x_2'$, although the same indicator functions always impose that $x_1 = x_1'$ and $x_2 = x_2'$. 
\label{fig: zx}} 
\end{figure}
We may sum up the $\f y_\sigma$-labels for $\sigma \in \bar \vartheta$. The result is
\begin{multline*}
R_{\xi, \vartheta} \;=\;
\sum_{\f b_\xi} \sum_{\f x_\xi} \cal J_\xi \,
\pBB{\prod_{\{e,e'\} \in L_{\xi}} J_{\{e,e'\}} S_{x_e}}
\sum_{\f b_{\bar \xi}} \sum_{\f z_{\bar \xi}}
\wt I
\sum_{\f y_{\vartheta}}
\cal K_{\xi, \vartheta} \pBB{\prod_{\{e,e'\} \in L_\vartheta} J_{\{e,e'\}} S_{x_e}} 
\\
\times  \pBB{s \, \pBB{\prod_{\sigma \in \Sigma} \chi_\sigma^{b_\sigma}} \pBB{\prod_{\sigma \in \bar \vartheta} S_{x_1(\sigma) z_1(\sigma)} \pb{S^{b_\sigma - 2}}_{z_1(\sigma) z_2(\sigma)} S_{z_2(\sigma) x_2(\sigma)} \delta_{x_1(\sigma) x_1'(\sigma)} \delta_{x_2(\sigma) x_2'(\sigma)}}} * \psi_1^{\leq, \eta}(E_1) * \psi_2^{\leq, \eta}(E_2)\,.
\end{multline*}
We may now explicitly sum up the geometric series arising from the summation over $b_\sigma = 3,4, \dots, [M^\mu]$ for each $\sigma \in \bar \vartheta$. This summation exploits the required cancellations, and, having done it, we may take the absolute value inside the summation. Estimating $\wt I \leq I_0$ (recall \eqref{def_I_ind}),  we get
\begin{multline} \label{R_xi_theta_estimate}
\abs{R_{\xi, \vartheta}} \;\leq\; C
\sum_{\f b_\xi} \sum_{\f x_\xi} \abs{\cal J_\xi} I_0 \,
\pBB{\prod_{\{e,e'\} \in L_{\xi}} J_{\{e,e'\}} S_{x_e}}
\sum_{\f b_\vartheta} \sum_{\f x_\vartheta}
\pBB{\prod_{\sigma \in \vartheta} \pb{W_\sigma + \ind{b_\sigma \leq 2}}}
\pBB{\prod_{\{e,e'\} \in L_\vartheta} J_{\{e,e'\}} S_{x_e}} 
\\
\times \prod_{\sigma \in \bar \vartheta} \pBB{ \wt Z_{x_1(\sigma) x_2(\sigma)}(\sigma) \, \delta_{x_1(\sigma) x_1'(\sigma)} \delta_{x_2(\sigma) x_2'(\sigma)} }\,,
\end{multline}
where we defined 
\begin{equation} \label{def_wt_Z}
\wt Z_{xy}(\sigma) \;\deq\; \sum_{u,w} S_{xu} \sup \hB{\absb{\pb{Z(\chi_\sigma(E_1 - v_1, E_2 - v_2) S)}_{uw}} \col \abs{v_1}, \abs{v_2} \leq 2 \eta M^{\delta/2}} \, S_{wy}
\end{equation}
and
\begin{equation} \label{def_Z}
Z(x) \;\deq\; \sum_{b = 1}^{[M^\mu] - 2} x^b \;=\; \frac{x (1 - x^{[M^\mu] - 2})}{1 - x}\,.
\end{equation}
Note that the condition on $v_1$ and $v_2$ in \eqref{def_wt_Z} amounts to constraining
them to lie in the support of $\psi_1^{\leq, \eta}(v_1) \psi_2^{\leq, \eta}(v_2)$ in the convolution integral; see \eqref{supp_phi_trunc} and recall that $\theta = \delta$.

Bearing later applications in mind, we introduce a general class of matrices $\cal Z(\sigma)$, parametrized by the bridges $\sigma$ of $\Sigma$, which satisfy a set of simple bounds that are sufficient to conclude the proof.

\begin{definition} \label{def_admissible_Z}
Recall the splitting $\Sigma = \Sigma_1 \sqcup \Sigma_2 \sqcup \Sigma_c$ from Definition \ref{def_split_Sigma}.
We call the family of matrices $\cal Z(\sigma, E_1, E_2, L) \equiv \cal Z (\sigma)$ parametrized by $\sigma \in \Sigma$ \emph{admissible} if
\begin{equation} \label{domestic_bound2}
\abs{\cal Z_{xy}(\sigma)} \;\leq\; \frac{C}{M}\,, \qquad
\sum_{y} \abs{\cal Z_{xy}(\sigma)} \;\leq\; C \log N
\end{equation}
for $\sigma \in \Sigma_1 \cup \Sigma_2$ and
\begin{equation} \label{connecting_bound2}
\abs{\cal Z_{xy}(\sigma)} \;\leq\; \frac{C}{M} M^{2 \delta} R_2(\omega + \eta)\,, \qquad
\sum_y \abs{\cal Z_{xy}(\sigma)} \;\leq\; C M^\mu\,
\end{equation}
for $\sigma \in \Sigma_c$.
\end{definition}

As advertised, the matrix $\wt Z$ from \eqref{def_wt_Z}
satisfies the estimates in
Definition \ref{def_admissible_Z}.
\begin{lemma}
For small enough $\delta$ the matrices $\wt Z$ from \eqref{def_wt_Z} are admissible
in the sense of Definition \ref{def_admissible_Z}.
\end{lemma}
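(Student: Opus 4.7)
The plan is to reduce the desired bounds on $\wt Z_{xy}(\sigma)$ to bounds on the kernel
\begin{equation*}
T_{uw}(\sigma) \;\deq\; \sup \hb{ \absb{\pb{Z(\chi_\sigma(E_1 - v_1, E_2 - v_2) S)}_{uw}} \col \abs{v_1}, \abs{v_2} \leq 2 \eta M^{\delta/2}}\,,
\end{equation*}
and then to estimate $T_{uw}(\sigma)$ using Proposition \ref{prop: bounds on S} together with Lemma \ref{lem: lct}, case-splitting by the type of the skeleton bridge $\sigma$. The basic reduction follows from $\sum_u S_{xu} = \cal I \leq 2$ applied to the definition \eqref{def_wt_Z}, which yields $\sup_{x,y} \abs{\wt Z_{xy}(\sigma)} \leq C \sup_{u,w} T_{uw}(\sigma)$ and $\sup_x \sum_y \abs{\wt Z_{xy}(\sigma)} \leq C \sup_u \sum_w T_{uw}(\sigma)$.

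For domestic bridges $\sigma \in \Sigma_1 \cup \Sigma_2$, one has $\chi_\sigma = -\ee^{\pm 2 \ii \arcsin(E_i - v_i)}$, so that $\abs{1 - \chi_\sigma} = 2 \sqrt{1 - (E_i - v_i)^2}$ and $2 - \abs{1 + \chi_\sigma} = 2 (1 - \abs{E_i - v_i})$ are both bounded below by a positive constant, provided $c_*$ in \eqref{D leq kappa} and $\delta$ are sufficiently small (so that $\abs{v_i} \leq 2 \eta M^{\delta/2}$ does not push $\abs{E_i - v_i}$ close to $1$). Using the identity $Z(\alpha S) = \alpha S (1 - \alpha S)^{-1} - (\alpha S)^{[M^\mu] - 1} (1 - \alpha S)^{-1}$, the first term is $O(1/M)$ entry-wise by \eqref{bound res 2} with $k = 1$, while the second is controlled by $\|S^{[M^\mu]-1}\|_\infty \cdot \|(1 - \chi_\sigma S)^{-1}\|_{\ell^\infty \to \ell^\infty}$, which by Lemma \ref{lem: lct} and \eqref{bound res 1} is at most $(C / (M \cdot M^{\mu d / 2})) \cdot C \log N = o(1/M)$; together these yield $T_{uw}(\sigma) \leq C/M$. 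Row sums are handled analogously via $\sum_w \abs{(S A)_{uw}} \leq \cal I \, \|A\|_{\ell^\infty \to \ell^\infty}$ applied to $A = (1 - \chi_\sigma S)^{-1}$ and $A = (\chi_\sigma S)^{[M^\mu] - 1} (1 - \chi_\sigma S)^{-1}$, producing $\sum_w T_{uw}(\sigma) \leq C \log N$, which is \eqref{domestic_bound2}.

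For connecting bridges $\sigma \in \Sigma_c$ we have $\chi_\sigma = \ee^{\ii (A_1 - A_2)}$, and expanding arcsin yields $\abs{A_1 - A_2} \asymp \abs{\omega + v_1 - v_2}/\sqrt{1 - E^2}$. The sup over $v_1, v_2$ may thus drive $\abs{1 - \chi_\sigma}$ to zero when $\omega \lesssim \eta M^{\delta/2}$, so I would handle the sup via a case split. If $\omega \geq K \eta M^{\delta/2}$ for a large constant $K$, then $\abs{1 - \chi_\sigma} \geq c \omega$ uniformly, and Proposition \ref{prop: bounds on S}(i) gives the main term $\leq (C/M) R_2(c \omega) \leq (C/M) M^{2\delta} R_2(\omega + \eta)$, while the tail is controlled by combining Lemma \ref{lem: lct} with the bound $\|(1 - \chi_\sigma S)^{-1}\|_{\ell^\infty \to \ell^\infty} \leq C \log N / (2 - \abs{1 + \chi_\sigma})$ and the observation that $2 - \abs{1 + \chi_\sigma} \geq c \omega^2$. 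In the complementary regime $\omega \leq K \eta M^{\delta/2}$, we have $\omega + \eta \asymp \eta$, and we use the universal bound $\abs{(Z(\chi_\sigma S))_{uw}} \leq \sum_{b=1}^{[M^\mu] - 2} \abs{S^b_{uw}}$, which by Lemma \ref{lem: lct} is at most $(C/M) R_2(M^{-\mu}) + C M^\mu / N$ (equal to $C M^{\mu/2}/M$, $C \mu \log M / M$, $C/M$ for $d = 1$, $2$, $\geq 3$ respectively, up to the $M^\mu/N$ term). The exponent constraints $2 \delta < \mu - \rho < 3 \delta$ from Proposition \ref{prop: expansion with trunction} and the bandwidth lower bound $L \geq W^{1 + d/6}$ from \eqref{LW_assump} then ensure this is dominated by $(C/M) M^{2 \delta} R_2(\omega + \eta)$ in every dimension. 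The row-sum bound is immediate from the universal estimate and $\sum_w S^b_{uw} = \cal I^b \leq C$: $\sum_w T_{uw}(\sigma) \leq \sum_{b=1}^{[M^\mu] - 2} \cal I^b \leq C M^\mu$, matching \eqref{connecting_bound2}.

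The main obstacle is the connecting-bridge case, since the resolvent $(1 - \chi_\sigma S)^{-1}$ degenerates as the sup over $v_1, v_2$ pushes $\abs{1 - \chi_\sigma}$ toward zero. The proof must therefore interpolate between the two qualitatively different estimates of Proposition \ref{prop: bounds on S} and Lemma \ref{lem: lct}, and one needs to verify carefully that the $M^{2\delta}$ slack in \eqref{connecting_bound2} is just enough to absorb the resulting polynomial and logarithmic factors simultaneously for every $d \geq 1$, relying on the combined effect of the constraints $2 \delta < \mu - \rho < 3 \delta$ and $L \geq W^{1 + d/6}$.
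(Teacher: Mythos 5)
Your argument takes a different route from the paper's, which simply cites \cite[Lemma 4.16]{EK3} (noting that the latter ``makes essential use of the oscillations in the sum \eqref{def_Z}''); reconstructing the estimate from Proposition \ref{prop: bounds on S} and Lemma \ref{lem: lct} is a sensible alternative, but two steps do not close. First, your tail bound for $\sigma \in \Sigma_c$ in the regime $\omega \geq K\eta M^{\delta/2}$ fails for $d \leq 3$. Bounding the entries of $(\chi_\sigma S)^{[M^\mu]-1}(1-\chi_\sigma S)^{-1}$ by $\sup_{u,w}\absb{(S^{[M^\mu]-1})_{uw}}\cdot\normb{(1-\chi_\sigma S)^{-1}}_{\ell^\infty\to\ell^\infty}$ and using $2-\abs{1+\chi_\sigma}\asymp\omega^2$ in \eqref{bound res 1} introduces a factor $\omega^{-2}$, which near the threshold $\omega\asymp\eta M^{\delta/2}$ is $\asymp M^{2\rho-\delta}$; for $d\leq3$ the gain $M^{-\mu d/2}$ from $S^{[M^\mu]-1}$ does not cover it when $\rho,\mu$ are close to $1/3$ and $\delta$ is small, and the resulting bound exceeds the target $(C/M)M^{2\delta}R_2(\omega+\eta)$ by a positive power of $M$. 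The remedy is to group as $(\chi_\sigma S)^{[M^\mu]-2}\cdot\chi_\sigma S(1-\chi_\sigma S)^{-1}$ and estimate the $(u,w)$-entry by $\pb{\sum_z S^{[M^\mu]-2}_{uz}}\,\sup_z\absb{(\chi_\sigma S(1-\chi_\sigma S)^{-1})_{zw}} \leq \cal I^{[M^\mu]-2}\,(C/M)R_2(\abs{1-\chi_\sigma})$ via \eqref{bound res 2}: keeping the entrywise resolvent estimate attached to the last $S$-factor retains the oscillatory gain, whereas the $\ell^\infty\to\ell^\infty$ norm of $(1-\chi_\sigma S)^{-1}$ alone discards it.

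Second, the row-sum bound for $\sigma\in\Sigma_1\cup\Sigma_2$ silently exchanges the supremum over $(v_1,v_2)$ in \eqref{def_wt_Z} with the sum over $w$: the estimate $\sum_w\absb{(SA(v))_{uw}}\leq\cal I\,\normb{A(v)}_{\ell^\infty\to\ell^\infty}$ holds for each fixed $v$, but $\sum_w\sup_v\abs{\,\cdot\,}\geq\sup_v\sum_w\abs{\,\cdot\,}$, not the reverse, so it does not directly control $\sum_w T_{uw}(\sigma)$. This can be patched --- for instance by writing $\sup_{\abs{v}\leq a}$ as the value at $v=0$ plus $\int_{\abs{v}\leq a}\abs{\partial_v(\,\cdot\,)}\,\dd v$, together with $\normb{(1-\chi_\sigma S)^{-2}}_{\ell^\infty\to\ell^\infty}\leq C(\log N)^2$ and the smallness of $a = 2\eta M^{\delta/2}$ --- but your argument does not address the step. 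The $\Sigma_c$ row sum is unaffected, since there you pass to the $v$-independent bound $\sum_b\absb{S^b_{uw}}$ before summing over $w$.
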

\begin{proof}
The claim is a trivial corollary of \cite[Lemma 4.16]{EK3}. We remark that the proof of \cite[Lemma 4.16]{EK3} relies on the estimates \eqref{bound res 1} and \eqref{bound res 2}. It makes essential use of the oscillations in the sum \eqref{def_Z}. See \cite{EK3} for the full details.
\end{proof}

In this step we have achieved the main goal: to exploit the oscillations
from an appropriate subset of the ladders. In the remainder of the argument we have to perform the remaining
summations. We shall not use oscillations any longer: only the size of the terms and their combinatorics will matter.

\subsubsection*{Step 5. Summing over the $\bar \xi$-variables without oscillations}
We now sum over the remaining $\bar \xi$-variables, i.e.\ the $\vartheta$-variables $\f b_\vartheta$ and $\f x_\vartheta$ in \eqref{R_xi_theta_estimate}. This is a straightforward estimate, since the right-hand side of \eqref{R_xi_theta_estimate} factorizes over all ladders encoded by $\sigma \in \vartheta$. The contribution to the right-hand side \eqref{R_xi_theta_estimate} of an individual ladder $\sigma \in \vartheta$ incident to skeleton vertices with labels  $x_1=x_1'$ and $x_2=x_2'$ 
 from $\f x_s$  (see Figure \ref{fig: zx}) is estimated by
\begin{equation} \label{backtracking_coincidences}
\sum_{b = 1}^{[M^\mu]} \sum_{y_1, \dots, y_{b - 1}} S_{x_1 y_1} S_{y_1 y_2} \cdots S_{y_{b - 2} y_{b - 1}} S_{y_{b - 1} x_2}\qBB{\pBB{1 - \prod_{i = 1}^{b - 4} \ind{y_i \neq y_{i + 2}}} + \ind{b \leq 2}} \;\leq\; \cal Z_{x_1 x_2}(\sigma)\,,
\end{equation}
where $\cal Z$ satisfies Definition \ref{def_admissible_Z} 
(in fact, it satisfies the stronger bound \eqref{domestic_bound2} for all $\sigma$). To prove \eqref{backtracking_coincidences}, we note that the estimate of the term proportional to $\ind{b \leq 2}$ is trivial. We deal with the other term by writing $\ind{y_i \neq y_{i + 2}} = 1 - \ind{y_i = y_{i + 2}}$ and using the elementary bound 
\begin{equation} \label{product_exp}
0 \;\leq\; 1 - \prod_{i = 1}^n (1 - a_i) \;\leq\; \sum_{i=1}^n a_i \qquad (a_i\in [0,1]) 
\end{equation}
(with  $a_i = \ind{y_i = y_{i + 2}}$ and
$n = b-4$ in this case).
This yields $b - 4 \leq M^\mu$ terms, each of which is bounded by $M^{-1} (S^k)_{x_1 x_2}$ for some $k \in \N$.
The factor $M^{-1}$ comes from the 
\begin{equation}\label{gainM}
\sum_{y_{i+1}} S_{y_i y_{i+1}}S_{y_{i+1}y_{i}} \;=\; (S^2)_{y_i y_i} \;\leq\; \frac{C}{M}
\end{equation}
summations in the event $a_i = \ind{y_i = y_{i + 2}}$; the rest of the $S$-factors are summed up freely.
 This concludes the proof of \eqref{backtracking_coincidences}.
Recall that $R_\xi = \sum_{\vartheta \subset \bar \xi} R_{\xi, \vartheta}$ and $\abs{\xi} \leq \abs{\Sigma}$. Plugging \eqref{backtracking_coincidences} into \eqref{R_xi_theta_estimate} therefore yields
\begin{equation} \label{R_xi_theta_estimate2}
\abs{R_\xi} \;\leq\; C_\Sigma \, I_0(\f x_s) 
\sum_{\f b_\xi} \sum_{\f x_\xi} \absb{\cal J_\xi (\f x_s, \f x_\xi)}\,
\pBB{\prod_{\{e,e'\} \in L_{\xi}(\Sigma, \f b_\zeta, \f b_\xi, \f 1)} J_{\{e,e'\}} S_{x_e}}
\pBB{\prod_{\{e,e'\} \in L_{\bar \xi}(\Sigma, \f b_\zeta, \f b_\xi, \f 1)} J_{\{e,e'\}} \cal Z_{x_e}(\{e,e'\})}\,,
\end{equation}
where we split $\f b = (\f b_\zeta, \f b_\xi, \f b_{\bar \xi})$ and abbreviated $\f b_{\bar \xi} = \f 1$ to denote $b_\sigma = 1$ for all $\sigma \in \bar \xi$.

\begin{remark} \label{rem:small1}
For future purposes we observe that if $\vartheta \neq \emptyset$ then the estimate \eqref{R_xi_theta_estimate2} is in fact valid with an extra factor $M^{2 \mu - 1}$ on the right-hand side. (In \eqref{R_xi_theta_estimate2} we simply estimated this factor by one.) This factor arises from the preceding argument for $\sigma \in \vartheta$: for each indicator function $a_i$ in \eqref{product_exp} we get a factor $M^{-1}$, there are $M^\mu$ such terms, and the $b_\sigma$-summation yields another factor $M^\mu$.
\end{remark}

\subsubsection*{Step 6. Summing over the $\xi$-variables}
Fix $\xi\subset \Sigma$.
We may now sum over $\f b_\xi$ and $\f x_\xi$ on the right-hand side of \eqref{R_xi_theta_estimate2}. 
The summation over  $\f b_\xi$ will be performed trivially, yielding a factor $M^{\abs{\xi} \mu}$. Hence it suffices to regard all $\f b_\xi$ as fixed.
Since for any fixed $\xi$ the sum on the right-hand side of \eqref{Jxi}
 contains $O_\Sigma(1)$ terms, it suffices to estimate the contribution of a single term $\cal J_{\alpha, \beta, \gamma}$ to the right-hand side of \eqref{R_xi_theta_estimate2}.
 Thus, for the following we fix $\alpha, \beta \subset \bar \zeta$ and $\gamma \subset \bar \zeta^2$ satisfying $\xi = \alpha \cup \beta \cup [\gamma]$. Define $\beta' \deq \beta \setminus \alpha$. Moreover, let $\gamma'$ be a minimal subset of $\gamma$ such that $\xi = \alpha \cup \beta' \cup [\gamma']$. Since $\beta' \subset \beta$ and $\gamma' \subset \gamma$, we may estimate 
\begin{equation} \label{cal_J_estimate}
\abs{\cal J_{\alpha, \beta, \gamma}} \;\leq\; \prod_{\sigma \in \alpha} U_\sigma \prod_{\sigma \in \beta'} \wt U_{\sigma}
 \prod_{\{\sigma, \sigma'\} \in \gamma'} U_{\sigma,\sigma'}\,.
\end{equation}
We therefore need to estimate
\begin{multline} \label{typical}
C_\Sigma \, I_0(\f x_s)  \, M^{\abs{\xi} \mu} \sum_{\f x_\xi} \Bigg(\prod_{\sigma \in \alpha} U_\sigma \prod_{\sigma \in \beta'} \wt U_{\sigma}
 \prod_{\{\sigma, \sigma'\} \in \gamma'} U_{\sigma,\sigma'}
 \Bigg) \\
\times\pBB{\prod_{\{e,e'\} \in L_{\xi}(\Sigma, \f b_\zeta, \f b_\xi, \f 1)} J_{\{e,e'\}} S_{x_e}}
\pBB{\prod_{\{e,e'\} \in L_{\bar \xi}(\Sigma, \f b_\zeta, \f b_\xi, \f 1)} J_{\{e,e'\}} \cal Z_{x_e}(\{e,e'\})}\,.
\end{multline}
For each factor $U$ in \eqref{typical} we shall use the estimates 
\begin{equation}\label{expU}
 U_\sigma \;\leq\; \sum_{\pi\ne \pi'\in L_\sigma} J_{\pi, \pi'}\,, 
\qquad
\wt U_{\sigma} \;\leq\; \sum_{\{ e, e'\}\in L_{\sigma}} \indb{[x_e] \in E_{\f b_\zeta}}\,,
\qquad
U_{\sigma,\sigma'} \;\leq\; \sum_{\pi\in L_{\sigma}}\sum_{\pi'\in L_{\sigma'}}  
J_{\pi, \pi'}\,,
\end{equation} 
which follow by applying the estimate \eqref{product_exp} to 
 the definitions \eqref{def_U_sigma} and \eqref{def_V_sigma}.

Next, we
 sum over $\f x_\xi$, by summing over the labels of the ladder vertices $\f x_\sigma$ for $\sigma \in \xi$. The basic intuition behind this summation is that, thanks to the coincidences
imposed by the factors in the first line of \eqref{typical}, the ladder associated with $\sigma$ may be estimated by an admissible factor $\cal Z$
(see Definition \ref{def_admissible_Z}) times a small factor $M^{-1}$ as in \eqref{gainM}.
This factor will compensate the factors of $M^{\mu}$ arising from summations over $b_\sigma$, over $\pi \in \sigma$, and possibly over $\pi'\in \sigma$.
(Recall that $\mu < 1/3$, so that three factors of $M^\mu$ are affordable for each factor of $M^{-1}$. In fact, the estimates presented here only produce at most two factors of $M^\mu$ for each factor $M^{-1}$, and in particular work up to $\mu < 1/2$.) Some care is needed in accounting for the coincidences in $U_{\sigma, \sigma'}$, since here \emph{two} $b_\sigma$-summations need to be compensated. Together with the summations over $\pi, \pi'$, 
the combinatorics give a factor $M^{4\mu}$, which would not be affordable against the gain of $M^{-1}$, but it turns
out that in the generic situation the gain is in fact $M^{-2}$. In order to observe this gain, we need to perform this summation in an appropriate order.
To that end, it is convenient to introduce a graph $\cal S = (V(\cal S), E(\cal S)) \deq (\xi, \gamma')$, whose vertices
are identified with the ladders in $\xi$, and which encodes which distinct ladders are connected by a factor $J_{\pi, \pi'}$.
Note that $\cal S$ is  a forest graph by the minimality assumption on $\gamma'$.
We say that $\{\sigma, \sigma'\} \in \gamma'$ is a \emph{twig} of $\cal S$ if $\sigma$ and $\sigma'$ both have degree one and do not belong to $\alpha \cup \beta'$. The twigs represent the problematic factors of $J_{\pi, \pi'}$ from which $M^{-2}$ needs to be gained.

\begin{figure}[ht!]
\begin{center}
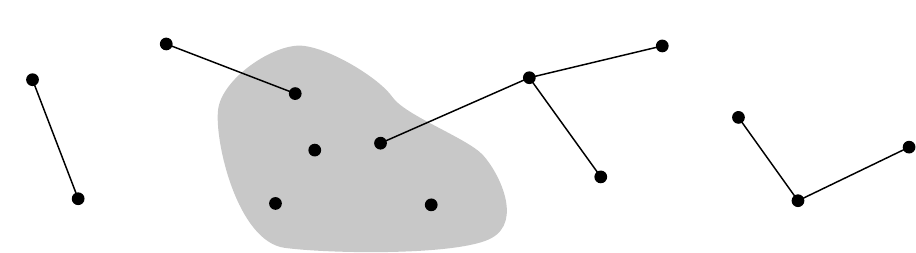
\end{center}
\caption{The forest $\cal S$.
 Vertices in $\alpha \cup \beta'$ are inside the shaded region. The leftmost connected component of $\cal S$ is its only twig. \label{fig: tree}} 
\end{figure}

The summation proceeds as follows. First, we sum recursively over all $\f x_\sigma$ for $\sigma \notin \alpha \cup \beta'$. We do this by starting at the leaves of $\cal S$, at each step removing the leaf $\sigma$ after the summation over $\f x_\sigma$ has been completed. If we encouter a leaf that belongs to a twig $\{\sigma, \sigma'\}$, we sum $\f x_\sigma$ and $\f x_{\sigma'}$ simultaneously. Finally, after only vertices in $\alpha \cup \beta'$ remain, we sum up the associated labels $\f x_\sigma$ one by one.

Now we give more details about each step of the summation. Suppose that $\sigma \notin \alpha \cup \beta'$ is a leaf of $\cal S$ that does not belong to a twig (e.g.\ $\sigma_3$ in Figure \ref{fig: zx}).  Let $\sigma'$ be the vertex of $\cal S$ adjacent to $\sigma$  (continuing with the example $\sigma = \sigma_3$ from Figure \ref{fig: zx}, we have $\sigma' = \sigma_4$). We now estimate
\begin{equation} \label{sum_sigma_sigmap}
\sum_{\f x_{\sigma}} U_{\sigma, \sigma'}(\f x_s, \f x_{\sigma}, \f x_{\sigma'}) \prod_{\{e,e'\} \in L_{\sigma}} J_{\{e,e'\}} S_{x_e} \;\leq\; C M^{2\mu - 1} \cal Z_{x_1(\sigma) x_2(\sigma)}(\sigma)\,,
\end{equation}
where $x_1(\sigma)$ and $x_2(\sigma)$ are skeleton labels incident to $L_\sigma$ (see Figure \ref{fig: zx}). The proof of \eqref{sum_sigma_sigmap} follows by estimating $U_{\sigma, \sigma'}$ as in \eqref{expU}, and by noting that the sum contains $O(M^{2\mu})$ terms, and in each term the number of free summation labels is reduced by at least one, leading to a factor $M^{-1} \cal Z$; we omit further details. (In fact, it is not hard to check that the factor $M^{2 \mu - 1}$ on the right-hand side of \eqref{sum_sigma_sigmap} may be improved to $M^{\mu - 1}$, by the simple observation that the generic case leads to a reduction of $M^{-2}$, while a weaker reduction of $M^{-1}$ is obtained only for $O(M^\mu)$ terms and not the generic $O(M^{2 \mu})$ terms.)  Having summed over $\f x_\sigma$, we strike the vertex $\sigma$ and the edge $\{\sigma, \sigma'\}$ from the tree and repeat this process until $\cal S$ has no more leaves in $\xi \setminus (\alpha \cup \beta')$ that do not belong to twigs.

Next, we estimate the $\f x$-summation associated with twigs. Let $\{\sigma, \sigma'\}$ be a twig of $\cal S$ (e.g.\ $\{\sigma_1, \sigma_2\}$ in Figure \ref{fig: zx}). Similarly to above, we claim that
\begin{equation} \label{sum_sigma_twig}
\sum_{\f x_\sigma, \f x_{\sigma'}} U_{\sigma, \sigma'}(\f x_s, \f x_{\sigma}, \f x_{\sigma'}) \prod_{\{e,e'\} \in L_{\sigma} \cup L_{\sigma'}} J_{\{e,e'\}} S_{x_e} \;\leq\; C M^{-1} \cal Z_{x_1(\sigma) x_2(\sigma)}(\sigma) \, \cal Z_{x_1(\sigma') x_2(\sigma')}(\sigma')\,.
\end{equation}
To see this, we estimate $U_{\sigma, \sigma'}$  using \eqref{expU}
and note that the coincidence $J_{\pi, \pi'}$ typically reduces the number of free summation labels by two.  Here ``typically'' means that at least one of $\pi$ and $\pi'$ is only incident to ladder vertices (or, more informally, is away from  its skeleton vertices, i.e.\ the ends of the ladder). 
This yields a gain $M^{-2}$ and there are  $O(M^{2\mu})$ such terms, so the total gain is bounded by $M^{2\mu-2} \leq M^{-1}$.
In the non-typical situation (i.e.\ where both $\pi$ and $\pi'$ are incident to skeleton  vertices), we only gain a factor $M^{-1}$ from the coincidences,
since in this case
one of the two labels incident to a bridge is a skeleton label, which is fixed so that $J_{\pi, \pi'}$ forces 
only one instead of two ladder labels to coincide. 
But this may happen only if $\pi$ and $\pi'$ are both at an end of their ladders, so the number of such terms is only $O(1)$. This concludes the proof of\eqref{sum_sigma_twig}.
We repeat this process for each twig of $\cal S$, after which we strike the twig from $\cal S$.

This leaves us with the summation over $\f x_\sigma$ for $\sigma \in \alpha \cup \beta'$  (e.g.\ $\sigma_5$ in Figure \ref{fig: tree}). 
 These summations factorize over $\sigma \in \alpha \cup \beta'$, and may be estimated individually (again by using the bound from \eqref{expU}). For $\sigma \in \alpha$ we get 
\begin{equation} \label{sum_sigma_alpha}
\sum_{\f x_\sigma} U_{\sigma}(\f x_s, \f x_{\sigma}) \prod_{\{e,e'\} \in L_{\sigma}} J_{\{e,e'\}} S_{x_e} \;\leq\; C M^{\mu-1} \cal Z_{x_1(\sigma) x_2(\sigma)}(\sigma)\,,
\end{equation}
 and a similar estimate applies for the case $\sigma \in \beta'$, where $U_\sigma(\f x_s, \f x_\sigma)$ in \eqref{sum_sigma_alpha} is replaced with $\wt U_\sigma(\f x_s, \f x_\zeta, \f x_\sigma)$.

We may now conclude the summation over the labels $\f x_\xi$ in \eqref{typical}
by noting that on the right-hand sides of \eqref{sum_sigma_sigmap}, \eqref{sum_sigma_twig}, and \eqref{sum_sigma_alpha} each factor $\cal Z$ carries a factor that is bounded by $CM^{-\mu}$. (This follows from $\mu < 1/3$; in fact, $\mu < 1/2$ would be enough here.) 
 Going back to \eqref{R_xi_theta_estimate2}, we have therefore proved that
\begin{align*}
\abs{R_\xi} &\;\leq\; C_\Sigma \,  I_0(\f x_s)
\sum_{\f b_\xi}
\pBB{\prod_{\{e,e'\} \in L_{\xi}(\Sigma, \f b_\zeta, \f 1)} M^{-\mu} J_{\{e,e'\}} \cal Z_{x_e}(\{e,e'\})}
\pBB{\prod_{\{e,e'\} \in L_{\bar \xi}(\Sigma, \f b_\zeta, \f 1)} J_{\{e,e'\}} \cal Z_{x_e}(\{e,e'\})}
\\
&\;\leq\; C_\Sigma \, I_0(\f x_s) 
\prod_{\{e,e'\} \in L_{\bar \zeta}(\Sigma, \f b_\zeta, \f 1)} J_{\{e,e'\}} \cal Z_{x_e}(\{e,e'\})\,,
\end{align*}
where the multiplicities $\f 1$ refer to $\f b_{\bar \zeta}$.

Recalling \eqref{V_T_prime_4} and the definition of $R_\xi$ given before \eqref{def_R_xi}, we therefore conclude that
\begin{multline} \label{V_T_prime_5}
\abs{\cal V_\zeta'(\Sigma)} \;\leq\; C_\Sigma  \sum_{\f b_\zeta}
\sum_{\Gamma \in \fra F_\zeta(\Sigma, \f b_\zeta, \f 1)} \sum_{\Xi \in \fra H(\Gamma)} \indb{\Phi(\Gamma, \Xi) = \cal G(\Sigma, \f b_\zeta, \f 1)}
\sum_{\f x_s} I_0(\f x_s)
\\
\times
\sum_{\f x_\zeta}
B_{\Gamma, \Xi}\p{\f x_s, \f x_\zeta}
\pBB{\prod_{\{e,e'\} \in L_\zeta(\Sigma, \f b_\zeta, \f 1)} S_{x_e}}
\,
\pBB{\prod_{\{e,e'\} \in L_{\bar \zeta}(\Sigma, \f 1)} \cal Z_{x_e}(\{e,e'\})}\,,
\end{multline}
where we used the bounds $D(\Gamma, \Xi), J_{\{e,e'\}} \leq 1$ as well as the canonical bijection $L_{\bar \zeta}(\Sigma, \f b_\zeta, \f 1) \simeq L_{\bar \zeta}(\Sigma, \f 1)$ described in Step 2 at the beginning of this subsection.

\begin{remark} \label{rem:small2}
Similarly to Remark \ref{rem:small1}, we note that if $\xi \neq \emptyset$ then the estimate \eqref{V_T_prime_5} is in fact valid with an extra factor $M^{2 \mu - 1}$ on the right-hand side. More precisely, from each $\sigma \in \alpha \cup \beta'$ we gain a factor $M^{2 \mu - 1}$. Moreover, from each $\sigma \in [\gamma']$ we also get a factor $M^{2 \mu - 1}$, except if $\sigma$ belongs to a twig $\{\sigma, \sigma'\}$, in which case the whole twig $\{\sigma, \sigma'\}$ gives rise to a single factor $M^{2 \mu - 1}$. All of these observations follow easily from the preceding argument, as in Remark \ref{rem:small1}.
\end{remark}

\subsubsection*{Step 7. Summing over the $\zeta$-variables}
Our next goal is to sum over the variables $\f b_\zeta$ and $\f x_\zeta$ in \eqref{V_T_prime_5}. The basic philosophy is similar to that of Step 6: the restrictions in the summation arising from the constraint $\Gamma \in \fra F_\zeta(\Sigma, \f b_\zeta, \f 1)$ yield powers of $M^{-1}$, which will compensate the factor $M^{\mu \abs{\zeta}}$ arising from the $\f b_\zeta$-summation.

The key observation behind estimating the right-hand side of \eqref{V_T_prime_5} is that, by definition of $\fra F_\zeta(\Sigma, \f b_\zeta, \f 1)$, if $\Gamma \in \fra F_\zeta(\Sigma, \f b_\zeta, \f 1)$ then each ladder $L_\sigma$ with $\sigma \in \zeta$ contains a bridge $\pi \in L_\sigma$ that is contained in a block of $\Gamma$ of size greater than two.
Thus, for any $\f x_s$, $\f x_\zeta$, and $\Gamma$ for which the corresponding summand on the right-hand side of \eqref{V_T_prime_5} is nonzero, we have
\begin{equation} \label{1leqconn}
1 \;\leq\; \sum_{\alpha \subset \zeta} \sum_{\gamma \subset \zeta^2} \ind{\zeta = \alpha \cup [\gamma]} \prod_{\sigma \in \alpha} U_\sigma \prod_{\{\sigma, \sigma'\} \in \gamma} U_{\sigma, \sigma'}\,.
\end{equation}
The interpretation of the right-hand side of \eqref{1leqconn} is that each ladder $L_\sigma$ must contain a bridge that is either (i) in the same block as another bridge of $L_\sigma$ (implemented by the factor $U_\sigma$) or (ii) in the same block as a bridge of a different ladder $L_{\sigma'}$ (implemented by the factor $U_{\sigma, \sigma'}$). Plugging \eqref{1leqconn} into the right-hand side of \eqref{V_T_prime_5} and using \eqref{estimate for breaking lumps} yields
\begin{multline} \label{V_T_prime_6}
\abs{\cal V_\zeta'(\Sigma)} \;\leq\; C_\Sigma \, I_0(\f x_s)  \sum_{\alpha \subset \zeta} \sum_{\gamma \subset \zeta^2} \ind{\zeta = \alpha \cup [\gamma]} \, \sum_{\f x_s} \sum_{\f b_\zeta} \sum_{\f x_\zeta}
\pBB{\prod_{\sigma \in \alpha} U_\sigma} \pBB{\prod_{\{\sigma, \sigma'\} \in \gamma} U_{\sigma, \sigma'}}
\\
\times
\pBB{\prod_{\{e,e'\} \in L_\zeta(\Sigma, \f b_\zeta, \f 1)} J_{\{e,e'\}} S_{x_e}}
\,
\pBB{\prod_{\{e,e'\} \in L_{\bar \zeta}(\Sigma, \f 1)} J_{\{e,e'\}} \cal Z_{x_e}(\{e,e'\})}\,.
\end{multline}
We may now estimate the sum over $\f x_\zeta$ exactly as in Step 6, by estimating the indicator functions $U_\sigma$ and $U_{\sigma, \sigma'}$ 
as in \eqref{expU} and using the facts that the sums over $\alpha$ and $\gamma$ contain $O_\Sigma(1)$ terms and that for each $\alpha$ and $\gamma$ we have $\zeta = \alpha \cup [\gamma]$. The result is
\begin{align}
\abs{\cal V_\zeta'(\Sigma)} &\;\leq\; C_\Sigma \sum_{\f x_s} \,  I_0(\f x_s)  \sum_{\f b_\zeta}
\pBB{\prod_{\{e,e'\} \in L_\zeta(\Sigma, \f 1)} M^{-\mu} J_{\{e,e'\}} \cal Z_{x_e}(\{e,e'\})}
\,
\pBB{\prod_{\{e,e'\} \in L_{\bar \zeta}(\Sigma, \f 1)} J_{\{e,e'\}} \cal Z_{x_e}(\{e,e'\})}
\notag \\ \label{V_T_prime_7}
&\;\leq\; C_\Sigma \,   \sum_{\f x_s} I_0(\f x_s)
\pBB{\prod_{\{e,e'\} \in \Sigma} J_{\{e,e'\}} \cal Z_{x_e}(\{e,e'\})}\,,
\end{align}
where in the last step we used that $\cal G(\Sigma, \f 1) = \Sigma$.

\begin{remark} \label{rem:small3}
Exactly as in Remark \ref{rem:small2}, we note that if $\zeta \neq \emptyset$ the estimate \eqref{V_T_prime_7} is valid with an additional factor $M^{2 \mu - 1}$ on the right-hand side. 
\end{remark}

\subsubsection*{Step 8. Summing over $\f x_s$ and conclusion of the proof of Proposition \ref{prop:V_general_estimate}}

To conclude the estimate of $\cal V'_\zeta(\Sigma)$, we use the following estimate on skeleton pairings from \cite[Lemma 4.22]{EK3}.

\begin{lemma} \label{lem: gen_skeleton_sum}
Suppose that $\Sigma \notin \fra S_D$ and that $\cal Z$ satisfies Definition \ref{def_admissible_Z}. Then for small enough $\delta$ there exists a $c_0 > 0$ such that
\begin{equation*}
\sum_{\f x \in \bb T^{V(\Sigma)}} I_0(\f x)
\pBB{\prod_{\{e,e'\} \in \Sigma} J_{\{e,e'\}} \cal Z_{x_e}(\{e,e'\})} \;\leq\; \frac{C_\Sigma N}{M} R_2(\omega + \eta)  M^{-c_0}\,.
\end{equation*}
\end{lemma}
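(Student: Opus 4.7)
The plan is to reduce the sum to a combinatorial count on the pairing $\Sigma$, trading each free label summation for a factor of $N = (L/W)^d M$ and each bridge for the $\ell^\infty$ bound from Definition \ref{def_admissible_Z}, while some bridges must instead be treated using the $\ell^1$ (row-sum) bound. The dumbbell skeletons $D_1,\dots,D_8$ are precisely the connected pairings that produce the leading balance $(N/M) R_2(\omega+\eta)$; the goal is to show that any other admissible skeleton $\Sigma \in \fra S \setminus \fra S_D$ picks up at least one extra factor of $M^{-c_0}$ relative to this balance.

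First I would use the indicators $J_{\{e,e'\}}$ and $I_0$ to reduce the vertex set: each $J_{\{e,e'\}}=1$ identifies the unordered pair of labels at $e$ and $e'$, and $I_0$ identifies the initial and final vertices of each chain, so the $\f x$-summation factorizes over a quotient graph on which the unordered labels live. Second, I would perform the label summations in a carefully chosen order along the skeleton, applying at each step either the pointwise bound or the row-sum bound from \eqref{domestic_bound2}-\eqref{connecting_bound2} to the corresponding $\cal Z(\sigma)$. Pointwise bounds on connecting bridges are small (essentially $M^{-1+2\delta} R_2$) but row-sum bounds on connecting bridges are expensive ($M^\mu$), so the summation order must be arranged to use a pointwise estimate on every connecting bridge in $\Sigma_c$ except those few that are essential to freely connect the two chains. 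Since $\abs{\Sigma} \leq K$, only finitely many skeletons need to be considered and the combinatorial prefactors may be absorbed into $C_\Sigma$.

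The main obstacle is the topological classification showing that for every $\Sigma \notin \fra S_D$ an ordering of summations with the required gain exists. The key structural fact is that a dumbbell is minimal in several respects simultaneously---it has exactly two connecting bridges, one domestic bridge per chain, and the minimum number of independent cycles in the label quotient---and any deviation from this minimal configuration (an extra bridge, an extra cycle, an extra connecting bridge, or a more intricate vertex identification pattern) yields either an extra pointwise estimate worth $M^{-1+C\delta}R_2$ on a connecting bridge or one less free vertex label than bridges, giving an extra factor $M^{-1}\log N$. Since $\mu < 1/3$, taking $\delta$ sufficiently small ensures the accumulated $M^{C\delta}$ and logarithmic losses are dominated by the net power gain, producing the required $M^{-c_0}$; this is the analogue in full generality of the case-by-case analysis of \cite[Section 4.5]{EK3}, which dispensed with the simplifications \textbf{(S1)}--\textbf{(S3)} there assumed.
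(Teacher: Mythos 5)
The paper does not actually prove this lemma here: it imports it verbatim from \cite[Lemma 4.22]{EK3} (``we use the following estimate on skeleton pairings from \cite[Lemma 4.22]{EK3}''). So there is no proof in the present paper to compare against directly; the comparison can only be against the general strategy of that companion lemma, which your sketch does roughly reproduce.

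That said, your proposal is not a proof but an outline, and the one step you flag as ``the main obstacle'' --- the topological classification showing every $\Sigma \notin \fra S_D$ admits a summation order that gains at least $M^{-c_0}$ relative to the dumbbell balance --- is exactly the entire nontrivial content of the lemma, and you assert it rather than derive it. The assertion ``any deviation from this minimal configuration yields either an extra pointwise estimate worth $M^{-1+C\delta}R_2$ on a connecting bridge or one less free vertex label than bridges'' is the thing to be proved, and it is delicate: a row-sum estimate on a connecting bridge costs $M^{\mu}$ (not merely $\log N$), so the bookkeeping of how many row-sums one is forced to use versus how many $M^{-1}$ gains one harvests genuinely depends on the structure of $\Sigma$, and one must show that it works out for \emph{every} admissible non-dumbbell skeleton, not just for ``typical'' ones. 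This is precisely the case analysis in \cite[Sections 4.4--4.5]{EK3} that the paper is invoking. Note also that the crude power count applied to the dumbbells themselves \emph{overshoots} the true leading order (the text around Lemma \ref{lem:D8_error} says so explicitly: ``applying them to a dumbbell skeleton yields an error bound that is larger than the main term''), so ``minimality of dumbbells under the crude counting'' is a more subtle claim than your phrasing suggests --- the dumbbells are not extremal for the crude bound, and the relevant statement is that every \emph{other} admissible skeleton has at least one structural feature (extra bridge, forced coincidence, or extra $\Sigma_c$ bridge) that lets one extract the missing $M^{-c_0}$ with an appropriate ordering. Without an actual enumeration or invariant-based argument establishing this, the proposal has a genuine gap at its load-bearing step.

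Two smaller points: first, the lemma's bound carries $R_2(\omega+\eta)$, not $R_4$, and this comes from using exactly one pointwise estimate on a connecting bridge (via \eqref{connecting_bound2}); your accounting should make explicit that at most one such $R_2$ factor may appear, since two of them would give $R_2^2$ which is not what the lemma asserts. Second, you should state explicitly that you may discard $I_0(\f x)$ after using it to identify the two endpoint labels of each chain --- this is what makes the quotient-graph picture well-defined --- and that the total number of skeletons with $\abs{\Sigma}\leq K$ is $O_K(1)$, which is why it suffices to prove the power gain skeleton by skeleton without tracking uniformity over $\Sigma$.
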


Now \eqref{V_T_prime_7} and Lemma \ref{lem: gen_skeleton_sum} yield
\begin{equation}
\abs{\cal V_\zeta'(\Sigma)} \;\leq\; \frac{C_\Sigma N}{M} R_2(\omega + \eta)  M^{-c_0}
\end{equation}
for all $\zeta \subset \Sigma$. 
An identical argument yields the same bound for $\abs{\cal V_\zeta''(\Sigma)}$ for all $\zeta \subset \Sigma$, where $\cal V_\zeta''(\Sigma)$ we defined after \eqref{def_V_T_prime}. The only difference is that $\chi_\sigma$ in \eqref{def_chi} is replaced with
\begin{equation*}
\chi_\sigma \;=\;
\begin{cases}
-\ee^{2 \ii A_1} & \text{if } \sigma \in \Sigma_1
\\
-\ee^{2 \ii A_2} & \text{if } \sigma \in \Sigma_2
\\
\ee^{\ii (A_1 + A_2)} & \text{if } \sigma \in \Sigma_c\,.
\end{cases}
\end{equation*}
The estimates are otherwise the same, since the resulting factors \eqref{def_wt_Z} again satisfy Definition \ref{def_admissible_Z}. (In fact, they satisfy even better bounds, since in \eqref{connecting_bound2} the argument $\omega + \eta$ is replaced with the larger constant $\kappa$). This concludes the estimate of $\wt {\cal V}_0(\Sigma)$ in \eqref{V_general_estimate}.

What remains is the estimate of $\wt {\cal V}_1(\Sigma)$. This is straightforward, since the set $\N^\Sigma \setminus B(\Sigma)$ is finite by Lemma \ref{lem: skeletons}, so that the $\f b$-summation in the definition of $\wt {\cal V}_1(\Sigma)$ ranges over a set of size $O(1)$. Hence we do not need to make use of oscillations, and may perform a brutal estimate by immediately taking the absolute value inside the summation. The result is that $\abs{\wt {\cal V}_1(\Sigma)}$ is bounded by the right-hand side of \eqref{V_T_prime_7} for some admissible matrices $\cal Z$;  we omit the details. Lemma \ref{lem: gen_skeleton_sum} then completes the estimate of $\wt {\cal V}_1(\Sigma)$. This concludes the proof of Proposition \ref{prop:V_general_estimate}.

\subsection{Computation of  $\wt{\cal V}(\Sigma)$  for $\Sigma \in \fra S_D$} \label{sec:computation of main term}
In this section we compute $\wt{\cal V}(\Sigma)$ for $\Sigma$ being one of the eight skeletons of $\fra S_D$, depicted in Figure \ref{fig: dumbbell}.

\begin{proposition} \label{prop:Vmain_gen}
For small enough $\delta$ in Proposition \ref{prop: expansion with trunction} there exists a constant $c_0 > 0$ such that 
\begin{equation*}
\sum_{\Sigma \in \fra S_D} \wt{\cal V}(\Sigma) \;=\; \cal V_{\rm main} + O\pbb{\frac{N}{M} M^{-c_0} R_2(\omega + \eta)}\,,
\end{equation*}
where $\cal V_{\rm main}$ was defined in \eqref{VDdef}.
\end{proposition}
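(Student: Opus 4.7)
The plan is to apply, for each $\Sigma \in \fra S_D$, the $(\zeta, \xi, \vartheta)$-decomposition developed in the proof of Proposition \ref{prop:V_general_estimate}, but now to \emph{extract} the main contribution rather than merely to bound it. We split $\wt{\cal V}(\Sigma) = 2 \re(\wt{\cal V}_0(\Sigma)) - \wt{\cal V}_1(\Sigma)$ and further decompose $\wt{\cal V}_0(\Sigma)$ according to \eqref{V_o_V_zeta}--\eqref{intr_vartheta}. The generic term, corresponding to $\zeta = \xi = \vartheta = \emptyset$, will produce $\cal V_{\rm main}$, while all remaining contributions, together with $\wt{\cal V}_1(\Sigma)$, will be absorbed into the error via the $M^{2\mu - 1}$ gains identified in Remarks \ref{rem:small1}--\ref{rem:small3}.

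For the generic term the simplifications are dramatic: the partition $\Gamma$ coincides with the pairing $\cal G(\Sigma, \f b)$, the halving partition $\Xi$ is atomic, all $\cal J_\xi$-indicators equal one, and each ladder summation factorizes as an independent geometric series. For each dumbbell $\Sigma \in \fra S_D$, this generic contribution factorizes into two $\wt\gamma$-factors (one per chain $\cal C_i$), two geometric series in the self-loop multiplicities of $\Sigma_1$ and $\Sigma_2$ which, combined with the denominators of $T(E_i)$ from \eqref{def_T}, yield the factor $\cal I^{b_1 + b_2}$ appearing in \eqref{VDdef}, and a factor $\tr S^{b_3 + b_4}$ arising from the free sum over the common labels of the two connecting bridges in $\Sigma_c$. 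After summing over the eight dumbbell shapes $D_1, \dots, D_8$ (accounting for the different placements of the connecting bridges and the corresponding phase factors $\chi_\sigma$), the admissibility constraint $\f b \in B(\Sigma)$ coming from the nonbacktracking condition will be shown to exclude exactly the pairs $(b_3, b_4) \in \h{(2, 0), (1, 1)}$, producing the index set $\cal A$ of \eqref{def_cal_A} and reproducing \eqref{VDdef} exactly.

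The error terms are controlled by combining the $M^{2\mu - 1}$ gain from Remarks \ref{rem:small1}--\ref{rem:small3} with a direct estimate of the associated dumbbell-type label sum. Although Lemma \ref{lem: gen_skeleton_sum} excludes $\fra S_D$, for a dumbbell skeleton the sum $\sum_{\f x} I_0(\f x) \prod_{\{e,e'\} \in \Sigma} J_{\{e,e'\}} \cal Z_{x_e}$ can be written as an explicit trace involving products of the matrices $S$ and $\cal Z$, and then bounded using Proposition \ref{prop: bounds on S}. This yields a bound of the same order as $\cal V_{\rm main}$ itself, namely $\frac{C_\Sigma N}{M} R_4(\omega + \eta)$. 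Multiplying by the gain $M^{2\mu - 1}$ and using the elementary inequality $R_4(s) \leq C s^{-1} R_2(s) + C R_2(s)$ together with $s \geq \eta \geq M^{-\rho}$ and the strict inequality $2\mu + \rho < 1$ (which holds since $\mu, \rho < 1/3$), we obtain the claimed bound $\frac{C_\Sigma N}{M} M^{-c_0} R_2(\omega + \eta)$. The term $\wt{\cal V}_1(\Sigma)$ is handled by the same direct label-sum bound, using Lemma \ref{lem: skeletons} to ensure that the sum over $\f b \in \N^\Sigma \setminus B(\Sigma)$ ranges over a finite set.

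The main technical obstacle will be the case-by-case identification of the eight dumbbell contributions, in particular verifying that the eight configurations combine to give the symmetric sum over $(b_1, b_2, b_3, b_4)$ in \eqref{VDdef} with matching phase factors, and that the nonbacktracking-induced exclusions from $B(\Sigma)$ correspond precisely to removing $\h{(2,0), (1,1)}$. A subtler point is the treatment of short ladders ($b_\sigma \leq 2$) inside the generic computation, since Step 4 of the proof of Proposition \ref{prop:V_general_estimate} implicitly assumed $b_\sigma \geq 3$ before summing the geometric series; these boundary configurations must be tracked carefully and shown either to contribute consistently to \eqref{VDdef}, whose summation ranges over all $b_1, b_2 \geq 0$ and $(b_3, b_4) \in \cal A$, or to be absorbed into the error term.
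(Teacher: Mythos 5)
Your overall strategy mirrors the paper's proof exactly: decompose $\wt{\cal V}(\Sigma)$ via the $(\zeta,\xi,\vartheta)$-classification, identify the generic $\zeta = \xi = \vartheta = \emptyset$ contribution with $\cal V_{\rm main}$, and control the remaining pieces using the $M^{2\mu-1}$ gain recorded in Remarks~\ref{rem:small1}--\ref{rem:small3}. The paper implements this via Lemma~\ref{lem:D8_error}, which records the $M^{2\mu-1}$-improved estimate for each nontrivial $(\zeta,\xi,\vartheta)$ in terms of admissible matrices, and then closes the arithmetic using \eqref{domestic_bound2}--\eqref{connecting_bound2} together with $3\mu < 1$.

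There is, however, a genuine gap in your bound for the dumbbell label sum. You claim that $\sum_{\f x} I_0(\f x) \prod_{\{e,e'\}\in\Sigma} J_{\{e,e'\}} \cal Z_{x_e}(\{e,e'\})$ can be reduced to a trace of products of $S$ and $\cal Z$ and bounded by $\frac{C_\Sigma N}{M}R_4(\omega+\eta)$ using Proposition~\ref{prop: bounds on S}. But the matrices $\cal Z$ appearing in the error estimates are only known to be \emph{admissible} in the sense of Definition~\ref{def_admissible_Z}: the bounds \eqref{domestic_bound2}--\eqref{connecting_bound2} are pointwise sup-bounds (arising from the sup over $v_1,v_2$ in \eqref{def_wt_Z} and from the cruder estimates in Steps 5--7), not resolvent identities, so Proposition~\ref{prop: bounds on S} does not apply. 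What admissibility actually gives, following the computation in the paper after Lemma~\ref{lem:D8_error}, is the larger bound $CNM^{\mu+2\delta-1}(\log N)^2 R_2(\omega+\eta)$. Your bound $\frac{N}{M}R_4(\omega+\eta)$ is the true size of the \emph{leading} (oscillating) dumbbell sum, not of the brutally estimated error configuration. Your subsequent arithmetic chain ($R_4 \leq C s^{-1} R_2 + C R_2$, $s\geq M^{-\rho}$, $2\mu+\rho<1$) is thus anchored to an unjustified starting point. Fortunately the paper's weaker bound still closes, using $3\mu < 1$ and small $\delta$, so this is a repairable but real gap in your argument.

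Your closing remark on short ladders ($b_\sigma \leq 2$) correctly identifies a delicate point that the paper treats only implicitly. The $\vartheta$-split in \eqref{intr_vartheta} places $\ind{b_\sigma\leq 2}$ into $\vartheta \neq \emptyset$, so the strictly generic $\vartheta=\emptyset$ term carries the restriction $b_\sigma\geq 3$, whereas \eqref{VDdef} sums $b_1,b_2$ from $0$ and allows small $b_3,b_4$ inside $\cal A$. Reconciling these (some small-$b_\sigma$ configurations are captured by different dumbbell shapes $D_1,\ldots,D_7$; others must be returned from the $\vartheta\neq\emptyset$ piece since the $\ind{b_\sigma\leq 2}$ portion does not actually carry a compensating $M^{2\mu-1}$ gain) is indeed a step that must be carried out carefully and is not worked out in either your proposal or the paper's exposition. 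You would do well to make this reconciliation explicit rather than list it as an open concern.

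Finally, your splitting $\wt{\cal V}(\Sigma) = 2\re(\wt{\cal V}_0(\Sigma)) - \wt{\cal V}_1(\Sigma)$ conflates two distinct decompositions. In the paper, $\wt{\cal V}(\Sigma) = \wt{\cal V}_0(\Sigma) - \wt{\cal V}_1(\Sigma)$ removes the constraint $\f b\in B(\Sigma)$, while $\wt{\cal V}(\Sigma) = 2\re(\wt{\cal V}'(\Sigma) + \wt{\cal V}''(\Sigma))$ is the identity \eqref{2re2re}. If you work with $\wt{\cal V}_0$ (unrestricted $\f b$-sum) and simultaneously claim the $B(\Sigma)$-constraint produces the index set $\cal A$, you are double-counting: the $\cal A$-restriction arises from $B(\Sigma)$, which $\wt{\cal V}_0$ has dropped. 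Either retain $B(\Sigma)$ throughout the main computation, or show that the generic parts of $\wt{\cal V}_1$ cancel the $(2,0),(1,1)$ over-count in $\wt{\cal V}_0$; but do not discard $\wt{\cal V}_1$ wholesale into the error, since the $(2,0),(1,1)$ contributions are of size $CN/M$, which exceeds the claimed error $\frac{N}{M}M^{-c_0}R_2(\omega+\eta)$ when $d\geq 3$ and $\omega\asymp 1$.
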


The rest of this subsection is devoted to the proof of Proposition \ref{prop:Vmain_gen}.
The argument is very similar to that of Section \ref{sec_53}, and we shall therefore only highlight the differences.
As before, we split the right-hand side of \eqref{def_V_general} using \eqref{2re2re}, which gives $\wt {\cal V}(\Sigma) = 2 \re  (\wt{\cal V}\!\,'(\Sigma) + \wt{\cal V}\!\,''(\Sigma))$. We focus on the leading order term, $\wt{\cal V}\!\,'(\Sigma)$; the term $\wt{\cal V}\!\,''(\Sigma)$ may be dealt with in exactly the same way, and yields a smaller-order contribution. Exactly as in Step 3 of Section \ref{sec_53}, we split $\wt{\cal V}\!\,'(\Sigma)$ using $\zeta \subset \Sigma$ (see \eqref{V_o_V_zeta}), $\xi \subset \bar \zeta$ (see \eqref{Jxi}) and $\vartheta \subset \bar \xi$ (see \eqref{intr_vartheta}); this gives
\begin{equation*}
\wt{\cal V}\!\,'(\Sigma) \;=\; \sum_{\xi \subset \bar \zeta} \sum_{\vartheta \subset \bar \xi} \cal V'_{\zeta, \xi, \vartheta}(\Sigma)
\end{equation*}
in self-explanatory notation.
First, we observe that for the leading terms $\cal V'_{\emptyset,\emptyset,\emptyset}(\Sigma)$ we have
\begin{equation*}
\sum_{\Sigma \in \fra S_D} \cal V'_{\emptyset,\emptyset,\emptyset}(\Sigma) \;=\; 2 \re \cal V_{\rm main}' + O_q(NM^{-q})\,,
\end{equation*}
where $\cal V_{\rm main}'$ was introduced in \eqref{vdd}. Indeed, if $\zeta = \emptyset$, $\xi = \emptyset$, and $\vartheta = \emptyset$ then it is not hard to see that the $\f x$-summations of $\cal V'_{\zeta, \xi, \vartheta}(\Sigma)$ within all ladders of $\cal G(\Sigma, \f b)$ are unconstrained. This yields precisely $2 \re \cal V_{\rm main}'$ (see \eqref{VDdef} and \eqref{vdd}).

What remains is to estimate the error terms, i.e.\ $\cal V'_{\zeta, \xi, \vartheta}(\Sigma)$ for $\zeta \cup \xi \cup \vartheta \neq \emptyset$. The strategy is very similar to that of Section \ref{sec_53}: a nonempty $\zeta \cup \xi \cup \vartheta$ induces coincidences among the $\f x$-labels, which yields a subleading contribution even after taking absolute value inside summation. Here, however, the bounds \eqref{domestic_bound2} and \eqref{connecting_bound2} are not quite good enough:  as observed in \cite[Equation (4.58)]{EK3}, applying them to a dumbbell skeleton yields an error bound that is larger than the main term.  For $\zeta = \xi  = \vartheta = \emptyset$ the arguments of Section \ref{sec_53} yield the bound $\abs{\cal V'_{\emptyset, \emptyset, \emptyset}(D_i)} \leq (N/M) R_2(\omega + \eta) M^\mu$ for any skeleton $D_i$ (see \cite[Section 4.5]{EK3} for more details); this bound cannot be improved using the methods of Section \ref{sec_53}.  However, if $\zeta \cup \xi \cup \vartheta \neq \emptyset$ then the estimates of Section \ref{sec_53} get an additional factor $M^{2 \mu - 1}$ on the right-hand side. This fact was observed in Remarks \ref{rem:small1}--\ref{rem:small3}. 

\begin{figure}[ht!]
\begin{center}
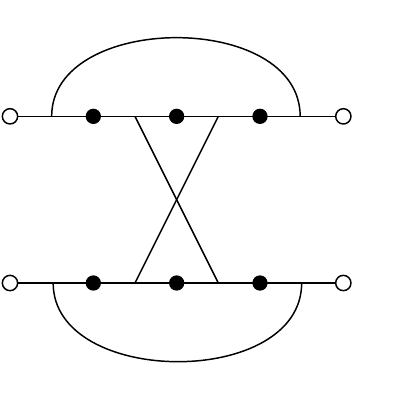
\end{center}
\caption{ The skeleton $D_8$. We indicate the four independent labels $y_1, \dots, y_4$ next to the vertices that carry them, and the multiplicities $b_1, \dots, b_4$ next to their associated bridges of $D_8$. \label{fig: D8}} 
\end{figure}

In order to derive the precise estimate, we consider only
the most complicated dumbbell skeleton, $\Sigma = D_8$, and we use the labelling from Figure \ref{fig: D8} to analyse $\cal V'_{\zeta, \xi, \vartheta}(D_8)$. In particular, we identify the bridges of $\Sigma$ with the set $\{1,2,3,4\}$. The key estimate is the following.
\begin{lemma} \label{lem:D8_error}
Suppose that $\zeta \cup \xi \cup \vartheta \neq \emptyset$. Then we have
\begin{equation*}
\abs{\cal V'_{\zeta, \xi, \vartheta}(D_8)} \;\leq\; M^{2 \mu - 1} \sum_{y_1, y_2, y_3, y_4} \abs{\cal Z_{y_1 y_3}(1)} \abs{\cal Z_{y_2 y_4}(2)} \abs{\cal Z_{y_4 y_3}(3)} \abs{\cal Z_{y_3 y_4}(4)}\,,
\end{equation*}
where $\cal Z(1)$ and $\cal Z(2)$ satisfy \eqref{domestic_bound2}, and $\cal Z(3)$ and $\cal Z(4)$ satisfy \eqref{connecting_bound2}.
\end{lemma}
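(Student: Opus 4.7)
The strategy is to revisit the eight-step argument from the proof of Proposition \ref{prop:V_general_estimate} and specialize it to the skeleton $\Sigma = D_8$, while keeping track of the additional gain $M^{2\mu - 1}$ promised by Remarks \ref{rem:small1}, \ref{rem:small2}, and \ref{rem:small3} whenever one of the sets $\vartheta$, $\xi$, or $\zeta$ is nonempty. In particular, I will not need to re-derive the summation bookkeeping from scratch; the point is simply to read off what the machinery produces when applied to this particular four-bridge graph and to verify that the resulting summation structure is the one stated.

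First, I would carry out Steps 1--4 of Section \ref{sec_53} for $\Sigma = D_8$: decompose $\cal V'(D_8)$ into pieces $\cal V'_{\zeta,\xi,\vartheta}(D_8)$ parametrized by the disrupted bridges $\zeta \subset \Sigma$, the coincidence-carrying bridges $\xi \subset \bar\zeta$, and the ``short or short-circuited'' bridges $\vartheta \subset \bar\xi$. After conditioning on ladders and decoupling the $\bar\zeta$-variables from the $\zeta$- and skeleton variables, I perform the explicit geometric summation of each ladder encoded by $\sigma \in \bar\vartheta$, which produces the admissible matrix $\cal Z(\sigma) = \wt Z(\sigma)$ of \eqref{def_wt_Z}. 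Bridges $1$ and $2$ lie in $\Sigma_1$ and $\Sigma_2$ respectively, so the corresponding matrices satisfy \eqref{domestic_bound2}; bridges $3$ and $4$ lie in $\Sigma_c$ (they connect the two components of $\cal C$), so the corresponding matrices satisfy \eqref{connecting_bound2}.

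Next, Steps 5--7 of Section \ref{sec_53} are applied to sum out first the $\f b_\vartheta$- and $\f x_\vartheta$-variables, then the $\xi$-variables, and finally the $\f b_\zeta$- and $\f x_\zeta$-variables. For each such summation, the outcome is again an admissible matrix of the appropriate type on each of the four bridges $\{1,2,3,4\}$ (this is the content of \eqref{backtracking_coincidences}, \eqref{sum_sigma_sigmap}--\eqref{sum_sigma_alpha}, and \eqref{1leqconn}). The crucial observation, which I would verify by inspecting each of the three steps in turn as indicated in Remarks \ref{rem:small1}--\ref{rem:small3}, is that whenever $\vartheta \neq \emptyset$, $\xi \neq \emptyset$, or $\zeta \neq \emptyset$, a single factor $M^{2\mu - 1}$ is picked up (and these factors are not used up in compensating anything else, since $D_8$ has no ladders remaining after the summation). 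Thus when the assumption $\zeta \cup \xi \cup \vartheta \neq \emptyset$ holds, at least one such factor is available, and the entire expression reduces to a product over the four bridges of $D_8$ of admissible matrices $\cal Z(\sigma)$, with their arguments given by the four skeleton labels $y_1, y_2, y_3, y_4$ in the incidence pattern shown in Figure \ref{fig: D8}.

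The main obstacle is the combinatorial verification that all three Remarks \ref{rem:small1}--\ref{rem:small3} indeed give a gain that survives all the way through the remaining summations --- in particular, that the $M^{2\mu-1}$ factor is not needed elsewhere to close the estimates on the $D_8$ skeleton. Since $D_8$ has four skeleton vertices and four bridges but no internal ladder vertices to sum over once the bridges have been replaced with $\cal Z$-matrices, the ``budget'' of $M^{-\mu}$ factors used per ladder in the generic arguments of Section \ref{sec_53} is not consumed here, and the gain propagates directly into the final estimate. Once this bookkeeping is checked for each of the three cases (nonempty $\vartheta$, nonempty $\xi$, or nonempty $\zeta$), the stated inequality follows immediately by reading off the incidence structure of $D_8$ from Figure \ref{fig: D8}.
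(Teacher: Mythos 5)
Your proposal is correct and follows exactly the same route as the paper: the paper's own proof of Lemma \ref{lem:D8_error} is precisely the two-sentence observation that the argument of Section \ref{sec_53} carries over verbatim, with the extra factor $M^{2\mu-1}$ supplied by Remarks \ref{rem:small1}--\ref{rem:small3} whenever $\zeta \cup \xi \cup \vartheta \neq \emptyset$. Your plan merely unpacks that pointer into its constituent steps, including the correct identification of which bridges of $D_8$ are domestic versus connecting and the observation that the gain survives because no further ladder budget must be spent after all bridges have been collapsed to $\cal Z$-matrices.
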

\begin{proof}
The proof follows the argument of Section \ref{sec_53} to the letter, except that we need the squeeze out an extra factor $M^{2 \mu - 1}$. This extra factor is however already present in the estimates of Section \ref{sec_53}, as observed in Remarks \ref{rem:small1}--\ref{rem:small3}. 
\end{proof}

Using Lemma \ref{lem:D8_error}, it is easy to conclude the proof of Proposition \ref{prop:Vmain_gen}. Combining Lemma \ref{lem:D8_error} with \eqref{domestic_bound2} and \eqref{connecting_bound2} yields, for $\zeta \cup \xi \cup \vartheta \neq \emptyset$,
\begin{equation*}
\abs{\cal V'_{\zeta, \xi, \vartheta}(D_8)} \;\leq\; C N M^{2 \mu - 1} M^\mu (\log N)^2 \frac{M^{2 \delta}}{M} R_2(\omega + \eta) \;\leq\; \frac{C N}{M} \, M^{-c_0} R_2(\omega + \eta)\,,
\end{equation*}
for small enough $\delta > 0$ in Proposition \ref{prop: expansion with trunction}, since $3 \mu - 1 < 0$. Similar estimates hold for $\Sigma  = D_1, \dots, D_7$. This concludes the proof of Proposition \ref{prop:Vmain_gen}.

\subsection{Conclusion of the proof of Proposition \ref{prop: main} and Theorems \ref{thm: main result}--\ref{thm: Theta 1}} \label{sec:conclusion_proof}
The estimate \eqref{main_prop_error} follows from Propositions \ref{prop:large_Sigma_general}, \ref{prop:V_general_estimate}, and \ref{prop:Vmain_gen}. Recalling Propositions \ref{prop: leading term} and \ref{prop: leading term 2}, we have therefore proved Proposition \ref{prop: main}.

The rest of the proof is the same as in \cite[Section 4.7]{EK3}, and we summarize the argument. Using Proposition \ref{prop: main} and \eqref{F - wt F}, we have computed the numerator of \eqref{ThetatoF}. The denominator is easily computed from the following result, proved in \cite[Lemma 4.24]{EK3}.

\begin{lemma} \label{lem:EY}
For $E \in [-1+ \kappa, 1 - \kappa]$ we have
\begin{equation}\label{semicir}
\E \, Y^\eta_\phi(E) \;=\; 4 \sqrt{1 - E^2} + O(\eta) \; = \; 2\pi\nu(E) + O(\eta) \,.
\end{equation}
\end{lemma}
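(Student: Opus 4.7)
The plan is to use the expansion already developed in the paper: from \eqref{calculation of phi eta} we have
\begin{equation*}
\E Y^\eta_\phi(E) \;=\; \frac{1}{N}\sum_{n \geq 0} 2\re(\psi^\eta * \gamma_n)(E) \cdot \E\tr H^{(n)}\,,
\end{equation*}
so the lemma reduces to identifying the leading contribution in this sum. The key observation is that only $n = 0$ contributes at leading order, a fact which is the (deterministic) trace counterpart of the variance analysis carried out in Sections \ref{sec:3}--\ref{sec: part 2}, but dramatically simpler because a single trace, rather than a covariance, is being computed.

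First I would isolate the $n = 0$ term. Since $H^{(0)} = \mathrm{Id}$, we have $\E \tr H^{(0)} = N$. From the explicit formula \eqref{claim about gamma n},
\begin{equation*}
\gamma_0(E) \;=\; \frac{2\, \ee^{\ii \arcsin E}}{1 - (M-1)^{-1} \ee^{2\ii \arcsin E}} \;=\; 2\ee^{\ii \arcsin E} + O(M^{-1})\,,
\end{equation*}
so $2\re \gamma_0(E) = 4\cos(\arcsin E) + O(M^{-1}) = 4\sqrt{1-E^2} + O(M^{-1})$. Since $\psi^\eta$ is a rescaling of $\psi$ with $\frac{1}{2\pi}\int \psi = 1$ (by the normalization $\int \phi = 2\pi$ adopted throughout the proof) and $\gamma_0$ is smooth at $E \in [-1+\kappa,1-\kappa]$, a standard Taylor expansion inside the convolution integral yields $2\re(\psi^\eta * \gamma_0)(E) = 4\sqrt{1-E^2} + O(\eta)$.

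Second I would show that the terms with $n \geq 1$ give only negligible contributions. For $n$ odd the symmetry $H_{xy} \eqdist -H_{xy}$ (which holds for $\beta = 2$) forces $\E\tr H^{(n)} = 0$. For $n \geq 2$ even, I would use \eqref{H^n and U_n} to write
\begin{equation*}
\frac{1}{N}\E\tr H^{(n)} \;=\; \frac{1}{N}\E\tr U_n(H/2) - \frac{1}{(M-1)N}\E\tr U_{n-2}(H/2)\,,
\end{equation*}
and invoke the orthogonality of Chebyshev polynomials of the second kind with respect to the semicircle density, $\int_{-1}^1 U_n(x) \nu(x)\, \dd x = \delta_{n,0}$. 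Combined with a (much simpler, one-chain) version of the diagrammatic expansion in Sections~\ref{sec_41}--\ref{sec: part 2} (or equivalently with the local semicircle law on mesoscopic scales), this gives $\frac{1}{N}\E\tr U_n(H/2) = \delta_{n,0} + O(M^{-c})$ uniformly in $n \leq M^\mu$, hence $\frac{1}{N}\E\tr H^{(n)} = O(M^{-c})$ for $n \geq 2$. Summing the contributions of these terms using the convolution bound \eqref{bound on gamma} and the truncation in \eqref{gamma - g} produces an error of size $O(M^{-c'})$, which is negligible compared with $O(\eta)$ since $\eta = M^{-\rho}$ with $\rho < 1/3$.

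The main obstacle is the estimate $\E\tr U_n(H/2)/N = \delta_{n,0} + O(M^{-c})$ with uniformity in $n$. This is essentially a single-chain analogue of Proposition~\ref{prop: main}, and is significantly easier: there is no connected pairing constraint, and the absolute value of each admissible diagram can be estimated directly, since the variance cancellation mechanism that drove the sharp bounds in Section~\ref{sec: part 2} is not needed here. In practice, the cleanest route is to remark that this bound is a standard output of the local semicircle law for band matrices in the diffusive regime, which applies on the scales relevant here.
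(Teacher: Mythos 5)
Your overall plan — expand $\E Y^\eta_\phi(E)$ in nonbacktracking powers via \eqref{calculation of phi eta}, extract the $n=0$ term, and show the rest is negligible — is the right one, and as far as the citation structure allows one to judge, it matches the framework of \cite[Lemma~4.24]{EK3}, which the paper merely quotes. The $n=0$ computation is correct: $\gamma_0(E) = 2\ee^{\ii\arcsin E} + O(M^{-1})$ gives $2\re\gamma_0(E) = 4\sqrt{1-E^2} + O(M^{-1})$, and the smearing by $\psi^\eta$ costs $O(\eta)$. One small caveat: for \textbf{(C1)} the naive first-order Taylor argument inside the convolution is delicate because $\phi$ has heavy tails (infinite second moment); you should instead use the exact identity \eqref{Cauchy case identity}, $(\psi^\eta * \gamma_0)(E) = \gamma_0(E+\ii\eta)$, and the smoothness of $\gamma_0$ to get the $O(\eta)$ directly. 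The symmetry argument $A_{xy}\eqdist -A_{xy}$ killing $\E\tr H^{(n)}$ for odd $n$ is correct. Note also that before exchanging the infinite sum over $n$ with the expectation and estimating the tail you should invoke the truncation of Proposition~\ref{prop: expansion with trunction}; this is where the cutoff $n \leq M^\mu$ comes from.

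The genuine soft spot is the claim $\frac{1}{N}\E\tr U_n(H/2) = \delta_{n,0} + O(M^{-c})$, uniformly in $n \leq M^\mu$. First, the local-semicircle-law route is not as direct as you suggest: the law controls the resolvent for $\im z \gtrsim M^{-1+\epsilon}$, and translating that into Chebyshev moments of degree up to $M^\mu$ requires a contour integral on which $U_n$ grows like $\exp\pb{c\, n\sqrt{\mathrm{dist}(z,[-1,1])}}$, so the edge behaviour and spectral-norm estimates become essential. More to the point, if you are willing to invoke the local law you can bypass the Chebyshev expansion entirely: for \textbf{(C1)}, $Y^\eta_\phi(E) = \frac{2}{N}\im\tr(H/2-E-\ii\eta)^{-1}$ directly, and the local law gives $\E Y^\eta_\phi(E) = 2\im m_{\rm sc}(E+\ii\eta) + O((M\eta)^{-1}) = 2\pi\nu(E) + O(\eta)$ since $(M\eta)^{-1} = M^{\rho-1}\ll\eta$ for $\rho < 1/2$; the case \textbf{(C2)} then follows by a Helffer--Sj\"ostrand argument. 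Second, if you instead stay within the paper's machinery and estimate $\E\tr H^{(n)}$ diagrammatically, your claim that ``the absolute value of each admissible diagram can be estimated directly'' is too optimistic: the combinatorics of pairings of a single chain of length up to $M^\mu$ still blow up unless you perform the ladder resummation (the second resummation of Section~\ref{sec_42}), and the one-chain analogue of Proposition~\ref{prop:large_Sigma_general} for large skeletons is needed. The single-chain analysis is indeed simpler than the two-chain one (no connected-pairing constraint), but it is not simply a matter of brute-force absolute-value bounds.
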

Now Theorems \ref{thm: main result}--\ref{thm: Theta 1} follow immediately with \eqref{D_const}, \eqref{Q_const} and
\begin{equation} \label{def:theta}
\Theta_{\phi_1,\phi_2}^\eta(E_1, E_2) \;\deq\; \frac{(LW)^d}{N^2} 
 \, \frac{\cal V_{\mathrm{main}}}{\E Y^\eta_{\phi_1}(E_1) \E Y^\eta_{\phi_2}(E_2)}\,.
\end{equation}

\section{Extensions} \label{sec:generalizations}

\subsection{Higher-order correlations}\label{sec:clt}

In this section we prove Theorem \ref{thm:clt}. Since the proof is almost identical to that of Theorem \ref{thm: main result}, we only outline the differences. In this section we use the notation $\bb M X \deq X - \E X$. 
 As in \eqref{def_F_eta},
 it suffices to compute
\begin{equation*}
F^\eta(E_1, \dots, E_k) \;\deq\; \E \pBB{\prod_{i = 1}^k \bb M \pB{\tr \phi_i^\eta (H/2 - E_i)}} \;=\; \wt F^\eta(E_1, \dots, E_k) + O_q(N^k M^{-q})
\end{equation*}
for all $q > 0$, where we defined
\begin{equation*}
\wt F^\eta(E_1, \dots, E_k) \;\deq\; \sum_{n_1 + \cdots + n_k \leq M^\mu} \pBB{\prod_{i = 1}^k 2 \re \pb{\wt \gamma_{n_2}(E_2, \phi_2)}} \E \pBB{\prod_{i = 1}^k \bb M (\tr H^{(n_i)})}\,.
\end{equation*}
(See \eqref{F - wt F}.) In order to evaluate the expectation on the right-hand side, we use a trivial extension of the graphical technology introduced in Section \ref{sec: part 2}. Here, the graph $\cal C \equiv \cal C(n_1, \dots, n_k) = \cal C_1(n_1) \sqcup \cdots \sqcup \cal C_k(n_k)$ consists of $k$ disjoint oriented chains, whereby the $i$-th chain $\cal C_i$ has $n_i$ edges. Plugging in \eqref{def: nb}, we get a sum over the labels $\f x = (x_i)_{i \in V(\cal C)}$.

We now proceed as in Section \ref{sec: part 2}, introducing the partitions $\Gamma \in \fra P(E(\cal C))$.
The only novel ingredient is a classifications of such partitions $\Gamma$ according to their induced partitions on the chains of $\cal C$, indexed by the set $\{1, \dots, k\}$. 
More precisely, for each $\Gamma \in \fra P(E(\cal C))$ we define the induced partition $P(\Gamma) \in \fra P(k) \deq \fra P(\{1, \dots, k\})$ as the finest partition on $\{1, \dots, k\}$ such that $i$ and $j$ belong to the same block of $P(\Gamma)$ if there exists a $\gamma \in \Gamma$ such that $i,j \in \gamma$. The interpretation of $P(\Gamma)$ is that components of $\cal C$ that are linked by a block of $\Gamma$ belong to the same block of $P(\Gamma)$. 
Previously, when computing the two-point correlation function we considered only partitions that linked the two chains of $\cal C$, so that $P(\Gamma)$ was always trivial.  Now we have several chains and their connectivity structure, described by $P(\Gamma)$, is more complicated.
We note that
$P(\Gamma)$ would have remained trivial if we had considered the $k$-th order cumulants instead of the correlation functions of $X_i$ in Theorem~\ref{thm:clt}. In our case, the subtraction of the expectation value in the definition of $X_i$ guarantees only that $P(\Gamma)$ has no atoms.

We write
\begin{equation*}
\E \pBB{\prod_{i = 1}^k \bb M (\tr H^{(n_i)})} \;=\; \sum_{\f x} I(\f x) \, \pBB{\prod_{e \in E(\cal C)} \sqrt{S_{x_e}}} \E \qBB{\prod_{i = 1}^k \bb M \pBB{\prod_{e \in E(\cal C_i)} A_{x_e}}}\,,
\end{equation*}
and decompose the expectation according to
\begin{equation} \label{A_clt}
\E \qBB{\prod_{i = 1}^k \bb M \pBB{\prod_{e \in E(\cal C_i)} A_{x_e}}} 
\;=\; \sum_{p \in \fra P^*(k)} \sum_{\Gamma \in \fra P(E(\cal C))} \ind{P(\Gamma) = p} \sum_{\Xi \in \fra H(\Gamma)} B_{\Gamma, \Xi}(\f x) \, D(\Gamma, \Xi)
\end{equation}
(compare with \eqref{A3}).
Here $\fra P^*(k) \subset \fra P(k)$ is the set of partitions with no atoms, $\fra H(\Gamma)$ is 
the set of halving partitions of $\Gamma$, and
\begin{equation} \label{def_D_clt}
D(\Gamma, \Xi) \;\deq\; \prod_{\gamma \in \Gamma} \mu_\gamma(\Xi) \prod_{i = 1}^k \pbb{1 - \prod_{\gamma \in \Gamma} \mu_{\gamma \cap E(\cal C_i)}(\Xi)}\,,
\end{equation}
which generalizes the indicator function \eqref{def_D_ind} to the case $k \geq 3$. In order to see this, we note (recalling that any monomial of in the entries of $A$ is either equal to one or has zero expectation) that the expectation on the left-hand side of \eqref{A_clt} is equal to one if and only if $\E \prod_{e \in E(\cal C_i)} A_{x_e} = 0$ for all $i$ and $\E \prod_{e \in E(\cal C)} A_{x_e} = 1$. These two conditions easily yield \eqref{def_D_clt}. Note that the restriction $p \in \fra P^*(k)$ imposes that $\Gamma$ has to be a connected partition in the sense that for any chain $\cal C_i$ there is another chain $\cal C_j$ such that some block
of $\Gamma$ has a nontrivial intersection with both $\cal C_i$ and $\cal C_j$. For the purposes of this proof,  this is the appropriate generalization
of the concept of connected partition  \eqref{conn_part}  to a $k$-component graph.
 Note also that, for $P(\Gamma) = p$ and $\Xi \in \fra H(\Gamma)$, the indicator function $D(\Gamma, \Xi)$ factorizes over the blocks of $p$.

Note that summation over $p$ on the right-hand side of \eqref{A_clt} yields the decomposition
\begin{equation} \label{F_split_p}
\wt F^\eta(E_1, \dots, E_k) \;=\; \sum_{p \in \fra P^*(k)} \wt F^\eta_p(E_1, \dots, E_k)
\end{equation}
in self-explanatory notation.
Now we split the right-hand side of \eqref{F_split_p} into the terms $p \in \fra M(k)$ (i.e.\ the pairings) and $p \in \fra P^*(k) \setminus \fra M(k)$. The former terms will yield the leading term of \eqref{clt_statement} and the latter terms the error term of \eqref{clt_statement}.

Let us consider an error term arising from a $p \in \fra P^*(k) \setminus \fra M(k)$. We may repeat the estimates of Section \ref{sec: part 2} and \cite[Section 4]{EK3} with merely cosmetic changes. Omitting the details, we get the following bounds. Each block $b$ of $p$ of size $\abs{b}$ yields a contribution bounded by
\begin{equation} \label{clt_main_estimate}
\frac{N}{M} M^{-\abs{b} + 2} (R_2(\omega_0 + \eta) M^\mu)^{\abs{b} - 1}
\end{equation}
to $\wt F^\eta(E_1, \dots, E_k)$. (Recall $\omega_0$ defined in the statement of Theorem \ref{thm:clt}.)  The bound \eqref{clt_main_estimate} may be seen from the power counting estimate \cite[Equation (4.57)]{EK3}, and we omit the details. Note that \eqref{clt_main_estimate} generalizes \cite[Equation (4.58)]{EK3} to the case $\abs{b} \geq 3$. In addition to \eqref{clt_main_estimate}, we have the stronger bound
\begin{equation} \label{clt_main_estimate2}
\frac{N}{M} R_4(\omega_0 + \eta)
\end{equation}
for blocks $b$ of size $\abs{b} = 2$, as follows by the explicit computation from 
 Proposition \ref{prop: main}. 
Again, we omit the details of the uninteresting modifications to the argument of Section \ref{sec: part 2} and \cite[Section 4]{EK3}.

Using \eqref{clt_main_estimate} and \eqref{clt_main_estimate2}, it is easy to conclude the proof. To illustrate the procedure, we give the details for the worst-case scenario: $k$ is odd, $p$ consists of one block of size three and $(k-3)/2$ blocks have size two. In that case we get from \eqref{clt_main_estimate} and \eqref{clt_main_estimate2} the bound
\begin{equation*}
\absb{\wt F^\eta_p(E_1, \dots, E_k)} \;\leq\; \frac{N}{M} M^{-1} (R_2(\omega_0 + \eta) M^\mu)^{2} \pbb{\frac{N}{M} R_4(\omega_0 + \eta)}^{(k-3)/2}
\;\leq\; \sqrt{\frac{M}{N}} \pbb{\frac{N}{M} R_4(\omega_0 + \eta)}^{k/2}\,,
\end{equation*}
where in the first step we used that $3 \mu < 1$ and $R_2(\omega_0 + \eta)^2 \leq M^\mu$. 
Essentially, a block of size greater than two yields an additional  small  factor bounded by  $(M/N)^{1/2}$.
This concludes the estimate of the error terms.

The main terms arise from the pairings $p \in \fra M(k)$ for even $k$. If $p$ is a pairing, the quantity $\wt F^\eta_p(E_1, \dots, E_k)$ is equal to a product over the blocks of $p$, up to some constraints among the summation labels imposed by $B_{\Gamma, \Xi}(\f x)$. These constraints may be removed exactly as in Section \ref{sec: part 2} at the expense of the error in \eqref{clt_statement}, and the result is a pure product over $k/2$ independent two-point functions. This concludes the proof of Theorem \ref{thm:clt}.

\subsection{The real symmetric case, $\beta = 1$} \label{sec:sym}
In this section we explain the changes needed to the arguments of Section \ref{sec: part 2} to prove Theorems \ref{thm: main result}, \ref{thm: Theta 2}, and \ref{thm: Theta 1} for $\beta = 1$ instead of $\beta = 2$. The necessary changes under the simplifications {\bf (S1)}--{\bf (S3)} of \cite{EK3} were explained in \cite[Section 5.1]{EK3}. Here we describe the changes required for the full proof, as presented in Section \ref{sec: part 2}.

The origin of the difference between the cases $\beta = 1$ and $\beta = 2$ is that for $\beta = 1$ we have $\E H_{xy}^2 = S_{xy}$, while for $\beta = 2$ we have $\E H_{xy}^2 = 0$ (in addition to $\E H_{xy} H_{yx} = \E \abs{H_{xy}}^2 = S_{xy}$, which is valid in both cases). This leads to additional terms for $\beta = 1$, which may be included in the argument of Section \ref{sec: part 2} as follows. Recall that the main idea of the partitions $\Gamma$ and $\Xi$ introduced in Section \ref{sec_41} is that blocks of $\Gamma$ correspond to edges that have the same unordered labels, and blocks of $\Xi$ to edges that have the same ordered labels. Since $\E A_{xy}^k A_{yx}^l$ is now nonzero if and only if $k + l$ is even, we find that the indicator function $\mu_\gamma(\Xi)$ defined in \eqref{def_mu_Xi} has to be replaced with
\begin{equation*}
\wt \mu_\gamma(\Xi) \;\deq\; \ind{\abs{\gamma} \text{ is even}} \, \ind{\Xi|_\gamma \text{ has at most two blocks}}\,.
\end{equation*}
This definition ensures that for $B_{\Gamma, \Xi}(\f x) = 1$ (recall \eqref{def:B}) we have $\E \prod_{e \in \gamma} A_{x_e} = 1$ if and only if $\wt \mu_\gamma(\Xi) = 1$. For a given $\Gamma$, we let $\Xi$ range over $\wt {\fra H}(\Gamma)$, defined as $\fra H(\Gamma)$ in \eqref{def_fra_H}
 except that $\mu_\gamma(\Xi)$ is replaced with $\wt \mu_\gamma(\Xi)$. With these definitions, the construction of Section \ref{sec_41} remains the same. In defining the refining pairing $\Pi$ of $(\Gamma, \Xi)$ as in Section \ref{sec:refining_partition}, we forgo the condition (a) of Section \ref{sec:refining_partition} by choosing the edge $e'$ in Step (ii) of $\Phi$ to be simply the largest edge of $\gamma$; otherwise the algorithm is unchanged.

This leads to pairings $\Pi$ whose two edges may either have different or coinciding ordered labels. We call the former \emph{twisted bridges} and the latter \emph{straight bridges}; these concepts were first introduced in Section 9 of \cite{EK1}, and were discussed in more detail in the current context in \cite[Section 5.1]{EK3}. Formally, we assign to each bridge of $\Pi$ a binary tag, which indicates whether the bridge is straight or twisted. The constraint imposed by a straight bridge $\{e,e'\}$ is given by the indicator function $J_{\{e,e'\}}(\f x)$ defined in \eqref{def_I_sigma}. Similarly, the constraint imposed by a twisted bridge $\{e,e'\}$ is given by the indicator function
\begin{equation*}
\wt J_{\{e,e'\}}(\f x) \;\deq\; \ind{x_e = x_{e'}} \;=\; \ind{x_{a(e)} = x_{a(e')}} \ind{x_{b(e')} = x_{b(e)}}\,.
\end{equation*}
By augmenting the pairings $\Pi$ to tagged pairings, the argument of Section \ref{sec: part 2} carries over easily.

The graphical representation and construction of the tagged skeleton $(\Sigma, \f b)$ from the tagged pairing $\Pi$ is covered in detail in \cite[Section 5.1]{EK3}. Here we give a short summary. Recall that the key observation behind the definition of a skeleton was that parallel straight bridges yield a large contribution but a small combinatorial complexity. Now \emph{antiparallel} twisted bridges behave analogously, whereby two bridges $\{e_1, e_1'\}$ and $\{e_2, e_2'\}$ are \emph{antiparallel} if $b(e_1) = a(e_2)$ and $b(e_1') = a(e_2')$.
(Recall that they are parallel if $b(e_1) = a(e_2)$ and $b(e_2') = a(e_1')$.) See Figure \ref{fig:twisted bridges} for an illustration. An \emph{antiladder} is a sequence of bridges such that two consecutive bridges are antiparallel. We represent straight bridges (as before) by solid lines and twisted bridges by dashed lines. 
\begin{figure}[ht!]
\begin{center}
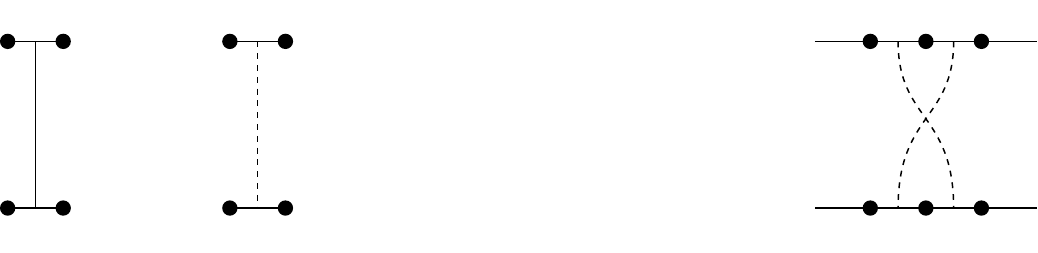
\end{center}
\caption{Left picture: a straight bridge (left) and a twisted bridge (right); labels with the same name are forced to coincide by the bridge. Right picture:
two antiparallel twisted bridges, which form an antiladder of size two.
\label{fig:twisted bridges}} 
\end{figure}

As in Section \ref{sec_42}, to each tagged pairing $\Gamma$ we assign a tagged skeleton $\Sigma$ with associated multiplicities $\f b$. The skeleton $\Sigma$ is obtained from $\Gamma$ by successively collapsing \emph{parallel straight bridges} and \emph{antiparallel twisted bridges} until none remains. 
We take over all notions from Section \ref{sec_42}, such as $\wt {\cal V}(\cdot)$, with the appropriate straightforward modifications for tagged skeletons.

As it turns out, allowing twisted bridges results in eight new dumbbell skeletons, called $\wt D_1, \dots, \wt D_8$ below, each of which has the same value $\wt{\cal V}(\cdot)$ as its counterpart without a tilde. They are defined in Figure \ref{fig:antidumbbell}.
\begin{figure}[ht!]
\begin{center}
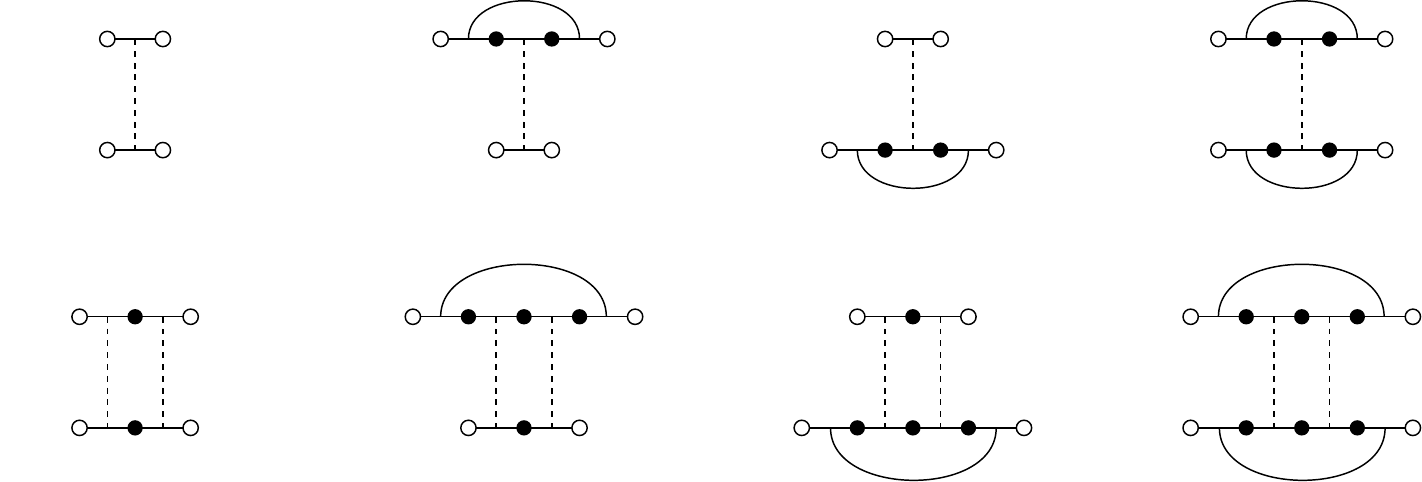
\end{center}
\caption{The eight dumbbell skeletons $\wt D_1, \dots, \wt D_8$ with twisted bridges. \label{fig:antidumbbell}} 
\end{figure}
Hence, for $\beta = 1$  the leading term  is simply twice the leading term of $\beta = 2$, 
which accounts for the trivial prefactor $2/\beta$ in the
final formulas.
Any other skeleton may be estimated by a trivial modification of the argument from Sections \ref{sec: part 2}; we omit further details.

\subsection{General band matrices} \label{sec:general_proof}

In this subsection we outline how to prove the results of Section \ref{sec:gen1} for general band matrices satisfying Definition \ref{def:gen_band}.  The main difference is that, since $H$ is not unimodular as in Section \ref{sec:setup1}, the key identity \eqref{H^n and U_n} does not hold. Instead, we have to correct the identity \eqref{H^n and U_n} with some error terms, thus effectively perturbing around \eqref{H^n and U_n}. This generalization was explained in detail in \cite[Section 6.1]{EK2}, and we summarize 
it here.

The error terms entering the general recursion relation are the random matrices $\Phi_2$ and $\Phi_3$, defined through
\begin{equation*}
(\Phi_2)_{xy} \;\deq\; \delta_{xy} 
\sum_z \pb{\abs{H_{xz}}^2 - S_{xz}}\,, \qquad
(\Phi_3)_{xy} \;\deq\; - \abs{H_{xy}}^2 H_{xy}\,.
\end{equation*}
We also introduce the notations
\begin{equation*}
(\ul{\Phi_3 H^{(n)}})_{x_0 x_{n + 1}} \;\deq\; \sum_{x_1, \dots, x_n} \qBB{\prod_{i = 0}^{n-1} 
\ind{x_i \neq x_{i + 2}}} \, (\Phi_3)_{x_0 x_1} H_{x_1 x_2} \cdots H_{x_n x_{n+1}}\,, \qquad \ul{\Phi_2 H^{(n)}} \;\deq\; \Phi_2 H^{(n)}\,.
\end{equation*}
Then \eqref{H^n and U_n} is replaced with 
\begin{equation} \label{H^n and U_n general}
U_n(H/2) \;=\; \sum_{k \geq 0} \sum_{a \in \{2,3\}^k} \; \sum_{\ell_0 + \cdots + \ell_k = n - \abs{a}} H^{(\ell_0)} \, 
\ul{\Phi_{a_1} H^{(\ell_1)}} \cdots \ul{\Phi_{a_k} H^{(\ell_k)}}\,,
\end{equation}
where the sum ranges over $\ell_i \geq 0$ for $i = 0, \dots, k$. Here we use the abbreviation 
$a = (a_1, \dots, a_k)$ as well as $\abs{a} \deq \sum_{i = 1}^k a_i$.
 For a proof of \eqref{H^n and U_n general}, see \cite[Proposition 6.2]{EK2}.
Note that in the unimodular case of Section \ref{sec:setup1} 
 we have $\ul{\Phi_2 H^{(n)}} = \Phi_2 H^{(n)}=0$ and $\ul{\Phi_3 H^{(n)}} = 
-\frac{1}{M-1}H^{(n-2)}$, so that \eqref{H^n and U_n general} reduces to \eqref{H^n and U_n}.

The leading term of \eqref{H^n and U_n general} is the term $k = 0$, which gives $U_n(H/2) = H^{(n)} + \cdots$. The error terms in $\cdots$ contain the matrices $\Phi_2$ and $\Phi_3$. The presence of either one of these factors leads to a subleading contribution. The basic idea why $\Phi_3$ is small is that it carries an extra factor $\abs{H_{xy}}^2 \leq M^{-1}$. The reason why $\Phi_2$ is small is that it has zero expectation. Proving that the terms $k \geq 1$ in \eqref{H^n and U_n general} yield a subleading contribution is a nontrivial extension of the analysis in the unimodular case,
and requires in particular the introduction of more complicated graphs and a more intricate analysis of the partitions induced on the edges of these graphs. The details for the case of quantum diffusion were carefully worked out in \cite{EK2}. The argument
 of \cite{EK2} may be taken over to the current setup.
Owing to the increased complexity of the error terms, the oscillations cannot be exploited as effectively as in Section \ref{sec: part 2}, but, following carefully the arguments of Section \ref{sec: part 2} combined with \cite[Sections 6--9]{EK2}, one finds  that the error estimates in Theorem \ref{thm: main result} remain valid
for any sufficiently small positive $\rho$. The only new ingredient needed in the proof is the observation that in addition to entries of $S$ arising from rungs $S_{xy} = \E H_{xy} H_{yx}$ of straight ladders, we get entries of $T$ arising from rungs $T_{xy} = \E H_{xy}^2$ of twisted antiladders (see Section \ref{sec:sym} and \cite[Sections 5.1]{EK3}). It is not hard to see that the estimates of Proposition \ref{prop: bounds on S} on the matrix $S$ remain true for the matrix $T$. (See Proposition \ref{prop: bounds on T} below.)

Having described the estimate of the error terms, we devote the rest of this subsection to the analysis of the main term $\Theta$, resulting from the dumbbell skeletons $D_1, \dots, D_8, \wt D_1, \dots, \wt D_8$. The contribution $\cal V_{\rm{main}}$ of the dumbbells $D_1, \dots, D_8$ was obtained in Section \ref{sec:computation of main term}, and was found to equal $\cal V_{\rm{main}}$ from \eqref{common expression for V(D)} up to negligible error terms. Similarly, the contribution of the skeletons $\wt D_1, \dots, \wt D_8$ is given by
\begin{multline} \label{V_main_wt}
\wt {\cal V}_{\rm{main}}
\;=\;
\sum_{b_1, b_2 = 0}^{\infty} \sum_{(b_3, b_4) \in \cal A} \, 2 \re \pb{\gamma_{2 b_1 + b_3 + b_4} * \psi_1^\eta}(E_1) \, 2 \re \pb{\gamma_{2 b_2 + b_3 + b_4} * \psi^\eta_2}(E_2) \, \cal I^{b_1 + b_2} \tr T^{b_3 + b_4}
\\
+ O_q(N M^{-q})\,,
\end{multline}
Note that the only difference between \eqref{V_main_wt} and \eqref{common expression for V(D)} is that the factor $\tr S^{b_3 + b_4}$ of \eqref{common expression for V(D)} was replaced with $\tr T^{b_3 + b_4}$ in \eqref{V_main_wt}. Indeed, as explained in Section \ref{sec:sym} and \cite[Sections 5.1]{EK3}, each twisted bridge gives rise to an entry of $T$ just as each straight bridge gives rise to an entry of $S$. As in \eqref{def:theta}, the leading term $\Theta$ is given by
\begin{equation} \label{def:theta_gen}
\Theta_{\phi_1,\phi_2}^\eta(E_1, E_2) \;=\; \frac{(WL)^d}{N^2} \, \frac{\cal V_{\mathrm{main}} + \wt {\cal V}_{\mathrm{main}}}{\E Y^\eta_{\phi_1}(E_1) \E Y^\eta_{\phi_2}(E_2)}\,.
\end{equation}
The asymptotics of $\cal V_{\rm{main}}$ were worked out in Proposition \ref{prop: main} (i)--(iii).

What therefore remains is an asymptotic analysis of $\wt {\cal V}_{\rm{main}}$ from \eqref{V_main_wt}. In order to give it, we introduce the quantities
\begin{equation} \label{def_Delta}
\Delta \;\deq\; \frac{1}{2} \sum_{x \in \bb T}  g \Big(\frac{x}{W} \Big)^2 S_{x0} - \abs{D^{-1/2} w}^2\,, \qquad w \;\deq\; \frac{1}{2} \sum_{x \in \bb T} \frac{x}{W} g \Big(\frac{x}{W} \Big) S_{x0}\,,
\end{equation}
as well as 
\begin{equation} \label{def_Upsilon}
\Upsilon \;\deq\; \sum_{x \in \bb T} h \Big(\frac{x}{W} \Big)  S_{x0}\,.
\end{equation}
Recalling the definitions \eqref{def_Delta_0} and \eqref{def_Upsilon_0}, it is not hard to see that
\begin{equation} \label{lim_Delta}
\Delta \;=\; \Delta_0 + O(W^{-1})\,, \qquad
\Upsilon \;=\; \Upsilon_0 + O(W^{-1})\,.
\end{equation}
In particular, $\Delta$ and $\Upsilon$ are bounded from below by some positive constant since $\Delta_0$ and $\Upsilon_0$ are.
Moreover, we set
\begin{equation} \label{def_sigma_wt}
\wt \sigma \;\deq\; \Delta \lambda^2 + \Upsilon \varphi\,,
\end{equation}
which is analogous to $\sigma$ from \eqref{def_sigma}.  Using $\Delta_0 > 0$ and $\Upsilon_0 > 0$, we therefore have
\begin{equation} \label{sigma_wt_sigma}
\wt \sigma \;=\; \sigma (1 + O(W^{-1}))\,.
\end{equation}

The following result gives the asymptotic behaviour of $\wt {\cal V}_{\rm{main}}$, in analogy to (i)--(iii) for $\cal V_{\rm{main}}$ from Proposition \ref{prop: main}.

\begin{proposition}[Asymptotics of $\wt {\cal V}_{\rm{main}}$] \label{prop: main T}
The quantity $\wt {\cal V}_{\rm{main}}$ from \eqref{V_main_wt} satisfies the following estimates.
\begin{enumerate}
\item
Suppose that \eqref{eta_Delta_2} holds. Then for $d = 1,2,3$  we have
\begin{equation} \label{V3C2 T}
\wt {\cal V}_{\rm{main}} \;=\;
\frac{ (2/\pi)^{d/2} B_d}{\nu(E)^2 \sqrt{\det D}}\pbb{\frac{L}{2 \pi W}}^d \pBB{2 \re \pbb{\frac{\pi \wt \sigma}{2} + \ii \frac{\omega}{\nu(E)}}^{d/2 - 2} + O \pB{ (\omega + \wt \sigma)^{d/2 - 2} \pb{\sqrt{\omega + \wt \sigma} + M^{-\tau/2}}}}\,.
\end{equation}
Moreover, for $d = 4$ we have
\begin{equation} \label{V4C2 T}
\wt {\cal V}_{\rm{main}} \;=\; \frac{8}{\nu(E)^2\sqrt{\det D}} \pbb{\frac{L}{2 \pi  W}}^d
 \pB{\min\{\abs{\log \omega}, \abs{\log \wt \sigma}\}  + O(1)}\,.
\end{equation}
\item
Suppose that \eqref{eta_Delta_2} holds and that $d = 2$. If $\phi_1$ and $\phi_2$ satisfy \textbf{(C1)} then
\begin{equation} \label{V2C1 T}
\wt {\cal V}_{\rm{main}} \;=\; \frac{8}{\pi\nu(E)^2 \sqrt{\det D}} \pbb{\frac{L}{2 \pi W}}^2  \pBB{\frac{4 \pi\eta \nu(E) + \pi^2 \nu(E)^2 \wt \sigma} {4 \omega^2 + (4 \eta + \pi \nu(E) \wt \sigma)^2} +
 \p{Q - 1} \min\{\abs{\log \omega}, \abs{\log \wt \sigma}\} + O(1)}\,,
\end{equation}
and if $\phi_1$ and $\phi_2$ satisfy \textbf{(C2)} then
\begin{equation} \label{V2C2 T}
\wt {\cal V}_{\rm{main}} \;=\; \frac{8}{\pi\nu(E)^2 \sqrt{\det D}} \pbb{\frac{L}{2 \pi W}}^2  \pBB{\frac{\pi^2 \nu(E)^2 \wt \sigma} {4 \omega^2 + (\pi \nu(E) \wt \sigma)^2} +
 \p{Q - 1} \min\{\abs{\log \omega}, \abs{\log \wt \sigma}\} + O(1)}\,.
\end{equation}
\item 
Suppose that $\omega = 0$. Then the exponent $\mu$ from Proposition \ref{prop: expansion with trunction} may be chosen so that there exists an exponent $c_1 > 0$ such that for $d =1,2,3$ we have
\begin{equation} \label{V3C2D0 T}
\wt {\cal V}_{\rm{main}} \;=\; \frac{2^{d/2}}{ \nu(E)^2\sqrt{\det D}} \,  \pbb{\frac{L}{2 \pi W}}^d
\pbb{\frac{\eta}{\nu(E)}}^{d/2 - 2} \pBB{ V_d\pbb{\phi_1, \phi_2; \frac{2 \wt \sigma}{\pi \nu(E) \eta}}  + O(M^{-c_1})}
\end{equation}
(recall the definition \eqref{def_V_d_a})
and for $d = 4$ we have
\begin{equation} \label{V4C2D0 T}
\wt {\cal V}_{\rm{main}} \;=\; \frac{4}{\nu(E)^2 \sqrt{\det D}} \pbb{\frac{L}{2 \pi W}}^4 \, 
\pB{  V_4(\phi_1, \phi_2) \min\{\abs{\log \eta}, \abs{\log \wt \sigma}\} + O (1)}\,.
\end{equation}
\end{enumerate}
\end{proposition}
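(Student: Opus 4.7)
The strategy is to follow the proofs of Propositions \ref{prop: leading term} and \ref{prop: leading term 2} essentially verbatim, exploiting the fact that \eqref{V_main_wt} differs from \eqref{common expression for V(D)} only in the replacement $\tr S^{b_3 + b_4} \to \tr T^{b_3 + b_4}$. The new ingredient required is an analogue of Proposition \ref{prop: bounds on S} for $T$, after which the asymptotics \eqref{V3C2 T}--\eqref{V4C2D0 T} follow by direct substitution into the computations of Sections \ref{sec:dumbbell_C1} and \ref{sec:dumbbell_C2}.

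To establish this analogue, I would begin with the small-momentum expansion of $\wh T(q)$ on the torus. Expanding $(1 - \varphi h(y)) \ee^{\ii \lambda g(y)} \ee^{-\ii \tilde q \cdot y}$ in $\lambda$, $\varphi$, and $\tilde q \deq qW$, integrating against $f(y)$, and using the definitions \eqref{def_Delta}, \eqref{def_Upsilon}, \eqref{def_sigma_wt} together with the parity of $f$, $g$, $h$, one finds that $\wh T$ acquires an additional uniform correction $-\wt \sigma$ beyond $\cal I$, plus a quadratic form in $\tilde q$ with a real linear shift proportional to $\lambda w$. After completing the square, the linear shift $\lambda D^{-1} w$ can be absorbed into $\tilde q$ by a change of variables, which is harmless in the continuum limit. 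Consequently, $1 - \alpha \wh T(q)$ behaves to leading order like $1 - \alpha \wh S(q)$ except that $1 - \alpha \cal I$ is replaced by $1 - \alpha \cal I + \alpha \wt \sigma$. Running the arguments of Appendix \ref{appendix: S} with this single modification yields exact analogues of \eqref{asymptotics of trace d leq 3}, \eqref{asymptotics of trace d 4}, and \eqref{precise asymptotics d=2}, in which $u \zeta = 1 - \alpha$ becomes $\tilde u \tilde \zeta = 1 - \alpha + \wt \sigma$.

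With this analogue of Proposition \ref{prop: bounds on S} in hand, the proofs of parts (i)--(iii) proceed by redoing the calculations in Propositions \ref{prop: leading term} and \ref{prop: leading term 2} with $S$ replaced by $T$. In part (i), setting $\alpha = \ee^{\ii (A_1^\eta - \ol A_2^\eta)}$ and using \eqref{alpha_Delta_sqrt_kappa} gives $1 - \alpha + \wt \sigma = \wt \sigma + 2 \ii (\omega + 2 \ii \eta)/(\pi \nu) + O(\omega^2)$; taking the real part of the resulting expression produces \eqref{V3C2 T} for $d = 1,2,3$ and \eqref{V4C2 T} for $d = 4$, where $\abs{\log \omega}$ naturally becomes $\absb{\log \absb{\wt \sigma + 2 \ii \omega/(\pi \nu)}} = \min\{\abs{\log \omega}, \abs{\log \wt \sigma}\} + O(1)$. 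Part (ii) follows analogously from the two-term expansion \eqref{precise asymptotics d=2}, with the $(Q - 1)$-coefficient unchanged because the relevant fourth-moment correction is that of $f$ itself to leading order in $\lambda, \varphi$. For part (iii), with $\omega = 0$, the argument of Proposition \ref{prop:S_int} is repeated with $\alpha T$ in place of $\alpha S$: the net effect of the $\wt \sigma$-shift is to insert a damping factor $\ee^{-a t}$ (with $a$ proportional to $\wt \sigma/\eta$) into the Fourier integrand on the right-hand side of \eqref{intR3}, producing precisely the quadratic form $V_d\p{\phi_1, \phi_2; a}$ defined in \eqref{def_V_d_a} with the specific prefactor displayed in \eqref{V3C2D0 T} and \eqref{V4C2D0 T}.

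The main obstacle is the rigorous justification of the Fourier expansion and continuum approximation of $\wh T$, particularly controlling the remainders uniformly in the parameters $\lambda$ and $\varphi$. In principle these are straightforward adaptations of the Riemann-sum analysis already used in the proof of Proposition \ref{prop: bounds on S}, but they must now be carried out for the phase-modulated convolution kernel $f (1 - \varphi h) \ee^{\ii \lambda g}$ rather than $f$, and one must verify that the resulting error terms remain compatible with the bounds \eqref{bound res 2} and \eqref{asymptotics of trace d leq 3}--\eqref{precise asymptotics d=2}. This is done using the decay and smoothness hypotheses (A$f$)--(A$h$). Once this step is in place, all remaining computations are routine adaptations of the arguments already carried out in Sections \ref{sec:dumbbell_C1}, \ref{sec:dumbbell_C2}, and Appendix \ref{appendix: S}.
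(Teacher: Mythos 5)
Your proposal matches the paper's proof essentially verbatim: you introduce the analogue of Proposition \ref{prop: bounds on S} for $T$ by expanding $\wh T_W$ (obtaining the constant $-\wt\sigma$ shift and the linear shift $\lambda D^{-1} w$, the latter absorbed by shifting the Fourier summation variable), you observe that the Fourier-space arguments carry over with $1-\alpha$ replaced by $1-\alpha+\wt\sigma$, and you note that for part (iii) the shift manifests as a damping factor $\ee^{-s\wt\sigma/(\eta b)}$ in the analogue of Proposition \ref{prop:S_int} — which is exactly the content of the paper's Lemma \ref{lem:That} and Propositions \ref{prop: bounds on T} and \ref{prop:T_int}, followed by rerunning Sections \ref{sec:dumbbell_C1}--\ref{sec:dumbbell_C2}. (The only blemish is a sign slip: \eqref{alpha_Delta_sqrt_kappa} gives $1-\alpha+\wt\sigma = \wt\sigma - \frac{2\ii}{\pi\nu}(\omega+2\ii\eta)+O(\omega^2)$, not $+\frac{2\ii}{\pi\nu}(\omega+2\ii\eta)$, though this is immaterial after applying $2\re(\cdot)^{d/2-2}$.)
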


Once Proposition \ref{prop: main T} is proved, Theorems \ref{thm: Theta 2 gen} and \ref{thm: Theta 1 gen} easily follow using \eqref{def:theta_gen}, \eqref{lim_Delta}, \eqref{def_sigma_wt},  \eqref{sigma_wt_sigma}, Lemma \ref{lem:EY}, and Proposition \ref{prop: main} (i)--(iii).

The rest of this subsection is devoted to the proof of Proposition \ref{prop: main T}. The argument is similar to the computation of $\cal V_{\rm{main}}$ in the proof of Proposition \ref{prop: main}. It relies on the following result, which is the generalization of Proposition \ref{prop: bounds on S} to the case $\wt \sigma > 0$.
\begin{proposition} \label{prop: bounds on T}
Let $T$ be as in \eqref{general T} and $\alpha \in \C$ satisfy $\abs{\alpha} \leq 1$ and $\abs{1 - \alpha} \geq 4 / M + (W/L)^2$. Then all estimates of Proposition \ref{prop: bounds on S} remain valid provided we replace $S$ with $T$ and $\alpha$ 
with $\alpha - \wt \sigma$. (In particular, $u\geq0$ and $\zeta\in \bb S^1$ are the polar coordinates of
$1-\alpha +\wt \sigma = u\zeta$.)  All constants depend in addition on $g$ and $h$.
\end{proposition}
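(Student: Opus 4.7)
The plan is to reduce Proposition \ref{prop: bounds on T} to Proposition \ref{prop: bounds on S} by a Fourier-space computation showing that, up to a diagonal gauge transformation, $T$ has the same Fourier symbol as $S$ except that the constant term $1$ is shifted to $1 - \wt\sigma$. This explains why replacing $\alpha$ with $\alpha - \wt\sigma$ throughout the statement of Proposition \ref{prop: bounds on S} yields the analogous estimates for $T$: the small-momentum behaviour of $1 - \alpha \hat T(q)$ is identical to that of $1 - (\alpha - \wt\sigma) \hat S(q)$ to leading order, while the bulk behaviour (large $q$) is uniformly controlled in both cases.

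First I would carry out the Fourier computation. Writing $\hat T(q) \deq \sum_x e^{-\ii q\cdot x} T_{x0}$ and expanding $\ee^{\ii \lambda g(z)} (1 - \varphi h(z)) = 1 + \ii \lambda g - \varphi h - \lambda^2 g^2/2 + \cdots$, the parity assumptions from (A$f$), (A$g$), (A$h$) make the odd terms in $q$ vanish at $q = 0$, while the linear-in-$q$ contribution from $\ii \lambda g$ produces a cross term $2 \lambda W \, q \cdot w$ (with $w$ as in \eqref{def_Delta}). Completing the square absorbs this cross term and yields
\begin{equation*}
\hat T(q) \;=\; (1 - \wt\sigma) - W^2 (q - q_0)^T D (q - q_0) + O\pb{\abs{Wq}^4 + (\lambda^3 + \varphi^2) \abs{Wq}^2 + \lambda^4 + \varphi^3}\,, \qquad q_0 \;\deq\; \lambda W^{-1} D^{-1} w\,,
\end{equation*}
where the contribution $\lambda^2 \absb{D^{-1/2} w}^2$ generated by completing the square is exactly what converts $\frac{1}{2} \sum_x g(x/W)^2 S_{x0}$ into $\Delta$ (cf.\ \eqref{def_Delta}). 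The momentum shift $q_0$ is then removed by conjugating $T$ with the diagonal unitary $U \deq \diag(\ee^{-\ii q_0 \cdot x})_{x \in \bb T}$: the matrix $T' \deq U T U^*$ has Fourier symbol $\hat T'(q) = \hat T(q + q_0)$, and since $U$ is unitary with unit-modulus diagonal entries, the bounds \eqref{bound res 1}--\eqref{precise asymptotics d=2} are invariant under the replacement $T \leadsto T'$.

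Second, I would re-run the proof of Proposition \ref{prop: bounds on S} (given in Appendix \ref{appendix: S}) with $T'$ in place of $S$. That proof proceeds by expressing the resolvent as a Fourier integral, then splitting the integration into a small-$q$ zone where one uses the parabolic approximation $\hat S(q) \approx 1 - W^2 q^T D q$, and a large-$q$ bulk zone where one uses the uniform gap $\abs{\hat S(q)} \leq 1$ with strict inequality away from $q = 0$. For $T'$ the small-$q$ zone produces a denominator $1 - \alpha \hat T'(q) = \pb{1 - (\alpha - \alpha\wt\sigma)} + \alpha W^2 q^T D q + \text{(quartic error)}$, which to leading order is identical to the denominator arising in the $S$-analysis after substituting $\alpha \to \alpha - \wt\sigma$ (using $\abs{\alpha} \leq 1$ to justify $\alpha \wt\sigma \approx \wt\sigma$ modulo negligible errors in the regime $\wt\sigma \ll 1$). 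The large-$q$ bulk bound follows from $\abs{\hat T(q)} \leq \hat S(0) = M/(M-1)$ together with strict inequality away from $q = 0$, which is guaranteed by the assumptions that $h \not\equiv 0$ and $g$ is not linear on $\supp f$.

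The main obstacle will be controlling the Fourier-integral errors in the regime where $\wt\sigma$ is comparable to or larger than $\abs{1 - \alpha}$. In that regime the effective parabolic integration region expands to radius $\sim \sqrt{\wt\sigma + \abs{1-\alpha}}/W$, and one must verify that the quartic remainder $O(\abs{Wq}^4)$ in the expansion of $\hat T$ is still negligible compared to the quadratic main term throughout this enlarged region; this requires the moment hypothesis \eqref{moment of f}. A second delicate point is the two-term asymptotics \eqref{precise asymptotics d=2} for $d = 2$, where the subleading $\abs{\log u}$ term and its coefficient $Q - 1$ come from the fourth-order Taylor coefficient of the symbol at its minimum. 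One must check that, after the gauge shift to $\hat T'$, this fourth-order coefficient coincides with the one for $\hat S$ (namely $Q$) up to corrections of order $\wt\sigma \leq 1$, which can be absorbed into the $O(1)$ error in \eqref{precise asymptotics d=2}. Once these technical points are handled, the remaining steps are essentially a term-by-term translation of the proof of Proposition \ref{prop: bounds on S}.
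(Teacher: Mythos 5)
Your proposal takes essentially the same route as the paper: derive the quadratic expansion of the Fourier symbol $\wh T_W(q) = (\cal I - \wt\sigma) - (q - q_0)\cdot D(q - q_0) + \cal Q(q) + \text{error}$ (this is the paper's Lemma \ref{lem:That}(ii)), absorb the linear-in-$q$ cross term by completing the square --- which is exactly how $\frac12\sum g^2 S_{x0}$ is corrected to $\Delta$ and hence $\wt\sigma$ emerges --- and then repeat the Fourier-space arguments of Appendix \ref{appendix: S} with $1-\alpha$ replaced by $1-\alpha+\wt\sigma$. Your identification of the delicate points (the moment hypothesis \eqref{moment of f} for the quartic remainder, and the transfer of the $Q$-coefficient in the $d=2$ two-term asymptotics) also matches what the proof must address.

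One small caveat on the framing: conjugating by $U = \diag(\ee^{-\ii q_0\cdot x})$ produces $T'_{xy} = \ee^{-\ii q_0\cdot(x-y)}T_{xy}$, and since $q_0 = \lambda W^{-1} D^{-1}w$ is generically not a point of the dual lattice $\bb T^*$, the matrix $T'$ is no longer circulant on $\bb T$ and the identity $\wh T'(q) = \wh T(q + q_0)$ does not hold exactly. The paper avoids this by shifting the variable $\wt q = q - \lambda D^{-1}w$ directly inside the Fourier sum rather than conjugating the matrix; the error from the shifted Riemann sum is then controlled by the same Poisson-summation estimate used in Appendix \ref{appendix: S}. You should also articulate the complementary regime $\max\{\lambda,\varphi\}\geq\epsilon$ (the paper's Lemma \ref{lem:That}(i)): there the symbol satisfies $\abs{\wh T_W(q)}\leq 1-\delta_\epsilon$ uniformly, which is why the quartic error terms of size $O(\varphi^2+\lambda^4)$ --- which are $O(1)$ in that regime --- are harmless. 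Neither of these is a gap in the mathematics, only in the exposition.
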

\begin{proof}
See Appendix \ref{appendix: T}.
\end{proof}

The following result is the generalization of Proposition \ref{prop:S_int} to the case $\wt \sigma > 0$.
\begin{proposition} \label{prop:T_int}
Suppose that \eqref{LW_assump} holds. Let $b > 0$ be fixed and $\cal J \deq 1 - M^{-c_2} \eta$ for some $c_2 > 0$.  Fix a smooth real function $e \in L^1(\R)$ 
 satisfying the condition \textbf{(C2)} (see \eqref{non_Cauchy}), and recall the notation  $e^\eta(v) = \eta^{-1}e(\eta^{-1}v)$.
Then for $d \leq 3$ we have
\begin{multline} \label{intR3S}
\frac{1}{2 \pi} \int \dd v \, e^\eta(v) \tr \frac{T}{\p{1 + \ii b v -\cal JT}^2} \;=\; \frac{(b \eta)^{d/2 - 2}}{\sqrt{\det D}} \pbb{\frac{L}{2 \sqrt{\pi} W}}^d \int_0^\infty \dd t \, \ee^{-t \wt \sigma / (\eta b)} \, t^{1-d/2} \, \ol{\wh e(t)}
\\
+ O \pbb{\frac{N}{M} R_4(\eta) M^{-c_0}}
\end{multline}
for some constant $c_0 > 0$, and for $d = 4$ we have
\begin{equation} \label{intR4S}
\frac{1}{2 \pi} \int \dd v \, e^\eta(v) \tr \frac{T}{\p{1 + \ii b v -\cal JT}^2} \;=\;
\frac{\min\h{\abs{\log \eta}, \abs{\log \wt \sigma}}}{\sqrt{\det D}} \pbb{\frac{L}{2 \sqrt{\pi} W}}^4 \, \ol{\wh e(0)}  + O \pbb{\frac{N}{M}}\,.
\end{equation}
\end{proposition}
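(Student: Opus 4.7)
The plan is to reduce the computation directly to the asymptotics of $\tr \frac{T}{(1-\alpha T)^2}$ furnished by Proposition \ref{prop: bounds on T}, which in turn translates the problem to the $S$-asymptotics of Proposition \ref{prop: bounds on S} with $\alpha$ replaced by $\alpha - \wt \sigma$. First I would write
\begin{equation*}
\frac{1}{1 + \ii b v - \cal J T} \;=\; \frac{1}{1 + \ii b v}\frac{1}{1 - \alpha_v T}\,, \qquad \alpha_v \;\deq\; \frac{\cal J}{1 + \ii b v}\,,
\end{equation*}
and apply Proposition \ref{prop: bounds on T} to the trace on the right. The effective polar coordinates entering the asymptotic are those of $1 - \alpha_v + \wt \sigma$, which for $v$ in the effective integration region satisfies
\begin{equation*}
1 - \alpha_v + \wt \sigma \;=\; \wt \sigma + \ii b v + O(M^{-c_2}\eta + v^2)\,.
\end{equation*}
As in the proof of Proposition \ref{prop:S_int} in Appendix \ref{appendix: S}, I would use the decay condition \textbf{(C2)} of $e$ to truncate the integration domain to $\abs{v} \leq M^{\theta} \eta$ at the cost of a negligible error, so that the approximation above is valid.

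For $d \leq 3$, substituting the asymptotic \eqref{asymptotics of trace d leq 3} (adapted to $T$) gives, to leading order,
\begin{equation*}
\tr \frac{T}{\p{1 + \ii b v -\cal JT}^2} \;=\; \frac{B_d}{\sqrt{\det D}}\pbb{\frac{L}{2 \pi W}}^d \pb{\wt \sigma + \ii b v}^{d/2 - 2} + (\text{error})\,,
\end{equation*}
with the fractional power analytic in the right half-plane. The key analytic step is then the Gamma-function representation
\begin{equation*}
\pb{\wt \sigma + \ii b v}^{d/2-2} \;=\; \frac{1}{\Gamma(2 - d/2)} \int_0^\infty \dd t \, t^{1 - d/2} \, \ee^{-t(\wt \sigma + \ii b v)}\,,
\end{equation*}
valid for $\wt \sigma > 0$ and $d < 4$. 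Swapping the order of integration and computing $\frac{1}{2\pi} \int \dd v \, e^\eta(v) \, \ee^{-\ii b v t} = \ol{\wh e(\eta b t)}$, then rescaling $s = \eta b t$, yields the expression
\begin{equation*}
\frac{B_d}{\Gamma(2 - d/2) \sqrt{\det D}}\pbb{\frac{L}{2 \pi W}}^d (b \eta)^{d/2 - 2}\int_0^\infty \dd s \, s^{1-d/2} \, \ee^{-s \wt \sigma/(\eta b)} \ol{\wh e(s)}\,.
\end{equation*}
Using the elementary identity $B_d / \Gamma(2 - d/2) = \pi^{d/2}$ (verified from the explicit values $B_1 = \pi/2$, $B_2 = \pi$, $B_3 = \pi^2$ and $\Gamma(3/2)=\sqrt{\pi}/2$, $\Gamma(1)=1$, $\Gamma(1/2)=\sqrt{\pi}$) produces the claimed constant $(L/(2\sqrt{\pi} W))^d$ in \eqref{intR3S}. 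The tracking of the subleading errors from Proposition \ref{prop: bounds on T}, combined with the tail estimates on $e^\eta$ and the standard bound \eqref{use_of_R}, is routine and yields the error term $O(\frac{N}{M}R_4(\eta) M^{-c_0})$.

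For $d = 4$, the asymptotic \eqref{asymptotics of trace d 4} adapted to $T$ gives, up to a multiplicative constant, a contribution of the form $\absb{\log \abs{\wt \sigma + \ii b v}}$ to the integrand. The $v$-integral splits naturally into the regime $\abs{b v} \lesssim \wt \sigma$, where the log is $\asymp \abs{\log \wt \sigma}$, and the regime $\abs{b v} \gtrsim \wt \sigma$, where it is $\asymp \absb{\log(bv)}$;
combined with the effective support $\abs{v} \lesssim \eta$ of $e^\eta$ this yields precisely $\min\h{\absb{\log \eta}, \absb{\log \wt \sigma}} \, \ol{\wh e(0)}$ up to $O(1)$. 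The main obstacle in carrying out this plan will be the bookkeeping of uniformly small errors for $v$ on the full integration domain, in particular verifying that the approximation $1 - \alpha_v + \wt \sigma \approx \wt \sigma + \ii b v$ has errors that can be absorbed into the claimed $M^{-c_0}$ remainder after integration against $e^\eta$—this requires separating the regimes $\wt \sigma \lesssim \eta$ and $\wt \sigma \gtrsim \eta$ and using the $M^{-c_2}\eta$ cushion in $\cal J$ exactly as in the proof of Proposition \ref{prop:S_int}.
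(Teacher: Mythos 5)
Your proposal is correct and takes a genuinely different route from the paper's.  The paper's proof is essentially a one-line reduction to the proof of Proposition~\ref{prop:S_int}: go to Fourier space, expand $\wh T_W(q)$ near the shifted origin via \eqref{shifted_F_space}, apply the elementary identity \eqref{FT_identity} pointwise in $q$ to convert the $v$-integral into a $t$-integral, and then evaluate the $q$-sum as a Gaussian integral via Poisson summation. The extra factor $\ee^{-s\wt\sigma/(\eta b)}$ emerges directly from the shifted Fourier expansion. By contrast, you \emph{re-use} the already-packaged trace asymptotics of Proposition~\ref{prop: bounds on T} (so the $q$-integration is already done), and recover the integral form of \eqref{intR3S} through the Gamma-function representation of the fractional power $(\wt\sigma + \ii bv)^{d/2-2}$. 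Your verification that $B_d/\Gamma(2-d/2) = \pi^{d/2}$ is a nice consistency check, and the $d=4$ case analysis via $\absb{\log\abs{\wt\sigma + \ii bv}}$ is an adequate replacement for the paper's direct evaluation of $\int_\eta^1 s^{-1}\ee^{-s\wt\sigma/(\eta b)}\dd s$. What your route buys: it avoids redoing the Fourier-space analysis and makes clearer that the claimed formula is a Laplace-type transform of the fractional-power singularity from Proposition~\ref{prop: bounds on T}. What the paper's route buys: because \eqref{FT_identity} is applied \emph{before} the $q$-sum, the cushion $1-\cal J\cal I \approx M^{-c_2}\eta$ appears as a simple exponential damping factor $\ee^{-(1-\cal J\cal I)(b\eta)^{-1}s}$ that is trivially $1 + O(M^{-\delta})$ on the relevant $s$-range, whereas in your route the same cushion is what keeps the derivative-type error $(M^{-c_2}\eta + v^2)\abs{\wt\sigma + \ii bv}^{d/2-3}$ integrable near $v=0$ when $\wt\sigma$ is small; you correctly flag this as the bookkeeping obstacle, and working it out requires the case split on $\wt\sigma \lessgtr \eta$ that you indicate.
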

\begin{proof}
See Appendix \ref{appendix: T}.
\end{proof}

Armed with Propositions \ref{prop: bounds on T} and \ref{prop:T_int}, we may complete the proof of Proposition \ref{prop: main T} by following the arguments of Sections \ref{sec:dumbbell_C1} and \ref{sec:dumbbell_C2} almost to the letter; we omit the uninteresting details.

\section{Beyond the diffusive regime} \label{sec:mean-field}

Throughout this paper we made the assumption \eqref{AS_regime} that we are in the diffusive regime; see Remark \ref{rem:regimes} for a more detailed discussion. While \eqref{AS_regime} is physically important and necessary for computing the asymptotics of the leading
terms (Theorems \ref{thm: Theta 2} and \ref{thm: Theta 1}),
 it is not fundamental for the proof of Theorem \ref{thm: main result}. We now restate
this theorem without the lower  bound  in \eqref{LW_assump}
(which implied \eqref{AS_regime}) and explain the proof.

\begin{theorem} \label{thm:main result mean-field}
Fix $\rho \in (0, 1/3)$ and $d \in \N$, and set $\eta \deq M^{-\rho}$. Suppose that the test functions $\phi_1$ and $\phi_2$ satisfy either both \textbf{(C1)} or both \textbf{(C2)}.
Suppose moreover that
\begin{equation}
W \;\leq\; L \;\leq\; W^C
\end{equation}
for some constant $C$.

Then there exists a constant $c_0 > 0$ such that, for any $E_1, E_2$ satisfying \eqref{D leq kappa} for small enough $c_* > 0$,  the local density-density correlation satisfies
\begin{equation}
\frac{\avg{Y^\eta_{\phi_1}(E_1) \, ; Y^\eta_{\phi_2}(E_2)}}{\avg{Y^\eta_{\phi_1}(E_1)} \avg{Y^\eta_{\phi_2}(E_2)}} \;=\; \frac{1}{(LW)^d} \pBB{\Theta_{\phi_1,\phi_2}^\eta(E_1,E_2) + M^{-c_0} O \pbb{R_2(\omega + \eta) + \frac{M}{N (\omega + \eta)}}} \,,
\end{equation}
where $\Theta$ is defined in \eqref{def:theta} with $\cal V_{\text{main}}$ defined in \eqref{VDdef}.
\end{theorem}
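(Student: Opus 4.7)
The plan is to retain the entire structural framework of the proof in Section \ref{sec: part 2} and modify only the matrix estimates that feed into Propositions \ref{prop:large_Sigma_general} and \ref{prop:V_general_estimate}. The path expansion of $\wt F^\eta(E_1,E_2)$ as a sum of skeleton values in \eqref{Fskel}, its splitting into dumbbell, small, and large skeletons, and the algebraic identity \eqref{VDdef} defining $\cal V_{\rm{main}}$ are all independent of the lower bound on $L$; in particular, the computation for dumbbell skeletons in Section \ref{sec:computation of main term} (i.e.\ Proposition \ref{prop:Vmain_gen}) is purely combinatorial. What fails in the mean-field regime is Proposition \ref{prop: bounds on S}: the bound \eqref{bound res 2} requires $\abs{1 - \alpha} \geq 4/M + (W/L)^2$, which excludes $\omega + \eta \ll (W/L)^2$.

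First I would establish a uniform replacement for \eqref{bound res 2} by decomposing $S = \frac{\cal I}{N} \mathbf{1}\mathbf{1}^* + S'$, where $\mathbf{1}\mathbf{1}^* / N$ is the rank-one spectral projector onto the constant top eigenvector of $S$ with eigenvalue $\cal I = 1 + O(M^{-1})$, and $S'$ has spectrum bounded away from $\cal I$ by a gap of order $(W/L)^2$. The claim is that for all $\alpha \in \C$ with $\abs{\alpha} \leq 1$, $\re \alpha \geq 0$, and $\abs{1 - \alpha} \geq C/M$, and for each $k = 1, 2, \ldots$,
\begin{equation*}
\sup_{x, y} \absbb{\pbb{\frac{S}{(1 - \alpha S)^k}}_{xy}} \;\leq\; \frac{C}{N \abs{1 - \alpha}^k} + \frac{C}{M} R_{2k}\pb{\abs{1 - \alpha} + (W/L)^2}\,.
\end{equation*}
The first summand is the rank-one contribution (constant in $x$ and $y$), and the second follows by applying \eqref{bound res 2} to $S'$, whose shifted spectral gap makes the diffusive lower bound automatic. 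The $\ell^\infty \to \ell^\infty$ estimate \eqref{bound res 1} survives unchanged.

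Next I would propagate this bound through Definition \ref{def_admissible_Z}. The domestic bound \eqref{domestic_bound2} is unaffected, since the rank-one correction of size $1/N$ is absorbed into $C/M$. The connecting bound \eqref{connecting_bound2} becomes
\begin{equation*}
\abs{\cal Z_{xy}(\sigma)} \;\leq\; \frac{C M^{2 \delta}}{M} R_2(\omega + \eta) + \frac{C M^{2 \delta}}{N(\omega + \eta)}\,, \qquad \sum_y \abs{\cal Z_{xy}(\sigma)} \;\leq\; C M^\mu\,,
\end{equation*}
the extra rank-one term contributing nothing to the $\ell^1$ norm beyond a harmless constant, provided $2\delta + \rho \leq \mu$ as already required. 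Running Steps 5--7 of the proof of Proposition \ref{prop:V_general_estimate} with this modified input, and noting that the power-counting argument is linear in the connecting $\cal Z$-entries, Lemma \ref{lem: gen_skeleton_sum} upgrades to
\begin{equation*}
\sum_{\f x \in \bb T^{V(\Sigma)}} I_0(\f x) \prod_{\{e,e'\} \in \Sigma} J_{\{e,e'\}} \, \cal Z_{x_e}(\{e,e'\}) \;\leq\; \frac{C_\Sigma N}{M} \pbb{R_2(\omega + \eta) + \frac{M}{N(\omega + \eta)}} M^{-c_0}\,.
\end{equation*}
An identical modification to Proposition \ref{prop:large_Sigma_general} absorbs the rank-one contribution with room to spare, since that bound uses only $\ell^1$-type summations. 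Dividing the resulting error in $\wt F^\eta$ by $N^2 \, \E Y^\eta_{\phi_1}(E_1) \, \E Y^\eta_{\phi_2}(E_2)$ and multiplying by $(LW)^d$ via \eqref{def:theta} and Lemma \ref{lem:EY} produces precisely the extra term $M^{-c_0} M / (N(\omega + \eta))$ added to \eqref{EY_result} in the statement of the theorem.

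The main obstacle will be the bookkeeping in the modified Lemma \ref{lem: gen_skeleton_sum}, since each connecting bridge potentially contributes a factor $1 / (N(\omega + \eta))$ while the skeleton summation over $N$ labels per vertex must compensate these $1/N$ factors without overcounting. A convenient approach is to write $\cal Z = \cal Z^\sharp + \cal Z^\flat$, with $\cal Z^\sharp$ the rank-one piece and $\cal Z^\flat$ the remainder, and to expand the product over connecting bridges accordingly: the rank-one part $\cal Z^\sharp$ has constant entries, so its contribution factorizes cleanly over labels and the count reduces to comparing the number of vertices to the number of $\cal Z^\sharp$-bridges of the skeleton; the remainder $\cal Z^\flat$ obeys the original bound \eqref{connecting_bound2} and is handled by the unchanged argument of Section \ref{sec_53}. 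Combining the two classes of terms yields exactly the stated error.
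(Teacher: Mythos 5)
Your proposal is correct and coincides in substance with the paper's: your rank-one piece $\frac{\cal I}{N}\mathbf{1}\mathbf{1}^*$ is exactly the origin $r=0$ contribution that the paper isolates in the Riemann sum \eqref{bound res 2 proof}, and both yield the extra summand $C/(N\abs{1-\alpha}^k)$ in a weakened version of \eqref{bound res 2} (the paper's bound \eqref{bound res 2 mean-field} carries $R_{2k}(\abs{1-\alpha})$ where you have the slightly sharper $R_{2k}(\abs{1-\alpha}+(W/L)^2)$, but either is adequate). Your account of how this term propagates through Definition \ref{def_admissible_Z} and Lemma \ref{lem: gen_skeleton_sum} supplies the bookkeeping behind the paper's terse closing remark that ``it is easy to conclude.''
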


\begin{proof}
The proof follows that of Theorem \ref{thm: main result} to the letter. Recall that the key input for the proof of Theorem \ref{thm: main result} is Proposition \ref{prop: bounds on S} (i). (The other parts of Proposition \ref{prop: bounds on S} are only needed for Theorems \ref{thm: Theta 2} and \ref{thm: Theta 1}). With the assumption $\abs{1 - \alpha} \geq 4 / M + (W/L)^2$ of Proposition \ref{prop: bounds on S} relaxed to $\abs{1 - \alpha} \geq 4 / M$, the estimate \eqref{bound res 1} remains true, and the estimate \eqref{bound res 2} is replaced with
\begin{equation} \label{bound res 2 mean-field}
\sup_{x,y} \absbb{\pbb{\frac{S}{(1 - \alpha S)^k}}_{xy}} \;\leq\; \frac{C}{M} R_{2k}(\abs{1 - \alpha}) + \frac{C}{N \abs{1 - \alpha}^k}\,.
\end{equation}
The proof of \eqref{bound res 2 mean-field} follows that of \eqref{bound res 2} up to \eqref{bound res 2 proof}. Since the lattice spacing of the Riemann sum is no longer smaller than one,  the bound of the right-hand side of \eqref{bound res 2 proof} requires, in addition to the integral approximation, the contribution of the origin $r = 0$. This yields \eqref{bound res 2 mean-field}.

Using \eqref{bound res 2 mean-field} instead of \eqref{bound res 2}, it is easy to conclude the proof of Theorem \ref{thm:main result mean-field}.
\end{proof}

Having established Theorem \ref{thm:main result mean-field}, what remains is the asymptotic analysis of $\Theta$, i.e.\ of \eqref{VDdef}. This was done in the diffusive regime \eqref{AS_regime} in Section \ref{sec:3}, using Proposition \ref{prop: bounds on S} (ii) and (iii) as input; the result was given in Theorems \ref{thm: Theta 2} and \ref{thm: Theta 1}. If \eqref{AS_regime} does not hold, this analysis, along with \eqref{asymptotics of trace d leq 3}--\eqref{precise asymptotics d=2}, has to be modified. We omit the details.

We conclude this section by noting that, instead of correlation functions of real linear statistics of the form \eqref{def_Y}, we may also consider correlation functions of Green functions and even at spectral parameters with different imaginary parts.
 The estimate of the subleading graphs is unchanged. In order to compute the contribution of the leading dumbbell graphs, instead of the right-hand side of \eqref{2re2re}, we have to compute $x_1 \ol x_2$ or $x_1 x_2$ (in the notation of \eqref{2re2re}) for the case {\bf (C1)}. Thus we get, for instance, for 
$\eta_1, \eta_2\geq W^{-\rho/3}$ with $\rho \in (0,1/3)$, that
\begin{equation}
\avgB{\tr \pb{H/2 - E_1 - \ii \eta_1}^{-1} \,; \tr \pb{H/2 - E_2 + \ii \eta_2}^{-1}} \;=\; \wh {\cal V}_{\text{main}} +  O \pbb{\frac{N}{M}R_2(\wh \omega) + \frac{1}{\wh \omega}}
\end{equation}
and
\begin{equation}
\avgB{\tr \pb{H/2 - E_1 - \ii \eta_1}^{-1} \,; \tr \pb{H/2 - E_2 - \ii \eta_2}^{-1}} \;=\;   O \pbb{\frac{N}{M}R_2(\wh \omega) + \frac{1}{\wh \omega}}\,,
\end{equation}
where we defined $\wh \omega \deq \omega + \min\{\eta_1, \eta_2\}$, and the leading term is given by
\begin{equation}
\wh {\cal V}_{\text{main}} \;\deq\; T(E_1) \ol{T(E_2)} \, \frac{\ee^{\ii A_1^{\eta_1}}}{1 + \ee^{2 \ii A_1^{\eta_1}} \cal I} \, \frac{\ee^{-\ii \ol A_2^{\eta_2}}}{1 + \ee^{-2 \ii \ol A_2^{\eta_2}} \cal I}
\tr \frac{\ee^{\ii (A_1^{\eta_1} - \ol A_2^{\eta_2})} S}{\pb{1 - \ee^{\ii (A_1^{\eta_1} - \ol A_2^{\eta_2})} S}^2}\,.
\end{equation}
We again omit the asymptotic analysis of $\wh {\cal V}_{\text{main}}$.

\appendix

\section{Proof of Proposition \ref{prop: expansion with trunction}} \label{app: path expansion}

Using the identity
\begin{equation*}
\phi^\eta(H/2 - E) \;=\; 2 \re \int_0^\infty \dd t \, \wh \phi(\eta t) \, \ee^{\ii t E} \ee^{-\ii t H/2}
\end{equation*}
 (recall that $\phi$ is real), we get from \eqref{def_F_eta}
\begin{equation*}
F^\eta(E_1,E_2) \;=\; \avgbb{ 2 \re \tr \int_0^\infty \dd t \, \wh \phi_1(\eta t) \, \ee^{-\ii (H/2) t+ \ii E_1 t} \,; 2 \re \tr \int_0^\infty \dd t \,  \wh \phi_2(\eta t) \, \ee^{-\ii (H/2) t + \ii E_2 t}}\,.
\end{equation*}

We now truncate in the norm of $H$. Fix $\delta$ satisfying $0 < 2 \delta < \mu - \rho$. From \cite{EK2}, Proposition 5.4, we find that that there exists an $\epsilon> 0$ and an event $\Xi$ (both depending on $\delta$) such that
\begin{equation} \label{properties of Xi}
\ind{\Xi} \, \norm{H} \;\leq\; M^\delta \,, \qquad \P(\Xi^c) \;\leq\; M^{-\epsilon M}\,.
\end{equation}
Using $\int_0^\infty \dd t \,  \wh \phi_i(\eta t) = O(\eta^{-1}) = O(M^\rho)$ for $i = 1,2$, we get
\begin{multline*}
F^\eta(E_1,E_2) \;=\; \avgbb{\ind{\Xi} \, 2 \re \tr \int_0^\infty \dd t \, \wh \phi_1(\eta t) \, \ee^{-\ii (H/2) t+ \ii E_1 t} \,; \ind{\Xi} \, 2 \re  \tr \int_0^\infty \dd t \,  \wh \phi_2(\eta t) \, \ee^{-\ii (H/2) t + \ii E_2 t}}
\\
+ O(N^2 M^{2 \rho -\epsilon M})\,.
\end{multline*}

Next, we truncate in $t$.
Since $\phi_1$ and $\phi_2$ are smooth,
for any $q > 0$ there is a $C_q$ such that $\abs{\wh \phi_i(t)} \leq C_q \, t^{-q}$ for $i = 1,2$. We conclude that
\begin{multline*}
F^\eta(E_1,E_2)
\;=\;
\avgbb{\ind{\Xi} \, 2 \re \tr \int_0^{M^{\rho + \delta}} \dd t \, \wh \phi_1(\eta t) \, \ee^{-\ii (H/2) t+ \ii E_1 t} \,; \ind{\Xi}\, 2 \re \tr \int_0^{M^{\rho + \delta}} \dd t \,  \wh \phi_2(\eta t) \, \ee^{-\ii (H/2) t + \ii E_2 t}}
\\
+ O_q \pb{N^2 M^{2 \rho - \delta(q - 1)}}
\end{multline*}
for all $q > 0$.
Recalling \eqref{definition of gamma tilde}, we therefore get using \eqref{Chebyshev expansion of propagator}
\begin{multline} \label{F eta truncated}
F^\eta(E_1,E_2)
\\
=\; \sum_{n_1,n_2 = 0}^\infty 2 \re \pb{\wt \gamma_{n_1}(E_1, \phi_1)} \, 2 \re \pb{\wt \gamma_{n_2}(E_2, \phi_2)} \, \avgb{\ind{\Xi} \, \tr H^{(n_1)}\,; \ind{\Xi} \, \tr H^{(n_2)}}
+ O_q \pb{N^2 M^{2 \rho - \delta(q - 1)}}\,.
\end{multline}

Next, we truncate in $n_1$ and $n_2$. From \eqref{bounds on a_n} and $\abs{\wh \phi_i(t)} \leq 1$ we find that
\begin{equation} \label{estimate of gamma gamma}
\absb{\wt \gamma_{n_1}(E_1, \phi_1) \wt \gamma_{n_2}(E_2,\phi_2)} \;\leq\; C\frac{M^{(\rho + \delta)(n_1 + 1)}}{n_1!} \frac{M^{(\rho + \delta) (n_2 + 1)}}{n_2!} \;\leq\; C \frac{(2 M^{\rho + \delta})^{n_1 + n_2 + 2}}{(n_1 + n_2)!}
\,.
\end{equation}
In order to estimate the contribution of the terms $n_1 + n_2 \geq M^\mu$ in \eqref{F eta truncated}, we need the following rough bound on Chebyshev polynomials, proved in \cite{EK2}, Lemma 5.5:
\begin{equation} \label{rough bound on Un}
\abs{U_n(E/2)} \;\leq\; C^n (1 + \abs{E})^n\,.
\end{equation}
Using \eqref{H^n and U_n} we therefore get
\begin{equation*}
\ind{\Xi} \norm{H^{(n)}} \;\leq\; (C M^\delta)^n\,.
\end{equation*}
Recalling \eqref{estimate of gamma gamma}, we have the bound
\begin{multline*}
\absBB{\sum_{n_1+n_2 > M^\mu} 2 \re \pb{\wt \gamma_{n_1}(E_1,\phi_1)} \, 2 \re \pb{\wt \gamma_{n_2}(E_2,\phi_2)} \, \avgb{\ind{\Xi} \, \tr H^{(n_1)}\,; \ind{\Xi} \, \tr H^{(n_2)}}}
\\
\leq\; N^2 M^{2 \mu} \sum_{n_1 + n_2 > M^\mu} \pbb{\frac{C M^{\rho+ \delta}}{n_1 + n_2}}^{n_1 + n_2} (C M^\delta)^{n_1 + n_2}
\;\leq\; N^2 M^{2 \mu} \sum_{n_1 + n_2 > M^\mu} \pB{C M^{\rho + 2 \delta - \mu}}^{n_1 + n_2} \;\leq\; N^2 \exp(- M^\mu)\,.
\end{multline*}

Summarizing, we have proved that
\begin{multline*} 
F^\eta(E_1,E_2) \;=\; \sum_{n_1 + n_2 \leq M^\mu} 2 \re \pb{\wt \gamma_{n_1}(E_1,\phi_1)} \, 2 \re \pb{\wt \gamma_{n_2}(E_2,\phi_2)} \avgb{\ind{\Xi} \, \tr H^{(n_1)}\,; \ind{\Xi} \, \tr H^{(n_2)}}
\\
+ O_q \pb{N^2 M^{2 \rho - \delta(q - 1)}}\,.
\end{multline*}
Next, we remove the indicator function $\ind{\Xi}$. We use \eqref{rough bound on Un} with the deterministic bound
\begin{equation*}
\norm{H} \;\leq\; \max_x \sum_y \abs{H_{xy}} \;\leq\; 2 \sqrt{M}
\end{equation*}
to get
\begin{multline*}
\absBB{\sum_{n_1 + n_2 \leq M^\mu} 2 \re \pb{\wt \gamma_{n_1}(E_1,\phi_1)} \, 2 \re \pb{\wt \gamma_{n_2}(E_2,\phi_2)} \, \avgb{\ind{\Xi^c} \, \tr H^{(n_1)}\,; \tr H^{(n_2)}} }
\\
\leq\; N^2 M^{4 \mu} \pb{C \sqrt{M}}^{M^\mu} \P(\Xi^c) \;\leq\; N^2 \exp(-M^c)\,,
\end{multline*}
where we used the trivial bound $\abs{\wt \gamma_n(E,\phi)} \leq C M^{\mu}$ and \eqref{properties of Xi}. This 
 (together with analogous estimates for the other error terms) 
 proves \eqref{F - wt F}, after a renaming of $q$. The claim about 
the improved bound in \eqref{F - wt F} in the case $\phi$ is analytic follows in exactly the same way.

Moreover, \eqref{gamma - g} follows easily from \eqref{definition of gamma tilde}, \eqref{claim about gamma}, and the estimate $\abs{\wh \phi_i(t)} \leq C_q t^{-q}$, which yield
\begin{equation*}
\absb{(\gamma_n * \psi_i^\eta)(E_i) - \wt \gamma_n(E_i,\phi_i)} \;\leq\; \int_{M^{\rho + \delta}}^\infty C_q (t \eta)^{-q} \;\leq\; C_q M^\rho M^{-\delta (q - 1)}\,.
\end{equation*}

What remains is the proof of \eqref{bound on gamma}. We use the bound \eqref{bounds on a_n} and the identity \eqref{claim about gamma} to get, for large enough $K$,
\begin{multline*}
\absb{(\psi^\eta * \gamma_n)(E)} \;\leq\; \int_0^{n / K} \dd t \, \absb{\wh \phi(\eta t) a_n(t)} + \int_{n/K}^{\infty} \dd t \, \absb{\wh \phi(\eta t) a_n(t)}
\;\leq\; \int_0^{n / K} \dd t \, \pbb{\frac{C t}{n}}^n + \int_{n/K}^{\infty} \dd t \, \absb{\wh \phi(\eta t)}
\\
\leq\; \ee^{-n} + C_q (\eta n)^{-q}\,,
\end{multline*}
by the decay of $\wh \phi$. Moreover, the bound 
$\absb{(\psi^\eta * \gamma_n)(E)} \leq C$ follows from \eqref{claim about gamma n}. The estimate of $\wt \gamma(E,\phi)$ is similar, using \eqref{gamma - g}. This concludes the proof of Proposition \ref{prop: expansion with trunction}.

\section{Proofs of Propositions \ref{prop: bounds on S} and \ref{prop:S_int}} \label{appendix: S}

In this appendix we establish resolvent estimates and asymptotics for the matrix $S$, and give the proof of Propositions \ref{prop: bounds on S} and \ref{prop:S_int}. 
We use the discrete Fourier transform. To that end, we define the lattice
\begin{equation*}
\bb T^* \;\deq\; \frac{2 \pi}{L} \bb T \;=\; \pbb{[-\pi, \pi)\cap \frac{2\pi\Z}{L}}^d
\end{equation*}
dual to $\bb T$, so that
\begin{equation*}
S_{xy} \;=\; \frac{1}{N} \sum_{p \in \bb T^*} \ee^{\ii p \cdot (x - y)} \wh S(p)\,, \qquad
\wh S(p) \;\deq\; \sum_{x \in \bb T} \ee^{-\ii p \cdot x} S_{x0}\,.
\end{equation*}
Note that $\wh S(p)$ is naturally defined for all $p \in [-\pi,\pi)^d$.
For $q \in [-\pi W, \pi W)^d$, define
\begin{equation} \label{def_S_W}
\wh S_W(q) \;\deq\; \wh S(q/W) \;=\; \sum_{x \in \bb T} \ee^{-\ii q \cdot x / W} \frac{1}{M - 1} f \pbb{\frac{x}{W}} \;=\; \frac{1}{M - 1} \sum_{v \in W^{-1} \bb T} \ee^{-\ii q \cdot v} f(v)\,,
\end{equation}
where in the second equality we used that $[x]_L = x$ for $x \in \bb T$. For the following we  recall the definition \eqref{def_I} of $\cal I$.

\begin{lemma} \label{lem: basic properties of SW}
The function $\wh S_W$ is real and symmetric, and satisfies $\abs{\wh S_W(q)} \leq \cal I$. Moreover, for any $\epsilon > 0$ there is a $\delta_\epsilon > 0$ such that
\begin{equation} \label{SW prop 1}
\absb{\wh S_W(q)} \;\leq\; 1 - \delta_\epsilon \qquad \text{if} \qquad \abs{q} \;\geq\; \epsilon
\end{equation}
for large enough $W$ (depending on $\epsilon$). Finally, we have the expansion
\begin{equation} \label{SW exp}
\wh S_W(q) \;=\; \cal I - q \cdot D q + \cal Q(q) + O(\abs{q}^{4 + c})\,,
\end{equation}
where $c$ is the constant from \eqref{moment of f}. Here $D$ is the covariance matrix defined in \eqref{definition of D} and
\begin{equation*}
\cal Q(q) \;\deq\; \frac{1}{4!} \sum_{x \in \bb T} (x \cdot q / W)^4 S_{x0}\,.
\end{equation*}
\end{lemma}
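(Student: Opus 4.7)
The realness and symmetry of $\wh S_W$ are immediate from the evenness of $f$, which forces $S_{x0} = S_{-x,0}$ via \eqref{general S}; under $x \mapsto -x$ the sum in \eqref{def_S_W} is both invariant and equal to its own complex conjugate. The pointwise bound $\absb{\wh S_W(q)} \leq \cal I$ is then the triangle inequality combined with the normalization $\sum_{x \in \bb T} S_{x0} = \cal I$ from \eqref{normalization of S}.

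For the expansion \eqref{SW exp}, the plan is to Taylor expand $\ee^{-\ii\theta}$ to fourth order, with remainder $R_5(\theta)$ satisfying $\abs{R_5(\theta)} \leq C\min(\abs{\theta}^4, \abs{\theta}^5)$. Treating the cases $\abs{\theta} \leq 1$ and $\abs{\theta} > 1$ separately, and assuming without loss of generality that the constant $c$ in \eqref{moment of f} satisfies $c \leq 1$, this gives $\abs{R_5(\theta)} \leq C\abs{\theta}^{4+c}$. Setting $\theta = q\cdot x/W$ in \eqref{def_S_W} and summing against $S_{x0}$, the odd-order terms vanish by evenness of $f$, the zeroth-order term yields $\cal I$, the second-order term yields $-q\cdot Dq$ by the definition \eqref{definition of D}, and the fourth-order term yields $\cal Q(q)$. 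The remainder is bounded by
\begin{equation*}
C\abs{q}^{4+c}\sum_{x \in \bb T}\pbb{\frac{\abs{x}}{W}}^{4+c} S_{x0} \;=\; O(\abs{q}^{4+c}),
\end{equation*}
using the moment hypothesis \eqref{moment of f} and a standard Riemann-sum comparison with $(\int f)^{-1}\int_{\R^d} \abs{v}^{4+c} f(v)\,\dd v < \infty$.

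The main obstacle is the uniform bound \eqref{SW prop 1}, which must hold for all $q \in [-\pi W, \pi W)^d$ with $\abs{q} \geq \epsilon$, i.e.\ up to scales comparable with $W$. The plan is to split this range into a bounded regime $\epsilon \leq \abs{q} \leq R$ and a high-frequency regime $\abs{q} > R$, where $R$ is a large fixed constant. In the bounded regime, a Riemann-sum approximation (using the piecewise $C^1$ structure of $f$ to control the quadrature error) gives $\wh S_W(q) = \wh f(q)/\!\int f + o_W(1)$ uniformly for $\abs{q} \leq R$; since $f$ is a nonnegative integrable function that is not concentrated at a single point, $\absb{\wh f(q)/\!\int f} < 1$ strictly for every $q \neq 0$, and continuity together with compactness of the closed annulus $\{\epsilon \leq \abs{q} \leq R\}$ yield a uniform bound by $1 - \delta_\epsilon$. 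For $\abs{q} > R$, I will instead apply discrete summation by parts directly to \eqref{def_S_W}: picking the coordinate $j$ with $\abs{q_j} \geq \abs{q}/\sqrt d$ and exploiting that the phase $\ee^{-\ii q_j x_j/W}$ is geometric, one trades a factor $\absb{1 - \ee^{\ii q_j/W}}^{-1} \asymp W/\abs{q_j}$ for a discrete gradient $f((x+e_j)/W) - f(x/W)$ of generic size $O(1/W)$, while summing on the torus produces no boundary terms. The finitely many discontinuities of $\nabla f$ contribute only $O(W^{d-1})$ to the total discrete variation (by their piecewise $C^1$ boundaries), and after using $M \asymp W^d$ this yields $\absb{\wh S_W(q)} \leq C/\abs{q}$, which is $\leq 1 - \delta_\epsilon$ once $R$ is taken sufficiently large. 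The main technical delicacy lies in bookkeeping the boundary contributions from the piecewise $C^1$ jumps of $\nabla f$ in the summation-by-parts step; both the integrability of $\abs{\nabla f}$ and the piecewise $C^1$ hypothesis with bounded-derivative pieces are used here.
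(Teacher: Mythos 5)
Your proof is correct. For the first sentence of the lemma and for the expansion \eqref{SW exp}, your argument coincides with the paper's (fourth-order Taylor expansion, with the remainder interpolated between $\abs{\theta}^4$ and $\abs{\theta}^5$ to give $\abs{\theta}^{4+c}$, then the moment hypothesis \eqref{moment of f}).

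For the uniform bound \eqref{SW prop 1} your route is genuinely different from the one the paper records for this lemma. The paper's proof gives the identity
$\cal I - \wh S_W(q) = \frac{1}{M-1}\sum_{u\in W^{-1}\bb T}\pb{1-\cos(q\cdot u)}f(u)$
and then writes ``we omit the details''; implicitly one still has to bound this sum from below uniformly for $\epsilon\leq\abs{q}\leq\pi W$, and to do so (and to get the symmetric lower bound on $\wh S_W$) some Riemann-sum and oscillation control of the same type as yours is unavoidable. Your two-regime decomposition (a compactness argument on the annulus $\epsilon\leq\abs q\leq R$ via Riemann-sum convergence to $\wh f/\!\int f$, then a single summation by parts to get the decay $\abs{\wh S_W(q)}\leq C/\abs q$ for $\abs q > R$) directly controls $\abs{\wh S_W}$ and handles both the upper and lower bounds in one pass. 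Notably, this is exactly the strategy the paper itself spells out for the analogous and more delicate statement about $T$ in Lemma~\ref{lem:That}~(i), including the compactness step and the discrete-derivative / summation-by-parts estimate $\abs{\wh T_W(q)}\leq C/\abs q$. So your proposal is a careful execution of the argument the paper ultimately relies on; the paper's ``cosine identity'' phrasing is a cosmetic rearrangement of the same Fourier sum (it is just the real part of \eqref{def_S_W}), not a shortcut that avoids the two-regime analysis. One minor remark: in the compactness step you should keep $R$ fixed independently of $\epsilon$ (large enough that $C/R\leq 1/2$), and take $\delta_\epsilon$ to be the minimum of $1/2$ and the lower gap over the annulus; as you have set it up, this works.
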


An immediate consequence of Lemma \ref{lem: basic properties of SW} is the bound, valid for $n \geq 2$,
\begin{equation} \label{tr_Sn_bound}
\tr S^n \;=\; \sum_{p \in \bb T^*} \wh S(p)^n \;\leq\; \cal I^{n - 2} \sum_{p \in \bb T^*}\absb{\wh S(p)}^2 \;=\; \cal I^{n - 2} N \sum_{x \in \bb T} S_{x0}^2 \;\leq\; C \cal I^n N/M\,,
\end{equation}
where in the third step we used Parseval's identity. 

\begin{proof}[Proof of Lemma \ref{lem: basic properties of SW}]
The bound $\abs{\wh S_W(q)} \leq \cal I$ is trivial
 and that $\wh S_W$ is real and symmetric follows immediately from the fact that $f$ is real and symmetric. Next, \eqref{SW prop 1} follows from the identity
\begin{equation*}
\cal I - \wh S_W(q) \;=\; \frac{1}{M - 1} \sum_{u \in W^{-1} \bb T} \pb{1 - \cos(q \cdot u)} f(u)\,;
\end{equation*}
we omit the details.
Finally, \eqref{SW exp} follows by applying Taylor's theorem with a fourth order remainder, combined with the estimates \eqref{moment of f} and $\abs{\ee^{\ii t} - 1} \leq 2 \abs{t}^c$ for any $c \in (0,1)$. 
\end{proof}

\begin{proof}[Proof of Proposition \ref{prop: bounds on S}]
The proof of \eqref{bound res 1} is almost identical to the proof of \cite[Proposition A.2 (ii)]{EKYY4}, using \cite[Proposition A.3 (ii)]{EKYY4}. We omit the details.

In order to prove \eqref{bound res 2}, by translation invariance of $S$ we may set $y = 0$. To begin with, we note that the case $\abs{\alpha} \leq 1/2$ is trivial by the identity $\frac{S}{(1 - \alpha S)^k} = S \pb{\sum_{n = 0}^\infty \alpha^n S^{n}}^k$ combined with $S^{n}_{xy} \leq \cal I^{n} M^{-1}$. Throughout the following we therefore assume that $\abs{\alpha} \geq 1/2$.

The proofs of \eqref{bound res 2}, \eqref{asymptotics of trace d leq 3}, and \eqref{asymptotics of trace d 4} rely on the Fourier space representation
\begin{equation} \label{resolvent_fspace}
\pbb{\frac{S}{(1 - \alpha S)^k}}_{x0} \;=\; \frac{1}{N} \sum_{p \in \bb T^*} \ee^{\ii p \cdot x} \frac{\wh S(p)}{(1 - \alpha \wh S(p))^k} \;=\;
\frac{1}{N} \sum_{q \in W \bb T^*} \ee^{\ii q \cdot x/W} \frac{\wh S_W(q)}{(1 - \alpha \wh S_W(q))^k}\,.
\end{equation}
Before giving the full proofs, we give a short overview of the argument. Approximating the summation in \eqref{resolvent_fspace} with a integral, we get
\begin{equation*}
\pbb{\frac{S}{(1 - \alpha S)^k}}_{x0} \;\approx\;
\frac{1}{(2 \pi W)^d} \int_{[-\pi W,\pi W]^d} \dd q \, \ee^{\ii q \cdot x/W} \frac{\wh S_W(q)}{(1 - \alpha \wh S_W(q))^k}\,.
\end{equation*}
Next, we approximate $\wh S_W(q) \approx 1 - q \cdot D q$ (see \eqref{SW exp}); this approximation is only valid for small $q$, but for $d \leq 4$ the main contribution to the integral comes precisely from small values of $q$. This yields, for $\alpha \approx 1$,
\begin{align*}
\pbb{\frac{S}{(1 - \alpha S)^k}}_{x0} &\;\approx\;
\frac{1}{(2 \pi W)^d} \int_{[-\pi W,\pi W]^d} \dd q \, \ee^{\ii q \cdot x/W} \frac{1}{(1 - \alpha (1 - q \cdot D q))^k}
\\
&\;\approx\;
\frac{1}{(2 \pi W)^d} \int_{[-\pi W,\pi W]^d} \dd q \, \ee^{\ii q \cdot x/W} \frac{1}{(1 - \alpha + q \cdot D q)^k}\,.
\end{align*}
By assumption on $\alpha$, we have $\re (1 - \alpha) > 0$. Now \eqref{bound res 2} will follow using $D \geq c$ and \eqref{use_of_R}. In order to prove \eqref{asymptotics of trace d leq 3} and \eqref{asymptotics of trace d 4}, we write
\begin{equation*}
\pbb{\frac{S}{(1 - \alpha S)^2}}_{00} \;\approx\;
\frac{1}{(2 \pi W)^d} \int_{[-\pi W,\pi W]^d} \dd q \, \frac{1}{(u \zeta + q \cdot D q)^k} \;=\;
\frac{u^{d/2 - 2}}{(2 \pi W)^d} \int_{[-\pi u^{-1/2} W,\pi u^{-1/2} W]^d} \dd q \, \frac{1}{(\zeta + q \cdot D q)^2}\,,
\end{equation*}
from which \eqref{asymptotics of trace d leq 3} and \eqref{asymptotics of trace d 4} will easily follow 
using the elementary identity
\begin{equation} \label{computation_V_d}
\int_{\R^d} \dd r \, \frac{1}{(\zeta +  \abs{r}^2)^2} \;=\; B_d \, \zeta^{d/2 - 2}\,.
\end{equation}

Now we give the full proof of \eqref{bound res 2},
\eqref{asymptotics of trace d leq 3}, and \eqref{asymptotics of trace d 4}. Let $\epsilon > 0$ to be chosen later, and split the summation on the right-hand side of \eqref{resolvent_fspace} according to
\begin{equation} \label{main splitting for frac S}
\pbb{\frac{S}{(1 - \alpha S)^k}}_{x0} \;=\;
\frac{1}{N} \sum_{q \in W \bb T^*} \ind{\abs{q} \leq \epsilon} \, \ee^{\ii q \cdot x/W} \frac{\wh S_W(q)}{(1 - \alpha \wh S_W(q))^k}
+
\frac{1}{N} \sum_{q \in W \bb T^*} \ind{\abs{q} > \epsilon} \, \ee^{\ii q \cdot x/W} \frac{\wh S_W(q)}{(1 - \alpha \wh S_W(q))^k}.
\end{equation}
Using \eqref{SW prop 1}, 
we write the second term of \eqref{main splitting for frac S} as
\begin{equation} \label{split large momenta}
\frac{1}{N} \sum_{q \in W \bb T^*} \ind{\abs{q} > \epsilon} \pbb{\ee^{\ii q \cdot x/W}  \wh S_W(q) + O \pbb{\frac{\abs{\wh S_W(q)}^2}{\delta^k_\epsilon}}}\,.
\end{equation}
We write the first term of \eqref{split large momenta} as
\begin{multline*}
\frac{1}{N} \sum_{q \in W \bb T^*} \ind{\abs{q} > \epsilon} \, \ee^{\ii q \cdot x/W} \wh S_W(q) \;=\; \frac{1}{N} \sum_{q \in W \bb T^*} \, \ee^{\ii q \cdot x/W} \wh S_W(q) - \frac{1}{N} \sum_{q \in W \bb T^*} \ind{\abs{q} \leq \epsilon} \, \ee^{\ii q \cdot x/W} \wh S_W(q)
\\
=\; S_{x0} + O\pbb{\frac{1}{M} \frac{M}{N} \sum_{q \in W \bb T^*} \ind{\abs{q} \leq \epsilon}} \;=\; O(M^{-1})\,.
\end{multline*}
Moreover, we estimate the second term of \eqref{split large momenta} by
\begin{equation*}
\frac{C}{\delta^k_\epsilon} \frac{1}{N} \sum_{q \in W \bb T^*} \abs{\wh S_W(q)}^2 \;=\; \frac{C}{\delta^k_\epsilon} \sum_{x \in \bb T} S_{x0}^2 \;\leq\; \frac{C}{\delta^k_\epsilon M}\,,
\end{equation*}
where in the first step we used Parseval's identity. We conclude that the second term of \eqref{main splitting for frac S} is bounded by $C \delta_\epsilon^{-k} M^{-1}$.

In order to estimate the first term of \eqref{main splitting for frac S}, we use \eqref{SW exp} and \eqref{D_bounded}
to choose $\epsilon > 0$ such that $\wh S_W(q) = \cal I - a(q)$ with
\begin{equation*}
c \abs{q}^2 \;\leq\; a(q) \;\leq\; C \abs{q}^2 \;\leq\; 1 \qquad \text{for} \qquad \abs{q} \;\leq\; \epsilon\,,
\end{equation*}
for some $c > 0$. 
Thus we get
\begin{equation} \label{small momenta 1}
\absBB{\frac{1}{N} \sum_{q \in W \bb T^*} \ind{\abs{q} \leq \epsilon} \, \ee^{\ii q \cdot x/W} \frac{\wh S_W(q)}{(1 - \alpha \wh S_W(q))^k}}
\;\leq\;
\frac{2}{N} \sum_{q \in W \bb T^*} \frac{\ind{\abs{q} \leq \epsilon}}{\absb{1 - \alpha (\cal I - a(q))}^k}\,.
\end{equation}
Consider first the case $\re \alpha < 0$. In that case the right-hand side of \eqref{small momenta 1} is bounded by
\begin{equation*}
\frac{2}{M} \frac{M}{N} \sum_{q \in W \bb T^*} \ind{\abs{q} \leq \epsilon} \;\leq\; \frac{C}{M}\,.
\end{equation*}
For the following we therefore assume that $\re \alpha > 0$, as well as $\abs{\alpha} \geq 1/2$.
Defining the polar variables
\begin{equation} \label{def_zeta_t}
t \;\deq\; \absbb{\frac{1 - \alpha}{\alpha}} \,, \qquad \xi \;\deq\; \frac{1 - \alpha}{t \alpha}
\end{equation}
and writing $q = t^{1/2} r$, we may estimate the right-hand side of \eqref{small momenta 1} by
\begin{equation} \label{small momenta 2}
\frac{2}{N \abs{\alpha}^k t^k} \sum_{r \in W t^{-1/2} \bb T^*} \frac{\ind{\abs{r} \leq \epsilon t^{-1/2}}}{\absb{\xi + (1 - \cal I) t^{-1} + t^{-1} a(t^{1/2} r)}^k}\,.
\end{equation}
An elementary estimate shows that
\begin{equation} \label{partition for alpha}
\h{\alpha \in \ol{\bb D} \col \re \alpha \geq 0} \;\subset\; \hbb{\alpha \in \ol{\bb D} \col \re \frac{1 - \alpha}{\alpha} \geq 0} \cup \hbb{\alpha \in \ol{\bb D} \col \absbb{\im \frac{1 - \alpha}{\alpha}} \geq \absbb{\re \frac{1 - \alpha}{\alpha}}}\,,
\end{equation}
where $\ol{\bb D}$ denotes the closed unit disc (the first set is the disc $\abs{\alpha - 1/2} \leq 1/2$, while the second one contains the union of the two discs $\abs{\alpha - (1/2 \pm i/2)}\leq 1/\sqrt{2}$ in the complementary regime $\abs{\alpha - 1/2}> 1/2$, which
together clearly cover the left-hand side). Using that $\abs{\xi}=1$, we conclude that for each $\alpha \in \ol{\bb D}$ with $\re \alpha > 0$ we have
$\abs{\im \xi} > 1/2$ or $\re \xi > 1/2$. Since  $-1/4 \leq (1 - \cal I) t^{-1} \leq 0$ by assumption on $\alpha$, we find that \eqref{small momenta 2} is bounded by
\begin{equation} \label{bound res 2 proof}
\frac{2}{N \abs{\alpha}^k t^k} \sum_{r \in W t^{-1/2} \bb T^*} \frac{\ind{\abs{r} \leq \epsilon t^{-1/2}}}{\absb{1/8 +  t^{-1} a(t^{1/2} r)}^k} \;\leq\; 
\frac{Ct^{d/2 - k}}{M}  \frac{M}{N t^{d/2}} \sum_{r \in W t^{-1/2} \bb T^*} \frac{\ind{\abs{r} \leq \epsilon t^{-1/2}}}{(1 + \abs{r}^2)^k}\,.
\end{equation}
Now \eqref{bound res 2} follows easily using a Riemann sum estimate, \eqref{use_of_R},
 and the fact that $c (W/L)^2 < t \leq 4$.

In order to prove \eqref{asymptotics of trace d leq 3} and \eqref{asymptotics of trace d 4}, we write
\begin{equation} \label{R_sum_tr_S}
\tr \frac{S}{\p{1 - \alpha S}^2} \;=\; \sum_{q \in W \bb T^*} \frac{\wh S_W(q)}{(1 - \alpha \wh S_W(q))^2}\,.
\end{equation}
Exactly as in the proof of \eqref{bound res 2} (see \eqref{main splitting for frac S} and the rest of its paragraph), we find that
\begin{equation} \label{tr S 1 - alpha S 1}
\tr \frac{S}{\p{1 - \alpha S}^2} \;=\; \sum_{q \in W \bb T^*} \frac{\wh S_W(q)}{(1 - \alpha \wh S_W(q))^2} \ind{\abs{q} \leq \epsilon} + O \pbb{\frac{N}{\delta_\epsilon^2 M}}\,.
\end{equation}
Writing $\wh S_W(q) = \cal I - a(q)$ as above and recalling the notation $u = \abs{1 - \alpha}$, we find, repeating the estimates from the proof of \eqref{bound res 2} and using $a(q) \leq C \abs{q}^2$, 
\begin{equation} \label{tr S 1 - alpha S 2}
\sum_{q \in W \bb T^*} \frac{\wh S_W(q)}{(1 - \alpha \wh S_W(q))^2} \ind{\abs{q} \leq \epsilon} \;=\; \cal I
\sum_{q \in W \bb T^*} \frac{\ind{\abs{q} \leq \epsilon}}{(1 - \alpha \wh S_W(q))^2} + O \pbb{\frac{N}{M} R_2(u)}\,.
\end{equation}
Using \eqref{SW exp} we get
\begin{equation*}
1 - \alpha \wh S_W(q) \;=\; 1 - \alpha + q \cdot D q + O(M^{-1} + u \abs{q}^2 + \abs{q}^4)\,.
\end{equation*}
A simple estimate using Riemann sums, as in the previous paragraph, therefore yields
\begin{equation} \label{tr S 1 - alpha S 3}
\sum_{q \in W \bb T^*} \frac{\ind{\abs{q} \leq \epsilon}}{(1 - \alpha \wh S_W(q))^2} \;=\;
\sum_{q \in W \bb T^*} \frac{\ind{\abs{q} \leq \epsilon}}{(1 - \alpha + q \cdot D q)^2} + \frac{N}{M} O \pbb{R_2(u) + \frac{u^{d/2 - 2}}{M u}}\,.
\end{equation}

At this point we differentiate between the cases $d \leq 3$ and $d = 4$. Suppose first that $d \leq 3$.
We define
\begin{equation*}
v(r) \;\deq\; \frac{1}{(\zeta + r \cdot D r)^2}\,,
\end{equation*}
where $1 - \alpha = u \zeta$.
Writing $q = u^{1/2} r$, we get
\begin{equation} \label{Riemann_approx}
\sum_{q \in W \bb T^*} \frac{\ind{\abs{q} \leq \epsilon}}{(1 - \alpha + q \cdot D q)^2} \;=\; \frac{1}{u^2}
\sum_{r \in W u^{-1/2} \bb T^*} \ind{\abs{r} \leq \epsilon u^{-1/2}} v(r) \;=\; \frac{1}{u^2}
\sum_{r \in \Lambda^*} v(r) + O(1)\,,
\end{equation}
where we introduced the infinite lattice $\Lambda^* \deq 2 \pi u^{-1/2} W L^{-1} \bb Z^d$, which
is the dual lattice of $\Lambda = u^{1/2} LW^{-1/2} \bb Z^d$.
In the last step we used a Riemann sum estimate using the fact that the spacing of $\Lambda^*$ is bounded from above
by some positive constant (since $\abs{1 - \alpha} \geq (W/L)^2$). We denote by $\wh v(y) \deq \int_{\R^d}
 \ee^{-\ii r \cdot y} v(r) \, \dd r$ the Fourier transform of $v$. Using the Poisson summation 
formula we therefore find
\begin{align*}
\sum_{q \in W \bb T^*} \frac{\ind{\abs{q} \leq \epsilon}}{(1 - \alpha + q \cdot D q)^2} &\;=\; \frac{1}{u^2}
 \pbb{\frac{L u^{1/2}}{2 \pi W}}^d \sum_{y \in \Lambda} 
 \wh v(y) + O(1)
\\
&\;=\; \frac{1}{u^2} \pbb{\frac{L u^{1/2}}{2 \pi W}}^d \int_{\R^d} v(r) \, \dd r + O \pb{1 + \ee^{-c L W^{-1} u^{1/2}}}\,,
\end{align*}
for some positive constant $c$ depending only on $D$. Here 
we used that $v$ is analytic in a neighbourhood of the real axis that is uniform in $\zeta$ by assumption on $\alpha$. Therefore $\wh v(y)$ is exponentially small in the lattice constant of $\Lambda$  for all $y\in \Lambda\setminus \{ 0\}$.
We may now get rid of the constraint $\abs{q} \leq \epsilon$ to get
\begin{equation*}
\sum_{q \in W \bb T^*} \frac{\ind{\abs{q} \leq \epsilon}}{(1 - \alpha + q \cdot D q)^2} \;=\; \frac{1}{u^2} \pbb{\frac{L u^{1/2}}{2 \pi W}}^d \pbb{\int_{\R^d} \frac{1}{(\zeta + r \cdot D r)^2} \, \dd r + O \pbb{\ee^{-c L W^{-1} u^{1/2}} + u^{2 - d/2}}}\,.
\end{equation*}
Recalling \eqref{computation_V_d}, \eqref{tr S 1 - alpha S 1}, \eqref{tr S 1 - alpha S 2}, and \eqref{tr S 1 - alpha S 3}, we therefore find
\begin{multline*}
\tr \frac{S}{\p{1 - \alpha S}^2}
\\
=\; \frac{u^{d/2 - 2}}{\sqrt{\det D}} \pbb{\frac{L}{2 \pi W}}^d  \qBB{B_d \, \zeta^{d/2 - 2} + O \pbb{\ee^{-c L W^{-1} u^{1/2}} + \frac{1}{M u} + u^{2 - d/2} + \ind{d = 1} u + \ind{d = 2} u \abs{\log u}}}\,.
\end{multline*}
This concludes the proof of \eqref{asymptotics of trace d leq 3}.

Suppose now that $d = 4$. Writing $q = u^{1/2} r$, we get
\begin{align*}
\sum_{q \in W \bb T^*} \frac{\ind{\abs{q} \leq \epsilon}}{(1 - \alpha + q \cdot D q)^2} &\;=\;
\frac{1}{u^2} \sum_{r \in W t^{-1/2} \bb T^*} \frac{\ind{\abs{r} \leq \epsilon u^{-1/2}}}{(\zeta + r \cdot D r)^2}
\\
&\;=\;
\pbb{\frac{L}{2 \pi  W}}^4 \pbb{\int_{\R^4} \dd r \, \frac{\ind{\abs{r} \leq \epsilon u^{-1/2}}}{(\zeta + r \cdot D r)^2} + O(1)}\,,
\end{align*}
Using that $W L^{-1} u^{-1/2} \leq C$ by assumption on $\alpha$, we get
\begin{align*}
\sum_{q \in W \bb T^*} \frac{\ind{\abs{q} \leq \epsilon}}{(1 - \alpha + q \cdot D q)^2}
&\;=\; \pbb{\frac{L}{2 \pi  W}}^4 \pbb{\int_{\R^4} \dd r \, \frac{\ind{1 \leq \abs{r} \leq u^{-1/2}}}{(\zeta + r \cdot D r)^2} + O(1)}
\\
&\;=\; \pbb{\frac{L}{2 \pi  W}}^4 \pbb{\int_{\R^4} \dd r \, \frac{\ind{1 \leq \abs{r} \leq u^{-1/2}}}{(r \cdot D r)^2} + O(1)}\,,
\end{align*}
where in the second step we used that $\abs{\zeta} = 1$  together with $\re \zeta \geq 0$. 
The change of variables $r = D^{-1/2} p$ and the estimate
\begin{equation} \label{ellipse}
\absB{\indb{1 \leq \abs{D^{-1/2} p}} - \indb{1 \leq \abs{p}}} \;\leq\; \indB{\norm{D^{-1}}^{-1/2} \leq \abs{p} \leq \norm{D}^{1/2}}
\end{equation}
(together with a similar estimate to translate the upper bound $\abs{r} \leq u^{-1/2}$ into
  $\abs{p} \leq u^{-1/2}$)
yield, after a short calculation,
\begin{equation*}
\sum_{q \in W \bb T^*} \frac{\ind{\abs{q} \leq \epsilon}}{(1 - \alpha + q \cdot D q)^2} \;=\; \frac{1}{\sqrt{\det D}} \pbb{\frac{L}{2 \pi  W}}^4  \pbb{\int_{\R^4} \dd p \, \frac{\ind{1 \leq \abs{p} \leq u^{-1/2}}}{\abs{p}^4} + O(1)}\,.
\end{equation*}
Recalling \eqref{tr S 1 - alpha S 1}, \eqref{tr S 1 - alpha S 2}, and \eqref{tr S 1 - alpha S 3}, we find that \eqref{asymptotics of trace d 4} follows.

What remains is the proof of \eqref{precise asymptotics d=2}. Let therefore $d=2$. 
 The proof is very similar to that of \eqref{asymptotics of trace d leq 3} given above, except that we have to keep one more term  from  the expansion \eqref{SW exp}. 
In order to convey its ideas more clearly, we unburden the notation by explaining the computation in terms of the integral
\begin{equation*}
\pbb{\frac{L}{2 \pi W}}^2 \int_{\R^2} \dd q \, \frac{\wh S_W(q)}{(1 - \alpha \wh S_W(q))^2}
\end{equation*}
instead of the Riemann sum \eqref{R_sum_tr_S}; the Riemann sum approximation may be controlled using Poisson summation, exactly as after \eqref{Riemann_approx} above. Throughout the argument we tacitly use \eqref{SW exp}, in which we replace the factor $\cal I$ with $1$; the resulting error is a distraction of order $(Mu)^{-1}$ and may be controlled as above. For small enough $\epsilon$ we get
\begin{equation*}
\int_{\R^2} \dd q \, \frac{\wh S_W(q)}{(1 - \alpha \wh S_W(q))^2} \;=\; \int_{\abs{q} \leq \epsilon} \dd q \, \frac{\wh S_W(q)}{(1 - \alpha \wh S_W(q))^2} + O(1)
\;=\;
\int_{\abs{q} \leq \epsilon} \dd q \, \frac{1 - q \cdot D q}{(1 - \alpha \wh S_W(q))^2} + O(1)\,.
\end{equation*}
We write
\begin{equation*}
1 - \alpha \wh S_W(q) \;=\; 1 - \alpha + q \cdot D q - \cal Q(q) + O\pb{\abs{1 - \alpha} \, \abs{q}^2 + \abs{q}^{4 + c}}\,,
\end{equation*}
which yields
\begin{equation*}
\frac{1}{(1 - \alpha \wh S_W(q))^2} \;=\; \frac{1}{(1 - \alpha + q \cdot D q - \cal Q(q))^2} + O\pbb{\frac{(\abs{1 - \alpha} \, \abs{q}^2 + \abs{q}^{4 + c})(\abs{1 - \alpha} + \abs{q}^2)}{(1 - \alpha \wh S_W(q))^2 (1 - \alpha + q \cdot D q - \cal Q(q))^2}}\,.
\end{equation*}
A routine estimate similar to the one yielding \eqref{tr S 1 - alpha S 3} above therefore gives
\begin{multline} \label{int_q_d2}
\int_{\R^2} \dd q \, \frac{\wh S_W(q)}{(1 - \alpha \wh S_W(q))^2} \;=\; \int_{\abs{q} \leq \epsilon} \dd q \, \frac{1 - q \cdot D q}{(1 - \alpha + q \cdot D q - \cal Q(q))^2} + O(1)
\\
=\; \int_{\R^2} \dd q \, \frac{1}{(1 - \alpha + q \cdot D q)^2}
- \int_{\abs{q} \leq \epsilon} \dd q \, \frac{q \cdot D q}{(1 - \alpha + q \cdot D q)^2}
+ \int_{\abs{q} \leq \epsilon} \dd q \, \frac{2 \cal Q(q)}{(1 - \alpha + q \cdot D q)^3}
 + O(1)\,.
\end{multline}
The first term of \eqref{int_q_d2} is
\begin{equation*}
\int_{\R^2} \dd q \, \frac{1}{(u \zeta + q \cdot D q)^2} \;=\; \frac{1}{\sqrt{\det D}}\, \frac{\pi}{u \zeta}\,.
\end{equation*}
To compute the second term of \eqref{int_q_d2}, we introduce the variable $p$ through $q = u^{1/2} D^{-1/2} p$. Using \eqref{ellipse}, we get
\begin{equation*}
- \int_{\abs{q} \leq \epsilon} \dd q \, \frac{q \cdot D q}{(1 - \alpha + q \cdot D q)^2} \;=\;
- \frac{1}{\sqrt{\det D}} \int_{\abs{p} \leq \epsilon u^{-1/2}} \dd p \, \frac{\abs{p}^2}{(\zeta + \abs{p}^2)^2} + O(1) \;=\; - \frac{\pi \abs{\log u}}{\sqrt{\det D}} + O(1)\,.
\end{equation*}
Finally, the third term of \eqref{int_q_d2} is
\begin{align*}
\int_{\abs{q} \leq \epsilon} \dd q \, \frac{2 \cal Q(q)}{(1 - \alpha + q \cdot D q)^3} &\;=\; \frac{1}{12 W^4} \sum_{x \in \bb T} S_{x0} \int_{\abs{q} \leq \epsilon} \dd q \, \frac{(x \cdot q)^4}{(1 - \alpha + q \cdot D q)^3}
\\
&\;=\; \frac{1}{12 W^4 \sqrt{\det D}} \sum_{x \in \bb T} S_{x0} \int_{1 \leq \abs{p} \leq u^{-1/2}} \dd p \, \frac{(D^{-1/2} x \cdot p)^4}{\abs{p}^6} + O(1)
\\
&\;=\; \frac{\pi}{32 W^4 \sqrt{\det D}} \sum_{x \in \bb T} S_{x0} \abs{D^{-1/2} x}^4 \abs{\log u} + O(1)\,.
\end{align*}
This concludes the proof of \eqref{precise asymptotics d=2}.
\end{proof}

\begin{proof}[Proof of Proposition \ref{prop:S_int}]
The proof relies on the Fourier space representation \eqref{resolvent_fspace}. Before giving the full proof, we sketch the calculation
 for $d \leq 3$.  Approximating the Riemann sum from \eqref{resolvent_fspace} with an integral and noting that, as in the proof of Proposition \ref{prop: bounds on S}, the leading contribution to the integral comes from the singularity at $q = 0$, we find
\begin{equation*}
\tr \frac{S}{\p{1 + \ii b v -\cal JS}^2} \;\approx\; \pbb{\frac{L}{2 \pi W}}^d \int_{\R^d} \dd q \, \frac{1}{\pb{1 - \cal J + \ii bv + q \cdot D q}^2}\,.
\end{equation*}
Using the identity
\begin{equation} \label{FT_identity}
\frac{1}{2 \pi} \int_\R \dd v \, e^\eta(v) \, \frac{1}{(x + \ii v)^2} \;=\; \int_0^\infty \dd t \, \ee^{-xt} \, t \, \ol{\wh e(\eta t)}\,, 
\end{equation}
valid for $x \geq 0$,
we therefore get
\begin{align*}
\frac{1}{2 \pi} \int_\R \dd v \, e^\eta(v) \tr \frac{S}{\p{1 + \ii b v -\cal JS}^2} &\;\approx\; \pbb{\frac{L}{2 \pi W}}^d \int_{\R^d}
 \dd q \, \frac{1}{2 \pi} \int_\R \dd v \, e^\eta(v) \, \frac{1}{\pb{1 - \cal J + \ii bv + q \cdot D q}^2}
\\
&\;=\; b^{- 2} \pbb{\frac{L}{2 \pi W}}^d \int_{\R^d} \dd q \, \int_0^\infty \dd t \, \ee^{- (1 - \cal J) t/b} \, \ee^{-t q \cdot D q /b} \, t \, \ol{\wh e(\eta t)}
\\
&\;\approx\; \frac{\pi^{d/2} b^{d/2 - 2}}{\sqrt{\det D}}\pbb{\frac{L}{2 \pi W}}^d \int_0^\infty \dd t \, t^{1-d/2} \, \ol{\wh e(\eta t)}\,,
\end{align*}
where in the third step we used that $1 - \cal J \ll \eta$. The claim will now follow by the change of variables $\eta t \mapsto t$.

Now we give the full proof of Proposition \ref{prop:S_int}.
Similarly to \eqref{resolvent_fspace} we get
\begin{equation*}
\tr \frac{S}{\p{1 + \ii b v -\cal JS}^2} \;=\; \sum_{q \in W \bb T^*} \frac{\wh S_W(q)}{\pb{1 + \ii b v - \cal J \wh S_W(q)}^2}
\;=\; \sum_{q \in W \bb T^*} \frac{\wh S_W(q)}{\pb{1 + \ii b v - \cal J \wh S_W(q)}^2} \ind{\abs{q} \leq \epsilon} + O \pbb{\frac{N}{\delta_\epsilon^2 M}}
\,,
\end{equation*}
where the last step holds for all $\epsilon > 0$, exactly as in \eqref{main splitting for frac S} and \eqref{tr S 1 - alpha S 1} above. Next, following the argument from \eqref{tr S 1 - alpha S 1} to \eqref{tr S 1 - alpha S 3} almost to the letter, we get, in analogy to \eqref{tr S 1 - alpha S 3},
\begin{align}
\tr \frac{S}{\p{1 + \ii b v -\cal JS}^2} &\;=\; \sum_{q \in W \bb T^*} \frac{\wh S_W(q)}{\pb{1 + \ii b v - \cal J \wh S_W(q)}^2}
\notag \\ \label{S_u_int}
&\;=\; \sum_{q \in W \bb T^*} \frac{\ind{\abs{q} \leq \epsilon}}{\pb{1 + \ii b v - \cal J (\cal I - q \cdot D q)}^2}  + O\pbb{\frac{N}{M} R_2(\eta M^{-c_2})}
\end{align}
for some $\epsilon$ which we fix in the following.
Plugging \eqref{S_u_int} into \eqref{FT_identity} and changing variables $v \to v/b$ yields
\begin{equation} \label{intr_T}
\frac{1}{2 \pi} \int_\R \dd v \, e^\eta(v) \tr \frac{S}{\p{1 + \ii b v -\cal JS}^2} \;=\; U + O \pbb{\frac{N}{M} R_2(\eta M^{-c_2})}\,,
\end{equation}
where we defined
\begin{align}
U &\;\deq\; \frac{1}{b^2} \sum_{q \in W \bb T^*} \ind{\abs{q} \leq \epsilon} \int_0^\infty \dd t \, \exp \pbb{-\frac{1 - \cal J (\cal I - q \cdot D q)}{b} \, t} \, t \, \ol{\wh e(\eta t)}
\notag \\ \label{def_U}
&\;=\;
\frac{1}{(b \eta)^2} \int_0^\infty \dd s \, s \, \ol{\wh e(s)} \, \ee^{- (1 - \cal J \cal I) (b\eta)^{-1} s} \sum_{q \in W \bb T^*} \ind{\abs{q} \leq \epsilon}  \ee^{- (b \eta)^{-1} s q \cdot D q}\,.
\end{align}
Introducing the variable $r \deq s^{1/2} (b\eta)^{-1/2} q$, we find by Riemann sum approximation
\begin{align}
&\mspace{-30mu}\sum_{q \in W \bb T^*} \ind{\abs{q} \leq \epsilon}  \ee^{-(b \eta)^{-1} s q \cdot D q}
\\
&\;=\; 
\sum_{r \in s^{1/2} (b\eta)^{-1/2} W \bb T^*} \ind{\abs{r} \leq \epsilon s^{1/2} (b \eta)^{-1/2}} \, \ee^{- r \cdot D r}
\notag \\
&\;=\; \pbb{\frac{(b \eta)^{1/2} L}{2 \pi s^{1/2} W}}^d \qBB{\int_{\R^d} \dd r \, \ind{\abs{r} \leq \epsilon s^{1/2} (b\eta)^{-1/2}} \, \ee^{-r \cdot D r} + O \pbb{\frac{s^{1/2} W}{\eta^{1/2} L}}}
\\ \label{G1}
&\;=\; \pbb{\frac{(b \eta)^{1/2} L}{2 \pi s^{1/2} W}}^d \qBB{\frac{\pi^{d/2}}{\sqrt{\det D}} + O \pbb{\frac{s^{1/2} W}{\eta^{1/2} L} + \ee^{-c \eta^{-1} s}}}\,,
\end{align}
where $c$ is some positive constant depending on $\epsilon$ and $b$. This estimate will be used where $s\gg \eta$.
On the other hand, we have the trivial bound
\begin{equation} \label{G2}
\sum_{q \in W \bb T^*} \ind{\abs{q} \leq \epsilon}  \ee^{- (b \eta)^{-1} s q \cdot D q} \;\leq\; \frac{CN}{M}\,,
\end{equation}
which will be used for small $s$.

Next, let $\delta$ be an exponent satisfying
\begin{equation*}
0 \;<\; 2 \delta \;<\; \min \h{1/3 - \rho\,,\, c_2\,,\, \rho}\,.
\end{equation*}
We split the integration over $s \in [0,\infty)$ in \eqref{def_U} into the interval $[\eta, M^\delta]$ and its complement.
Using \eqref{G2}  and the rapid decay of $\wh e(s)$ for large $s$ 
 to estimate the integrand for $s \notin [\eta, M^\delta]$, together with the bound
 $\ee^{- (1 - \cal J \cal I) (b\eta)^{-1} s} = 1 + O(M^{-\delta})$ for $s \leq M^\delta$, we therefore get 
\begin{align}
U &\;=\; \frac{1}{(b \eta)^2} \int_{\eta}^{M^\delta} \dd s \, s \, \ol{\wh e(s)} \, \ee^{- (1 - \cal J \cal I) (b\eta)^{-1} s} \pbb{\frac{(b \eta)^{1/2} L}{2 \pi s^{1/2} W}}^d \qBB{\frac{\pi^{d/2}}{\sqrt{\det D}} + O \pbb{\frac{s^{1/2} W}{\eta^{1/2} L} + \ee^{-c \eta^{-1} s}}} + O \pbb{\frac{N}{M}}
\notag \\ \label{T_computed}
&\;=\; \frac{(b \eta)^{d/2 - 2}}{\sqrt{\det D}} \pbb{\frac{L}{2 \sqrt{\pi} W}}^d \int_{\eta}^{M^\delta} \dd s \, s^{1-d/2} \, \ol{\wh e(s)} \, \qbb{1 + O \pbb{\frac{s^{1/2} W}{\eta^{1/2} L} + \ee^{-c \eta^{-1} s}}} + O \pbb{\frac{N}{M}}\,.
\end{align}
At this point we distinguish the cases $d \leq 3$ and $d = 4$. Let us start with $d \leq 3$. Observing that by \eqref{LW_assump} we have $\frac{s^{1/2} W}{\eta^{1/2} L} \leq M^{-\delta}$ for $s \leq M^\delta$, we find
\begin{equation*}
U \;=\; \frac{(b \eta)^{d/2 - 2}}{\sqrt{\det D}} \pbb{\frac{L}{2 \sqrt{\pi} W}}^d \int_0^\infty \dd s \, s^{1-d/2} \, \ol{\wh e(s)} + O \pbb{\frac{N}{M} \pb{1 + R_4(\eta) M^{-\delta}}}\,.
\end{equation*}
Now \eqref{intR3} follows from \eqref{intr_T} and the fact that $R_2(\eta M^{-c_2}) \leq C R_4(\eta) M^{-\delta}$.

Finally, let $d = 4$. From \eqref{T_computed} we get
\begin{align*}
U &\;=\; \frac{1}{\sqrt{\det D}} \pbb{\frac{L}{2 \sqrt{\pi} W}}^4 \int_{\eta}^1 \dd s \, s^{-1} \, \ol{\wh e(s)} + O \pbb{\frac{N}{M}}
\\
&\;=\; \frac{\abs{\log \eta}}{\sqrt{\det D}} \pbb{\frac{L}{2 \sqrt{\pi} W}}^4 \, \ol{\wh e(0)}  + O \pbb{\frac{N}{M}}\,.
\end{align*}
Now \eqref{intR4} follows from \eqref{intr_T}.
\end{proof}

\section{Proofs of Propositions \ref{prop: bounds on T} and \ref{prop:T_int}} \label{appendix: T}

The arguments of this appendix are similar to those of Appendix \ref{appendix: S}. We take over the notations from Appendix \ref{appendix: S} without further comment, and only give the proofs when they differ significantly from those of Appendix \ref{appendix: S}.

For $q \in [-\pi W, \pi W)^d$ we define (in analogy to $\wh S_W(q)$ from \eqref{def_S_W})
\begin{align}
\wh T_W(q) &\;\deq\;
\frac{1}{M - 1} \sum_{r \in W^{-1} \bb T} \ee^{-\ii q \cdot r} \, f(r) \, [1 - \varphi h(r)]\, \ee^{\ii \lambda g(r)}
\notag \\ \label{TW}
&\;=\; \sum_{x \in \bb T}
\cos (q \cdot x/W - \lambda g(x/W)) \, [1 - \varphi h(x/W)] \, S_{x0}\,.
\end{align}
We remark that for all practical purposes $\wh T_W(q)$ should be thought of as its limit as $W \to \infty$, i.e.\ the integral
\begin{equation} \label{limW_B}
\lim_{W \to \infty} \wh T_W(q) \;=\; \int_{\R^d} \dd r \, \cos (q \cdot r - \lambda g(r)) \, [1 - \varphi h(r)] \, f(r)\,.
\end{equation}

The following result generalizes Lemma \ref{lem: basic properties of SW} to the case $\varphi, \lambda \neq 0$.
\begin{lemma} \label{lem:That}
\begin{enumerate}
\item
For each fixed $\epsilon > 0$ there exists a $\delta_\epsilon > 0$ such that if $\max\{\lambda, \varphi, \abs{q}\} \geq \epsilon$ then $\abs{\wh T_W(q)} \leq 1 - \delta_\epsilon$ for large enough $W$ (depending on $\epsilon$).
\item
We have the expansion
\begin{equation} \label{gen_exp_phi}
\wh T_W(q) \;=\; \cal I - \wt \sigma - (q - \lambda D^{-1} w) \cdot D (q - \lambda D^{-1} w) + \cal Q(q)
+ O\pb{\abs{q}^{4 + c} + \abs{q}^3 \lambda + \abs{q}^2 \varphi + \varphi^2 + \lambda^4}\,,
\end{equation}
where $c$ is the constant from \eqref{moment of f}.
\end{enumerate}
\end{lemma}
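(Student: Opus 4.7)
The plan is to prove parts (i) and (ii) independently. Part (ii) will be a direct Taylor expansion of the cosine appearing in the definition \eqref{TW} of $\wh T_W(q)$, paralleling the proof of Lemma~\ref{lem: basic properties of SW} but carefully tracking the additional parameters $\lambda$ and $\varphi$. Part (i) will combine the triangle inequality with a reduction to the continuum limit \eqref{limW_B}, invoking the structural assumptions (A$g$) and (A$h$) at the final step.

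For part (ii), I abbreviate $y \equiv q \cdot x/W - \lambda g(x/W)$ and begin from the uniform Taylor bound
\begin{equation*}
\absb{\cos y - 1 + y^2/2 - y^4/24} \;\leq\; C |y|^{4+c},
\end{equation*}
which is valid for all $y \in \R$ (using $y^6 \leq y^{4+c}$ for $|y| \leq 1$ and $y^4 \leq y^{4+c}$ for $|y| \geq 1$). Substituting into \eqref{TW} and distributing the factor $1 - \varphi h$, the computation splits into contributions from $\cos y$ and $-\varphi h(x/W) \cos y$. Expanding $y^2 = (q \cdot x/W)^2 - 2 \lambda (q \cdot x/W) g(x/W) + \lambda^2 g(x/W)^2$ and summing against $S_{x0}$ yields the key algebraic identity
\begin{equation*}
\tfrac{1}{2} \sum_x S_{x0}\, y^2 \;=\; (q - \lambda D^{-1} w) \cdot D (q - \lambda D^{-1} w) + \lambda^2 \Delta,
\end{equation*}
which is a direct consequence of the definitions \eqref{definition of D} of $D$ and \eqref{def_Delta} of $w$ and $\Delta$. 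The $y^4/24$ contribution yields $\cal Q(q)$ together with mixed terms of the form $\lambda^k |q|^{4-k}$ for $k = 1,2,3,4$, which I would absorb into the stated error via the elementary inequalities $\lambda^2 |q|^2 \leq \lambda |q|^3 + \lambda^4$ and $\lambda^3 |q| \leq \lambda |q|^3 + \lambda^4$. The $-\varphi h$ contribution is handled by expanding $\cos y = 1 + O(y^2)$, giving $-\varphi \Upsilon + O\pb{\varphi(|q|^2 + \lambda |q| + \lambda^2)}$, and using $\varphi \lambda |q|, \varphi \lambda^2 \leq \varphi^2 + \lambda^4$ to absorb into the stated error. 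Finally, $\sum_x S_{x0} |y|^{4+c} = O(|q|^{4+c} + \lambda^4)$ follows from $|y| \leq C (|q| \cdot |x|/W + \lambda)$, the boundedness of $g$, the moment assumption \eqref{moment of f}, and $\lambda \leq 1$. Collecting all terms and invoking the definition \eqref{def_sigma_wt} of $\wt \sigma$ produces \eqref{gen_exp_phi}.

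For part (i), the triangle inequality yields the baseline bound $|\wh T_W(q)| \leq \sum_x (1 - \varphi h(x/W)) S_{x0} = \cal I - \varphi \Upsilon$, which handles the regime $\varphi \geq \epsilon$ since $\Upsilon$ is bounded below by $\Upsilon_0/2 > 0$ for large $W$. In the complementary regime $\varphi < \epsilon_0$ small, a continuity argument in $\varphi$ reduces to the case $\varphi = 0$ at cost $O(\epsilon_0)$. If additionally $\lambda < \epsilon_0$ and $|q| \geq \epsilon$, the further perturbation $\wh T_W(q) = \wh S_W(q) + O(\lambda)$ combined with Lemma~\ref{lem: basic properties of SW} closes the argument. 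The essential new regime is $\lambda \geq \epsilon$ with $\varphi$ small, where I would pass to the continuum limit \eqref{limW_B}: standard Riemann-sum estimates give $\wh T_W(q) \to \wh T_\infty(q)$ uniformly on compact subsets of $q$, and the strict inequality $|\wh T_\infty(q)| < 1$ for $\lambda > 0$ is forced by assumption (A$g$). Indeed, equality would require $\lambda g(r) - q \cdot r \in \pi \Z$ pointwise on each open $C^1$-piece of $\supp f$, forcing $g$ to be affine on $\supp f$, a contradiction. Compactness upgrades this to a uniform bound.

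The main obstacle will be extending this control uniformly over the full Fourier domain $q \in [-\pi W, \pi W]^d$, whose diameter grows with $W$. For $|q|$ bounded (say $|q| \leq R$), the continuum reduction above suffices. For $|q| > R$, I would invoke a discrete Riemann--Lebesgue estimate: summation by parts against the piecewise $C^1$ regularity of $f$, $g$, and $h$ yields decay of $|\wh T_W(q)|$ in $|q|$ that is uniform in $W$. Choosing $R$ large enough relative to $\epsilon$ then combines the continuum analysis on $|q| \leq R$ with the Riemann--Lebesgue decay on $|q| > R$ to produce the uniform strict bound claimed in (i).
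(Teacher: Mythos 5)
Your proof is correct and follows essentially the same route as the paper. Part (ii) is identical: a fourth-order Taylor expansion of the cosine in \eqref{TW}, the algebraic identity $\frac{1}{2}\sum_x S_{x0} y^2 = (q - \lambda D^{-1}w)\cdot D(q - \lambda D^{-1}w) + \lambda^2\Delta$ recovering $A_W(q)$ from \eqref{def_AW}, and absorption of the mixed error terms via AM--GM and the moment bound \eqref{moment of f}. For part (i) the two key ingredients -- passing to the continuum limit \eqref{limW_B} combined with a compactness argument for bounded $q$, and a summation-by-parts (discrete Riemann--Lebesgue) estimate for large $q$ -- coincide with the paper's; you additionally use the triangle-inequality observation $|\wh T_W(q)| \leq \cal I - \varphi\Upsilon$ to dispatch the regime where $\varphi$ is bounded below, whereas the paper folds $\varphi$ into the single three-parameter compactness argument on $\cal D_K$. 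This is only a cosmetic variation: the $\varphi$-case still reduces to $\lim_W|\wh T_W|<1$ in either formulation, and your case split introduces a mild bookkeeping subtlety in choosing the cutoff $\epsilon_0$ consistently with the gap produced by the $\varphi=0$ analysis, which is most cleanly avoided (as in the paper) by keeping $\varphi$ as a variable in the compact domain rather than reducing to $\varphi=0$ by a perturbation.
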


\begin{proof}
For $\epsilon \in (0,1)$ and $K > 0$ define the compact domain
\begin{equation*}
\cal D_K \;\deq\; \hb{(\lambda, \varphi, q) \in [0,1] \times [0,1] \times \R^d \col \abs{q} \leq K \,,\, \max\{\lambda, \varphi, \abs{q}\} \geq \epsilon}\,.
\end{equation*}
Then for each fixed triple $(\lambda, \varphi, q) \in \cal D_K$ it is not hard to see from \eqref{limW_B} that $\lim_{W \to \infty} \abs{\wh T_W(q)} < 1$, by assumption on $g$ and $h$. Since $\cal D_K$ is compact and the map $(\lambda, \varphi, q) \mapsto \lim_{W \to \infty} \abs{\wh T_W(q)}$ is continuous, we conclude that there exists a $\delta_\epsilon > 0$ such that $\lim_{W \to \infty}\abs{\wh T_W(q)} \leq 1 - \delta_\epsilon$ for all $(\lambda, \varphi, q) \in \cal D_K$. Since the convergence of $T_W$ is uniform in $(\lambda, \varphi, q) \in \cal D_K$, we conclude (after renaming $\delta_\epsilon$) that $\abs{\wh T_W(q)} \leq 1 - \delta_\epsilon$ for $(\lambda, \varphi, q) \in\cal D_K$ and large enough $W$.

What remains in the proof of (i) is to prove $\abs{\wh T_W(q)} \leq 1 - \delta_\epsilon$ in the case $\abs{q} > K$ and $q \in [-\pi W, \pi W)^d$ for some large enough $K > 0$. To that end, we use summation by parts. Let $\abs{q} \geq K$. Without loss of generality, suppose that $\abs{q_1} \geq \abs{q_i}$ for all $i = 1, \dots, d$, so that $\abs{q_1} \geq K d^{-1/2}$. To simplify notation, we assume that $f$, $g$, and $h$ are $C^1$ and not just piecewise $C^1$. The piecewise $C^1$ case may be handled similarly by restriction to individual pieces combined with a simple estimate of the boundary terms arising from the summation by parts. Define the discrete derivative $(D_i f)(r) \deq W \pb{f(r + W^{-1} e_i) - f(r)}$ where $e_i$ is the standard unit vector in the $i$-direction. Then we have
\begin{equation*}
\wh T_W(q) \;=\; \frac{\ii \ee^{\ii q_1 / (2W)}}{2 W \sin(q_1 / (2W))} \, \frac{1}{M - 1} \sum_{r \in W^{-1} \bb T} D_1(\ee^{-\ii q \cdot r}) \, f(r) \, [1 - \varphi h(r)]\, \ee^{\ii \lambda g(r)}\,.
\end{equation*}
Since $\abs{q_1} \leq \pi W$, the first fraction is bounded in absolute value by $C / \abs{q_1} \leq C / \abs{q}$. The rest may be estimated using summation by parts by a constant, using the assumptions on $f$, $g$, and $h$; we omit the details. The result is $\abs{\wh T_W(q)} \leq C / \abs{q}$, where the constant $C$ does not depend on $\lambda$ or $\varphi$. Choosing $K$ large enough that $C / K \leq 1/2$ completes the proof of part (i).

In order to prove (ii), we expand $\cos$ in \eqref{TW} to fourth order and use \eqref{moment of f} to get
\begin{equation*}
\wh T_W(q)
\;=\; \cal I - A_W(q) - \varphi \sum_{x \in \bb T} h(x/W) S_{x0} + \cal Q(q)
+ O\pb{\abs{q}^{4 + c} + \abs{q}^3 \lambda + \abs{q}^2 \varphi + \varphi \lambda^2 + \lambda^4}\,,
\end{equation*}
where we introduced the quadratic term
\begin{equation} \label{def_AW}
A_W(q) \;\deq\; \frac{1}{2} \sum_{x \in \bb T} \pb{q \cdot x/W - \lambda g(x/W)}^2 S_{x0}
\;=\; (q - \lambda D^{-1} w) \cdot D (q - \lambda D^{-1} w) + \lambda^2 \Delta\,.
\end{equation}
This concludes the proof of (ii).
\end{proof}

\begin{proof}[Proof of Proposition \ref{prop: bounds on T}]
The proof of Proposition \ref{prop: bounds on S} may be taken over with minor modifications, using Lemma \ref{lem:That} as input. The bound \eqref{bound res 1} for $T$ is proved similarly to \eqref{bound res 1} for $S$. All of the remaining claims rely on Fourier space analysis. We use the expansion \eqref{gen_exp_phi} instead of \eqref{SW exp}. In the summation over $q$ in \eqref{resolvent_fspace} we shift the origin by introducing the new variable $\wt q \deq q - \lambda D^{-1} w$. Using it we may write \eqref{gen_exp_phi} as
\begin{equation} \label{shifted_F_space}
\wh T_W(q) \;=\; \cal I - \wt \sigma - \wt q \cdot D \wt q + \cal Q(\wt q)
+ O\pb{\abs{\wt q}^{4 + c} + \abs{\wt q}^3 \lambda + \abs{\wt q}^2 \varphi + \varphi^2 + \lambda^4}\,.
\end{equation}
Then all the Fourier space arguments from the proof of Proposition \ref{prop: bounds on S} carry over provided one replaces $1 - \alpha$ with $1 - \alpha + \wt \sigma$.
\end{proof}

\begin{proof}[Proof of Proposition \ref{prop:T_int}]
The proof follows along the same lines as that of Proposition \ref{prop:S_int}, using \eqref{gen_exp_phi} instead of \eqref{SW exp}. We use the same shift in Fourier space as in the proof of Proposition \ref{prop: bounds on T}, and work with \eqref{shifted_F_space}. The only change to the proof of Proposition \ref{prop:S_int} is that in \eqref{T_computed} we get the additional factor $\ee^{- s (\lambda^2 \Delta + \varphi \Upsilon)/(\eta b)}$. For the case $d = 4$ we use the elementary estimate
\begin{equation*}
\int_\eta^1 \frac{1}{s} \, \ee^{- s \wt \sigma/(\eta b)} \, \dd s \;=\; \min\h{\abs{\log \eta}, \abs{\log \wt \sigma}} + O(1)\,. \qedhere
\end{equation*}
\end{proof}

{\small
\providecommand{\bysame}{\leavevmode\hbox to3em{\hrulefill}\thinspace}
\providecommand{\MR}{\relax\ifhmode\unskip\space\fi MR }
\providecommand{\MRhref}[2]{%
  \href{http://www.ams.org/mathscinet-getitem?mr=#1}{#2}
}
\providecommand{\href}[2]{#2}

}

\end{document}